\numberwithin{equation}{section}
\numberwithin{subsection}{section}
\newenvironment{enumeratea}
{\begin{enumerate}[\upshape (a)]}
{\end{enumerate}}
\newenvironment{enumerate1}
{\begin{enumerate}[\upshape (1)]}
{\end{enumerate}}
\newcommand{\theoremname}{testing}
\newenvironment{named}[1]{\renewcommand\theoremname{#1}
\begin{namedtheorem}}
{\end{namedtheorem}}
\newtheorem*{namedtheorem}{\theoremname}
\newtheorem{theorem}{Theorem}[section]
\newtheorem{proposition}[theorem]{Proposition}
\newtheorem{proposition-definition}[theorem]
{Proposition-Definition}
\newtheorem{corollary}[theorem]{Corollary}
\newtheorem{lemma}[theorem]{Lemma}
\theoremstyle{definition}
\newtheorem{definition}[theorem]{Definition}
\newtheorem{examples}[theorem]{Examples}
\newtheorem{remark}[theorem]{Remark}
\newtheorem{conjecture}[theorem]{Conjecture}
\theoremstyle{remark}
\newcommand\nome{testing}
\newcommand\call[1]{\label{#1}\renewcommand\nome{#1}}
\newcommand\itemref[1]{\item\label{\nome;#1}}
\newcommand\refall[2]{\ref{#1}~(\ref{#1;#2})}
\newcommand\refpart[2]{(\ref{#1;#2})}
\newcommand\cA{\mathcal{A}} \newcommand\cB{\mathcal{B}}
\newcommand\cO{\mathcal{O}}
\newcommand\CC{\mathbb{C}} 
\newcommand\GG{\mathbb{G}}
 \newcommand\NN{\mathbb{N}}
 \newcommand\PP{\mathbb{P}}
\newcommand\QQ{\mathbb{Q}}
 \newcommand\ZZ{\mathbb{Z}}
 \newcommand\rD{\mathrm{D}}
\newcommand\rK{\mathrm{K}}
 \newcommand\rR{\mathrm{R}}
\newcommand\rS{\mathrm{S}}
\newcommand\rma{\mathrm{a}} \newcommand\rmb{\mathrm{b}}
\newcommand\rmm{\mathrm{m}}
\newcommand\arr{\ifinner\to\else\longrightarrow\fi}
\newcommand\arrto{\ifinner\mapsto\else\longmapsto\fi}
\newcommand\darr{\ifinner\Rightarrow\else\Longrightarrow\fi}
\newcommand{\xarr}{\xrightarrow}
\renewcommand\H{\operatorname{H}}
\newcommand\op{^{\mathrm{op}}}
\newcommand\eqdef{\overset{\mathrm{\scriptscriptstyle def}} =}
\renewcommand\th{^\text{th}}
\def\displaytimes_#1{\mathrel{\mathop{\times}\limits_{#1}}}
\def\displayotimes_#1{\mathrel{\mathop{\bigotimes}\limits_{#1}}}
\renewcommand\hom{\operatorname{Hom}}
\newcommand\End{\operatorname{End}}
\newcommand\aut{\operatorname{Aut}}
\newcommand\pic{\operatorname{Pic}}
\newcommand\spec{\operatorname{Spec}}
\newcommand\generate[1]{\langle #1 \rangle}
\newcommand\id{\mathrm{id}}
\newcommand\pr{\operatorname{pr}}
\newcommand\indlim{\varinjlim}
\renewcommand\projlim{\varprojlim}
\newcommand{\GL}{\mathrm{GL}}
\newcommand\dash{\nobreakdash-\hspace{0pt}}
\newcommand\double{\mathbin{\rightrightarrows}}
\newcommand\doublelong[2]{\mathbin{\xymatrix{{}\ar@<3pt>[r]^{#1}
\ar@<-3pt>[r]_{#2}&}}}
\newcommand{\underhom}
{\mathop{\underline{\mathrm{Hom}}}\nolimits}
\newcommand{\underisom}
{\mathop{\underline{\mathrm{Isom}}}\nolimits}
\newcommand{\underaut}
{\mathop{\underline{\mathrm{Aut}}}\nolimits}
\newlength{\ignora}
\newcommand{\hsmash}[1]{\settowidth{\ignora}{#1}#1\hspace{-\ignora}}
\newcounter{stepcount}
\newcommand{\catsch}[1]{(\mathrm{Sch}/#1)}
\newcommand{\mmu}{\boldsymbol{\mu}}
\newcommand{\et}{{\rm \acute{e}t}}
\newcommand{\gm}{\GG_{\rmm}}
\newcommand{\cha}{\operatorname{char}}
\newcommand{\univ}[2][\kappa]{\Pi_{#2/#1}}
\newcommand{\etuniv}[2][\kappa]{\Pi_{#2/#1}^{\mathrm{\acute{e}t}}}
\newcommand{\tuniv}[2][\kappa]{\Pi_{#2/#1}^{\mathrm{tame}}}
\newcommand{\rep}{\operatorname{Rep}}
\newcommand{\vect}{\operatorname{Vect}}
\newcommand{\coh}{\operatorname{Coh}}
\newcommand{\infl}{inflexible\xspace}
\newcommand{\geom}{geometrically connected and geometrically reduced\xspace}
\newcommand{\ikt}{\widetilde{\rK}_{0}}
\newcommand{\fin}{\operatorname{Fin}}
\newcommand{\efin}{\operatorname{EFin}}
\newcommand{\tfin}{\operatorname{TFin}}
\newcommand{\sym}{\operatorname{Sym}}
\newcommand{\ga}{\GG_{\rma}}
\newcommand{\affger}{(\mathrm{Aff\,Ger}/\kappa)}
\newcommand{\psp}{pseudo-proper\xspace}
\newcommand{\aff}[1]{(\mathrm{Aff}/#1)}
\newcommand{\catset}{\mathrm{(Set)}}
\newcommand{\thickslash}{\mathbin{\!\!\fatslash}}
\newcommand{\fet}{_{\rm f\acute{e}t}}
\newcommand{\LC}{\operatorname{LC}}
\newcommand{\RH}{\operatorname{RH}}
\DeclareMathOperator{\homext}{{Hom-ext}}
\DeclareMathOperator{\HomExt}{\mathbf{Hom-ext}}
\DeclareMathOperator{\IsCl}{IsCl}
\DeclareMathOperator{\tors}{Tors}
\DeclareMathOperator{\h}{H}
\newcommand{\sep}{^{\mathrm{sep}}}
\begin{document}

\bibliographystyle{amsalpha}

\title[The Nori fundamental gerbe of a fibered category]{The Nori fundamental gerbe\\of a fibered category}

\author[Borne]{Niels Borne$^\dagger$}

\author[Vistoli]{Angelo Vistoli$\ddagger$}

\address[Borne]{Laboratoire Paul Painlevé\\
Université de Lille\\
U.M.R. CNRS 8524\\
U.F.R. de Mathématiques\\
59\,655 Villeneuve d'Ascq Cédex\\
France}
\email{Niels.Borne@math.univ-lille1.fr}

\address[Vistoli]{Scuola Normale Superiore\\Piazza dei Cavalieri 7\\
56126 Pisa\\ Italy}
\email{angelo.vistoli@sns.it}

\thanks{$^\dagger$Supported in part by the Labex CEMPI (ANR-11-LABX-0007-01) and Anr ARIVAF (ANR-10-JCJC 0107)}

\thanks{$^\ddagger$Supported in part by the PRIN project ``Geometria delle varietà algebriche e dei loro spazi di moduli'' from MIUR}

\maketitle

\begin{abstract}
We extend Nori's theory of the fundamental group scheme to a theory of the fundamental gerbe, which applies to schemes, algebraic stacks, and more general fibered categories, even in absence of a rational point. We give a tannakian interpretation of the fundamental gerbe in terms of essentially finite bundles, extending Nori's correspondence for complete varieties with a rational point. We also show how our formalism allows a natural formulation of Grothendieck's section conjecture in arbitrary characteristic.
\end{abstract}

\section{Introduction}

Let $X$ be a reduced proper connected scheme over a perfect field $\kappa$, with a rational point $x_{0} \in X(\kappa)$. The celebrated result of Nori \cite{nori-phd} says the following.

\begin{enumeratea}

	\item There is a profinite group scheme $\pi(X, x_{0})$, the \emph{Nori fundamental group scheme}, with a $\pi(X,x_{0})$-torsor $P \arr X$ with a trivialization $P\mid_{x_{0}} \simeq \pi(X, x_{0})$ such that for every finite group scheme $G \arr \spec \kappa$ and every $G$ torsor $Q \arr X$ with a trivialization $\alpha\colon Q \mid_{x_{0}} \simeq G$, there is a unique homomorphism of group schemes $\pi(X, x_{0}) \arr G$ inducing $Q$ and $\alpha$.

\item There is an equivalence of Tannaka categories between representations of the group scheme $\pi(X, x_{0})$ and essentially finite locally free sheaves on $X$.

\end{enumeratea} 

Let us recall that a locally free sheaf $E$ on $X$ is \emph{finite} if there exist two polynomials $f$ and $g$ in one variable with natural numbers as coefficients, with $f \neq g$, such that $f(E) \simeq g(E)$ (here we evaluate $f$ on $E$ by using direct sums and tensor powers). The notion of \emph{essentially finite} is more delicate, and we refer to \cite[p.~82]{nori-phd} for the definition, which uses a notion of semistable locally free sheaf. In characteristic~0, it turns out, as a consequence of the theorem, that every essentially finite sheaf is in fact finite.

In this paper we generalize Nori's construction by removing the assumption that $X$ has a rational point $x_{0} \in X(\kappa)$, and we give a simpler and more direct approach to Nori's correspondence between representations and essentially finite locally free sheaves. We also show how our formalism allows to give a natural interpretation of Grothendieck's Section Conjecture, and a natural formulation of the conjecture in arbitrary characteristic.

As to the first point, we replace the group with a gerbe (thus we use the language of gerbes, not that of groupoids). In characteristic~0 this gerbe is the one associated with the fundamental groupoid of Deligne \cite{deligne-minus-three-points}; an alternative construction in the smooth case is due to Esnault and Hai \cite{esnault-hai-groupoid}. The gerbes that appear are not of finite type, so we need to consider gerbes in the uncomfortably large fpqc topology. Fortunately, we don't need to use dubious notions such as the fpqc stackification. 

In our approach we don't need $X$ to be a scheme, we work with fibered categories over a fixed field $\kappa$ (not necessarily stacks, the sheaf conditions on $X$ don't play any role). After some preliminary work on fpqc gerbes and projective limits, and on finite stacks, in Section~\ref{sec:fund-gerbe} we introduce a class of fibered categories, called \emph{\infl} (Definition~\ref{def:fund-gerbe}). This notion has some geometric content: the class of \infl algebraic stacks is properly contained in the class of geometrically connected algebraic stacks, and properly contains the class of \geom algebraic stacks. Then we define a \emph{fundamental gerbe} of a fibered category $X$ as a profinite gerbe $\univ X$ with a morphism $X \arr \univ X$, such that every morphism from $X$ to a finite stack $\Gamma$ factors uniquely through $\univ X$. The fundamental gerbe is unique. Our main result in this section is that a fibered category has a fundamental gerbe if and only if it is \infl. In Section~\ref{sec:base-change} we show that the fundamental base gerbe has a very useful base-change property with respect to algebraic separable extensions.

Next we discuss the tannakian interpretation of the fundamental gerbe, generalizing Nori's approach. Let us recall that Grothendieck, Saavedra Rivano and Deligne (\cite{saavedra}, \cite{deligne-tannakian}) have shown that there is an equivalence between non-neutral Tannaka categories on one side and affine fpqc gerbes on the other; thus it is natural to ask whether the gerbe $\univ X$ has a tannakian interpretation. For this we need to work with finite locally free sheaves on $X$, and this only works if we impose a finiteness condition on $X$. 

In Section~\ref{sec:tannaka} we introduce a class of fibered categories, that we call \emph{pseudo-proper}: a fibered category over $\kappa$ is \psp when it has a fpqc cover by a quasi-compact and quasi-separated scheme, and furthermore for any locally free sheaf $E$ on $X$ the dimension of the $\kappa$-vector space $\H^0(X,E)$ is finite. This last condition is a very natural one to impose, as it ensures that the Krull--Schmidt theorem holds for locally free sheaves on $X$.

Then we discuss finite locally free sheaves on a \psp fibered category, copying Nori's definition. We also define essentially finite locally free sheaves, with a definition that is more general and simpler than Nori's, and does not use semistable locally free sheaves at all. (In the case of a complete scheme over $\kappa$ our notion and Nori's turn out to coincide.) We denote by $\efin X$ the category of essentially finite locally free sheaves.

The following is our main result.

\begin{named}{Main theorem}[Theorems \ref{thm:tannakian-description} and \ref{thm:char-infl}]
Let $X$ be a \psp fibered category.

\begin{enumeratea}

\item The fibered category $X$ is \infl if and only if $\efin X$ is tannakian. 

\item If $X$ is \infl, then $\efin X$ is equivalent as a Tannaka category to the category of representations of the fundamental gerbe $\univ X$.

\item If $\cha \kappa = 0$ and $X$ is \infl, then every essentially finite locally free on $X$ is in fact finite.

\end{enumeratea}
\end{named}

We find it interesting that the condition of being inflexible is in fact equivalent to the completely different condition that $\efin X$ be tannakian; this seems to suggest that indeed this inflexibility condition is a very natural one.

Next we study two natural quotients of $\univ X$. The first is the largest étale quotient $\etuniv X$ of $\univ X$; in Section~\ref{sec:etale} we show that this coincides with the stack associated with Deligne's relative fundamental groupoid, introduced in~\cite{deligne-minus-three-points}.

In Section~\ref{sec:section}, we show how Grothendieck's famous Section Conjecture can be interpreted as a statement about the étale fundamental gerbe (Conjecture~\ref{conj:section_reform}). Furthermore, we formulate a version of the Section Conjecture in arbitrary characteristic (Conjecture~\ref{conj:section_extend}).

The second quotient is the largest quotient of $\univ X$ that is \emph{tame}, in the sense of \cite{dan-olsson-vistoli1}. Of course in characteristic~0 we have $\univ X = \etuniv X = \tuniv X$.

It is natural to ask what are the tannakian subcategories of $\efin X$ that correspond to $\etuniv X$ and $\tuniv X$. The first one does not seem to have a natural answer (although there is a tannakian interpretation of $\etuniv X$ in terms of local systems, see Section~\ref{sec:etale-tannakian}). 

In Section~\ref{sec:tame-tannakian} we show that the category of representations of $\tuniv X$ is equivalent to the category of \emph{finite tame locally free sheaves} on $X$: these are the finite locally free sheaves $E$ on $X$ such that all tensor powers $E^{\otimes n}$ are semisimple.

Finally, Section~\ref{sec:examples} contains some examples and applications to illustrate the theory.

\subsection*{Acknowledgments} The authors would like to thank Bertrand Töen for an extremely stimulating conversation. They are also grateful to Michel Emsalem, Damian Rössler, and Jakob Stix, for several discussions, especially about Section~\ref{sec:section}, and to the referee for his useful comments. The content of Section~\ref{sec:examples} started out as joint work with Sylvain Brochard, whom we thank heartily. Finally, we also thank the ENS Lyon for its hospitality while much of this research was done.

\section{Conventions}\label{sec:conventions}

We will work over a fixed field $\kappa$; all schemes and all morphisms of schemes will be over $\kappa$, unless explicit mention to the contrary is made. All fibered categories will be fibered in groupoids over the category $\aff{\kappa}$ of affine schemes over $\kappa$, with the same proviso. As usual, we will identify a functor $\aff\kappa\op \arr \catset$ with the corresponding fibered category, and a scheme with the corresponding functor.  A fibered product (of fibered categories, or of schemes) $X \times_{\spec \kappa}Y$ will be denoted simply by $X \times Y$. If $\kappa'$ is an extension of $\kappa$ and $X$ is a fibered category over $\kappa$, we set $X_{\kappa'} \eqdef \aff {\kappa'} \times_{\aff \kappa}X$.

If $p\colon X \arr \aff \kappa$ is a fibered category, we will consider it as a site by putting on it the fpqc topology, generated by the collection of morphisms $\{\xi_{i} \arr \xi\}$ such that $\{p(\xi_{i}) \arr p(\xi)\}$ is an fpqc cover. There is a sheaf $\cO_{X}$ on $X$, sending each object $\xi$ into $\cO\bigl(p(\xi)\bigr)$.

Let $f\colon X \arr Y$ be a morphism of fibered categories, where $Y$ is an algebraic stack. We call the \emph{scheme-theoretic image} $Y'$ of $X$ in $Y$ the intersection of all the closed substacks $Z$ of $Y$ such that $f$ factors, necessarily uniquely, though $Z$. Alternatively, $Y' \subseteq Y$ is the closed substack associated with the largest quasi-coherent sheaf of ideals of $\cO_{Y}$ contained in the kernel of the natural homomorphism $\cO_{Y} \arr f_{*}\cO_{X}$. It is easy to see that $f$ factors uniquely through $Y'$ (this is clear when $X$ is a scheme, and the general case reduces to this).

If $X$ is an algebraic stack over $\kappa$, we will call $\vect X$ the category of locally free sheaves of $\cO_{X}$-modules of finite constant rank on $X$, which we will also call \emph{vector bundles}. If $X$ is a locally noetherian algebraic stack, we denote by $\coh X$ the category of coherent sheaves on $X$. We also denote by $\vect_{\kappa}$ the category $\vect(\spec \kappa)$ of vector spaces on $\kappa$.

All 2-categories appearing in this paper will be strict $(2,1)$-categories, unless we mention otherwise. Likewise, all functors will be strict. We will use the symbol $*$ for the Godement product.

\section{Projective limits of fpqc gerbes}\label{sec:projective-limits}

We will work with the 2-category $\affger$ of affine fpqc gerbes over $\kappa$, called \emph{tannakian gerbes} in \cite[Chapitre~III, \S 2]{saavedra}. These are fpqc gerbes with a flat presentation $R \double U$, where $R$ and $U$ are affine $\kappa$-schemes. Equivalently they can be defined as fpqc gerbes over $\catsch{\kappa}$ with affine diagonal, and an affine chart.

Suppose that $R \double U$ is a flat groupoid, where $R$ and $U$ are affine; then the associated fpqc stack is a gerbe if and only if the diagonal $R \arr U \times U$ is faithfully flat, and $U$ is nonempty.

If an affine fpqc gerbe $\Phi$ has an object $\xi$ defined over $\kappa$, then it is equivalent to the classifying $\cB_{k}G$, where $G$ is the automorphism group scheme of $\xi$ over $\kappa$. 

\begin{proposition}\call{prop:properties-fpqc-gerbe}

Let $\Phi$ be an affine fpqc gerbe.

\begin{enumeratea}

\itemref{a} $\Phi$ has an fpqc presentation of the type $R \double U$, where $R$ is affine and $U$ is the spectrum of a field.

\itemref{b} Any morphism from a non-empty algebraic stack $X$ to $\Phi$ is faithfully flat, and an fpqc cover. If $X$ is a scheme, it is moreover 
	representable.

\itemref{c} The diagonal $\Phi \arr \Phi \times \Phi$ is representable, faithfully flat, and affine.

\end{enumeratea}

\end{proposition}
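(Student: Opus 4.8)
The plan is to prove the three parts in the order (a), (c), (b), since the field presentation produced in (a) feeds the descent arguments for the other two, and (c) supplies the representability needed in (b). Throughout I will exploit that the defining gerbe condition gives a presentation $R \double U$ with $R$ affine, $R \arr U \times U$ faithfully flat, and $U$ affine, together with fpqc descent of the properties ``faithfully flat'', ``affine'', and ``representable'' along covers.

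For (a), I start from any flat presentation $R_{0} \double U_{0}$ with $R_{0}, U_{0}$ affine, choose a point $u \in U_{0}$ with residue field $k$, and consider the composite $\spec k \arr U_{0} \arr \Phi$. Since the diagonal is affine, $\spec k \times_{\Phi} \spec k$ is an affine scheme, so it suffices to see that $\spec k \arr \Phi$ is faithfully flat. I would check this fpqc-locally on $\Phi$ along the cover $U_{0} \arr \Phi$: the base change is $V \eqdef \spec k \times_{\Phi} U_{0} \arr U_{0}$. The key identification is that $V$ is obtained from the faithfully flat map $R_{0} \arr U_{0} \times U_{0}$ by base change along $\spec k \times U_{0} \into U_{0} \times U_{0}$, where the first factor is the point $u$; hence $V \arr \spec k \times U_{0}$ is faithfully flat, and composing with the flat surjection $\spec k \times U_{0} \arr U_{0}$ shows $V \arr U_{0}$ is faithfully flat. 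Descent along $U_{0} \arr \Phi$ then yields that $\spec k \arr \Phi$ is faithfully flat, giving the desired presentation $R \double \spec k$ with $R = \spec k \times_{\Phi} \spec k$.

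For (c), affineness of the diagonal is part of the definition, and representability follows since affine schemes are schemes. For faithful flatness I argue by descent: the map $U \times U \arr \Phi \times \Phi$ is an fpqc cover (a product of the cover $U \arr \Phi$ with itself), and the base change of the diagonal $\Phi \arr \Phi \times \Phi$ along it is exactly $R = U \times_{\Phi} U \arr U \times U$, which is faithfully flat and affine by the gerbe condition. Since all three properties descend fpqc-locally on the target, the diagonal has them. The representability assertion in (b) for $X$ a scheme then follows formally: for any scheme $T \arr \Phi$, the fibred product $X \times_{\Phi} T \arr X \times T$ is the base change of the affine diagonal, hence affine, so $X \times_{\Phi} T$ is a scheme.

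For (b), the point is to reduce the base of $\Phi$ to a field. Using the presentation $\spec k \arr \Phi$ from (a), I base change $f \colon X \arr \Phi$ to obtain $X \times_{\Phi} \spec k \arr \spec k$. A morphism of algebraic stacks to the spectrum of a field is automatically flat, and it is surjective because $X$ is non-empty and $\spec k \arr \Phi$ is a cover (so $X \times_{\Phi}\spec k \arr X$ is faithfully flat, forcing the source to be non-empty); hence $X \times_{\Phi} \spec k \arr \spec k$ is faithfully flat. Descent of faithful flatness along the fpqc cover $\spec k \arr \Phi$ then shows $f$ is faithfully flat, and an fpqc cover. The main obstacle, and really the crux of the whole proposition, is the flatness in part (a): that replacing $U_{0}$ by a residue field does not destroy flatness over $\Phi$. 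Everything else is a formal consequence of the faithful flatness of $R \arr U \times U$ together with fpqc descent. The one technical point to track is the quasi-compactness implicit in the word ``cover'', which must be followed through the base changes, but this is routine given the affineness furnished by (c).
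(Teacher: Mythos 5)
Your proof is correct, but it takes a genuinely different route from the paper's. The paper proves the parts in the order (b), (c), (a), and its engine is \emph{neutralization}: after an extension of the base field one may assume $\Phi(\kappa) \neq \emptyset$, hence $\Phi \simeq \cB_{\kappa}G$ for an affine group scheme $G$; then $\spec \kappa \arr \cB_{\kappa}G$ is the universal torsor, affine and faithfully flat, so every base change $S \times_{\cB_{\kappa}G} \spec\kappa$ is a $G$-torsor and (b), (c) follow, with (a) deduced last by setting $U = \spec K$ for some $K$ with $\Phi(K) \neq \emptyset$ and $R = U \times_{\Phi} U$, affine by (c) and a presentation by (b). You never neutralize: you work directly with the given affine presentation and the gerbe criterion that $R_{0} \arr U_{0} \times U_{0}$ is faithfully flat, and your key computation $\spec k \times_{\Phi} U_{0} \simeq (\spec k \times U_{0}) \times_{U_{0} \times U_{0}} R_{0}$, for $k$ a residue field of a point of $U_{0}$, is correct and has the merit of producing the field of part (a) concretely, where the paper invokes the existence of $K$ with $\Phi(K)\neq\emptyset$ abstractly; your (b) then replaces torsor theory by the observation that any algebraic stack over a field is flat, plus descent, which also handles a general algebraic stack $X$ directly rather than reducing to the scheme case. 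Both arguments ultimately rest on the same pillar — fpqc descent of faithful flatness, surjectivity, and affineness along covers of the target — and note that the paper's opening move ``one can make a field extension'' in (b) and (c) is itself a descent step of exactly the kind you spell out, so your treatment of the quasi-compactness implicit in ``fpqc cover'' is at the same level of detail as the paper's.

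One presentational point: in (a) you invoke affineness of the diagonal to see that $\spec k \times_{\Phi} \spec k$ is affine, yet you prove (c) only afterwards. There is no actual circularity, since your proof of (c) uses only the original presentation $R_{0} \double U_{0}$ and not (a); still, you should either order the parts (c), (a), (b), or cite the paper's equivalent definition of an affine fpqc gerbe as one with affine diagonal and an affine chart, which makes the affineness available from the start.
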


\begin{proof}
For part \refpart{prop:properties-fpqc-gerbe}{b}, one can make a field extension; so we can assume that $\Phi(\kappa) \neq \emptyset$, so that $\Phi \simeq \cB_{\kappa}G$, where $G$ is a affine group scheme. Then the morphism $\spec \kappa \arr \cB_{\kappa}G$ corresponding to the trivial torsor $G \arr \spec \kappa$ is the universal torsor over $\cB_{\kappa}G$; in particular, it is affine and faithfully flat, hence an fpqc cover. Hence, if $S$ is a non-empty scheme and $S \arr \cB_{\kappa}G$ is a morphism, the fibered product $P \eqdef S \times_{\cB_{\kappa}G} \spec \kappa$ is a $G$-torsor over $S$, so it is non-empty, and $P \arr \spec \kappa$ is an fpqc cover. Hence $S \arr \cB_{\kappa}G$ is an fpqc cover, as claimed.

For part~\refpart{prop:properties-fpqc-gerbe}{c} we can also make an extension of base field, since affine morphisms satisfy fpqc descent, so that $\Phi(\kappa) \neq \emptyset$. In this case $\Phi \simeq \cB_{\kappa}G$ for an affine group $G$, and the statement is clear.

For \refpart{prop:properties-fpqc-gerbe}{a}, take a field extension $K$ of $\kappa$ such that $\Phi(K) \neq \emptyset$, and set $U \eqdef \spec K$ and $R \eqdef U \times_{\Phi} U$. Since the diagonal $\Phi \arr \Phi \times \Phi$ is affine, we have that $R$ is an affine scheme. Because of \refpart{prop:properties-fpqc-gerbe}{b}, the groupoid $R \double U$ gives an fpqc presentation of $\Phi$.
\end{proof}

\begin{definition}
A \emph{boolean cofiltered $2$-category} $I$ is a small 2-category such that, for any two objects $i$ and $j$ of $I$,
\begin{enumeratea}

\item there exists another object $k$ with arrows $k \arr i$ and $k \arr j$, and

\item given any two 1-arrows $a$, $b\colon j \arr i$, there exists a unique 2-arrow $a \darr b$.

\end{enumeratea}
\end{definition}

One sees immediately that a $2$-category is cofiltered if and only if it is equivalent to a cofiltered partially ordered set, considered as a 2-category.

From now we will call a boolean cofiltered $2$-category simply a \emph{cofiltered $2$-category}. The reason for the adjective ``boolean'' is that, as was pointed out by the referee, a general cofiltered $1$-category is not equivalent to a partially ordered set, and so it can not be a boolean cofiltered $2$-category.

\begin{definition}
A \emph{projective system} in $\affger$ consists of a cofiltered 2-category $I$ and a strict 2-functor $\Gamma\colon I \arr \affger$.
\end{definition}

\begin{remark}
There are two possible ways of defining a projective system. One as in the definition above; the other is to take $I$ to be a cofiltered partially ordered set, and $\Gamma$ to be a pseudo-functor. The two methods are essentially equivalent, but the one above works better for our purposes.
\end{remark}

Given a projective system $\Gamma\colon I \arr \affger$, we denote by $\Gamma_{i}$ the image of an object $i$ of $I$, and by $\Gamma_{a}\colon \Gamma_{j} \arr \Gamma_{i}$ the cartesian functor corresponding to a 1-arrow $a\colon j \arr i$. Finally, if $a$, $b\colon j \arr i$ are 1-arrows, we denote by $\Gamma_{a,b}\colon \Gamma_{a} \arr \Gamma_{b}$ the natural isomorphism corresponding to the unique 2-arrow $a \to b$.

\begin{definition}
Let  $\Gamma\colon I \arr \affger$ be a projective system of affine gerbes. The \emph{projective limit} $\projlim \Gamma$ is the category fibered in groupoids over $\catsch{\kappa}$ defined as follows.

An object $\xi$ of $\projlim \Gamma$ consists of the following data.

\begin{enumerate1}

\item A scheme $T$ over $\kappa$, and an object $\xi_{i}$ of $\Gamma_{i}(T)$ for each object $i$ of $\Gamma$.

\item For each 1-arrow $a\colon j \arr i$ in $I$, an arrow $\xi_{a}\colon \Gamma_{a}(\xi_{j}) \arr \xi_{i}$ in $\Gamma_{i}(T)$.

\end{enumerate1}

These are required to satisfy the following conditions.

\begin{enumeratea}

\item If $a\colon j \arr i$ and $b\colon k \arr j$ are 1-arrows in $I$, the diagram
   \[
   \xymatrix@C+15pt{
   \Gamma_{ab}(\xi_{k})
   \ar[r]^{\Gamma_{a}(\xi_{b})}\ar[rd]_{\xi_{ab}}
   &\Gamma_{a}(\xi_{j}) \ar[d]^{\xi_{a}}\\
   &\xi_{i}
   }
   \]
commutes.

\item If $a$ and $b$ are 1-arrows from $j$ to $i$, the diagram
   \[
   \xymatrix{
   \Gamma_{a}(\xi_{j}) \ar[rr]^{\Gamma_{a,b}}\ar[rd]_{\xi_{a}}
      && \Gamma_{b}(\xi_{j})\ar[ld]^{\xi_{b}}\\
   & \xi_{i}
   }
   \]
commutes.
\end{enumeratea}

An arrow $f\colon  \xi \arr \eta$ consists of a morphism of $\kappa$-schemes $\phi\colon T \arr T'$ and an arrow $f_{i}\colon \xi_{i} \arr \eta_{i}$ for each $i$, such that
\begin{enumeratea}

\item $f_{i}$ maps to $\phi$ in $\catsch k$ for all $i$, and

\item for each 2-arrow $a\colon j \arr i$, the diagram
   \[
   \xymatrix@C+15pt{
   \Gamma_{a}(\xi_{j}) \ar[r]^{\Gamma_{a}(f_{j})} \ar[d]^{\xi_{a}}
   &\Gamma_{a}(\eta_{j})\ar[d]^{\eta_{a}}\\
   \xi_{i} \ar[r]^{f_{i}} & \eta_{i}
   }
   \]
commutes.
\end{enumeratea}
\end{definition}

For the projective limit $\projlim \Gamma$ we will also use the notation $\projlim_{i \in I} \Gamma_{i}$, or $\projlim_{i} \Gamma_{i}$.

It is a straightforward exercise in descent theory to show that $\projlim \Gamma$ is an fpqc stack.

\begin{remark}\label{rmk:viewpoint-groupoids}
Here is different description of $\projlim_{i \in I} \Gamma_{i}$ from the point of view of groupoids. Suppose that $\Phi \eqdef \projlim \Gamma$ is non-empty (it is not clear to us whether it can happen that it is empty); let $X$ be a non-empty scheme with a morphism $X \arr \projlim \Gamma$, corresponding to an object $\xi= \{\xi_{i}\}$ of $\Phi(X)$. We claim that $X \arr \Phi$ is representable, faithfully flat, and an fpqc cover.

For this, let $T$ be an affine scheme with a morphism $T \arr \Phi$, corresponding to an object $\tau = \{\tau_{i}\}$ of $\Phi(T)$. The fibered product $X \times_{\Gamma_{i}}T$ is represented by the scheme $P_{i} \eqdef \underisom_{X \times T}(\pr_{2}^{*}\tau_{i}, \pr_{1}^{*}\xi_{i})$; by Proposition \refall{prop:properties-fpqc-gerbe}{c}, $P_{i}$ is affine and faithfully flat over $X \times T$. If $a\colon j \arr i$ is an arrow in $I$, the corresponding isomorphisms $\xi_{a}\colon \Gamma_{a}(\xi_{j}) \arr \xi_{i}$ and $\tau_{a}\colon \Gamma_{a}(\tau_{j}) \arr \tau_{i}$ induce a morphism of $X \times T$-schemes $P_{j} \arr P_{i}$; this defines a functor from $I$ to the category $\aff{X \times T}$ of affine schemes over $X \times T$. Since the target is a 1-category, this functor factors through  the preordered set $\overline{I}$ corresponding to $I$ (that is, the objects are the objects of $I$, and we set $j \leq i$ when there exists an arrow $j \arr i$). Set $P \eqdef \projlim_{\overline{I}}P_{i}$; it follows from the definition of projective limit that $P$ represents the functor $\underisom_{X \times T}(\pr_{2}^{*}\tau, \pr_{1}^{*}\xi) = X \times_{\Phi} T$. Since the limit of affine faithfully flat schemes over $X \times T$ is again affine and faithfully flat, we see that $P$ is affine and faithfully flat over $X \times T$; hence $P$ is an fpqc cover of $T$, and the result follows.

If we set $R_{i} = X \times_{\Gamma_{i}} X$ and $R = \projlim R_{i}$, it results from the above that $\projlim \Gamma$ is the fpqc quotient of the groupoid $R \double X$; this will be used in Remark~\ref{rmk:compare-Deligne} to compare our construction with Deligne's construction of the étale fundamental groupoid.

Also, since $R \arr X \times X$ is faithfully flat and affine, and $X$ is nonempty, we have that $\Phi$ is a fpqc gerbe. We record this in a Proposition.
\end{remark}

\begin{proposition}\label{prop:limit-gerbes}
Let $\Gamma\colon I \arr \affger$ a projective system of affine fpqc gerbes. If the limit $\projlim \Gamma$ is not empty, it is an fpqc gerbe.
\end{proposition}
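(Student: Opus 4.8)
The plan is to realize $\Phi \eqdef \projlim \Gamma$ as the fpqc quotient of an explicit affine flat groupoid, and then invoke the gerbe criterion recalled at the beginning of this section: a flat groupoid $R \double U$ with $R$ and $U$ affine presents an fpqc gerbe precisely when $U$ is nonempty and the diagonal $R \arr U \times U$ is faithfully flat. Since it has already been noted that $\projlim \Gamma$ is an fpqc stack, the two gerbe axioms (local nonemptiness and local isomorphy of objects) are all that remain, and both are encoded in that criterion.

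First I would produce the presentation. As $\Phi$ is nonempty by hypothesis, some scheme carries a morphism to $\Phi$; restricting the corresponding object to a nonempty affine open, I may assume there is an affine scheme $X$ together with an object $\xi = \{\xi_{i}\}$ of $\Phi(X)$, that is, a morphism $X \arr \Phi$. Setting $R_{i} \eqdef X \times_{\Gamma_{i}} X$ and $R \eqdef \projlim R_{i}$, Remark~\ref{rmk:viewpoint-groupoids} shows that $X \arr \Phi$ is representable and faithfully flat and that $\Phi$ is the fpqc quotient of the groupoid $R \double X$. Thus $X$ plays the role of $U$ above, and it is affine and nonempty.

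It then remains to check that $R \double X$ is a flat groupoid with $R$ affine and $R \arr X \times X$ faithfully flat. Each $R_{i}$ is the base change of the diagonal $\Gamma_{i} \arr \Gamma_{i} \times \Gamma_{i}$ along $X \times X \arr \Gamma_{i} \times \Gamma_{i}$, hence affine and faithfully flat over $X \times X$ by Proposition~\refall{prop:properties-fpqc-gerbe}{c}; in particular $R_{i} = \underspec_{X \times X}(\cA_{i})$ for a faithfully flat quasi-coherent $\cO_{X \times X}$-algebra $\cA_{i}$, and $R_{i} \arr X$ is flat (being a base change of the faithfully flat $X \arr \Gamma_{i}$ of Proposition~\refall{prop:properties-fpqc-gerbe}{b}). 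The limit is $R = \underspec_{X \times X}\bigl(\indlim \cA_{i}\bigr)$, which is again affine over $X \times X$, and a scheme since $X \times X$ is affine; as a filtered colimit of flat modules is flat, both $R \arr X \times X$ and $R \arr X$ are flat, so $R \double X$ is a flat groupoid. Faithful flatness of $R \arr X \times X$ then follows, and the criterion applies, giving that $\Phi$ is an fpqc gerbe.

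The only step requiring genuine care — and the one I would treat as the main obstacle — is the stability of faithful flatness under the cofiltered limit defining $R$. Affineness and flatness pass to the limit formally, but surjectivity of $R \arr X \times X$ must be argued separately: over each point $x$ of $X \times X$ the fiber of $R$ is $\spec$ of $\indlim\bigl(\cA_{i} \otimes \kappa(x)\bigr)$, a filtered colimit of the nonzero residue algebras coming from the (surjective) $R_{i} \arr X \times X$, and this colimit is nonzero because the transition maps are unital, so $1 \neq 0$ survives. Hence $R \arr X \times X$ is surjective as well as flat, i.e. faithfully flat. Everything else is a direct unwinding of Remark~\ref{rmk:viewpoint-groupoids}, of which this proposition is in effect a restatement.
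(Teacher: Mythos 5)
Your proposal is correct and takes essentially the same route as the paper: the paper's proof is exactly the content of Remark~\ref{rmk:viewpoint-groupoids}, which presents $\projlim \Gamma$ as the fpqc quotient of the groupoid $R \double X$ with $R = \projlim (X \times_{\Gamma_{i}} X)$, uses Proposition~\ref{prop:properties-fpqc-gerbe} to see each $R_{i}$ is affine and faithfully flat over $X \times X$, and concludes by the criterion that an affine flat groupoid with nonempty base and faithfully flat diagonal presents an fpqc gerbe. The only difference is that you spell out the stability of affine faithful flatness under the cofiltered limit (filtered colimit of flat algebras, plus the fiberwise $1 \neq 0$ surjectivity argument), a step the paper asserts in a single sentence.
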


\begin{proposition}\label{prop:factor-through-finite}
Let $\Phi = \projlim_{i}\Gamma_{i}$ be a projective limit of affine fpqc gerbes, $\Delta$ an affine fpqc stack with a flat presentation by affine $\kappa$-schemes of finite type. Then the natural functor $\indlim_{i}\hom(\Gamma_{i}, \Delta) \arr \hom(\Phi, \Delta)$ is an equivalence.
\end{proposition}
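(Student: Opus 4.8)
The plan is to reduce the statement to fpqc descent data along the canonical groupoid presentations of $\Phi$ and of the $\Gamma_{i}$, and then to apply the classical limit theorem for morphisms into a finitely presented scheme. If $\Phi$ is empty the statement is trivial, so assume $\Phi \neq \emptyset$ and choose, as in Remark~\ref{rmk:viewpoint-groupoids}, a field extension $K/\kappa$ with an object of $\Phi(K)$; set $X \eqdef \spec K$. This object induces compatible fpqc covers $X \arr \Gamma_{i}$ (faithfully flat by \refpart{prop:properties-fpqc-gerbe}{b}) and $X \arr \Phi$; writing $R_{i} \eqdef X \times_{\Gamma_{i}} X$ and $R \eqdef X \times_{\Phi} X = \projlim_{i} R_{i}$ (all affine by \refpart{prop:properties-fpqc-gerbe}{c}), this identifies $\Gamma_{i}$ and $\Phi$ with the fpqc quotients $[R_{i} \double X]$ and $[R \double X]$. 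Since $\Delta$ is an fpqc stack, descent along these covers gives equivalences of $\hom(\Gamma_{i},\Delta)$ and $\hom(\Phi,\Delta)$ with the groupoids of descent data: an object is a pair $(\delta,\theta)$ with $\delta \in \Delta(X)$ and $\theta\colon s^{*}\delta \arr t^{*}\delta$ an isomorphism between the two pullbacks of $\delta$ along the projections $s,t$ to $R_{i}$ (resp.\ $R$) satisfying the cocycle condition over the scheme of composable pairs $R_{i}^{(2)}$ (resp.\ $R^{(2)} = \projlim_{i} R_{i}^{(2)}$), and an arrow is an isomorphism in $\Delta(X)$ compatible with the $\theta$'s. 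Under these identifications the functor in question is the obvious one that restricts a descent datum from $R_{i}$ to $R$.

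The essential point is a single finiteness input. Let $S \double V$ be the given flat presentation of $\Delta$ with $V,S$ affine and of finite type over $\kappa$; then $V \arr \Delta$ is an fpqc cover and $S = V \times_{\Delta} V$, so the diagonal $\Delta \arr \Delta \times \Delta$ is affine and of finite presentation. Consequently, for any two objects $a,b \in \Delta(R_{i_{0}})$ the sheaf $\underisom(a,b)$ is represented by an affine scheme $W \arr R_{i_{0}}$ of finite presentation, being the pullback of the diagonal of $\Delta$ along $(a,b)\colon R_{i_{0}} \arr \Delta \times \Delta$. The key lemma I would prove is that, for the cofiltered system $\{R_{i}\}_{i \le i_{0}}$ of affine $R_{i_{0}}$-schemes with limit $R$, the natural map
\[
\indlim_{i \le i_{0}} \isom_{\Delta(R_{i})}(a|_{R_{i}}, b|_{R_{i}}) = \indlim_{i \le i_{0}} W(R_{i}) \longrightarrow W(R) = \isom_{\Delta(R)}(a|_{R}, b|_{R})
\]
is a bijection; this is exactly the limit theorem for morphisms into a finitely presented scheme (EGA~IV, 8.14.2), since $R = \projlim_{i \le i_{0}} R_{i}$ is a cofiltered limit of affine schemes over $R_{i_{0}}$. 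The same statement holds verbatim over $R^{(2)} = \projlim_{i} R_{i}^{(2)}$.

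With this in hand the two properties follow formally. For essential surjectivity, start with $(\delta,\theta) \in \hom(\Phi,\Delta)$. The objects $s^{*}\delta$ and $t^{*}\delta$ are already defined over every $R_{i}$, being pulled back from $\delta \in \Delta(X)$ along the projections $R_{i} \arr X$; so by the key lemma applied with $a = s^{*}\delta$, $b = t^{*}\delta$, the isomorphism $\theta$ over $R$ descends to an isomorphism $\theta_{i}$ over some $R_{i}$, uniquely in the colimit. The cocycle identity for $\theta_{i}$ is an equality of two morphisms in $\isom_{\Delta(R^{(2)})}$ which, by construction, holds after pullback to $R^{(2)}$; applying the injectivity half of the key lemma over $R^{(2)}$, it already holds over $R_{j}^{(2)}$ for some $j \le i$. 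Thus $(\delta,\theta_{j})$ is a genuine descent datum mapping to $(\delta,\theta)$. For full faithfulness, note that an arrow is an isomorphism $\beta \in \isom_{\Delta(X)}(\delta,\delta')$---a datum over the fixed scheme $X$, hence common to both sides---subject to a compatibility condition over $R_{i}$ (resp.\ $R$); the bijectivity in the key lemma shows that this condition holds over $R$ if and only if it holds over some $R_{j}$, giving both fullness and faithfulness.

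The main obstacle---really the only nontrivial ingredient---is isolating the finite-presentation input correctly: one must check that the finite-type flat presentation of $\Delta$ forces the diagonal, and hence every relevant $\underisom$ scheme, to be of finite presentation, so that the classical limit theorem applies. Everything else is bookkeeping with fpqc descent data, using that the cofiltered $2$-category $I$ induces an honest cofiltered inverse system of affine schemes $\{R_{i}\}$ (it factors through the associated preordered set $\overline{I}$, exactly as in Remark~\ref{rmk:viewpoint-groupoids}) and that the base $X$ is held fixed, so that no limit argument is needed for $\delta$ itself or for its automorphisms in $\Delta(X)$.
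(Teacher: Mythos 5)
Your proof is correct, and its skeleton is the same as the paper's: both arguments pick a cover $\spec K \arr \Phi$, set $R_{i} = \spec K \times_{\Gamma_{i}} \spec K$ and $R = \projlim R_{i}$, identify $\hom(\Gamma_{i},\Delta)$ and $\hom(\Phi,\Delta)$ with categories of descent data on $R_{i} \double \spec K$ and $R \double \spec K$, and finish with a limit theorem. The difference lies in which limit theorem. The paper invokes the statement that $\Delta$, being finitely presented over $\kappa$, satisfies $\indlim_{i}\Delta(T_{i}) \simeq \Delta(\projlim T_{i})$ for any cofiltered system of affine schemes --- an equivalence covering both objects and morphisms --- and then asserts this ``is easily seen to yield'' the equivalence of descent categories. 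You instead notice that, because the underlying object $\delta$ and the pullbacks $s^{*}\delta$, $t^{*}\delta$ live at every finite level of the system, only the morphism-level half of that equivalence is ever used; and you establish it directly, by checking that the flat finite-type presentation forces the diagonal of $\Delta$ to be affine and finitely presented, so each relevant $\underisom$ is a finitely presented affine scheme and the classical result (EGA~IV, 8.14.2) applies over the systems $\{R_{i}\}$ and $\{R_{i}^{(2)}\}$. This makes the argument more self-contained (no spreading out of objects of $\Delta$ is required) and in effect fills in the step the paper leaves implicit; the paper's formulation is shorter, and its stack-level limit equivalence is reused verbatim in the following proposition for the non-groupoid fibered category of locally free sheaves, where your Isom-scheme shortcut would have to be redone. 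One shared caveat: your opening claim that the empty case is trivial is not quite right --- if $\Phi = \emptyset$ the colimit side need not collapse to the terminal groupoid --- but the paper's proof makes the same tacit non-emptiness assumption when it chooses $\spec K \arr \Phi$, so this is not a gap relative to the paper.
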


\begin{proof}
The stack $\Delta$ is finitely presented over $\kappa$. This implies that if $\{T_{i}\}$ is a projective system of affine $\kappa$-schemes, the natural functor $\indlim_{i}\Delta(T_{i}) \arr \Delta(\projlim T_{i})$ is an equivalence.

Let $S \arr \Phi$ be a morphism, where $S = \spec K$ is the spectrum of a field. Set $R \eqdef S \times_{\Phi}S$ and $R_{i}\eqdef S\times_{\Gamma_{i}} S$ for each $i$; as in the proof of Proposition~\ref{prop:limit-gerbes}, we have $R = \projlim R_{i}$. There is an equivalence of categories between $\hom(\Phi, \Delta)$ and the category of objects of $\Delta(S)$ with descent data on the flat groupoid $R \double S$, and, analogously, an equivalence of $\hom(\Gamma_{i}, \Delta)$ with the category of objects of $\Delta(S)$ with descent data on $R_{i} \double S$. But the equivalence of $\indlim_{i}\Delta(R_{i})$ with $\Delta(R)$ is easily seen to yield an equivalence between the category of objects with descent data on $R \double S$ and the colimit of the categories of objects with descent data on $R_{i} \double S$.
\end{proof}

If $\Phi$ is an fpqc gerbe over $\kappa$, we denote by $\rep \Phi$ the category of coherent sheaves on $\Phi$; these are all locally free, because $\Phi$ admits a faithfully flat morphism $\spec K \arr \Phi$, where $K$ is a field. So $\rep \Phi = \vect \Phi$. The category $\rep \Phi$ is tannakian.

\begin{proposition}
Suppose that $\Phi = \projlim_{i}\Gamma_{i}$ is a non-empty limit of fpqc gerbes. The natural functor $\indlim_{i} \rep \Gamma_{i} \arr \rep \Phi$ is an equivalence.
\end{proposition}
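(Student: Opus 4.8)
The plan is to reduce the whole statement to fpqc descent along one well-chosen cover, and then to exploit the fact that coherent sheaves on a gerbe have finite rank in order to push a descent datum from the limit down to a finite stage. First, since $\Phi$ is non-empty, I would pick a field extension $K$ of $\kappa$ with $\Phi(K)\neq\emptyset$ and set $X\eqdef\spec K$. By Remark~\ref{rmk:viewpoint-groupoids}, $X\arr\Phi$ is an fpqc cover and $\Phi$ is the fpqc quotient of the groupoid $R\double X$, where $R=\projlim_{i}R_{i}$ and $R_{i}\eqdef X\times_{\Gamma_{i}}X$; by \refall{prop:properties-fpqc-gerbe}{b}, applied to each $X\arr\Gamma_{i}$, the stack $\Gamma_{i}$ is likewise the quotient of $R_{i}\double X$. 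All the $R_{i}$ are affine with affine transition maps, so $R$ is affine with $\Gamma(R,\cO_{R})=\indlim_{i}\Gamma(R_{i},\cO_{R_{i}})$, and, by cofinality of the diagonal of the index category, $R\times_{X}R=\projlim_{i}(R_{i}\times_{X}R_{i})$, so global sections again commute with the limit.

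Next I would use fpqc descent to identify $\rep\Phi=\coh\Phi$ with the category of pairs $(E,\phi)$, where $E$ is a finite-dimensional $K$-vector space (a coherent sheaf on $X$) and $\phi$ is a descent datum for $E$ along $R\double X$ — an isomorphism over $R$ satisfying the cocycle condition over $R\times_{X}R$ and normalized along the identity section — with morphisms the $K$-linear maps compatible with the descent data; and similarly for $\rep\Gamma_{i}$ using $R_{i}\double X$. Under these identifications the natural functor $\rep\Gamma_{i}\arr\rep\Phi$ is simply pullback of the descent datum along $R\arr R_{i}$, so I am reduced to showing that every descent datum over $R$ descends, together with its morphisms, to some $R_{i}$.

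For essential surjectivity, fix $(E,\phi)$ with $\dim_{K}E=n$ and a basis of $E$, so that $\phi\in\GL_{n}\bigl(\Gamma(R,\cO_{R})\bigr)=\GL_{n}\bigl(\indlim_{i}\Gamma(R_{i},\cO_{R_{i}})\bigr)$. Since $\GL_{n}$ is of finite presentation, this is $\indlim_{i}\GL_{n}\bigl(\Gamma(R_{i},\cO_{R_{i}})\bigr)$, so $\phi$ is the pullback of some $\phi_{i_{0}}$ over $R_{i_{0}}$. The cocycle identity for $\phi$ is a finite list of equalities of matrix entries in $\Gamma(R\times_{X}R,\cO)=\indlim_{i}\Gamma(R_{i}\times_{X}R_{i},\cO)$, and both sides come from $\phi_{i_{0}}$; since equalities in a filtered colimit of rings are witnessed at a finite stage, the pullback $\phi_{i_{1}}$ already satisfies the cocycle identity over $R_{i_{1}}\times_{X}R_{i_{1}}$ for some $i_{1}\geq i_{0}$. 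The normalization $e_{i_{0}}^{*}\phi_{i_{0}}=\id$ is automatic, since $e_{i_{0}}$ is the composite of the identity section $e\colon X\arr R$ with $R\arr R_{i_{0}}$ and $e^{*}\phi=\id$. Thus $(E,\phi_{i_{1}})\in\rep\Gamma_{i_{1}}$ maps to $(E,\phi)$. Full faithfulness is the same argument one step down: a $K$-linear $f$ is compatible with two descent data over $R$ exactly when finitely many equalities hold in $\indlim_{i}\Gamma(R_{i},\cO_{R_{i}})$, hence exactly when $f$ is compatible at some finite stage, while faithfulness is immediate because the underlying $K$-linear map is preserved. As $I$ is equivalent to a cofiltered poset, the $2$-colimit $\indlim_{i}\rep\Gamma_{i}$ is computed naively, and these three facts give the asserted equivalence.

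The crux — and the only place where anything beyond bookkeeping happens — is this finiteness step: a priori a descent datum on $\Phi$ lives over the huge scheme $R=\projlim_{i}R_{i}$, and what forces it down to a finite $\Gamma_{i}$ is precisely that coherent sheaves on the gerbe $\Phi$ have finite rank, so that $\phi$ is controlled by the finitely presented group $\GL_{n}$ and by finitely many equations in the filtered colimit $\indlim_{i}\Gamma(R_{i},\cO_{R_{i}})$. This is exactly the mechanism of Proposition~\ref{prop:factor-through-finite}, with the single finite-type target there replaced by the classifying stacks $\cB_{\kappa}\GL_{n}$ of each finite rank $n$; indeed one could instead deduce the statement from that proposition, applied to $\Delta=\cB_{\kappa}\GL_{n}$ to handle objects and their isomorphisms and to the finite-type quotient stacks $[\mathrm{Mat}_{n\times m}/(\GL_{m}\times\GL_{n})]$ to handle the remaining morphisms.
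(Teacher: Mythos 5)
Your proof is correct and follows essentially the same route as the paper: the paper takes $S=\spec K\arr\Phi$, writes $R=\projlim R_{i}$, identifies morphisms (here: representations) with descent data along $R\double S$, and invokes the finite presentation of the fibered category of locally free sheaves to descend to a finite stage --- which is exactly your mechanism, with the finite-presentation step spelled out concretely via $\GL_{n}$ matrices and finitely many equations in the filtered colimit of rings. Your closing observation that the statement reduces to Proposition~\ref{prop:factor-through-finite} with a suitable target is precisely how the paper phrases its own proof (there with the non-groupoid fibered category of locally free sheaves as the single target $\Delta$, rather than your stacks $\cB_{\kappa}\GL_{n}$ and $[\mathrm{Mat}_{n\times m}/(\GL_{m}\times\GL_{n})]$).
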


\begin{proof}
Let $\Delta$ be the fibered category of locally free sheaves on $\catsch \kappa$, that is, the fibered category whose objects are pairs $(T, E)$, where $T$ is a $\kappa$-scheme and $E$ is a locally free sheaf on $T$, and in which the arrows are given by arbitrary homomorphism of locally free sheaves (thus, $\Delta$ is not fibered in groupoids). Then $\Delta$ is still finitely presented, in the sense that the natural functor $\indlim_{i}\Delta(T_{i}) \arr \Delta(\projlim T_{i})$ is an equivalence for any projective system of affine $\kappa$-schemes $\{T_{i}\}$. Hence, the proof of Proposition~\ref{prop:factor-through-finite} goes through in this case.
\end{proof}

\section{Finite stacks}\label{sec:finite-stacks}

\begin{definition}
A \emph{finite stack} over $\kappa$ is an fppf stack that is represented by a flat groupoid $R \double X$, where $R$ and $X$ are finite $\kappa$-schemes.

A \emph{finite gerbe} is a finite stack over $\kappa$ that is a gerbe in the fppf topology.
\end{definition}

By a well known theorem of M.~Artin \cite[Théorème~(10.1)]{laumon-moret-bailly}, a finite stack is algebraic; it can be defined as an algebraic stack $\Gamma$ over $\kappa$ with finite diagonal, which admits a flat surjective map $U \arr \Gamma$, where $U$ is finite over $\kappa$. Notice that a finite stack is always an fpqc stack, as it can be interpreted as the stack of $R \double X$-torsors (if $R\double X$ is an fppf groupoid, then an fpqc torsor is also an fppf torsor, as it is trivialized by one the projections $R \arr X$).

We have the following useful characterization of finite stacks.

\begin{proposition}\label{prop:char-finite}
Let $\Gamma$ be an algebraic stack over $\kappa$. Then $\Gamma$ is finite stack if and only if the following conditions are satisfied.

\begin{enumeratea}

\item $\Gamma$ is of finite type over $\kappa$.

\item The diagonal of $\Gamma$ is quasi-finite.

\item The category $\Gamma(\overline{\kappa})$ has finitely many isomorphism classes.

\end{enumeratea}
\end{proposition}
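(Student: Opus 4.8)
The plan is to move back and forth between the intrinsic conditions (1)--(3) and the existence of a flat presentation $R \double X$ by finite $\kappa$-schemes. The forward implication is the routine one, so I would dispose of it first. Assuming $\Gamma$ is a finite stack, presented by a flat groupoid $R \double X$ with $R$ and $X$ finite over $\kappa$, the cover $X \arr \Gamma$ is faithfully flat and finitely presented with $X$ of finite type, giving (1). The diagonal $\Gamma \arr \Gamma \times \Gamma$ pulls back along $X \times X \arr \Gamma \times \Gamma$ to $R = X \times_\Gamma X \arr X \times X$; since $R$ is finite over $\kappa$, hence finite over $X \times X$, the diagonal is finite and a fortiori quasi-finite, giving (2). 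For (3), $X(\overline\kappa)$ is finite, and every object $\xi \in \Gamma(\overline\kappa)$ lifts to $X(\overline\kappa)$ because $X \times_{\Gamma}\spec\overline\kappa$ is a nonempty finite-type scheme over the algebraically closed field $\overline\kappa$; hence $\Gamma(\overline\kappa)$ has at most $\# X(\overline\kappa)$ isomorphism classes.

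For the converse I would begin with a smooth surjective representable atlas $V \arr \Gamma$ with $V$ a quasi-compact scheme of finite type over $\kappa$, guaranteed by algebraicity and (1), and the whole task is to manufacture from it a \emph{finite} flat atlas. First I would record a dimension reduction: by (3) the topological space $|\Gamma|$ is finite, since every point is dominated by a geometric point and there are only finitely many isomorphism classes of these. Consequently any quasi-finite morphism $X \arr \Gamma$ from a finite-type $\kappa$-scheme automatically has $\dim X = 0$ (its source is covered by the finitely many $0$-dimensional fibres over the finitely many points of $|\Gamma|$), and a $0$-dimensional finite-type $\kappa$-scheme is finite over $\kappa$. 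So it suffices to produce a quasi-finite, flat, surjective, finite-type $X \arr \Gamma$.

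I expect this last construction to be the main obstacle. One cannot take $V$ itself, nor an \'etale refinement of it: when $\Gamma$ has infinitesimal automorphism groups (such as $\mu_p$ or $\alpha_p$ in characteristic $p$) every smooth, and a fortiori every \'etale, atlas has strictly positive relative dimension, and $\Gamma$ need not be Deligne--Mumford, so an \'etale atlas need not exist at all. The remedy I would use is to slice the smooth atlas down to relative dimension $0$ while preserving flatness. Working locally on $V$, where $V \arr \Gamma$ has constant relative dimension $d$, I would cut by $d$ sufficiently general relative hypersurface sections: since the fibres of $V \arr \Gamma$ are smooth, hence Cohen--Macaulay, a general section meets each fibre in a relative effective Cartier divisor that is a non-zero-divisor, so by the local criterion each successive section stays flat over $\Gamma$, and after $d$ steps one reaches a locally closed $X \subseteq V$ that is flat of relative dimension $0$ over $\Gamma$, i.e. quasi-finite and flat. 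Using the finiteness of $|\Gamma|$, I would arrange the sections to pass through a point of $V$ over each point of $\Gamma$ (taking a finite union of such local slices if needed), making $X \arr \Gamma$ surjective. Keeping flatness and surjectivity under control simultaneously is the delicate part.

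Finally I would set $R \eqdef X \times_\Gamma X$. Then $R \arr X \times X$ is the pullback of the diagonal of $\Gamma$, which by (2) is quasi-finite and, by (1) together with the quasi-compactness of the diagonal of an algebraic stack, of finite type. Since $X \times X$ is finite over $\kappa$, it follows that $R$ is a finite-type $\kappa$-scheme that is quasi-finite over the $0$-dimensional $X \times X$, hence $0$-dimensional, hence finite over $\kappa$. The two projections $R \arr X$ are pullbacks of the flat morphism $X \arr \Gamma$, so $R \double X$ is a flat groupoid in finite $\kappa$-schemes whose associated fppf stack is $\Gamma$, and therefore $\Gamma$ is a finite stack.
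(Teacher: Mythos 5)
Your overall architecture coincides with the paper's: both treat the forward direction as routine, and both reduce the converse to producing a quasi-finite, flat, surjective, finite-type cover $X \arr \Gamma$ by a scheme and then deducing finiteness of $X$ and of $R = X\times_{\Gamma}X$ from condition (c). The paper simply cites \cite[V.7]{sga3-1} (via the proof of \cite[Proposition~2.11]{olsson-hom-stacks}) for the existence of such a cover, and concludes by the observation that a finite-type algebraic space with finitely many $\overline{\kappa}$-points is finite; your endgame (finiteness of $|\Gamma|$, so that quasi-finite finite-type sources are $0$-dimensional, hence finite over $\kappa$) is a correct variant of this, modulo the small point that ``every point is dominated by a geometric point'' is too quick: non-closed points of $|\Gamma_{\overline{\kappa}}|$ are not images of $\overline{\kappa}$-points, and one should instead use that $|\Gamma|$ is Jacobson, so its closed points (which do come from $\overline{\kappa}$-points, hence are finite in number) are dense, forcing $|\Gamma|$ to be finite.

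The genuine gap is in your slicing construction, i.e.\ precisely in your re-proof of the input that the paper cites. As justified, your slicing never uses hypothesis (b), and the conclusion you draw from smoothness alone --- that any smooth atlas of a finite-type stack can be cut down to a quasi-finite flat atlas --- is false: for $\Gamma = \cB_{\kappa}\gm$ with atlas $V = \spec\kappa$ the relative dimension is $1$ but there is nothing to cut, and no quasi-finite flat cover by a scheme exists. The error is in ``a general section meets each fibre in a relative effective Cartier divisor'': the fibre $F \eqdef V \times_{\Gamma} \spec k(v)$ is not a subscheme of $V$, so your cutting functions, which live on $V$, act on $F$ only through the projection $q\colon F \arr V$, and they can reduce $\dim F$ to $0$ at the point $w$ over $v$ only if $q$ is quasi-finite at $w$. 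The fibre of $q$ over $v$ is $\spec k(v) \times_{\Gamma} \spec k(v)$, an automorphism space, so this quasi-finiteness is exactly hypothesis (b); without it, functions pulled back from $V$ are constant along the stabilizer directions of $F$, as in the $\cB_{\kappa}\gm$ example. With (b) inserted at this point the argument does go through: since $\frm_{v}\cO_{F,w}$ is then $\frm_{w}$-primary, one can choose $f_{1},\dots,f_{d} \in \frm_{v}$ whose images form a system of parameters of the $d$-dimensional Cohen--Macaulay local ring $\cO_{F,w}$, apply the local criterion of flatness fibrewise, and spread out using openness of the flat and quasi-finite loci, with surjectivity cheap because $|\Gamma|$ is finite with all points closed. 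Alternatively, you could simply quote \cite[V.7]{sga3-1} as the paper does and keep the rest of your argument unchanged.
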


\begin{proof}
It is immediate to check that if $\Gamma$ is finite, it has the properties above. Conversely, assume that the conditions are satisfied.

If $\Gamma$ is an algebraic space of finite type over $k$, and $\Gamma(\overline{k})$ is finite, then it is immediate to see that $\Gamma$ is in fact the spectrum of a finite $k$-algebra. Let us reduce the general case to this one. By \cite[V.7]{sga3-1} (see also the second paragraph of the proof of \cite[Proposition~2.11]{olsson-hom-stacks}) there exists a scheme $X$ and a quasi-finite flat surjective morphism $X \arr \Gamma$ of finite type; then $\Gamma$ has an fppf presentation $X\times_{\Gamma}X \double X$.

Since $X(\overline{\kappa})$ is immediately seen to be finite, it follows that $X$ is finite; and since $X\times_{\Gamma}X \arr X \times X$ is quasi-finite, $(X\times_{\Gamma}X)(\overline{k})$ is also finite, so $X\times_{\Gamma}X$ is finite. The result follows.

\end{proof}

\begin{proposition}\label{prop:gerbe<->geom}
A finite stack over $\kappa$ is a finite gerbe if and only if it is \geom.
\end{proposition}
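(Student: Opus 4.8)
The plan is to reduce immediately to the case of an algebraically closed base field and then argue geometrically. Being a gerbe is an fpqc-local condition on the base: both gerbe axioms assert that certain morphisms (namely $\Gamma\arr\spec\kappa$ and the diagonal $\Gamma\arr\Gamma\times\Gamma$) are fppf epimorphisms, and such epimorphisms are detected by, and descend along, the faithfully flat morphism $\spec\overline{\kappa}\arr\spec\kappa$. Hence a finite stack $\Gamma$ is a gerbe over $\kappa$ if and only if $\Gamma_{\overline\kappa}$ is a gerbe over $\overline\kappa$. Since the hypotheses are, by definition, the connectedness and reducedness of $\Gamma_{\overline\kappa}$, the whole statement reduces to proving, over $\overline\kappa$, that a finite stack is a gerbe if and only if it is connected and reduced. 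Throughout I may assume $\Gamma$ nonempty, which is part of both conditions.

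For the implication gerbe $\Rightarrow$ connected and reduced, I would first note that a nonempty finite stack over $\overline\kappa$ has an object over $\overline\kappa$: a presenting scheme $U$ is finite and nonempty, hence has a $\overline\kappa$-point, which maps to an object $\xi$ of $\Gamma(\overline\kappa)$. A gerbe with a rational object is neutral, so $\Gamma_{\overline\kappa}\simeq \cB_{\overline\kappa}G$ with $G=\underaut(\xi)$ a finite group scheme (exactly as in the discussion preceding Proposition~\ref{prop:properties-fpqc-gerbe}, which applies since a finite gerbe is an affine fpqc gerbe). Now $\cB_{\overline\kappa}G$ has a single point, so it is connected; and the canonical atlas $\spec\overline\kappa\arr\cB_{\overline\kappa}G$ is faithfully flat with reduced source, so $\cO_{\cB_{\overline\kappa}G}$ injects into $\overline\kappa$ and therefore has no nilpotents, giving reducedness. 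In particular this shows $\cB_\kappa G$ is \geom for every finite group scheme $G$, including non-reduced ones such as $\mmu_p$ or $\ba_p$, which is a useful sanity check.

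For the converse, suppose $\Gamma_{\overline\kappa}$ is connected and reduced. A finite stack is $0$-dimensional, so its underlying topological space is finite and discrete; connectedness then forces it to consist of a single point $x$. The residual gerbe $\cG_x\into\Gamma_{\overline\kappa}$ at $x$ is the unique reduced locally closed substack supported on $\{x\}$, and it is a gerbe over its residue field, which is $\overline\kappa$. Since $x$ is the only point, $\cG_x$ is closed and supported on all of $|\Gamma_{\overline\kappa}|$; being reduced with full support it coincides with $(\Gamma_{\overline\kappa})_{\mathrm{red}}$. As $\Gamma_{\overline\kappa}$ is reduced this gives $\Gamma_{\overline\kappa}=\cG_x$, which is a gerbe, as desired.

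The main obstacle is the converse direction, and specifically the role of reducedness. Connectedness alone is far too weak—non-reduced finite stacks such as $\spec\overline\kappa[\epsilon]$ are connected but manifestly not gerbes (two objects over a base need not be locally isomorphic)—so the argument must genuinely use reducedness to exclude them. Packaging this through the residual gerbe is what makes it clean: reducedness is precisely the statement that $\Gamma_{\overline\kappa}$ carries no infinitesimal structure beyond that of the gerbe at its unique point. The delicate feature to keep in mind is that the relevant atlases here are only flat, not smooth, and the automorphism group $G$ may itself be non-reduced, so reducedness of $\Gamma$ must be verified directly on the structure sheaf via the faithfully flat atlas rather than by appealing to a smooth chart; I would also double-check the standard fact that residual gerbes of finite stacks exist and are gerbes over their residue fields, since the whole converse rests on it.
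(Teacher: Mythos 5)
Your proof is correct, but your converse runs along a genuinely different track from the paper's. After reducing to an algebraically closed field, you identify a connected reduced finite stack with its residual gerbe at its unique point, invoking the standard facts that residual gerbes exist and are gerbes over the residue field (equivalently, that a reduced locally Noetherian algebraic stack whose space is a single point admits a surjective flat finitely presented morphism from the spectrum of a field). The paper instead stays elementary: it base-changes only along a \emph{finite} extension (being an fppf gerbe is fppf-local on the base) to obtain a section $\spec\kappa \arr \Gamma$, proves this section is flat by generic flatness --- this is exactly where reducedness enters, since generic flatness needs a reduced target, and connectedness of the finite (hence discrete) space $|\Gamma|$ promotes ``generically flat'' to ``flat'' --- and proves it surjective because its image is open (flat and of finite type), closed ($\Gamma$ finite), and $\Gamma$ is connected; this exhibits a presentation $G \double \spec\kappa$, i.e.\ $\Gamma \simeq \cB_{\kappa}G$. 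Your route buys a cleaner conceptual picture at the cost of importing the residual-gerbe machinery, which you rightly flag as the load-bearing input; the paper's route buys self-containedness. One step of yours needs tightening: $\spec\overline{\kappa} \arr \spec\kappa$ is fpqc but not fppf, so ``the gerbe axioms descend along it'' as stated only yields fpqc-local existence and local isomorphism of objects; to recover the fppf-gerbe property over $\kappa$ you should add a limit argument --- since $\Gamma$ and its diagonal are of finite presentation, any object or local isomorphism over a cover of $T_{\overline{\kappa}}$ descends to $T_{\kappa'}$ for some finite subextension $\kappa'/\kappa$, and $T_{\kappa'} \arr T$ is fppf --- or simply base-change to a finite extension as the paper does. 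Your forward direction (neutralize over $\overline{\kappa}$, then read off connectedness and reducedness from the faithfully flat atlas $\spec\overline{\kappa} \arr \cB_{\overline{\kappa}}G$) coincides with the paper's argument in substance.
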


\begin{proof}
It is obvious that a finite gerbe is connected; it is also reduced, since it admits a faithfully flat map from the spectrum of a field (Proposition~\refall{prop:properties-fpqc-gerbe}{a}). Since by extending the base field a finite gerbe stays a finite gerbe, it is also \geom.

Conversely, suppose that $\Gamma$ is a finite stack that is \geom. Being an fppf gerbe is a local property in the fppf topology, so we may base change to a finite extension of $\kappa$, and assume that $\Gamma$ admits a section $\spec \kappa \arr \Gamma$, corresponding to an object $\xi$ of $\Gamma(\kappa)$. The fibered product $G \eqdef \spec \kappa \times_{\Gamma}\spec \kappa$ is the group scheme of automorphisms of $\xi$; if we show that $\spec \kappa \arr \Gamma$ is flat and surjective, then $\Gamma$ will have a presentation $G \double \spec \kappa$, that is, $\Gamma \simeq \cB_{\kappa}G$, so it is indeed an fppf gerbe.

The morphism $\spec \kappa \arr \Gamma$ is of finite type. By the theorem on generic flatness, it is generically flat, since $\Gamma$ is connected; but since $\spec \kappa$ consists of a point, it is in fact flat. Hence its image in $\Gamma$ is open. Since $\Gamma$ is finite, it is also closed, so it is surjective, because $\Gamma$ is connected.
\end{proof}

If $G$ and $H$ are finite group schemes over $\kappa$, every homomorphism of group schemes $G \arr H$ induces a morphism of gerbes $\cB_{\kappa}G \arr \cB_{\kappa}H$. However, the category $\hom_{\kappa}(\cB_{\kappa}G, \cB_{\kappa}H)$ is not equivalent to the set $\hom_{\kappa}(G,H)$. Denote by $\underhom_{\kappa}(G, H)$ the fibered category over $\kappa$ of homomorphisms $G \arr H$, that is, the sheaf sending each $\kappa$-scheme $T$ into the set $\hom_{T}(G_{T}, H_{T})$. There is a natural action by conjugation of $H$ on $\underhom_{\kappa}(G, H)$; then the fibered category $\underhom_{\kappa}(\cB_{\kappa}G, \cB_{\kappa}H)$ is equivalent to the category of $\kappa$-valued objects of the quotient stack $[\underhom_{\kappa}(G, H)/H]$ (see \cite[III Remarque~1.6.7]{giraud}).

\begin{definition}
The \emph{degree} $\deg \Gamma$ of a finite gerbe $\Gamma$ is the degree of the automorphism group scheme $\underaut_{\kappa'}\xi$, where $\kappa'$ is an extension of $\kappa$ and $\xi$ is an object of $\Gamma(\spec\kappa')$.
\end{definition}

In the definition above, it is straightforward to show that $\deg \Gamma$ does not depend on $\kappa'$ nor on $\xi$. Furthermore, one should notice that the degree in this sense is not the degree of the proper quasi-finite morphism $\Gamma \arr \spec\kappa$, which equals $1/\deg \Gamma$.

\begin{proposition}\label{prop:properties-degree}
Let $f\colon \Gamma \arr \Delta$ be a representable morphism of finite gerbes. Then $\deg \Gamma$ divides $\deg \Delta$, and $f$ is an isomorphism if and only if $\deg \Gamma = \deg \Delta$.
\end{proposition}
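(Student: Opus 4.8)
The plan is to reduce everything to classifying stacks of finite group schemes and to recast the statement as one about a single homomorphism of such group schemes. First I would pass to a field extension $\kappa'$ of $\kappa$ with $\Gamma(\kappa') \neq \emptyset$; since $f$ sends a $\kappa'$-point of $\Gamma$ to one of $\Delta$, we automatically get $\Delta(\kappa') \neq \emptyset$ as well. As noted after the definition, the degree is invariant under extension of the base field, and both representability and the property of being an isomorphism may be tested after the faithfully flat base change $\kappa'/\kappa$ (by fpqc descent). Hence we may assume $\Gamma \simeq \cB_{\kappa'}G$ and $\Delta \simeq \cB_{\kappa'}H$, where $G$ and $H$ are the automorphism group schemes of compatible base points, so that $\deg\Gamma = \deg G$ and $\deg\Delta = \deg H$.

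Under the description of $\underhom_{\kappa'}(\cB_{\kappa'}G, \cB_{\kappa'}H)$ recalled above, the morphism $f$ corresponds to a homomorphism $\phi\colon G \arr H$, well-defined up to conjugation by $H$ (which affects none of the assertions below). The essential point is that such a morphism is representable if and only if the induced map on the automorphism group of the base point — which is precisely $\phi$ — is a monomorphism; for finite group schemes over a field this is the same as $\phi$ being a closed immersion, identifying $G$ with a closed subgroup scheme of $H$.

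Granting this, the fppf quotient $H/G$ is a finite $\kappa'$-scheme and $H \arr H/G$ is a $G$-torsor, hence faithfully flat of constant degree $\deg G$. Therefore $\deg H = \deg G \cdot \deg(H/G)$, and since $\deg(H/G)$ is a positive integer we conclude that $\deg\Gamma = \deg G$ divides $\deg\Delta = \deg H$. For the second assertion, $\deg\Gamma = \deg\Delta$ holds exactly when $\deg(H/G) = 1$, i.e. when $H/G$ is trivial; as $\phi$ is already a monomorphism, this happens precisely when $\phi$ is an isomorphism, which in turn is equivalent to $f\colon \cB_{\kappa'}G \arr \cB_{\kappa'}H$ being an isomorphism. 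Since being an isomorphism descends along the faithfully flat extension $\kappa'/\kappa$, the conclusion holds over $\kappa$.

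The step I expect to be the main obstacle is the equivalence between representability of $f$ and $\phi$ being a monomorphism. This is the standard criterion that a morphism of stacks is representable exactly when it is injective on automorphism group schemes, but pinning it down in the present fpqc setting — and then exploiting the $G$-torsor $H \arr H/G$ to obtain the multiplicativity $\deg H = \deg G \cdot \deg(H/G)$ — is where the genuine content lies. Once these are in hand, both the divisibility and the isomorphism criterion follow formally.
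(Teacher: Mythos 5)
Your proof is correct and follows essentially the same route as the paper: extend the base field so that $\Gamma$ (and hence, via compatible base points, $\Delta$) becomes neutral, identify $f$ with a homomorphism $\phi\colon G \arr H$ of finite group schemes, and use that representability forces $\phi$ to be a monomorphism, hence a closed immersion. The paper then simply declares the conclusion ``standard''; your fleshing out of that step --- the $G$-torsor $H \arr H/G$ giving $\deg H = \deg G \cdot \deg(H/G)$, with equality of degrees iff $H/G$ is trivial iff $\phi$ is an isomorphism --- is exactly the standard argument being invoked.
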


\begin{proof}
The property of being representable is stable under extension of base field, as is the degree; hence we may assume that $\Gamma(\spec\kappa)$ is not empty. Then $f\colon \Gamma \arr \Delta$ is equivalent to a morphism $\phi\colon \cB_{\kappa}G \arr \cB_{\kappa}H$ induced by a homomorphism $G \arr H$ of finite $\kappa$-group schemes. If $f$ is representable, then $\phi$ is injective, and the result is standard in this case.
\end{proof}

\begin{definition}
A \emph{profinite gerbe} over $\kappa$ is an fpqc gerbe that is equivalent to a projective limit of finite gerbes over $\kappa$.
\end{definition}

\section{The Nori fundamental gerbe}\label{sec:fund-gerbe}

Let $X$ be a fibered category over $\kappa$.

\begin{definition}\label{def:fund-gerbe}
A \emph{fundamental gerbe} for $X$ is a profinite gerbe $\univ X$ with a morphism $X \arr \univ X$ of algebraic stacks over $\spec\kappa$, such that, if $\Gamma$ is a finite stack over $\spec\kappa$, the induced functor
   \[
   \hom_{\kappa}(\univ X, \Gamma) \arr \hom_{\kappa}(X, \Gamma)
   \]
is an equivalence of categories.
\end{definition}

\begin{proposition}
	\label{prop:fund-gerbe-unique}
Let $X \arr \univ X$ be a fundamental gerbe. Then for any profinite gerbe $\Phi$ the induced functor
   \[
   \hom_{\kappa}(\univ X, \Phi) \arr \hom_{\kappa}(X, \Phi)
   \]
is equivalence of categories.

In particular, a fundamental gerbe is unique up to a canonical equivalence.
\end{proposition}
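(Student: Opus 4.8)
The plan is to reduce to the defining case of a finite stack by commuting $\hom_\kappa(-,\,\cdot\,)$ with the projective limit. First I would write $\Phi\simeq\projlim_{i\in I}\Gamma_i$ with each $\Gamma_i$ a finite gerbe, and establish the key point: for an arbitrary fibered category $Y$ over $\kappa$ the groupoid $\hom_\kappa(Y,\Phi)$ is the $2$-limit of the groupoids $\hom_\kappa(Y,\Gamma_i)$, naturally in $Y$. This should be a matter of unwinding the explicit definition of $\projlim\Gamma$: an object of $\Phi(T)$ is exactly a compatible family $\{\xi_i\}$ equipped with transition isomorphisms $\xi_a$ satisfying the two cocycle conditions of the definition, so a morphism $Y\arr\Phi$ is precisely a family of morphisms $f_i\colon Y\arr\Gamma_i$ together with $2$-isomorphisms $\Gamma_a\circ f_j\Rightarrow f_i$ for each $1$-arrow $a\colon j\arr i$, compatible in the evident way, and similarly for arrows. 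In short, $\hom_\kappa(Y,\Phi)\simeq\projlim_{i}\hom_\kappa(Y,\Gamma_i)$.

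Next I would apply this with $Y=\univ X$ and with $Y=X$. Since the structure morphism $X\arr\univ X$ induces on each index the precomposition functor, naturality identifies the functor of the statement with the $2$-limit
\[
\projlim_{i}\hom_\kappa(\univ X,\Gamma_i)\arr\projlim_{i}\hom_\kappa(X,\Gamma_i)
\]
of the precomposition functors $\hom_\kappa(\univ X,\Gamma_i)\arr\hom_\kappa(X,\Gamma_i)$. Each $\Gamma_i$ is a finite gerbe, hence a finite stack, so by Definition~\ref{def:fund-gerbe} every one of these functors is an equivalence; since a $2$-limit of a diagram of levelwise equivalences is again an equivalence, the displayed functor, and therefore the functor of the statement, is an equivalence.

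For the final assertion I would argue by the usual Yoneda-type uniqueness. Given two fundamental gerbes $X\arr\univ X$ and $X\arr\Phi'$, the first part (applied to the profinite gerbe $\Phi'$) presents the structure morphism $X\arr\Phi'$, up to canonical isomorphism, as the composite of $X\arr\univ X$ with an essentially unique $u\colon\univ X\arr\Phi'$; symmetrically one gets $v\colon\Phi'\arr\univ X$. Both $v\circ u$ and $\id_{\univ X}$ restrict along $X\arr\univ X$ to the same morphism, so they agree under the equivalence $\hom_\kappa(\univ X,\univ X)\simeq\hom_\kappa(X,\univ X)$; hence $v\circ u\simeq\id$, and likewise $u\circ v\simeq\id$, making $u$ the desired canonical equivalence.

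I expect the only real content to lie in the first step: checking carefully that $\hom_\kappa(Y,\projlim_i\Gamma_i)$ is genuinely the $2$-limit $\projlim_i\hom_\kappa(Y,\Gamma_i)$, tracking the coherence data contributed by the $1$- and $2$-arrows of the indexing $2$-category $I$. Everything afterward — the reduction through Definition~\ref{def:fund-gerbe} and the uniqueness argument — should be formal.
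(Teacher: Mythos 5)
Your proposal is correct and is precisely the argument the paper intends: the paper's proof of Proposition~\ref{prop:fund-gerbe-unique} consists only of the remark that it ``follows easily from the definitions of a profinite gerbe and of a projective limit,'' and your three steps --- identifying $\hom_\kappa(Y,\projlim_i\Gamma_i)$ with the limit $\projlim_i\hom_\kappa(Y,\Gamma_i)$ by unwinding the explicit definition of $\projlim\Gamma$, applying Definition~\ref{def:fund-gerbe} levelwise to the finite gerbes $\Gamma_i$, and the formal uniqueness argument --- are exactly the omitted details. The one point worth making explicit in your key step is that the limit in question is the \emph{pseudo}limit (the transition data $\xi_a$ are isomorphisms, not equalities), which is the form of $2$-limit for which a levelwise equivalence of diagrams of groupoids does induce an equivalence on limits.
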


\begin{proof}
This follows easily from the definitions of a profinite gerbe and of a projective limit.
\end{proof}

\begin{definition}
We say that a fibered category $X$ is \emph{\infl} if it is non-empty, and for any morphism $X \arr \Gamma$ to a finite stack, there exists a closed substack $\Gamma' \subseteq \Gamma$ that is a gerbe, and a factorization $X \arr \Gamma' \arr \Gamma$.
\end{definition}

Notice that in the statement above, if $\Gamma'$ exists then it must be the scheme-theoretic image of $X$ in $\Gamma$; hence it is unique.

Here are some properties of this notion.

\begin{proposition}\call{prop:properties-infl}
Let $X$ be a fibered category over $\kappa$.

\begin{enumeratea}

\itemref{a} If $X$ is inflexible, the only $\kappa$-subalgebra of $\H^{0}(X, \cO)$ that is finite over $\kappa$ is $\kappa$ itself.

\itemref{b} If $X \neq \emptyset$ and every morphism $X \arr \Gamma$ to a finite stack $\Gamma$ factors through an affine fpqc gerbe, then $X$ is inflexible. In particular, an affine fpqc gerbe is \infl.

\end{enumeratea}
\end{proposition}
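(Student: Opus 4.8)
The plan is to treat the two parts separately, exploiting in both cases the observation (just recorded) that whenever a factorization through a gerbe exists, that gerbe must be the scheme-theoretic image of $X$, together with the criterion of Proposition~\ref{prop:gerbe<->geom} that a finite stack is a gerbe exactly when it is \geom.

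For part \refpart{prop:properties-infl}{a}, I would let $A \subseteq \H^{0}(X, \cO)$ be a finite $\kappa$-subalgebra and consider the associated morphism $f\colon X \arr \spec A$; since $A$ is finite over $\kappa$, the scheme $\spec A$ is a finite stack. Inflexibility yields a factorization through a closed substack $\Gamma' \subseteq \spec A$ which is a gerbe, and which must be the scheme-theoretic image of $X$. The key point is that this scheme-theoretic image is cut out by the kernel of $A \arr \H^{0}(X,\cO)$ on global sections; but that map is the given inclusion, hence injective, so $\Gamma' = \spec A$. Thus $\spec A$ is itself a finite gerbe, so by Proposition~\ref{prop:gerbe<->geom} it is \geom. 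A finite $\kappa$-scheme with this property satisfies $A \otimes_{\kappa}\overline{\kappa} = \overline{\kappa}$, whence $A = \kappa$. This part is routine once the scheme-theoretic image is identified.

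For part \refpart{prop:properties-infl}{b}, given a morphism $f\colon X \arr \Gamma$ to a finite stack, I would use the hypothesis to factor it as $X \xrightarrow{h} \Phi \xrightarrow{g} \Gamma$ with $\Phi$ an affine fpqc gerbe, and set $\Gamma''$ to be the scheme-theoretic image of $\Phi$ in $\Gamma$, a closed substack of $\Gamma$, hence finite. The heart of the matter is to show $\Gamma''$ is a gerbe, which I would do via Proposition~\ref{prop:gerbe<->geom} by checking it is \geom. Choosing, as in Proposition~\refpart{prop:properties-fpqc-gerbe}{a}, a faithfully flat morphism $\spec K \arr \Phi$ from the spectrum of a field, faithful flatness makes the associated homomorphism $\cO_{\Phi} \arr (\text{pushforward of }\cO_{\spec K})$ injective, so $\Gamma''$ is equally the scheme-theoretic image of $\spec K$ in $\Gamma$. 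Since scheme-theoretic image commutes with the flat base change $\overline{\kappa}/\kappa$ (all the stacks here being quasi-compact and quasi-separated), and since $\Phi_{\overline{\kappa}}$ again admits a faithfully flat map from the spectrum of a field $K'$, the base change $\Gamma''_{\overline{\kappa}}$ is the scheme-theoretic image of $\spec K'$; being the scheme-theoretic image of a reduced, irreducible scheme, it is reduced and connected. Hence $\Gamma''$ is \geom, so a finite gerbe.

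Finally I would conclude that $X$ is inflexible. As $g$ factors through $\Gamma''$, so does $f$, giving $X \arr \Gamma'' \into \Gamma$; it remains to see that the scheme-theoretic image $\Gamma'$ of $X$ in $\Gamma$ equals $\Gamma''$. Here I use that a gerbe has no proper non-empty closed substacks (over $\overline{\kappa}$ it is $\cB_{\overline{\kappa}}G$, whose only closed substacks are the empty one and the whole stack, and one descends this): since $X$ is non-empty, its scheme-theoretic image in the gerbe $\Gamma''$ is a non-empty closed substack, hence all of $\Gamma''$, a gerbe. The concluding ``in particular'' then follows by taking $X = \Phi$ an affine fpqc gerbe and factoring any $\Phi \arr \Gamma$ through $\Phi$ itself. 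I expect the main obstacle to be the reducedness half of showing $\Gamma''$ is \geom: over a non-perfect $\kappa$ one cannot argue with the field $K$ directly, and the clean fix is to pass to $\overline{\kappa}$ first and only then choose the faithfully flat cover by a field, so that genuine (not merely over-$\kappa$) reducedness is available.
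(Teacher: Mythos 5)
Your proposal is correct and follows essentially the same route as the paper's proof: for (a) you reduce to the observation that the closed subgerbe must be all of $\spec A$ (the paper phrases this as ``the only scheme over $\kappa$ that is a gerbe is $\spec\kappa$''), and for (b) you replace $X$ by the gerbe $\Phi$ via injectivity of $\cO_{\Phi} \arr g_{*}\cO_{X}$, pass to a field cover $\spec K \arr \Phi$ as in Proposition~\refall{prop:properties-fpqc-gerbe}{a}, and use compatibility of the scheme-theoretic image with base field extension to verify the hypotheses of Proposition~\ref{prop:gerbe<->geom} --- exactly as the paper does, including your correct handling of the non-perfect-field issue. The only (harmless) deviation is your final paragraph: once $f$ factors through the closed substack $\Gamma''$, which is a gerbe, the definition of \infl is already satisfied, so identifying $\Gamma''$ with the scheme-theoretic image of $X$ is redundant.
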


\begin{proof}
\refpart{prop:properties-infl}{a}: Let $A$ be a finite $\kappa$-subalgebra of $\H^{0}(X, \cO)$. Then $\spec A$ is a finite stack, so corresponding morphism $X \arr \spec A$ factors trough a closed subgerbe of $\spec A$. But the only scheme over $\kappa$ that is a gerbe is $\spec \kappa$; hence the embedding $A \subseteq \H^{0}(X, \cO)$ factors through $\kappa$, and $A = \kappa$.

\refpart{prop:properties-infl}{b}: Let $f\colon X \arr \Gamma$ be a morphism to a finite stack, and suppose that there is a factorization $X \xarr{g} \Phi \xarr{h} \Gamma$, where $\Phi$ is an affine fpqc gerbe. The adjunction homomorphism $\cO_{\Phi} \arr g_{*}\cO_{X}$ is injective, because $g$ is faithfully flat, by \refall{prop:properties-fpqc-gerbe}{b}. Hence the kernel of $\cO_{\Gamma} \arr f_{*}\cO_{X}$ coincides with the kernel of $\cO_{\Gamma} \arr h_{*}\cO_{\Phi}$. So $X$ and $\Phi$ have the same scheme-theoretic image in $\Gamma$, and we may assume $X = \Phi$. 

By Proposition \refall{prop:properties-fpqc-gerbe}{a}, there exists an extension $K$ of $\kappa$ and an affine faithfully flat morphism $\spec K \arr \Phi$; the scheme-theoretic images of $\Phi$ and of $\spec K$ are evidently the same. Since $\spec K$ is reduced and connected, we deduce that the scheme-theoretic image of $\Phi$ in $\Gamma$ is reduced and connected. On the other hand, the formation of the scheme-theoretic image commutes with extensions of the base field $\kappa$, by Remark~\ref{rmk:base-change} below, and by extending the base field an affine fpqc gerbe remains an affine fpqc gerbe; hence the scheme theoretic image is \geom, and we conclude by Proposition~\ref{prop:gerbe<->geom}.
\end{proof}

This strange condition of being inflexible has some geometric content.

\begin{proposition}\call{prop:geometric-properties-infl}
 Suppose that $X$ is an algebraic stack of finite type over $\kappa$.
\begin{enumeratea}

\itemref{a} If $X$ is \infl, it is geometrically connected.

\itemref{b} If $X$ is \geom, it is \infl.

\end{enumeratea}
\end{proposition}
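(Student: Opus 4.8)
The plan is to prove the two implications separately: \refpart{prop:geometric-properties-infl}{a} is essentially a reformulation of Proposition~\refall{prop:properties-infl}{a} through the classical cohomological criterion for geometric connectedness, while \refpart{prop:geometric-properties-infl}{b} rests on analysing the scheme-theoretic image of $X$ in a finite stack and applying Proposition~\ref{prop:gerbe<->geom}. In both cases the behaviour of the scheme-theoretic image under the base change $\overline{\kappa}/\kappa$ (Remark~\ref{rmk:base-change}) is the technical glue.

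For \refpart{prop:geometric-properties-infl}{a}, assume $X$ is \infl. First I would note $X$ is connected: a nontrivial idempotent in $\H^{0}(X,\cO)$ would produce a copy of $\kappa\times\kappa$ inside $\H^{0}(X,\cO)$, a finite $\kappa$-subalgebra different from $\kappa$, contradicting Proposition~\refall{prop:properties-infl}{a}. Next, since $X$ is of finite type over $\kappa$, the separable algebraic closure $\kappa'$ of $\kappa$ inside $\H^{0}(X,\cO)$ is a finite separable extension of $\kappa$; this can be checked on a quasi-compact scheme atlas $U\arr X$, where it reduces to the classical fact that the algebraic closure of $\kappa$ in the global functions of a finite-type scheme is finite over $\kappa$. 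As $\kappa'$ is a finite $\kappa$-subalgebra of $\H^{0}(X,\cO)$, Proposition~\refall{prop:properties-infl}{a} forces $\kappa'=\kappa$, and I would then invoke the standard criterion that a connected finite-type algebraic stack in whose ring of global functions $\kappa$ is separably closed is geometrically connected. It is worth stressing that inflexibility is genuinely stronger than what is used here: it also rules out nontrivial purely inseparable extensions sitting in $\H^{0}(X,\cO)$, which geometric connectedness would tolerate.

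For \refpart{prop:geometric-properties-infl}{b}, assume $X$ is \geom (in particular non-empty). Given a morphism $f\colon X\arr\Gamma$ to a finite stack, let $\Gamma'$ be the scheme-theoretic image of $X$, so that $X$ factors as $X\arr\Gamma'\into\Gamma$; then $\Gamma'$ is again a finite stack, being a closed substack of $\Gamma$ (it inherits conditions (a)--(c) of Proposition~\ref{prop:char-finite}). By Proposition~\ref{prop:gerbe<->geom} it suffices to show $\Gamma'$ is \geom, and for this I would base change to $\overline{\kappa}$, using Remark~\ref{rmk:base-change} to identify $\Gamma'_{\overline{\kappa}}$ with the scheme-theoretic image of $X_{\overline{\kappa}}\arr\Gamma_{\overline{\kappa}}$. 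For connectedness: the morphism is quasi-compact, so the underlying space of $\Gamma'_{\overline{\kappa}}$ is the closure of the image of $X_{\overline{\kappa}}$, which is connected because $X_{\overline{\kappa}}$ is. For reducedness: by construction $\cO_{\Gamma'_{\overline{\kappa}}}$ is the image of $\cO_{\Gamma_{\overline{\kappa}}}\arr (f_{\overline{\kappa}})_{*}\cO_{X_{\overline{\kappa}}}$, hence a subsheaf of rings of $(f_{\overline{\kappa}})_{*}\cO_{X_{\overline{\kappa}}}$; since $X_{\overline{\kappa}}$ is reduced the latter is a sheaf of reduced rings, so $\cO_{\Gamma'_{\overline{\kappa}}}$ is reduced. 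Thus $\Gamma'$ is \geom, hence a finite gerbe, and $X$ is \infl.

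The step I expect to be the main obstacle is the geometric reducedness in part (b). Its two ingredients — that formation of the scheme-theoretic image commutes with the extension $\overline{\kappa}/\kappa$, so that the hypothesis "$X$ geometrically reduced" can be exploited over $\overline{\kappa}$, and that the scheme-theoretic image of a reduced object is reduced — are each routine, but they must be combined with care, and it is precisely the geometric-reducedness assumption on $X$ that is indispensable: without it $\Gamma'$ could be connected yet non-reduced, and hence fail to be a gerbe. By comparison, part (a) is a direct consequence of Proposition~\refall{prop:properties-infl}{a} once the $\H^{0}$-criterion for geometric connectedness is in hand.
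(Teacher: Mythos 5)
Your proof is correct. Part \refpart{prop:geometric-properties-infl}{b} is essentially the paper's own argument: take the scheme-theoretic image $\Gamma'$ of $X$ in $\Gamma$, use compatibility of its formation with base field extension (the paper also cites Remark~\ref{rmk:base-change} for exactly this), get connectedness from the closure of the image and reducedness from the injection $\cO_{\Gamma'} \into f_{*}\cO_{X}$ (legitimate here because $f$ is quasi-compact and quasi-separated, so the kernel is quasi-coherent), and conclude by Proposition~\ref{prop:gerbe<->geom}; the only difference is that the paper splits into characteristics (in characteristic $0$ reducedness over $\kappa$ already suffices, since a reduced finite stack is then \'etale), while you run one uniform argument over $\overline{\kappa}$ -- which is fine, and arguably cleaner, since geometric reducedness of $X$ is the hypothesis. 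Part \refpart{prop:geometric-properties-infl}{a} is where you genuinely diverge. The paper takes a surjection $X_{\kappa'} \arr \spec\kappa' \sqcup \spec\kappa'$ witnessing geometric disconnectedness, forms the Weil restriction $\rR_{\kappa'/\kappa}(\spec\kappa' \sqcup \spec\kappa')$, a finite $\kappa$-scheme, and uses Proposition \refall{prop:properties-infl}{a} to force the corresponding morphism through $\spec\kappa$, contradicting surjectivity by adjunction. You instead feed \refall{prop:properties-infl}{a} into the criterion ``connected plus $\kappa$ separably closed in $\H^{0}(X,\cO)$ implies geometrically connected''; this is Galois descent in different clothing (a nontrivial idempotent of $\H^{0}(X_{\kappa'},\cO)$ descends, via its Galois orbit, to a finite \'etale $\kappa$-subalgebra of $\H^{0}(X,\cO)$ of dimension $>1$, which is a field by connectedness), and it is valid, but for stacks it rests on $\H^{0}(X_{\kappa'},\cO) = \H^{0}(X,\cO)\otimes_{\kappa}\kappa'$, which you should justify: it holds because a finite-type algebraic stack is quasi-compact and quasi-separated, by the same flat-base-change mechanism as Remark~\ref{rmk:base-change}. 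Two small points: your quoted ``classical fact'' must be stated for the \emph{separable} algebraic closure, since the full algebraic closure of $\kappa$ in $\Gamma(U,\cO)$ can be infinite-dimensional when $U$ is non-reduced (all nilpotents are algebraic), and an \infl $X$ need not be reduced -- your main argument does say ``separable'', so this is only a slip in the citation; and the finiteness of $\kappa'$ over $\kappa$ is actually superfluous, since any single element $e \in \kappa' \setminus \kappa$ already gives a finite $\kappa$-subalgebra $\kappa[e] \neq \kappa$ contradicting \refall{prop:properties-infl}{a}.
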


\begin{proof}
\refpart{prop:geometric-properties-infl}{a}: Suppose that $X$ is \infl, but not geometrically connected; then there exists a finite separable extension $\kappa'$ of $\kappa$ and a surjective morphism of $\kappa'$-schemes $X_{\kappa'} \arr \spec \kappa' \sqcup \spec \kappa'$. Let $\Gamma$ be the $\kappa$-scheme obtained by Weil restriction from $\spec \kappa' \sqcup \spec \kappa'$; it is well known, and easy to see, that the Weil restriction of a finite $\kappa'$-scheme along a finite separable field extension is finite. The morphism $X \arr \Gamma$ corresponding to the given morphism $X_{\kappa'} \arr \spec \kappa' \sqcup \spec \kappa'$ factors through $\spec \kappa$; hence $X_{\kappa'} \arr \spec \kappa' \sqcup \spec \kappa'$ factors through $\spec \kappa'$, by Proposition \refall{prop:properties-infl}{a}, and this is a contradiction.

\refpart{prop:geometric-properties-infl}{b}: Let $\Gamma'$ be the scheme-theoretic image of $f\colon X\to \Gamma$, where $\Gamma$ is a finite stack. The stack $\Gamma'$ is geometrically connected, since $X$ is. We claim that $\Gamma'$ is geometrically reduced. If $\cha \kappa = 0$, then $\Gamma'$ is étale. If $\cha \kappa > 0$, the canonical morphism $\cO_{\Gamma'} \arr f_{*}\cO_{X}$ is injective, by definition; and this injectivity is preserved under finite field extension. 

Hence in any case $\Gamma'$ is \geom, so by Proposition~\ref{prop:gerbe<->geom} it is a gerbe.
\end{proof}

The following examples seem to show that the geometric content in the condition of being \infl is somewhat subtle. A geometric characterization of \infl algebraic stacks eludes us.

\begin{examples}\hfil

\begin{enumeratea}

\item The condition of Proposition \refall{prop:properties-infl}{a} is not sufficient for an algebraic stack, or even a scheme, of finite type to be \infl. For example, let $X_{0}$ be a \geom projective scheme over $\kappa$ with a non-trivial invertible sheaf $L$ and an isomorphism $L^{\otimes 2} \simeq \cO_{X_{0}}$. Consider the relative spectrum $X$ of the finite sheaf of algebras $\cO_{X_{0}} \oplus L$ over $X_{0}$, where the product of two sections of $L$ is always $0$. Then $\H^{0}(X, \cO) = \kappa$, but we claim that $X$ is not \infl.

The invertible sheaf $L$ correspond to a $\mmu_{2}$-torsor $Y_{0} \arr X_{0}$; call $Y$ the relative scheme of the sheaf $\cO_{Y_{0}} \oplus \cO_{Y_{0}}\epsilon$, with $\epsilon^{2} = 0$. There is a free action of $\mmu_{2}$ on $Y$, extending the given action on $Y_{0} \subseteq Y$, changing the sign of $\epsilon$. There is a tautological $\mmu_{2}$-equivariant morphism $Y \arr \spec \kappa[\epsilon]$, where $\kappa[\epsilon]$ is the ring of dual numbers, and $\mmu_{2}$ acts on $\kappa[\epsilon]$ by changing the sign of $\epsilon$. This induces a morphism $X \arr \bigl[\spec k[\epsilon] / \mmu_{2}\bigr]$, which does not factor through a gerbe.

\item On the other hand, there are examples of projective schemes over $\kappa$ that are \infl without being reduced.

Suppose that $X_{0}$ is a \geom positive-dimensional projective scheme over $\kappa$, and let $L$ be the dual of an ample invertible sheaf on $X_{0}$. Consider the relative spectrum $X$ of the finite sheaf of algebras $\cO_{X_{0}} \oplus L$ over $X_{0}$, where the product of two sections of $L$ is always $0$. Assume also the characteristic of $\kappa$ is $0$ (this is not necessary, but makes the proof somewhat easier). Then we claim that $X$ is \infl.

The scheme $X_{0}$ is a closed subscheme of $X$ with sheaf of ideals $L$. Take a morphism $f\colon X \arr \Gamma$ to a finite stack. Assume that the homomorphism $\cO_{\Gamma} \arr f_{*}\cO_{X}$ is injective (that is, the scheme-theoretic image of $X$ in $\Gamma$ is $\Gamma$ itself); we need to show that $\Gamma$ is a gerbe. Call $\Gamma_{0}$ the reduced substack of $\Gamma$, and $N$ the sheaf of ideals of $\Gamma_{0}$ in $\Gamma$. We have $N^{2} = 0$, because $N^{2}$ pulls back to $L^{2} = 0$. The morphism $f$ restricts to a morphism $f_{0}\colon X_{0} \arr \Gamma_{0}$ with scheme-theoretic image $\Gamma_{0}$; hence $\Gamma_{0}$ is a gerbe, since $X_{0}$ is \infl; so we need to show that $N = 0$.

The pullback $f^{*}N = f_{0}^{*}N$ maps to $L$, and it is enough to show that this map is $0$. Choose a morphism $\phi\colon \spec K \arr \Gamma_{0}$, where $K$ is a finite extension of $\kappa$; this map is étale, since we are in characteristic~$0$. Consider the cartesian diagram
   \[
   \xymatrix{
   Y \ar[r]^-{g}\ar[d]^{\psi} & \spec K \ar[d]^{\phi}\\
   X_{0} \ar[r]^{f_{0}} & \Gamma_{0}\hsmash{\,.}
   }
   \]
It is enough to show that $\H^{0}(X_{0}, f^{*}N^{\vee} \otimes L) = 0$; this follows if we prove that 
   \[
   \H^{0}\bigl(Y, \psi^{*}(f^{*}N^{\vee} \otimes L)\bigr) =  
   \H^{0}(Y, g^{*}\phi^{*}N^{\vee} \otimes \psi^{*}L))
   \]
is $0$. But this is clear, because $\phi^{*}N^{\vee}$ is free, the sheaf $\psi^{*}L$ is the dual of an ample invertible sheaf on $Y$, and $Y$ is étale over $X_{0}$, hence projective and reduced.
\end{enumeratea}
\end{examples}

The following is the main result of this section.

\begin{theorem}\label{thm:universal-exists}
A fibered category over $\kappa$ has a fundamental gerbe if and only if it is \infl.
\end{theorem}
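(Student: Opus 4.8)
The plan is to prove the two implications separately; the forward one is formal, while the converse requires constructing $\univ X$ as a projective limit of finite gerbes, with essentially all the difficulty concentrated in a single rigidity lemma. \emph{Necessity.} Suppose $X$ admits a fundamental gerbe $X\arr\univ X$. First I would check $X\neq\emptyset$: applying the defining equivalence to the finite stack $\spec\kappa\sqcup\spec\kappa$, the groupoid $\hom_\kappa(\univ X,\spec\kappa\sqcup\spec\kappa)$ has two non-isomorphic objects since $\univ X$ is a nonempty connected gerbe, whereas $\hom_\kappa(X,-)$ would be terminal if $X$ were empty. Now a profinite gerbe is an affine fpqc gerbe (the presentation $R\double\spec K$ of Remark~\ref{rmk:viewpoint-groupoids} has $R$ affine), and every morphism from $X$ to a finite stack factors through $\univ X$ by the defining equivalence, so Proposition~\ref{prop:properties-infl}(b) shows $X$ is \infl.

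\emph{Sufficiency: the construction.} Assume $X$ is \infl. I would build $\univ X$ as the limit over a $2$-category $I$ of finite gerbe quotients. An object is a morphism $f\colon X\arr\Gamma$ to a finite gerbe that is a \emph{quotient}, meaning $f$ does not factor through any representable morphism $\Gamma'\arr\Gamma$ of finite gerbes that is not an isomorphism; a $1$-arrow $(\Gamma_j,f_j)\arr(\Gamma_i,f_i)$ is a morphism $\phi\colon\Gamma_j\arr\Gamma_i$ together with a $2$-isomorphism $\phi\circ f_j\simeq f_i$, and I set $\univ X\eqdef\projlim_{I}\Gamma$. For the cofiltered condition, given two objects the product $\Gamma_i\times\Gamma_j$ is again a finite stack, so by inflexibility the scheme-theoretic image of $(f_i,f_j)\colon X\arr\Gamma_i\times\Gamma_j$ is a finite gerbe; replacing it by its minimal quotient (possible since descending chains of subgroup schemes in a finite group scheme terminate) yields a common refinement mapping to both $\Gamma_i$ and $\Gamma_j$ over $X$.

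\emph{The rigidity lemma (the heart of the matter).} The crucial point, which supplies both the uniqueness of $2$-arrows making $I$ boolean cofiltered and the full faithfulness below, is: if $f\colon X\arr\Gamma$ is a quotient, $\Delta$ is a finite gerbe, and $\phi,\psi\colon\Gamma\arr\Delta$ satisfy $\phi f\simeq\psi f$, then there is a unique $2$-isomorphism $\phi\simeq\psi$ restricting to the given one. I expect \emph{existence} to be the main obstacle, and I would extract it from inflexibility itself: the sheaf $W\eqdef\underisom_\Gamma(\phi,\psi)$ is a torsor under the finite $\Gamma$-group scheme $\underaut_\Gamma\phi$, hence finite and representable over $\Gamma$, so $W$ is a finite stack, and the given $2$-isomorphism over $X$ is a lift $X\arr W$ of $f$. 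By inflexibility the scheme-theoretic image $W'\subseteq W$ is a finite gerbe, representable over $\Gamma$, through which $f$ factors; since $f$ is a quotient, $W'\arr\Gamma$ must be an isomorphism (Proposition~\ref{prop:properties-degree}), and its inverse is the desired section $\Gamma\arr W$, that is, the $2$-isomorphism $\phi\simeq\psi$. Uniqueness is the easy half: two such differ by a section of the affine $\Gamma$-scheme $\underaut_\Gamma\phi$ trivial over $X$, and since $f$ is dominant the map $\cO_\Gamma\arr f_*\cO_X$ is injective, forcing triviality. Granting this lemma, every Hom-groupoid of $I$ is rigid, $I$ is boolean cofiltered, and the $f_i$ assemble coherently into a morphism $X\arr\univ X$.

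\emph{The universal property.} Finally, for a finite stack $\Gamma$ I would compute $\hom_\kappa(\univ X,\Gamma)\simeq\indlim_i\hom_\kappa(\Gamma_i,\Gamma)$ using Proposition~\ref{prop:factor-through-finite} (valid since $\Gamma$ has a flat presentation by finite, hence finite-type, schemes), and identify this with $\hom_\kappa(X,\Gamma)$ via composition with the $f_i$. Essential surjectivity is exactly inflexibility: any $X\arr\Gamma$ factors through its scheme-theoretic image, a finite gerbe, and then through a minimal quotient belonging to $I$. Full faithfulness follows by passing to a common refinement and invoking the rigidity lemma to match $2$-arrows over $\Gamma_i$ with $2$-arrows over $X$. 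Since $X\neq\emptyset$ maps to $\univ X$, the limit is nonempty and hence a profinite gerbe by Proposition~\ref{prop:limit-gerbes}, completing the verification that $X\arr\univ X$ is a fundamental gerbe.
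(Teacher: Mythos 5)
Your proposal is correct and follows essentially the paper's own proof: your ``quotient'' morphisms are exactly the paper's Nori-reduced morphisms, your construction of $\univ X$ as the limit over the $2$-category of such morphisms is the same, your rigidity lemma is Lemma~\ref{lem:unique-arrow} proved the same way (your $W=\underisom_\Gamma(\phi,\psi)$ is the paper's fibered product $\Gamma\times_{\Delta\times\Delta}\Delta$; note that a priori $W$ is only a pseudo-torsor under $\underaut_\Gamma\phi$, since $\phi$ and $\psi$ need not be locally isomorphic, but the finiteness and representability of $W\arr\Gamma$ that you actually use follow anyway by pulling back the finite diagonal of $\Delta$), and the universal property is checked identically via Lemma~\ref{lem:factor-through-Nori-reduced} and Proposition~\ref{prop:factor-through-finite}. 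The only deviations are cosmetic: your separate equalizer argument for uniqueness of the $2$-isomorphism (valid, where the paper extracts uniqueness from that of the closed subgerbe through which $X\arr W$ factors), your explicit verification that $X\neq\emptyset$ using $\spec\kappa\sqcup\spec\kappa$ (left implicit in the paper), and the omission of replacing your $2$-category $I$ by a skeleton so that it is small, as the paper's definition of projective limit requires.
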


From this and Proposition \refall{prop:geometric-properties-infl}{b} we obtain the following.

\begin{corollary}
A \geom algebraic stack of finite type over $k$ has a fundamental gerbe.
\end{corollary}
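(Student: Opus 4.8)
The Corollary is a formal consequence of Theorem~\ref{thm:universal-exists} together with Proposition~\refall{prop:geometric-properties-infl}{b}: a \geom algebraic stack of finite type is \infl by the latter, hence has a fundamental gerbe by the former. Since the content lies in Theorem~\ref{thm:universal-exists}, whose proof is not contained in the excerpt, I describe how I would prove it. The forward implication is quick. If $X \arr \univ X$ is a fundamental gerbe, then $\univ X$ is an affine fpqc gerbe, because by the groupoid picture of Remark~\ref{rmk:viewpoint-groupoids} a nonempty projective limit of finite gerbes is presented by a groupoid $R \double U$ with $R$ affine. One checks $X \neq \emptyset$: otherwise, for a nontrivial finite-gerbe quotient $\Gamma_i$ of $\univ X$ the groupoid $\hom_\kappa(X, \Gamma_i)$ would be terminal while $\hom_\kappa(\univ X, \Gamma_i)$ contains the projection with its nontrivial $2$-automorphisms, contradicting the defining equivalence. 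As every morphism $X \arr \Gamma$ to a finite stack is isomorphic to its factorization $X \arr \univ X \arr \Gamma$ through the affine fpqc gerbe $\univ X$, Proposition~\refall{prop:properties-infl}{b} shows $X$ is \infl.

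For the reverse implication, assume $X$ is \infl and build $\univ X$ as a projective limit. I would take the index $2$-category $I$ whose objects are the morphisms $f_i \colon X \arr \Gamma_i$ to finite gerbes with scheme-theoretic image all of $\Gamma_i$ (equivalently $\cO_{\Gamma_i} \into (f_i)_*\cO_X$); a $1$-arrow $a\colon i \arr j$ is a morphism $\Gamma_a \colon \Gamma_i \arr \Gamma_j$ equipped with a $2$-isomorphism $\Gamma_a f_i \simeq f_j$, and a $2$-arrow is a compatible $2$-isomorphism of the $\Gamma_a$. Setting $\Gamma\colon I \arr \affger$, $i \mapsto \Gamma_i$, I define $\univ X \eqdef \projlim_I \Gamma_i$. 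The compatible system $(f_i)$ assembles into a canonical morphism $X \arr \univ X$, and since $X \neq \emptyset$ the limit is nonempty, so it is an fpqc gerbe by Proposition~\ref{prop:limit-gerbes} and profinite by construction.

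Everything hinges on a rigidity statement, which I expect to be the main obstacle: \emph{if $f\colon X \arr \Delta$ is a morphism to a finite gerbe with scheme-theoretic image $\Delta$, and $a, b\colon \Delta \arr \Gamma$ are morphisms to a finite stack, then restriction along $f$ is a bijection from $2$-isomorphisms $a \simeq b$ onto $2$-isomorphisms $af \simeq bf$.} Since $\Gamma$ has finite, hence affine, diagonal, the $\isom$-scheme $V \eqdef \underisom_\Delta(a,b) = \underspec_\Delta \cA$ is affine over $\Delta$, and a $2$-isomorphism $a \simeq b$ is a $\Delta$-section of $V$, i.e. an $\cO_\Delta$-algebra map $\cA \arr \cO_\Delta$; restricting along $f$ post-composes with $\cO_\Delta \into f_*\cO_X$, so injectivity is immediate from dominance. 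Surjectivity---descending a $2$-isomorphism from $X$ to $\Delta$---is the hard point. Here I would use that $V \arr \Delta$ is a torsor under the finite flat band $\underaut_\Delta(a)$, hence finite and faithfully flat, study the scheme-theoretic image $W \subseteq V$ of the given $X$-section (so that $W \arr \Delta$ is finite and again dominant), and conclude that $W \arr \Delta$ is an isomorphism; the cleanest route is to base change to an extension $K/\kappa$ splitting $\Delta$ and $\Gamma$, where---using that scheme-theoretic image commutes with field extension (Remark~\ref{rmk:base-change})---the claim becomes a concrete statement about conjugating homomorphisms of finite group schemes, reassembled by fpqc descent of sections of the affine scheme $V$.

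Granting rigidity, the remaining steps are formal. Cofilteredness condition~(a) follows from inflexibility: for objects $i, j$ the morphism $(f_i, f_j)\colon X \arr \Gamma_i \times \Gamma_j$ has scheme-theoretic image a finite gerbe (a closed substack of the finite stack $\Gamma_i\times\Gamma_j$ is finite), which with the two projections gives a common refinement. Condition~(b)---uniqueness of a $2$-arrow between parallel $1$-arrows---is the injective half of rigidity for $f_j$, its existence the surjective half; essential smallness of $I$ is routine. For the universal property, Proposition~\ref{prop:factor-through-finite} identifies $\hom_\kappa(\univ X,\Gamma)$ with $\indlim_i \hom_\kappa(\Gamma_i,\Gamma)$ for any finite stack $\Gamma$; essential surjectivity of $\indlim_i \hom_\kappa(\Gamma_i,\Gamma) \arr \hom_\kappa(X,\Gamma)$ holds because inflexibility factors any $X \arr \Gamma$ through its scheme-theoretic image, which is an object of $I$, while full faithfulness is rigidity applied over a common refinement of two factorizations. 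This equivalence is exactly the defining property of the fundamental gerbe, and the Corollary follows at once.
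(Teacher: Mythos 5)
Your treatment of the Corollary itself coincides exactly with the paper's: it is the combination of Proposition \refall{prop:geometric-properties-infl}{b} (a \geom stack of finite type is \infl) with Theorem \ref{thm:universal-exists}, and nothing more is needed. The issue is with your reconstruction of Theorem \ref{thm:universal-exists}, which has a genuine gap: you index the limit by morphisms $f\colon X \arr \Gamma$ to finite gerbes that are merely \emph{dominant}, i.e.\ $\cO_{\Gamma}\into f_{*}\cO_{X}$, and you assert your rigidity statement for all such $f$. That statement is false at this level of generality. Dominance only excludes factorizations through proper \emph{closed} substacks, whereas the paper's notion of Nori-reduced morphism excludes factorizations through any finite gerbe mapping representably and non-isomorphically; a representable morphism of finite gerbes such as $\cB_{\kappa}H \arr \cB_{\kappa}G$ for a proper subgroup $H \lneq G$ is faithfully flat --- hence dominant --- without being a closed immersion, so the two notions genuinely differ.

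Concretely: let $G$ be any nontrivial finite group scheme and $f\colon X \arr \spec\kappa \arr \cB_{\kappa}G$ the trivial-torsor morphism. Since $\spec\kappa \arr \cB_{\kappa}G$ is affine and faithfully flat and $\kappa \into \H^{0}(X,\cO)$, we get $\cO_{\cB_{\kappa}G}\into f_{*}\cO_{X}$, so $f$ is an object of your index category $I$. Take $a = \id_{\cB_{\kappa}G}$ and $b\colon \cB_{\kappa}G \arr \spec\kappa \arr \cB_{\kappa}G$. Then $af$ and $bf$ are both the trivial torsor on $X$, so $2$-isomorphisms $af \simeq bf$ exist; but a $2$-isomorphism $a \simeq b$ would be a section of $V = \underisom_{\cB_{\kappa}G}(a,b) \arr \cB_{\kappa}G$, and here $V$ is the universal torsor $\spec\kappa \arr \cB_{\kappa}G$, which has a section only when $G$ is trivial. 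In your own notation, the scheme-theoretic image $W$ of the $X$-section is all of $V = \spec\kappa$: it is finite and dominant over $\Delta = \cB_{\kappa}G$, yet $W \arr \Delta$ is not an isomorphism, so the step ``conclude that $W \arr \Delta$ is an isomorphism'' cannot be carried out. The same example shows that your $I$ fails condition (b) of boolean cofilteredness (the parallel $1$-arrows determined by $a$ and $b$ admit no $2$-arrow between them), so $\projlim_{I}$ is not even defined in the paper's framework, and $\hom(\Gamma, \Delta) \arr \hom(X, \Delta)$ is not fully faithful for such objects. The missing ingredient is precisely the paper's Nori-reduced condition together with Lemma \ref{lem:factor-through-Nori-reduced} (every morphism to a finite gerbe factors, essentially uniquely, through a Nori-reduced one, proved by induction on the degree via Proposition \ref{prop:properties-degree}); with Nori-reduced objects your rigidity argument does close: $W \arr \Delta$ is representable because $V \arr \Delta$ is, and Nori-reducedness then forces it to be an isomorphism --- this is the paper's Lemma \ref{lem:unique-arrow}. (A minor repair elsewhere: your nonemptiness argument in the forward direction breaks if no finite quotient of $\univ X$ has nontrivial $2$-automorphisms of the projection; test instead against $\Gamma = \spec\kappa \sqcup \spec\kappa$, for which $\hom$ from a nonempty gerbe has two isomorphism classes of objects, while $\hom(\emptyset,\Gamma)$ is terminal.)
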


\begin{remark}
Of course one could relax the definition of fundamental gerbe and only require that it be universal for maps from $X$ to finite (or, equivalently, profinite) gerbes. However, still $\univ X$ would not exist in general. For example, it is easy to see that $\spec\kappa \sqcup \spec \kappa$ can not have a universal gerbe in this sense.

When $\cha \kappa = 0$, one can show that an algebraic stack of finite type has a fundamental gerbe in this weaker sense if and only if it is geometrically connected. The main point is that in characteristic 0 every finite gerbe is étale, so for any finite gerbe $\Gamma$ the restriction functor $\hom(X, \Gamma) \arr \hom(X_{\mathrm{red}}, \Gamma)$ is an equivalence; hence the fundamental gerbe for $X_{\mathrm{red}}$ is also a fundamental gerbe for $X$.

However, in positive characteristic the exact conditions for the existence of a fundamental gerbe in this weaker sense are not clear.
\end{remark}

Let us prove Theorem~\ref{thm:universal-exists}. First, assume that $X$ has a fundamental gerbe $\univ X$, and take a morphism $X \arr \Gamma$ to a finite stack. This factors through $\univ X$, and the result follows from \refall{prop:properties-infl}{b}.

Now assume that $X$ is \infl.

\begin{definition}
Let $\Gamma$ be a finite gerbe. A morphism of fibered categories $X \arr \Gamma$ is \emph{Nori-reduced} if for any factorization $X \arr \Gamma' \arr \Gamma$, where $\Gamma'$ is a finite gerbe and $\Gamma' \arr \Gamma$ is faithful, then $\Gamma' \arr \Gamma$ is an isomorphism.
\end{definition}

\begin{remark}
The notion of Nori-reduced morphism is perhaps clarified by the following fact. Suppose that $G$ is a finite étale group scheme over $\kappa$ and $X \arr \cB_{\kappa}G$ is a morphism, where $X$ is a \geom stack of finite type over $\kappa$. This morphism corresponds to a $G$-torsor $Y \arr X$. We claim that $X \arr \cB_{\kappa}G$ is Nori-reduced if and only if $Y$ is geometrically connected.

In fact, a representable map $\Gamma \arr \cB_{\kappa}G$, where $\Gamma$ is a finite gerbe, is given by the projection
$[U/G] \arr \cB_{\kappa}G$, where $U$ is a finite étale scheme on which $G$ acts transitively. So, $X \arr \cB_{\kappa}G$ is not Nori reduced if and only if there exists a finite étale scheme $U$, different from $\spec \kappa$, on which $G$ acts transitively, and a $G$-equivariant morphism $Y \arr U$. If this exists, $Y$ cannot by geometrically connected, since $Y \arr U$ must be surjective. Conversely, if $Y$ is not geometrically connected, you can take as $U$ the spectrum of the algebraic closure of $\kappa$ in $\H^0(Y, \cO)$, with the natural action of $G$.

\end{remark}

\begin{lemma}\label{lem:factor-through-Nori-reduced}
Let $\Gamma$ be a finite gerbe, $X$ an \infl fibered category, $X \arr \Gamma$ a morphism. Then there exists a factorization $X \arr \Delta \arr \Gamma$, where $\Delta$ is a finite gerbe, $\Delta \arr \Gamma$ is representable, and $X \arr \Delta$ is Nori-reduced.

Furthermore, $X \arr \Delta$ is unique up to equivalence.
\end{lemma}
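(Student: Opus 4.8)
The plan is to obtain $\Delta$ as a minimal representable factorization of $X \to \Gamma$, where "minimal" is measured by the degree of Proposition~\ref{prop:properties-degree}, and to derive both existence and uniqueness from one construction: the scheme-theoretic image of $X$ inside a fibered product. The essential input is that, because $X$ is \infl, any morphism from $X$ to a finite stack has scheme-theoretic image a finite \emph{gerbe}. I also record once that for morphisms of stacks in groupoids representability is the same as faithfulness, so the "faithful" hypothesis in the definition of Nori-reduced and the "representable" hypothesis of Proposition~\ref{prop:properties-degree} refer to the same class of morphisms.

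For existence, I would consider the collection of factorizations $X \to \Delta \to \Gamma$ with $\Delta$ a finite gerbe and $\Delta \to \Gamma$ representable. This collection is nonempty, since the identity exhibits $\Gamma$ itself as such a factorization, and by Proposition~\ref{prop:properties-degree} every such $\Delta$ satisfies $\deg \Delta \mid \deg \Gamma$, so the possible degrees are positive integers bounded by $\deg \Gamma$. I then pick a factorization with $\deg \Delta$ minimal and claim $X \to \Delta$ is Nori-reduced. Indeed, given a factorization $X \to \Delta' \to \Delta$ with $\Delta'$ a finite gerbe and $\Delta' \to \Delta$ faithful, the composite $\Delta' \to \Gamma$ is again representable and hence a competing factorization, so minimality gives $\deg \Delta \le \deg \Delta'$; on the other hand Proposition~\ref{prop:properties-degree} applied to $\Delta' \to \Delta$ gives $\deg \Delta' \mid \deg \Delta$. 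Hence $\deg \Delta' = \deg \Delta$, and the same proposition forces $\Delta' \to \Delta$ to be an isomorphism.

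For uniqueness, suppose $X \to \Delta$ and $X \to \Delta'$ are two Nori-reduced representable factorizations of the same $X \to \Gamma$. Using the chosen $2$-isomorphism between the two composites into $\Gamma$, the two morphisms assemble into a single morphism $X \to \Delta \times_{\Gamma} \Delta'$. The fibered product $\Delta \times_{\Gamma} \Delta'$ is again a finite stack: it is of finite type, its diagonal is quasi-finite, and its $\overline{\kappa}$-points have finitely many isomorphism classes (being triples consisting of objects of $\Delta(\overline{\kappa})$ and $\Delta'(\overline{\kappa})$ together with an isomorphism of their images in $\Gamma(\overline{\kappa})$), so Proposition~\ref{prop:char-finite} applies. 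By inflexibility the scheme-theoretic image $\Sigma$ of $X$ in $\Delta \times_{\Gamma} \Delta'$ is a finite gerbe, and both projections $\Sigma \to \Delta$ and $\Sigma \to \Delta'$ are representable: the closed immersion $\Sigma \hookrightarrow \Delta \times_{\Gamma} \Delta'$ is representable, while each projection off the fibered product is the base change of the representable morphism $\Delta' \to \Gamma$ (resp.\ $\Delta \to \Gamma$). Since $X \to \Sigma \to \Delta$ factors the Nori-reduced morphism $X \to \Delta$ through the faithful $\Sigma \to \Delta$, that map is an isomorphism, and symmetrically so is $\Sigma \to \Delta'$. Composing yields an equivalence $\Delta \simeq \Delta'$ compatible with the morphisms from $X$ and to $\Gamma$, which is the asserted uniqueness up to equivalence.

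The step I expect to require the most care is the verification that the scheme-theoretic image of $X$ in $\Delta \times_{\Gamma} \Delta'$ (and, in the existence argument, the justification that the poset of representable factorizations behaves well) is genuinely a finite gerbe; this is precisely where the \infl hypothesis is used. Accompanying this is the routine but necessary bookkeeping that representability is preserved under base change, composition, and closed immersion, so that Proposition~\ref{prop:properties-degree} can legitimately be invoked at each stage to pass between divisibility of degrees and isomorphism of gerbes.
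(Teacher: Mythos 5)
Your proposal is correct and takes essentially the same approach as the paper: for existence the paper argues by descending induction on $\deg \Delta$ via Proposition~\ref{prop:properties-degree} (your choice of a factorization of minimal degree is the same argument in different clothing), and for uniqueness it forms exactly your fibered product $\Delta \times_{\Gamma} \Delta'$, takes the scheme-theoretic image of $X$ there, and uses inflexibility plus Nori-reducedness to conclude that both projections are equivalences. Your explicit bookkeeping (faithful equals representable for morphisms of these stacks, finiteness of the fibered product via Proposition~\ref{prop:char-finite}, stability of representability under composition and base change) merely spells out what the paper's proof leaves implicit.
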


\begin{proof}
Take a factorization $X \arr \Delta \arr \Gamma$ with $\Delta \arr \Gamma$ representable, and suppose that $X \arr \Delta$ is not Nori-reduced. By definition there will be a factorization $X \arr \Delta' \arr \Delta$, where $\Delta' \arr \Delta$ is representable, but not an isomorphism. By Proposition~\ref{prop:properties-degree}, the degree of $\Delta'$ is less than the degree of $\Delta$. The proof is concluded by induction on the degree of $\Delta$.

For the second part, suppose that $X \arr \Delta \arr \Gamma$ and $X \arr \Delta' \arr \Gamma$ are two factorizations as in the statement. Then $\Delta\times_{\Gamma}\Delta'$ is a finite stack, and its two projections onto $\Delta$ and $\Delta'$ are representable. Consider the morphism $X \arr \Delta \times_{\Gamma} \Delta'$ induced by the two morphism above; since $X$ is \infl, the scheme-theoretic image $\Delta''$ of $X$ in $\Delta\times_{\Gamma}\Delta'$ is a finite gerbe, and the two morphisms $\Delta'' \arr \Delta$ and $\Delta'' \arr \Delta'$ are representable. Since $X \arr \Delta$ and $X \arr \Delta'$ are Nori-reduced, $\Delta'' \arr \Delta$ and $\Delta'' \arr \Delta'$ are equivalences, and the result follows.
\end{proof}

Here is the key lemma.

\begin{lemma}\label{lem:unique-arrow}
Let $f\colon X \arr \Gamma$ and $g\colon X \arr \Delta$ be morphisms of fibered categories, where $\Gamma$ and $\Delta$ are finite gerbes and $f$ is Nori-reduced. Suppose that $u$, $v\colon \Gamma \arr \Delta$ are morphism of fibered categories, and $\alpha\colon u \circ f \simeq g$ and $\beta\colon v \circ f \simeq g$ are isomorphisms. Then there exists a unique isomorphism $\gamma\colon u \simeq v$ such that $ \beta\circ(\gamma * \id_{f})=\alpha  $.
\end{lemma}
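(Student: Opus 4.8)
The plan is to reinterpret the statement as saying that whiskering with $f$ sets up a bijection between isomorphisms $u\simeq v$ and isomorphisms $u\circ f\simeq v\circ f$, and then to apply this to the isomorphism $\sigma\eqdef\beta^{-1}\circ\alpha\colon u\circ f\simeq v\circ f$. Uniqueness of $\gamma$ is the \emph{faithfulness} of whiskering, and existence is its \emph{fullness}; I would treat these two separately, and I expect the existence half to contain all the difficulty.

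For uniqueness, if $\gamma,\gamma'$ both satisfy the equation then $\gamma*\id_f=\gamma'*\id_f$, so $\delta\eqdef(\gamma')^{-1}\gamma\colon u\simeq u$ satisfies $\delta*\id_f=\id_{uf}$, and it suffices to prove $\delta=\id_u$. I would regard $\delta$ as a global section of the group scheme $\underaut_\Gamma(u)\arr\Gamma$, which is affine over $\Gamma$, being the pullback of the diagonal of $\Delta$ along $(u,u)$, affine by Proposition~\refall{prop:properties-fpqc-gerbe}{c}. The locus where $\delta$ coincides with the identity section is then a closed substack $Z\subseteq\Gamma$ through which $f$ factors. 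The decisive observation is that a finite gerbe has no nontrivial closed substacks: pulling back along the field-valued chart $\spec K\arr\Gamma$ of Proposition~\refall{prop:properties-fpqc-gerbe}{a} turns $Z$ into a closed subscheme of $\spec K$, hence empty or all of $\spec K$, and faithful flatness of the chart transports the dichotomy back to $Z$. Since $X\neq\emptyset$ (a Nori-reduced morphism cannot have empty source), $Z=\Gamma$ and $\delta=\id_u$. This step uses only that $\Gamma$ is a gerbe, not Nori-reducedness.

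For existence I would put $I\eqdef\underisom_\Gamma(u,v)$, the pullback of the diagonal $\Delta\arr\Delta\times\Delta$ along $(u,v)$. By Proposition~\refall{prop:properties-fpqc-gerbe}{c} the morphism $I\arr\Gamma$ is affine and faithfully flat (a torsor under $\underaut_\Gamma(u)$), so $I$ is a finite stack and $I\arr\Gamma$ is representable, hence faithful; a section of $I\arr\Gamma$ is precisely an isomorphism $u\simeq v$. The datum $\sigma$ is a section of $f^{*}I\arr X$, that is a morphism $s\colon X\arr I$ over $\Gamma$. Let $\Gamma'$ be the scheme-theoretic image of $s$, so that $f$ factors as $X\arr\Gamma'\arr\Gamma$ with $\Gamma'\arr\Gamma$ faithful. \emph{If} $\Gamma'$ is a finite gerbe, then Nori-reducedness of $f$ forces $\Gamma'\arr\Gamma$ to be an isomorphism, and composing its inverse with $\Gamma'\into I$ yields a section $\gamma\colon u\simeq v$; unwinding the construction, the restriction of $\gamma$ along $f$ is again $s$, i.e. $\gamma*\id_f=\sigma$, whence $\beta\circ(\gamma*\id_f)=\beta\circ\sigma=\alpha$, and uniqueness is supplied by the previous paragraph.

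The main obstacle is thus to show that $\Gamma'$ is a finite gerbe, that is (Proposition~\ref{prop:gerbe<->geom}) geometrically connected and geometrically reduced. Geometric connectedness I expect to extract from Nori-reducedness itself, which is a connectedness condition on the cover of $X$ cut out by $f$ (in the spirit of the remark identifying Nori-reduced morphisms with geometrically connected torsors): the scheme-theoretic image of a geometrically connected source is geometrically connected. Geometric reducedness is the genuinely delicate point and is where I expect the real work to lie. In characteristic $0$ it is free, since finite gerbes are then \'etale and a representable $\Gamma'\arr\Gamma$ is therefore \'etale; in positive characteristic one must rule out infinitesimal thickening of the image inside $I$ — a nonreduced $X$ can a priori map onto an infinitesimal neighborhood of a section rather than onto the section itself — so the argument must pin down that no such nilpotents survive, and it is precisely here that a reducedness/inflexibility input on $X$ enters and must be exploited against the structure of the torsor $I\arr\Gamma$.
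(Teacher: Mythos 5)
Your uniqueness argument is correct, and is in fact a tidy self-contained variant of the paper's (the paper encodes uniqueness as uniqueness of the closed subgerbe through which the lift $X \arr \Gamma'$ factors, namely the scheme-theoretic image; your equalizer-of-two-sections argument uses only that $\Gamma$ is a gerbe and that $X \neq \emptyset$, and needs neither Nori-reducedness nor inflexibility). The existence half also follows the paper's route exactly: your $I$ is precisely the stack the paper builds as the fibered product of $\generate{u,v}\colon \Gamma \arr \Delta \times \Delta$ with the diagonal of $\Delta$, and the final appeal to Nori-reducedness via a representable $\Gamma' \arr \Gamma$ is the same. But it stops at the decisive step: you never prove that the scheme-theoretic image $\Gamma'$ of $s\colon X \arr I$ is a gerbe, and you explicitly defer ``the real work'' to that point. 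In the paper there is no such work: the lemma sits inside the proof of Theorem~\ref{thm:universal-exists}, after the reduction ``now assume that $X$ is \infl'', so inflexibility of $X$ is a standing hypothesis; by the very definition of an \infl fibered category, the morphism $X \arr I$ to the finite stack $I$ factors through a closed substack of $I$ that is a gerbe, and that substack is necessarily the scheme-theoretic image. Your proof never imports this hypothesis, and that is the gap.

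Moreover, the gap cannot be closed along the lines you sketch: Nori-reducedness of $f$ alone cannot supply geometric connectedness of the image, because the statement read literally, without inflexibility, is false. Take $\Gamma = \spec \kappa$: a finite gerbe admitting a faithful morphism to $\spec\kappa$ has trivial inertia, hence is $\spec\kappa$ itself, so the structure morphism of \emph{any} fibered category is Nori-reduced. Now let $X = \spec\kappa \sqcup \spec\kappa$, let $\Delta = \cB_{\kappa}(\ZZ/2\ZZ)$, let $u = v$ be the morphism given by the trivial torsor, $g = u \circ f$, $\alpha = \id$, and let $\beta$ be the automorphism of $u \circ f$ that is trivial on one copy of $\spec\kappa$ and nontrivial on the other. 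Whiskering along $f$ lands in the diagonal of $(\ZZ/2\ZZ)(X) = \ZZ/2\ZZ \times \ZZ/2\ZZ$, so no $\gamma$ with $\beta \circ (\gamma * \id_{f}) = \alpha$ exists. (The same setting shows that your parenthetical claim that a Nori-reduced morphism cannot have empty source does not follow from the definition: $\emptyset \arr \spec\kappa$ is Nori-reduced by the letter of it; nonemptiness, like connectedness, is part of inflexibility.) The correct repair is simply to state and invoke the standing hypothesis that $X$ is \infl, after which your existence argument closes in one line and your uniqueness argument stands as written.
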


This can be expressed by saying that, given two 2-commutative diagrams
   \[
   \xymatrix{
   X \ar[r]^{f}\ar[rd]_{g} &\Gamma \ar[d]^{u}\\
   &\Delta
   }
   \quad
   \text{and}
   \quad
   \xymatrix{
   X \ar[r]^{f}\ar[rd]_{g} &\Gamma \ar[d]^{v}\\
   &\Delta
   }
   \]
in which $f$ is Nori-reduced, there exists a unique isomorphism $u \simeq v$ making the diagram
   \[
   \xymatrix@C+15pt{
   X \ar[r]^{f}\ar[rd]_{g} &\Gamma \ar@/_/[d]_{u}\ar@/^/[d]^{v}\\
   &\Delta   
   }
   \]
2-commutative.

\begin{proof}
Consider the category $\Gamma' \arr \Gamma$ fibered in sets over $\Gamma$, whose objects over a $\kappa$-scheme $T$ are pairs $(\xi, \rho)$, where $\xi$ is an object of $\Gamma(T)$ and $\rho$ is an isomorphism of $u(\xi)$ with $v(\xi)$ in $\Delta(T)$. This can be written as a fibered product
   \[
   \xymatrix{\ar@{}[rd]|{\square}
   \Gamma' \ar[r]\ar[d] & \Gamma \ar[d]^{\generate{u,v}}\\
   \Delta \ar[r] & \Delta \times \Delta\,,
   }
   \]
where the morphism $\Delta \arr \Delta \times \Delta$ is the diagonal. So $\Gamma'$ is a fibered product of finite stacks, hence it is a finite stack.

An isomorphism $u \simeq v$ corresponds to a section of the projection $\Gamma' \arr \Gamma$, or, again, to a substack $\Gamma'' \subseteq \Gamma'$ such that the restriction $\Gamma'' \arr \Gamma$ of the projection is an isomorphism. The composite isomorphism $u \circ f \xarr{\alpha} g \xarr{\beta^{-1}} v \circ f$ yields a lifting $X \arr \Gamma'$ of $f\colon X \arr \Gamma$; the thesis can be translated into the condition that there exists a unique substack $\Gamma'' \subseteq \Gamma'$ as above, such that $X \arr \Gamma'$ factors through $\Gamma''$. Since $X$ is \infl, there is a unique substack $\Gamma''$ of $\Gamma'$ that is a gerbe, such that $X \arr \Gamma'$ factors through $\Gamma''$. However, $\Gamma'' \arr \Gamma$ is representable, because $\Gamma' \arr \Gamma$ is, so $\Gamma'' \arr \Gamma$ is an isomorphism, since $f$ is Nori-reduced.
\end{proof}

\begin{proof}[Proof of Theorem~\ref{thm:universal-exists}]
Consider the 2-category whose objects are Nori-reduced morphisms $X \arr \Gamma$, and whose 1-arrows from $f\colon X \arr \Gamma$ to $g\colon X \arr \Delta$ are pairs $(u, \alpha)$, where $u\colon \Gamma \arr \Delta$ is a morphism of finite stacks and $\alpha\colon u \circ f \simeq g$ is an isomorphism A 2-arrow from $(u, \alpha)$ to $(v, \beta)$ is an isomorphism $\gamma\colon u \simeq v$ such that $\beta\circ (\gamma * \id_{f}) =\alpha  $. The composites and the Godement products are defined in the obvious way. Let $I$ be a skeleton of this category; it is a small 2-category.

We claim that $I$ is a cofiltered 2-category. The fact that given any 1-arrows between two fixed objects there is a unique arrow between them is the content of Lemma~\ref{lem:unique-arrow}. Let us check that given two objects $X \arr \Gamma_{i}$ and $X \arr \Gamma_{j}$, there is an object $X \arr \Gamma_{k}$ with an arrow to both. Since $X$ is \infl, the morphism $X \arr \Gamma_{i} \times \Gamma_{j}$ induced by two objects can be factored through a finite gerbe $\Gamma' \subseteq \Gamma_{i} \times \Gamma_{j}$; from Lemma~\ref{lem:factor-through-Nori-reduced} we see that $X \arr \Gamma'$ can be lifted to a morphism $X \arr \Gamma_{k}$, which is an object in $I$.

We set $\univ X \eqdef \projlim \Gamma$. The morphisms $X \arr \Gamma_{i}$ induce a morphism of fibered categories $X \arr \univ X$. If $\Delta$ is a finite stack over $\kappa$, we need to show that the functor $\hom(\univ X, \Delta) \arr \hom(X, \Delta)$ is an equivalence. It follows from Lemma~\ref{lem:factor-through-Nori-reduced} that it is essentially surjective, so we need to show that it is fully faithful. By Proposition~\ref{prop:factor-through-finite} we see that it is enough to show that given a Nori-reduced morphism $X \arr \Gamma_{i}$, the induced functor $\hom(\Gamma_{i}, \Delta) \arr \hom(X, \Delta)$ is fully faithful. This follows immediately from Lemma~\ref{lem:unique-arrow}.
\end{proof}

\begin{remark}
It was pointed out to us by Bertrand Töen that an alternate proof of Theorem~\ref{thm:universal-exists} can be given along the following lines. Consider the embedding of the 2-category of finite stacks into the category of all algebraic stacks. This embedding preserves 2-limits, hence, it extends to a functor from the 2-category of pro-objects in the category of all algebraic stacks. By a 2-categorical analogue of the adjoint functor theorem, this has a right adjoint, which associates with each algebraic stack a universal pro-object in the 2-category of finite stacks. From the definition of \infl stack, the universal pro-object of an \infl stack is a pro-object in the category of finite gerbes. By the results of Section~\ref{sec:projective-limits}, this can be thought of a profinite gerbe. 

This clarifies considerably the meaning of Theorem~\ref{thm:universal-exists}. Unfortunately we don't know a reference for the 2-categorical result used above. Furthermore, we find the direct construction, via Nori-reduced morphisms, both useful and enlightening.
\end{remark}

\begin{remark}\label{rmk:gerbe-vs-group}
Given an affine group scheme $G$ over $\kappa$, we obtain an fpqc gerbe $\cB_{\kappa}G$, together with a preferred object $G \arr \spec \kappa$ in $\cB_{\kappa}G(\kappa)$, the trivial torsor. Conversely, given an affine fpqc gerbe $\Phi$ and an object $\xi$ of $\Phi(\kappa)$, we obtain an affine group scheme $\underaut_{\kappa}\xi$, with a canonical equivalence $\Phi \simeq \cB_{\kappa}\underaut_{\kappa}\xi$.

Consider the 2-category whose objects are pairs $(\Phi, \xi)$, where $\Phi$ is an fpqc gerbe over $\kappa$ and $\xi$ is an object of $\Phi(\kappa)$. The 1-arrows $(\Phi, \xi) \arr (\Psi, \eta)$ are pairs $(F, a)$, where $F\colon \Phi \arr \Psi$ is a cartesian functor, and $a\colon F(\xi) \simeq \eta$ is an isomorphism in $\Psi(\kappa)$. The 2-arrows $(F, a) \arr (F', a')$ are the base-preserving natural transformations $F \arr F'$, which are compatible with $a$ and $a'$, in the obvious sense. It is easy to see that this 2-category is equivalent to the 1-category of affine group schemes; an affine group scheme $G$ corresponds to the pair $(\cB_{\kappa}G, G \to \spec \kappa)$, while an object $(\Phi, \xi)$ is carried to $\underaut_{\kappa}\xi$. In this correspondence profinite gerbes correspond to profinite group schemes.

Now, assume that $X$ is \infl, and that we are given an object $x_{0}$ of $X(\kappa)$, corresponding to a section $x_{0}\colon \spec \kappa \arr X$. The image $\xi_{0}$ of $x_{0}$ in $\univ X(\kappa)$ gives a profinite group $\pi(X, x_{0}) \eqdef \underaut_{\kappa}\xi_{0}$; we claim that this is the fundamental group scheme in the sense of Nori. This means the following.

Denote by $P_{0} \arr X$ the $\pi(X, x_{0})$-torsor corresponding to the morphism $X \arr \univ X \simeq \cB_{\kappa}\pi(X, x_{0})$; by construction we have a trivialization $x_{0}^{*}P_{0} \simeq \pi(X, x_{0})$. Suppose that we are given a finite group scheme $G$ with a homomorphism $\pi(X, x_{0}) \arr G$; by transport of structure we obtain a $G$-torsor $P \arr X$ with a trivialization $x_{0}^{*}P \simeq G$. 

Conversely, suppose we are given a finite group scheme $G$, a $G$-torsor $P \arr X$, and a trivialization $x_{0}^{*}P \simeq G$. This gives a factorization of the morphism $\spec \kappa \arr \cB_{\kappa}G$ corresponding to the trivial torsor as $\spec \kappa \arr X \arr \cB_{\kappa}G$; by definition of $\univ X$, we obtain a morphism $\univ X \arr \cB_{\kappa}G$, together with an isomorphism of the image of the object in $\univ X(\kappa)$ corresponding to the composite $\spec \kappa \xarr{x_{0}} X \arr \univ X$ with the trivial torsor in $\cB_{\kappa}G(\kappa)$. By the discussion above, this yields a homomorphism of group schemes $\pi(X, x_{0}) \arr G$.

This gives a bijective correspondence between isomorphism classes of $G$-torsors $P \arr X$ with a trivialization $x_{0}^{*}P \simeq G$, and homomorphism of group schemes $\pi(X, x_{0}) \arr G$. Thus, in the case of stacks with a given rational point, the Nori fundamental gerbe corresponds to the Nori fundamental group.
\end{remark}

\section{Base change for the Nori fundamental gerbe}\label{sec:base-change}

Nori showed in \cite[II, Proposition~5]{nori-phd} that the formation of the fundamental group scheme satisfies base change for algebraic separable extensions. Here we prove the analogous result for fundamental gerbes under finite separable extensions, which is very useful for applications and calculations (see Section~\ref{sec:examples}). Nori used the action of the Galois group on the fundamental group scheme; this can be made to work in our case too, using the theory of group actions on stacks (see \cite{romagny-group-actions}); however, this is technically rather involved, so we prefer a different method, based on the Weil restriction of algebraic stacks.

\begin{proposition}\label{prop:base-change}
Let $\kappa'/\kappa$ be a separable extension, $X$ an inflexible fibered category over $\aff \kappa$. Suppose that either

\begin{enumeratea}

\item $\kappa'$ is finite over $\kappa$, or

\item there exists a quasi-compact scheme $U$ and a morphism $U \arr X$ which is representable, faithfully flat, quasi-compact and quasi-separated.

\end{enumeratea}

 Then $X_{\kappa'}$ is inflexible over $\kappa'$, and $\univ[\kappa']{X_{\kappa'}} = \spec\kappa' \times \univ X$.
\end{proposition}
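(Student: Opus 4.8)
The plan is to show directly that the base change $\Phi_{\kappa'}\eqdef\spec\kappa'\times\univ X$ is a fundamental gerbe for $X_{\kappa'}$ over $\kappa'$. Once this is established, Theorem~\ref{thm:universal-exists} forces $X_{\kappa'}$ to be \infl, and Proposition~\ref{prop:fund-gerbe-unique} gives the asserted identity $\univ[\kappa']{X_{\kappa'}}=\Phi_{\kappa'}$, so both conclusions follow at once. That $\Phi_{\kappa'}$ is a profinite gerbe is immediate: writing $\univ X=\projlim_{i}\Gamma_{i}$ with each $\Gamma_{i}$ a finite gerbe over $\kappa$, base change commutes with the projective limit, each $(\Gamma_{i})_{\kappa'}$ is again a finite gerbe over $\kappa'$ (both finiteness and the gerbe property are stable under base change), and $\Phi_{\kappa'}$ is non-empty because it receives the base change $X_{\kappa'}\arr\Phi_{\kappa'}$ of the universal morphism while $X_{\kappa'}\neq\emptyset$; hence Proposition~\ref{prop:limit-gerbes} applies. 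It therefore remains to prove that, for every finite stack $\Gamma$ over $\kappa'$, the functor $\hom_{\kappa'}(\Phi_{\kappa'},\Gamma)\arr\hom_{\kappa'}(X_{\kappa'},\Gamma)$ induced by $X_{\kappa'}\arr\Phi_{\kappa'}$ is an equivalence.

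In case~(a), where $\kappa'/\kappa$ is finite separable, I would use the Weil restriction $R_{\kappa'/\kappa}$, which is right adjoint to base change $(-)_{\kappa'}$. The crucial input is that $R_{\kappa'/\kappa}\Gamma$ is again a \emph{finite} stack over $\kappa$: it is of finite type with quasi-finite diagonal, and since $\overline{\kappa}\otimes_{\kappa}\kappa'\simeq\overline{\kappa}^{[\kappa':\kappa]}$ one has $(R_{\kappa'/\kappa}\Gamma)(\overline{\kappa})=\Gamma(\overline{\kappa})^{[\kappa':\kappa]}$, which has finitely many isomorphism classes, so Proposition~\ref{prop:char-finite} applies. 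The adjunction then identifies the functor above with $\hom_{\kappa}(\univ X,R_{\kappa'/\kappa}\Gamma)\arr\hom_{\kappa}(X,R_{\kappa'/\kappa}\Gamma)$, and the latter is an equivalence by the defining universal property of $\univ X$ (Definition~\ref{def:fund-gerbe}) against the finite stack $R_{\kappa'/\kappa}\Gamma$. Naturality of the adjunction in the source variable, applied to the universal morphism $X\arr\univ X$, guarantees that this identification is compatible with $X_{\kappa'}\arr\Phi_{\kappa'}$, which is exactly what is needed.

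In case~(b) I would reduce to~(a) by a limit argument. Writing $\kappa'$ as the filtered union of its finite separable subextensions $\kappa'_{\lambda}$, we have $\spec\kappa'=\projlim_{\lambda}\spec\kappa'_{\lambda}$, hence $X_{\kappa'}=\projlim_{\lambda}X_{\kappa'_{\lambda}}$, and likewise $\Phi_{\kappa'}=\projlim_{\lambda}(\univ X)_{\kappa'_{\lambda}}$, where by case~(a) each $(\univ X)_{\kappa'_{\lambda}}=\univ[\kappa'_{\lambda}]{X_{\kappa'_{\lambda}}}$. A finite stack $\Gamma$ over $\kappa'$ is finitely presented, so it descends to a finite stack $\Gamma_{\lambda}$ over some $\kappa'_{\lambda}$ with $\Gamma=(\Gamma_{\lambda})_{\kappa'}$. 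The hypothesis that $U\arr X$ is representable, faithfully flat, quasi-compact and quasi-separated is precisely what allows the standard limit formalism to express $\hom_{\kappa'}(X_{\kappa'},\Gamma)$ as the filtered colimit of the $\hom_{\kappa'_{\lambda}}(X_{\kappa'_{\lambda}},\Gamma_{\lambda})$; the analogous statement for $\Phi_{\kappa'}$ follows from its profinite structure (an application of Proposition~\ref{prop:factor-through-finite}). Applying the equivalence of case~(a) at each finite level and passing to the colimit over $\lambda$ then yields the claim.

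The main obstacle is twofold. In case~(a) it lies in the theory of Weil restriction of \emph{stacks} rather than schemes: one must establish that $R_{\kappa'/\kappa}$ exists on finite stacks and preserves finiteness---in particular the quasi-finiteness of the diagonal, the one point of Proposition~\ref{prop:char-finite} that is not handled by the elementary count of $\overline{\kappa}$-points---and that the resulting adjunction is genuinely compatible with the canonical morphism $X_{\kappa'}\arr\Phi_{\kappa'}$. In case~(b) the delicate point is the descent/limit step: the quasi-compactness and quasi-separatedness of $U\arr X$ must be used to commute $\hom$ into finite stacks past the cofiltered limit $\spec\kappa'=\projlim_{\lambda}\spec\kappa'_{\lambda}$, ensuring that both the target $\Gamma$ and the morphism out of $X_{\kappa'}$ descend to a common finite level.
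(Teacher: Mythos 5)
Your proposal is correct and follows essentially the same route as the paper: case (a) by transposing along the Weil-restriction adjunction $\hom_{\kappa'}\bigl((\univ X)_{\kappa'},\Gamma\bigr)\simeq\hom_{\kappa}(\univ X,\rR_{\kappa'/\kappa}\Gamma)$ and invoking the universal property of $\univ X$ against the finite stack $\rR_{\kappa'/\kappa}\Gamma$, and case (b) by descending $\Gamma$ and the given morphism to a finite subextension $\ell$ via the quasi-compact, quasi-separated cover $U \arr X$ and applying case (a) there, with your explicit verification that $\spec\kappa'\times\univ X$ is a profinite gerbe and the deduction of inflexibility from Theorem~\ref{thm:universal-exists} merely made explicit where the paper leaves them implicit. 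The one step you flag as an obstacle---finiteness of $\rR_{\kappa'/\kappa}\Gamma$, where your count of $\overline{\kappa}$-points via Proposition~\ref{prop:char-finite} leaves finite type and quasi-finiteness of the diagonal unverified---is exactly the paper's Lemma~\ref{properties-weil-restriction}, proved more directly by splitting over $\kappa\sep$, where the Weil restriction becomes the product $\prod_{i}\Gamma_{i}$, so that $\rR_{\kappa'/\kappa}R \double \rR_{\kappa'/\kappa}U$ is an fppf presentation of $\rR_{\kappa'/\kappa}\Gamma$ by finite $\kappa$-schemes; you could close your gap the same way (and note that in case (b) your colimit description of $\hom_{\kappa'}(\Phi_{\kappa'},\Gamma)$ over subextensions is not literally Proposition~\ref{prop:factor-through-finite}, which concerns the colimit over the gerbe's own index category---the paper sidesteps this point by directly factoring the descended morphism $X_{\ell}\arr\Delta$ through $(\univ X)_{\ell}$ and then base changing to $\kappa'$).
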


Let us describe the essential features of the Weil restriction of stacks. If $A$ is a finite $\kappa$-algebra and $Y \arr \aff A$ is a fibered category, we define the Weil restriction $\rR_{A/\kappa}Y$, as usual, as the fibered product $\aff \kappa \times_{\aff A} Y$, where the functor $\aff \kappa \arr \aff A$ is defined by $S \arrto S_{A} \eqdef \spec A\times_{\spec \kappa}S$. As a pseudo-functor, $\rR_{A/\kappa}Y$ is defined by $S \arr Y(S_{A})$. When $Y$ is represented by a scheme, then $\rR_{A/\kappa}Y$ is represented by its Weil restriction, which is a scheme when $A$ is finite over $\kappa$. Furthermore, it is immediate to check that if $Y$ is a stack, say in the étale topology, so is $\rR_{A/\kappa}Y$.

This construction is clearly functorial, that is, every morphism $f\colon Y \arr Y'$ of fibered categories over $\aff A$ induces a morphism $\rR_{A/\kappa}f\colon \rR_{A/\kappa}Y \arr \rR_{A/\kappa}Y'$.

When $X \arr \aff \kappa$ is fibered category, we have a natural equivalence of categories of base-preserving functors
   \[
   \hom_{\kappa}(X, \rR_{A/\kappa}Y) \simeq \hom_{A}(X_{A}, Y)\,,
   \]
where $X_{A}$ is the fibered product $\aff A \times_{\aff \kappa}X$ (i.e., the Weil restriction is left adjoint to the pullback functor of fibered categories, in the 2-categorical sense). Because of this, one sees immediately that Weil restriction preserves fibered products, that is, if $Y' \arr Y$ and $Y'' \arr Y$ are morphisms of fibered categories over $\aff A$, the natural morphism $\rR_{A/\kappa}(Y' \times_{Y}Y'') \arr \rR_{A/\kappa}Y' \times_{\rR_{A/\kappa}Y} \rR_{A/\kappa}Y''$ is an equivalence.

If $\ell$ is an extension of $\kappa$, and $A_{\ell} \eqdef \ell\otimes_{\kappa}A$, it is easy to see that $(\rR_{A/\kappa}Y)_{\ell} \simeq \rR_{A_{\ell}/\ell}(Y_{A_{\ell}})$.

\begin{lemma}\call{properties-weil-restriction}\hfil
Suppose that $\kappa'$ is a finite separable extension of $\kappa$, and $\Gamma$ is a finite stack over $\kappa'$. Then $\rR_{\kappa'/\kappa}\Gamma$ is finite stack over $\kappa$.

\end{lemma}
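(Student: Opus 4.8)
**

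The goal is to show that the Weil restriction $\rR_{\kappa'/\kappa}\Gamma$ of a finite stack $\Gamma$ over $\kappa'$ is a finite stack over $\kappa$. The plan is to verify the three conditions of Proposition~\ref{prop:char-finite}: that $\rR_{\kappa'/\kappa}\Gamma$ is of finite type over $\kappa$, has quasi-finite diagonal, and has finitely many isomorphism classes of $\overline\kappa$-points. Since $\Gamma$ is finite, it is of finite type over $\kappa'$ with quasi-finite diagonal and finitely many $\overline{\kappa'}$-points by the same characterization.

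First I would address finite type. The Weil restriction along a finite extension is well-behaved with respect to presentations: if $U \arr \Gamma$ is a finite flat surjective chart with $U$ a finite $\kappa'$-scheme and $R = U\times_{\Gamma}U$ the associated finite groupoid, then applying $\rR_{\kappa'/\kappa}$ term by term gives a groupoid $\rR_{\kappa'/\kappa}R \double \rR_{\kappa'/\kappa}U$ of finite $\kappa$-schemes (the Weil restriction of a finite scheme along a finite extension is finite). The key fact I would invoke is that Weil restriction commutes with fibered products, stated explicitly in the excerpt, so that $\rR_{\kappa'/\kappa}R$ is again the fiber product presenting the Weil-restricted stack; thus $\rR_{\kappa'/\kappa}\Gamma$ inherits a presentation by finite $\kappa$-schemes. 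This would in fact prove the lemma directly by exhibiting a finite flat groupoid presentation, which is the cleanest route.

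The main work is checking that the presentation $\rR_{\kappa'/\kappa}R \double \rR_{\kappa'/\kappa}U$ really presents $\rR_{\kappa'/\kappa}\Gamma$, i.e. that Weil restriction commutes with passing to the stack quotient. For this I would use the adjunction $\hom_\kappa(X, \rR_{\kappa'/\kappa}\Gamma) \simeq \hom_{\kappa'}(X_{\kappa'}, \Gamma)$ recorded before the lemma, together with the fact that Weil restriction preserves fibered products; descent along the fppf chart of $\Gamma$ then transports to a chart of the Weil restriction after base change. I expect the main obstacle to be confirming that $\rR_{\kappa'/\kappa}U \arr \rR_{\kappa'/\kappa}\Gamma$ remains \emph{flat and surjective} (flatness and surjectivity are not automatically preserved by Weil restriction in general), so I would verify this by base-changing to $\overline\kappa$, where $\kappa'\otimes_\kappa\overline\kappa \simeq \overline\kappa^{\,[\kappa':\kappa]}$ splits since $\kappa'/\kappa$ is separable, reducing the Weil restriction to an ordinary finite product of copies of $\Gamma_{\overline\kappa}$; flatness and surjectivity of a chart are preserved under such products.

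That same splitting also settles the remaining two conditions. Over $\overline\kappa$ we have $(\rR_{\kappa'/\kappa}\Gamma)_{\overline\kappa} \simeq \prod_{\sigma}\Gamma_{\overline\kappa}$, a finite product indexed by the $\kappa$-embeddings $\sigma\colon\kappa'\into\overline\kappa$. A finite product of stacks with quasi-finite diagonal has quasi-finite diagonal, and a finite product of stacks with finitely many $\overline\kappa$-isomorphism classes again has finitely many classes; so both Proposition~\refpart{prop:char-finite}{} conditions (b) and (c) follow from the corresponding properties of $\Gamma$. Since all three conditions of Proposition~\ref{prop:char-finite} hold, $\rR_{\kappa'/\kappa}\Gamma$ is a finite stack over $\kappa$, completing the proof.
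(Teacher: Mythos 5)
Your proposal is correct and takes essentially the same route as the paper: apply $\rR_{\kappa'/\kappa}$ to a finite fppf presentation $R \double U$ of $\Gamma$, use compatibility of Weil restriction with fibered products, and verify that the restricted groupoid is still an fppf presentation by splitting $\kappa'\otimes_{\kappa}\kappa\sep \simeq (\kappa\sep)^{n}$, so that $(\rR_{\kappa'/\kappa}Y)_{\kappa\sep}$ becomes a finite product of conjugates $\prod_{i} Y_{i}$ --- which also makes your extra pass through the three conditions of Proposition~\ref{prop:char-finite} redundant, since the presentation by finite $\kappa$-schemes already gives the definition of a finite stack directly. One small caveat: your parenthetical claim that the Weil restriction of a finite scheme along a finite extension is finite needs the word \emph{separable} (for instance $\rR_{\kappa'/\kappa}\mmu_{p}$ along a purely inseparable degree-$p$ extension is positive-dimensional), but in the separable case at hand your own splitting argument over $\overline{\kappa}$ supplies exactly the needed proof, just as the paper's computation over $\kappa\sep$ does.
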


\begin{proof}
Call $n$ the degree of $\kappa'$ over $\kappa$. Let $\kappa\sep$ be the separable closure of $\kappa$, and call $\nu_{1}$, \dots,~$\nu_{n}$ the embeddings of $\kappa'$ into $\kappa\sep$. Set $A \eqdef \kappa\sep \otimes_{\kappa}\kappa'$; we have an isomorphism of $\kappa$-algebras $A \simeq (\kappa\sep)^{n}$, such that for each $i \in I$ the composite of the embedding $\kappa' \subseteq A$ with the $i\th$ projection is $\nu_{i}$. For each stack $Y$ over $\kappa'$, denote by $Y_{i}$ the fibered product $\spec\kappa\sep\times_{\spec \kappa'} Y$, where the morphism $\spec \kappa\sep \arr \spec \kappa'$ is induced by $\nu_{i}\colon \kappa' \arr \kappa\sep$.

If $Y$ is a stack over $\kappa'$, then we have $Y_{A} = \coprod_{i=1}^{n}Y_{i}$; hence it is easy to see that
   \[
   (\rR_{\kappa'/\kappa}Y)_{\kappa\sep} = \rR_{A/\kappa\sep}\biggl(\,\coprod_{i=1}^{n}Y_{i}\biggr)
   = \prod_{i=1}^{n}Y_{i}\,;
   \]
hence if $Y$ is a finite scheme over $\kappa$, then $\rR_{\kappa'/\kappa}Y$ is a finite scheme over $\kappa$.

In the general case, take an fppf presentation $R \double U$ of $\Gamma$, where $U$ and $R$ are finite schemes over $\kappa'$. Then $\prod_{i}R_{i} \double \prod_{i}U_{i}$ is an fppf presentation of $\prod_{i}\Gamma_{i}$, hence $\rR_{\kappa'/\kappa}R \double \rR_{\kappa'/\kappa}U$ is an fppf presentation of $\rR_{\kappa'/\kappa}\Gamma$. Since $\rR_{\kappa'/\kappa}R$ and $\rR_{\kappa'/\kappa}U$ are finite schemes, the proof is complete.
\end{proof}

\begin{proof}[Proof of Proposition~\ref{prop:base-change}]

Let us show that $(\univ X)_{\kappa'}$ is a universal gerbe for $X_{\kappa'}$. For this, first assume that $\kappa'$ is finite over $\kappa$.

Let $\Gamma$ be a finite stack over $\kappa'$, and consider the functor
   \[
   \hom_{\kappa'}\bigl((\univ X)_{\kappa'}, \Gamma\bigr) \arr
   \hom_{\kappa'}\bigl(X_{\kappa'}, \Gamma\bigr)
   \]
induced by $X_{\kappa'} \arr (\univ X)_{\kappa'}$. Under the adjunction between Weil restrictions and pullbacks, we have equivalences
   \[
   \hom_{\kappa'}\bigl((\univ X)_{\kappa'}, \Gamma\bigr) \simeq 
   \hom_{\kappa}\bigl(\univ X, \rR_{\kappa'/\kappa}\Gamma\bigr)
   \]
and
   \[
   \hom_{\kappa'}\bigl(X_{\kappa'}, \Gamma\bigr) \simeq 
   \hom_{\kappa}\bigl(X, \rR_{\kappa'/\kappa}\Gamma\bigr)\,.
   \]
However, by the Lemma above $\rR_{\kappa'/\kappa}\Gamma$ is a finite stack, so the natural functor 
   \[
   \hom_{\kappa}\bigl(\univ X, \rR_{\kappa'/\kappa}\Gamma\bigr) \arr
   \hom_{\kappa}\bigl(X, \rR_{\kappa'/\kappa}\Gamma\bigr)
   \]
is an equivalence. This completes the proof in the case that $\kappa'$ is finite over $\kappa$.

Assume that we are under the hypothesis~(b): choose a morphism $U \arr X$ as in condition~(b), and set $R \eqdef U \times_{X} U$. Suppose that we are given a finite stack $\Gamma$ over $\kappa'$; we need to show that the functor
   \[
   \hom_{\kappa'}\bigl((\univ X)_{\kappa'}, \Gamma\bigr)
   \arr \hom_{\kappa'}\bigl(X_{\kappa'}, \Gamma\bigr)
   \]
is an equivalence. Let us show that the functor is essentially surjective; for this, choose a morphism $X_{\kappa'} \arr \Gamma$. By descent theory, this corresponds to an object $\xi$ of $\Gamma(U_{\kappa'})$, with descent data, that is, with an isomorphism of the two pullback of $\xi$ to $\Gamma(R_{\kappa'})$, satisfying the cocycle condition. Since the stack $\Gamma$ is finitely presented, there exists an intermediate extension $\kappa \subseteq \ell \subseteq \kappa'$ finite over $\kappa$, and a finite stack $\Delta$ on $\ell$, such that $\Gamma \simeq \Delta_{\kappa'}$. Furthermore, by enlarging $\ell$ we may assume that there exists an object $\eta$ of $\Delta(U_{\ell})$ with descent data in $\Delta(R_{\ell})$, whose pullback to $\Gamma(U_{\kappa'})$ is isomorphic to $\xi$, as an object with descent data. This gives a morphism $X_{\ell} \arr \Delta$ which pulls back to the given morphism $X_{\kappa'} \arr \Gamma$. Since $\ell$ is finite over $\kappa$ we have that $X_{\ell} \arr \Delta$ factors through $(\univ X)_{\ell}$; hence $X_{\kappa'} \arr \Gamma$ factors through $\bigl((\univ X)_{\ell}\bigr)_{\kappa'} = (\univ X)_{\kappa'}$, as claimed.

The proof that the functor is fully faithful is similar, and left to the reader.
\end{proof}

\begin{remark}
The base-change result fails for inseparable extensions; see \cite[p. 89]{nori-phd}, \cite{mehta-subramanian-fundamental-group-scheme} and \cite{pauly-nori-counterexample}.
\end{remark}

\section{The tannakian interpretation of the fundamental gerbe}\label{sec:tannaka}

In this section all schemes, and all algebraic spaces, will be quasi-separated. A morphism of fibered categories is called \emph{representable} when it is represented by algebraic spaces.

\begin{definition}\call{def:psp}
A fibered category $X$ over $\kappa$ is \emph{\psp} if it satisfies the following two conditions.

\begin{enumeratea}

\itemref{a} There exists a quasi-compact scheme $U$ and a morphism $U \arr X$ which is representable, faithfully flat, quasi-compact and quasi-separated.

\itemref{b} For any locally free sheaf of $\cO_{X}$-modules $E$ on $X$, the $\kappa$-vector space $\H^{0}(X, E)$ is finite-dimensional.

\end{enumeratea}

\end{definition}

Notice that in the definition above we don't assume that $X$ is a stack, in any topology. Given a morphism $U \arr X$ as in part~\refpart{def:psp}{a}, we obtain an fpqc groupoid $R \double U$, where $R \eqdef U \times_{X} U$. If $X$ is a stack in the fpqc topology, then it is equivalent to the quotient stack of $(R \double U)$-torsors.

\begin{examples}\hfil
\label{expl:pseudo-propre}
	
\begin{enumeratea}

\item A finite algebraic stack is \psp. 

\item An affine fpqc gerbe is \psp.

\end{enumeratea}

This is clear for a finite stack.

If $X$ is an affine fpqc gerbe, condition \refpart{def:psp}{a} of Definition~\ref{def:psp} is obviously verified. To prove condition \refpart{def:psp}{b}, let $E$ be a locally free sheaf on $X$. Because of Remark~\ref{rmk:base-change} below we can make a base extension, and assume that $X(\kappa) \neq \emptyset$. Then $X = \cB_{\kappa}G$, where $G$ is an affine group scheme; but then a locally free sheaf on $X$ is a finite-dimensional representation of $G$, $\H^{0}(X, E) = E^{G}$, and the result is obvious.
\end{examples}

\begin{remark}\label{rmk:base-change}
We will use the following fact. Suppose that $X$ is a fibered category, and let $U \arr X$ be a morphism as in Definition \refall{def:psp}{a}; set $R \eqdef U \times_{X} U$. If $E$ is a locally free sheaf on $X$ we denote by $E_{U}$ and $E_{R}$ the restrictions of $E$ to $U$ and $R$ respectively. Then $\H^{0}(X, E)$ is the equalizer of the pullbacks $\H^{0}(U, E_{U}) \double \H^{0}(R, E_{R})$ (this is a straightforward application of descent theory).

Furthermore, in the situation above, let $\kappa'$ be a field extension. Then the induced morphism $U_{\kappa'} \arr X_{\kappa'}$ is also representable, faithfully flat, quasi-compact and quasi-separated, and $R_{\kappa'} = U_{\kappa'}\times_{X_{\kappa'}} U_{\kappa'}$. Since $U$ is quasi-compact and quasi-separated we have $\H^{0}(U, E_{U}) \otimes_{\kappa}\kappa' = \H^{0}(U_{\kappa'}, E_{U_{\kappa'}})$, and analogously for $R$. Hence we also have $\H^{0}(X, E) \otimes_{\kappa} \kappa' = \H^{0}(X_{\kappa'}, E_{\kappa'})$.

\end{remark}

\begin{lemma}
If $X$ is \infl and \psp, then $\H^{0}(X, \cO) = \kappa$.
\end{lemma}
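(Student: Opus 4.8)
The plan is to combine the pseudo-properness hypothesis with the inflexibility property already recorded in \refall{prop:properties-infl}{a}. The starting observation is that $\H^0(X, \cO)$ carries a natural $\kappa$-algebra structure, since $\cO_X$ is a sheaf of $\kappa$-algebras and global sections inherit the ring structure. The crucial input from pseudo-properness is condition \refpart{def:psp}{b}: applying it to the locally free rank-one sheaf $E = \cO_X$, we learn that $\H^0(X, \cO)$ is finite-dimensional as a $\kappa$-vector space, hence finite as a $\kappa$-algebra.

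With this in hand, I would take $A = \H^0(X, \cO)$ itself, viewed as a $\kappa$-subalgebra of $\H^0(X, \cO)$ (the improper subalgebra). Because $A$ is finite over $\kappa$, Proposition \refall{prop:properties-infl}{a} applies directly: that statement asserts that, for inflexible $X$, the only $\kappa$-subalgebra of $\H^0(X, \cO)$ finite over $\kappa$ is $\kappa$ itself. Therefore $A = \kappa$, which is exactly the assertion $\H^0(X, \cO) = \kappa$.

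There is no genuine obstacle here; the lemma is an immediate corollary of the two cited facts. The one point worth flagging is the role of each hypothesis: inflexibility alone only controls the \emph{finite} $\kappa$-subalgebras of $\H^0(X, \cO)$, so without some finiteness the argument could not reach the full algebra. Pseudo-properness supplies precisely the missing ingredient by forcing $\H^0(X, \cO)$ itself to be finite over $\kappa$, at which point \refall{prop:properties-infl}{a} can be invoked for the whole algebra rather than merely its finite subalgebras.
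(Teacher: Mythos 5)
Your proof is correct and is essentially identical to the paper's: both apply pseudo-properness (Definition \refall{def:psp}{b} with $E = \cO_X$) to conclude that $\H^0(X,\cO)$ is a finite $\kappa$-algebra, and then invoke Proposition \refall{prop:properties-infl}{a} on the whole algebra, viewed as a $\kappa$-subalgebra of itself. Your closing remark on the division of labor between the two hypotheses is accurate but adds nothing beyond the paper's two-line argument.
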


\begin{proof}
Since $X$ is \psp, the $\kappa$-vector space $\H^{0}(X, \cO)$ is finite-dimensional. The result follows from Proposition \refall{prop:properties-infl}{a}.
\end{proof}

Let $\cA$ be a $\kappa$-linear rigid tensor category, with finite-dimensional Hom vector spaces, in which the idempotents split. (For example, if $X$ is a \psp fibered category we can take $\cA = \vect X$, see \cite{atiyah-krull-schmidt}, Lemma 6). We can define the \emph{indecomposable K-theory ring $\ikt \cA$} as the Grothendieck group associated with the monoid of isomorphism classes of objects of $\cA$, with the product given by tensor product. The Krull--Schmidt theorem holds in the category $\cA$ (see \cite{ringel_tame_1984}, \S 2.2); hence $\ikt \cA$ is a free abelian group on the isomorphism classes of indecomposable objects of $\cA$, and two objects of $\cA$ are isomorphic if and only if their classes in $\ikt \cA$ coincide.

Notice that if $f \in \NN[x]$ is a polynomial with natural numbers as coefficient and $E$ is an object of $\cA$, we can define $f(E)$, by interpreting the sum as a direct sum, and a power as a tensor power. 

\begin{definition}
We say that an object $E$ of $\cA$ is \emph{finite} when one of the following equivalent conditions is satisfied.

\begin{enumeratea}

\item The class of $E$ in $\ikt \cA$ is integral over $\ZZ$.

\item The class of $E$ in $\ikt \cA \otimes_{\ZZ}  \QQ$ is algebraic over $\QQ$.

\item There exist $f$ and $g$ in $\NN[x]$ with $f \neq g$ and $f(E) \simeq g(E)$.

\item The set of isomorphism classes of indecomposable components of all the powers of $E$ is finite.

\end{enumeratea}
\end{definition}

The equivalence of these conditions is proved as in \cite[2.3]{nori-phd}.

\begin{proposition}[\hbox{\cite[Lemma~(3.1)]{nori-phd}}] \label{prop:properties-finite}
\hfil
\begin{enumeratea}

\item Finite sums, tensor products and duals of finite objects are finite.

\item If the direct sum of two objects is finite, both objects are finite.

\end{enumeratea}
\end{proposition}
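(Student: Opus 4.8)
The plan is to prove part~(1) using the integrality characterization of finiteness and part~(2) using the characterization in terms of indecomposable components of powers, since these two descriptions are adapted to the two very different kinds of statement.

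For part~(1), I would first observe that $\ikt\cA$ is a \emph{commutative} ring, the symmetry of the tensor product giving $[E][F]=[E\otimes F]=[F\otimes E]=[F][E]$, and that direct sum and tensor product of objects correspond to addition and multiplication of classes. By condition~(a) in the definition, an object is finite precisely when its class is integral over $\ZZ$. Since the elements of a commutative ring that are integral over $\ZZ$ form a subring (the integral closure of $\ZZ$ in $\ikt\cA$), the classes $[E\oplus F]=[E]+[F]$ and $[E\otimes F]=[E][F]$ are integral whenever $[E]$ and $[F]$ are, which settles sums and tensor products. For duals, the point is that $E\mapsto E^\vee$ satisfies $(E\oplus F)^\vee\simeq E^\vee\oplus F^\vee$ and $(E\otimes F)^\vee\simeq E^\vee\otimes F^\vee$ and fixes the unit, so it induces a ring endomorphism of $\ikt\cA$ fixing $\ZZ$; applying such an endomorphism to an integral equation $[E]^n+a_{n-1}[E]^{n-1}+\dots+a_0=0$ shows that $[E^\vee]$ is again integral. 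Thus $E^\vee$ is finite.

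For part~(2), I would switch to condition~(d): $E$ is finite exactly when the set of isomorphism classes of indecomposable components occurring in the powers $E^{\otimes n}$, for all $n\ge 0$, is finite. Suppose $E\oplus F$ is finite, so this set is finite for $E\oplus F$. The key combinatorial observation is that $(E\oplus F)^{\otimes n}$ decomposes, upon distributing the tensor product over the direct sum, as a direct sum indexed by words of length $n$ in the two letters $E$ and $F$; the all-$E$ word contributes a summand isomorphic to $E^{\otimes n}$. Hence $E^{\otimes n}$ is a direct summand of $(E\oplus F)^{\otimes n}$, and by Krull--Schmidt every indecomposable component of $E^{\otimes n}$ already occurs in $(E\oplus F)^{\otimes n}$. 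Taking the union over all $n$, the set of indecomposable components of the powers of $E$ is contained in the corresponding (finite) set for $E\oplus F$, so $E$ is finite; by symmetry $F$ is finite as well.

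I expect the genuine subtlety to lie in part~(2): one might be tempted to argue ring-theoretically, but integrality of a sum $[E]+[F]$ does \emph{not} in general force integrality of the summands (for instance $\tfrac12+\tfrac12=1$), so condition~(a) is useless here. What rescues the statement is the categorical fact that $E^{\otimes n}$ is an honest direct summand of $(E\oplus F)^{\otimes n}$, together with Krull--Schmidt; this is precisely why it is worth having the several equivalent characterizations of finiteness. The remaining verifications --- commutativity of $\ikt\cA$, compatibility of duality with $\oplus$ and $\otimes$, and the word-indexed decomposition of $(E\oplus F)^{\otimes n}$ --- are routine consequences of the rigid symmetric tensor structure and of the Krull--Schmidt theorem.
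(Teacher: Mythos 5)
Your proof is correct and follows essentially the same route as the argument the paper delegates to Nori's Lemma~(3.1): parts concerning sums and tensor products via the fact that the elements of the commutative ring $\ikt\cA$ integral over $\ZZ$ form a subring, duals via the ring endomorphism induced by $E \mapsto E^{\vee}$, and part~(b) via the observation that $E^{\otimes n}$ is a direct summand of $(E \oplus F)^{\otimes n}$ combined with the Krull--Schmidt theorem, which the paper has already established for $\vect X$. Your closing remark correctly identifies why the integrality characterization cannot prove part~(b) and why the indecomposable-components characterization is the right tool there.
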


Our definition of essentially finite sheaf is more elementary and direct than Nori's, and works more generally.

\begin{definition}
An object $E$ of $\cA$ is \emph{essentially finite} if it is the kernel of a homomorphism between two finite objects.
\end{definition}

If $X$ is a \psp fibered category and $E$ is a locally free sheaf on $X$, then we say that $E$ is finite, or essentially finite, when it has the corresponding property when viewed as an object of $\vect X$.

\begin{proposition}\label{prop:all-essentially-finite}
Let $\Phi$ be a profinite gerbe over $\kappa$. Then all representations of $\Phi$ are essentially finite.

Furthermore, if the characteristic of $\kappa$ is $0$ all representations are finite, and the category $\rep \Phi$ is semisimple.
\end{proposition}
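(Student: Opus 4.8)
The plan is to reduce everything to the case of a finite gerbe, and there to exploit a ``regular representation''. Write $\Phi = \projlim_{i}\Gamma_{i}$ as a projective limit of finite gerbes, and recall the equivalence $\indlim_{i}\rep\Gamma_{i}\arr\rep\Phi$ established just above: every representation of $\Phi$ is isomorphic to $\pi_{i}^{*}E_{i}$ for some $i$ and some $E_{i}\in\rep\Gamma_{i}$, where $\pi_{i}\colon\Phi\arr\Gamma_{i}$ is the projection. The functor $\pi_{i}^{*}$ is $\kappa$-linear, exact (being pullback of vector bundles along a flat morphism of gerbes) and monoidal; hence it preserves both finiteness and essential finiteness, since condition~(c) of the definition of finite is visibly stable under any tensor functor, and an exact tensor functor carries a kernel of a map of finite objects to a kernel of a map of finite objects. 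Thus it suffices to prove that \emph{every representation of a finite gerbe $\Gamma$ is essentially finite}, and, when $\cha\kappa=0$, finite.

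So fix a finite gerbe $\Gamma$ and choose a finite flat surjective morphism $p\colon U\arr\Gamma$ with $U$ a finite $\kappa$-scheme (such a $U$ exists by the argument proving Proposition~\ref{prop:char-finite}). Set $\cR\eqdef p_{*}\cO_{U}$, a locally free sheaf on $\Gamma$, playing the role of the regular representation. The key input is the \emph{tensor identity} $\cR\otimes E\simeq\cR^{\oplus\rk E}$, valid for every $E\in\vect\Gamma$. I would verify this after a field extension $\kappa'/\kappa$ splitting $\Gamma$, where $\Gamma_{\kappa'}\simeq\cB_{\kappa'}G$ for a finite group scheme $G$, $\cR$ becomes the ordinary regular representation, and the identity is the standard isomorphism of Hopf modules; being an isomorphism of sheaves defined canonically via comultiplication, it descends along the fpqc cover $\Gamma_{\kappa'}\arr\Gamma$. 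Two consequences follow. First, taking $E=\cR$ gives $\cR^{\otimes 2}\simeq\cR^{\oplus\rk\cR}$, so $\cR$ is finite by condition~(c). Second, the unit $E\arr p_{*}p^{*}E\simeq E\otimes\cR\simeq\cR^{\oplus\rk E}$ is injective, because $p$ is faithfully flat; thus every $E$ embeds into a finite object $\cR^{m}$. Letting $Q$ be the cokernel of such an embedding, which is again locally free since every coherent sheaf on $\Gamma$ pulls back to a free sheaf on a cover $\spec\kappa'\arr\Gamma$ and local freeness is fpqc-local, the same argument embeds $Q\into\cR^{k}$, and the composite $\cR^{m}\arr Q\into\cR^{k}$ exhibits $E$ as the kernel of a map between the finite objects $\cR^{m}$ and $\cR^{k}$. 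Hence $E$ is essentially finite, in any characteristic.

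The delicate step is precisely this construction of $\cR$ and the descent of the tensor identity to the possibly non-neutral gerbe $\Gamma$; everything else is formal. For the final assertions, assume $\cha\kappa=0$. Then each finite gerbe $\Gamma_{i}$ is étale, so $\rep\Gamma_{i}$ is semisimple with finitely many simple objects $S_{1},\dots,S_{r}$; consequently each tensor power $E_{i}^{\otimes n}$ is a direct sum of the $S_{\ell}$, so the set of indecomposable (equivalently, simple) components of all powers of $E_{i}$ is finite, and $E_{i}$ is finite by condition~(d). Transporting along $\pi_{i}^{*}$ as above shows every representation of $\Phi$ is finite. Finally $\rep\Phi=\indlim_{i}\rep\Gamma_{i}$ is a filtered colimit of semisimple categories along exact faithful tensor functors: any representation and any subobject already live in a common $\rep\Gamma_{i}$, where the subobject is a direct summand, so the splitting persists in $\rep\Phi$; hence $\rep\Phi$ is semisimple.
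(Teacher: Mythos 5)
Your overall architecture is the same as the paper's: reduce via the equivalence $\indlim_{i}\rep\Gamma_{i}\simeq\rep\Phi$ to a single finite gerbe $\Gamma$, embed an arbitrary representation $E$ into copies of $\cR=p_{*}\cO_{U}$ via the adjunction unit, prove $\cR$ is finite via a tensor identity, and dispose of the cokernel the same way (the paper does all of this through Lemma~\ref{lem:reduced-finite-stacks}, with the cover a \emph{connected} finite scheme $T$). The one step where you diverge --- and which you yourself flag as the delicate one --- is your justification of the tensor identity $\cR\otimes E\simeq\cR^{\oplus\rk E}$, and as written it does not hold up. First, the isomorphism is not canonical, even over a neutral gerbe: for $\Gamma=\cB_{\kappa}G$ the Hopf-module theorem identifies $\kappa[G]\otimes E$ with $\kappa[G]\otimes E_{0}$, where $E_{0}$ is the underlying space with the trivial action, and the further identification $E_{0}\simeq\cO^{\oplus\rk E}$ requires a choice of basis; so there is no ``canonical isomorphism defined via comultiplication'' to descend, and the mere existence of an isomorphism after base change to $\kappa'$ does not descend along $\Gamma_{\kappa'}\arr\Gamma$ without a genuine argument (Noether--Deuring for the finite extension $\kappa'/\kappa$ would rescue existence over $\kappa$, but you do not invoke it). Second, after base change $\cR_{\kappa'}$ is not the regular representation in general: it is $\cO(\widetilde{U})$ for the $G$-torsor $\widetilde{U}\eqdef U_{\kappa'}\times_{\cB_{\kappa'}G}\spec\kappa'$ over $U_{\kappa'}$, which becomes a sum of copies of $\kappa'[G]$ only after trivializing that torsor.

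The irony is that the identity you want has a one-line proof over $\kappa$ itself, with no base change, and it is exactly what the paper does: by the projection formula, $\cR\otimes E\simeq p_{*}(p^{*}E)$, and $p^{*}E$ is free of rank $\rk E$ because $U$ is a finite (artinian) $\kappa$-scheme, on which locally free of constant rank implies free; hence $\cR\otimes E\simeq\cR^{\oplus\rk E}$, and in particular $\cR^{\otimes 2}\simeq\cR^{\oplus\deg p}$. (The paper takes the cover connected precisely so that it is local artinian; constancy of rank makes even that unnecessary.) With this substitution your proof closes up and essentially coincides with the paper's. Your remaining steps are sound: the local freeness of the cokernel $Q$ via the fpqc cover $\spec\kappa'\arr\Gamma$ (Proposition~\ref{prop:properties-fpqc-gerbe}) is correct and arguably slicker than the paper's appeal to generic flatness in Lemma~\ref{lem:reduced-finite-stacks}; and your characteristic-zero argument (finite gerbes are \'etale, their representation categories are semisimple with finitely many simples, so condition~(d) gives finiteness, and semisimplicity passes to the filtered colimit) is valid, though it differs in mechanism from the paper's, which instead checks that $\H^{0}\colon\rep\Gamma\arr\vect_{\kappa}$ is exact and then gets finiteness from essential finiteness plus semisimplicity via Proposition~\ref{prop:properties-finite}.
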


\begin{proof}
Since $\rep \Phi$ is the colimit of categories $\rep \Gamma$, where $\Gamma$ is finite, it is enough to show both parts when $\Phi = \Gamma$ is finite. Then the first part follows from Lemma \refall{lem:reduced-finite-stacks}{a}.

For the second part it is enough to check that the functor $\H^{0}\colon \rep \Gamma \arr \vect_{\kappa}$ is exact. For this we can make a finite extension of the base field, and assume that there exists a section $\spec\kappa \arr \Gamma$. Then $\Gamma$ is of the type $\cB_{\kappa}G$, where $G$ is a finite group scheme on $\kappa$, and the result is standard.
\end{proof}

\begin{theorem}\label{thm:tannakian-description}
Suppose that $X$ is \infl and \psp  over $\kappa$. Then the pullback functor $\rep \univ X \arr \vect X$ gives an equivalence of tensor categories of $\rep \univ X$ with $\efin X$.

Furthermore, if the characteristic of $\kappa$ is $0$ we have $\fin X = \efin X$.
\end{theorem}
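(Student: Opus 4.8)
The plan is to construct the equivalence in two stages, exploiting the universal property of $\univ X$ together with the tannakian duality for affine fpqc gerbes. First I would define the pullback functor: the morphism $X \arr \univ X$ induces $\rep \univ X = \vect \univ X \arr \vect X$, and since $\univ X$ is a profinite gerbe, every representation is essentially finite by Proposition~\ref{prop:all-essentially-finite}; pulling back preserves finiteness and kernels, so the functor lands in $\efin X$. The goal is then to show this functor $\rep \univ X \arr \efin X$ is fully faithful and essentially surjective.

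For essential surjectivity I would argue that every essentially finite bundle on $X$ arises by pullback. The key idea is that a finite locally free sheaf $E$ of rank $r$ on $X$ should give rise to a morphism $X \arr \Gamma$ to a finite gerbe, via the monodromy/Tannakian subcategory it generates. Concretely, the full tensor subcategory of $\vect X$ generated by $E$ and its duals has only finitely many indecomposables (condition (d) of the finite definition), so after a suitable rigid-abelian completion it is the representation category of a finite gerbe $\Gamma$, and the tautological inclusion gives a map $X \arr \Gamma$ factoring the structure of $E$. By the universal property of the fundamental gerbe, $X \arr \Gamma$ factors as $X \arr \univ X \arr \Gamma$, so $E$ is pulled back from a representation of $\univ X$. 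For a general essentially finite $E$, realized as a kernel $0 \arr E \arr F_1 \arr F_2$ of finite bundles, I would use that $\rep \univ X \arr \efin X$ is exact and that both $F_1, F_2$ descend to $\univ X$, so the kernel does too (this requires $\rep \univ X$ to be an abelian subcategory closed under kernels, which holds since $\univ X$ is a gerbe and its representation category is tannakian).

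For full faithfulness I would compute $\hom$-spaces as global sections of internal homs. Given representations $V, W$ of $\univ X$, a homomorphism of the pulled-back bundles is a global section of $\curshom(V, W)$ pulled back to $X$; I would show that
\[
\H^0\bigl(X, \curshom(V,W)\rest{X}\bigr) = \H^0\bigl(\univ X, \curshom(V,W)\bigr).
\]
This is where the hypotheses that $X$ is \infl and \psp do real work: by the preceding Lemma $\H^0(X,\cO) = \kappa$, and more generally I expect global sections of a pullback from $\univ X$ to agree with global sections on $\univ X$ itself, using that $X \arr \univ X$ is a faithfully flat fpqc cover (or at least behaves like one on global invariants), via Remark~\ref{rmk:base-change} applied to a presentation. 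Reducing to finite gerbes through the colimit $\rep \univ X = \indlim \rep \Gamma$, I would establish the identity for each finite quotient $\Gamma$ via a Nori-reduced factorization and Lemma~\ref{lem:unique-arrow}.

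The main obstacle will be essential surjectivity, specifically the step that produces a finite gerbe out of a finite bundle $E$ and realizes the morphism $X \arr \Gamma$. The clean way is tannakian: the rigid abelian tensor category generated by $E$ must be identified with $\rep \Gamma$ for a genuine finite gerbe $\Gamma$, and one must check that the fiber functor and the resulting map $X \arr \Gamma$ are compatible so that the universal property applies. Controlling this — in particular verifying that the generated category is \emph{abelian} (not merely additive), which is exactly where essential finiteness rather than finiteness is needed — and matching it with the finite-gerbe structure theory of Section~\ref{sec:finite-stacks}, is the delicate heart of the argument. The final characteristic-zero assertion $\fin X = \efin X$ should then follow from Proposition~\ref{prop:all-essentially-finite}: in characteristic $0$ every representation of a profinite gerbe is finite and $\rep \univ X$ is semisimple, so every essentially finite bundle, being pulled back from a finite representation, is itself finite.
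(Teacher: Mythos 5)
Your skeleton (pullback lands in $\efin X$ by Proposition~\ref{prop:all-essentially-finite}; full faithfulness via global sections after reducing to finite Nori-reduced quotients; essential surjectivity first for finite bundles and then for kernels; the characteristic-zero claim from semisimplicity) matches the paper's, and your reduction of the essentially finite case to the finite case and your char-$0$ argument are correct. But essential surjectivity for a \emph{finite} bundle --- which you yourself flag as the delicate heart --- is a genuine gap, and the route you propose is the one the paper deliberately avoids. The tensor subcategory of $\vect X$ generated by $E$ is additive and rigid, but there is no reason it is abelian: cokernels of maps of vector bundles on a \psp fibered category need not be locally free, and $X$ may have no rational point, so no fiber functor is in sight; this is exactly why Nori needed properness, a rational point, and semistable sheaves. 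Worse, the assertion that the category generated by a finite bundle is tannakian is essentially a fragment of the theorem being proved (the implication ``\infl $\Rightarrow$ $\efin X$ tannakian'' of Theorem~\ref{thm:char-infl} is deduced \emph{from} Theorem~\ref{thm:tannakian-description}, not the other way round), so your ``suitable rigid-abelian completion'' risks circularity and in any case has no proof. The paper replaces all of this by a geometric argument: an isomorphism $\sigma\colon f(E)\simeq g(E)$ defines a lift $X \arr [I/\GL_{r}]$ of the map $X \arr \cB_{\kappa}\GL_{r}$, where $I$ is the scheme of isomorphisms $f(V)\simeq g(V)$, $V=\kappa^{r}$; Lemma~\ref{lem:finite-stabilizers} (proved via the representation ring of $\gm$ and a $\delta$-invariant for $\ga$) shows the $\GL_{r}$-action on $I$ has finite stabilizers, so orbits are closed and there is an affine geometric quotient $I/\GL_{r}$; since $X$ is \infl and \psp one has $\H^{0}(X,\cO)=\kappa$, forcing $X \arr I/\GL_{r}$ to land in a single rational point, whose fiber gives a finite stack $[\Omega/\GL_{r}]$ by Proposition~\ref{prop:char-finite}; inflexibility then produces the finite gerbe, and the universal property the factorization through $\univ X$. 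Some construction of this kind is indispensable; without it your proof does not close.

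There is also a flaw in your full-faithfulness step: the justification that $X \arr \univ X$ ``is a faithfully flat fpqc cover (or at least behaves like one on global invariants)'' is false in general --- $X$ is an arbitrary \infl \psp fibered category (for instance a non-reduced projective scheme), and a Nori-reduced morphism to a finite gerbe is almost never flat. The correct statement, and the one the paper proves, is Lemma~\ref{lem:Nori-reduced-coh-flat}: for Nori-reduced $f\colon X \arr \Gamma$ one has $f_{*}\cO_{X}=\cO_{\Gamma}$ (using coherence of $f_{*}\cO_{X}$ from pseudo-properness, Lemma~\ref{lem:image-coherent}, plus inflexibility applied to the relative spectrum of $f_{*}\cO_{X}$); full faithfulness is then the projection formula, Lemma~\ref{lem:pullback-faithfully-flat}, which yields exactly your desired identity on global sections of internal Homs. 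Lemma~\ref{lem:unique-arrow}, which you invoke instead, concerns uniqueness of $2$-isomorphisms between morphisms of gerbes and gives no cohomological information, so it cannot substitute for the pushforward computation.
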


Here is a purely tannakian consequence of the Theorem.

\begin{corollary}
Let $\cA$ be a Tannaka category. The full subcategory of $\cA$ consisting of essentially finite objects is tannakian. Furthermore, if $\cha \kappa = 0$, then every essentially finite object of $\cA$ is finite.
\end{corollary}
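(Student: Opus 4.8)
The plan is to deduce this purely tannakian statement from the geometric Theorem~\ref{thm:tannakian-description} by means of Tannaka duality. The first step is to invoke the theorem of Saavedra Rivano and Deligne: a Tannaka category $\cA$ over $\kappa$ is equivalent, as a $\kappa$-linear rigid tensor category, to $\rep \Phi$ for some affine fpqc gerbe $\Phi$. Since the notions of finite and essentially finite object are defined entirely within the rigid tensor structure (via the ring $\ikt$ and the splitting of idempotents), such an equivalence carries the full subcategory of essentially finite objects of $\cA$ onto the subcategory $\efin \Phi$ of essentially finite locally free sheaves on $\Phi$. Thus it is enough to treat the case $\cA = \rep \Phi$.

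The second step is to check that $\Phi$ meets the hypotheses of Theorem~\ref{thm:tannakian-description}. An affine fpqc gerbe is \psp by Examples~\ref{expl:pseudo-propre} and \infl by Proposition~\refall{prop:properties-infl}{b}; hence the theorem applies with $X = \Phi$, giving an equivalence of tensor categories $\rep \univ \Phi \simeq \efin \Phi$. Since $\univ \Phi$ is a profinite gerbe, in particular an fpqc gerbe, its representation category $\rep \univ \Phi$ is tannakian, being the category of representations of an fpqc gerbe. Transporting this along the equivalences of the first step shows that the essentially finite objects of $\cA$ form a tannakian category.

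For the characteristic-zero claim I would simply quote the last sentence of Theorem~\ref{thm:tannakian-description}: when $\cha \kappa = 0$ one has $\fin \Phi = \efin \Phi$, so that every essentially finite locally free sheaf on $\Phi$ is already finite. As finiteness too is an intrinsic tensor-categorical property, the equivalence $\cA \simeq \rep \Phi$ translates this into the assertion that every essentially finite object of $\cA$ is finite.

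The only point requiring care is the matching of the abstract, tensor-categorical notion of (essentially) finite object in $\cA$ with the geometric notion on $\Phi$; but this is a formality, since both are phrased solely in terms of the indecomposable $K$-theory ring $\ikt$, which any equivalence of $\kappa$-linear rigid tensor categories (with finite-dimensional Hom-spaces and split idempotents) preserves. Everything else is a direct appeal to Theorem~\ref{thm:tannakian-description} and to Tannaka duality, so I do not anticipate a substantial obstacle.
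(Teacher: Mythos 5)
Your proof is correct and follows essentially the same route as the paper: identify $\cA$ with $\rep\Phi$ for an affine fpqc gerbe $\Phi$, note that $\Phi$ is \infl (Proposition \refall{prop:properties-infl}{b}) and \psp (Examples~\ref{expl:pseudo-propre}), and apply Theorem~\ref{thm:tannakian-description}. Your added remark that (essential) finiteness is intrinsic to the rigid tensor structure via $\ikt$, and hence preserved by the equivalence, is a point the paper leaves implicit, but it is a correct and harmless elaboration rather than a different argument.
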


We don't know a purely tannakian proof of this, that does not use the formalism of affine gerbes.

\begin{proof}
	Let $\Phi$ be an affine fpqc gerbe, such that $\cA$ is equivalent to $\rep \Phi$. Then $\Phi$ is \infl (Proposition \refall{prop:properties-infl}{b}) and pseudo-proper (Examples \ref{expl:pseudo-propre}). By Theorem~\ref{thm:tannakian-description}, the category of essentially finite objects in $\cA$ is equivalent to the category of representations of the universal gerbe $\univ{\Phi}$, and the result follows.
\end{proof}

For the proof of the Theorem~\ref{thm:tannakian-description}, call $F\colon X \arr \univ X$ the morphism, and consider $F^{*}\colon \rep \univ X \arr \vect X$. The functor $F^{*}$ is an exact functor of tensor categories; as such, it carries finite representations into finite locally free sheaves, and essentially finite representations into essentially finite locally free sheaves. By Proposition~\ref{prop:all-essentially-finite} the image of $F^{*}$ is contained in $\efin X$.

Let us check that $F^{*}$ is fully faithful. The category $\rep \univ X$ is the colimit of the categories $\rep \Gamma$ over the category of Nori-reduced morphism $X \arr \Gamma$; hence it is enough to show that if $f\colon X \arr \Gamma$ is a Nori-reduced morphism, the pullback functor $f^{*}\colon \rep \Gamma \arr \vect X$ is fully faithful; this follows from Lemma~\ref{lem:pullback-faithfully-flat} and from the following.

\begin{lemma}\label{lem:Nori-reduced-coh-flat}
Let $f\colon X \arr \Gamma$ be a Nori-reduced morphism. Then $f_{*}\cO_{X} = \cO_{\Gamma}$.
\end{lemma}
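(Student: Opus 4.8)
The plan is to show that the coherent-looking $\cO_{\Gamma}$-algebra $\cB \eqdef f_{*}\cO_{X}$ is in fact equal to $\cO_{\Gamma}$, by exhibiting $\underspec_{\Gamma}\cB$ as a finite stack and then invoking the very definition of Nori-reduced. The guiding analogy is the classical fact that a proper morphism with geometrically connected and reduced fibres satisfies $f_{*}\cO = \cO$; here the role of properness is played by pseudo-properness of $X$, and the role of connectedness of the fibres by inflexibility together with Nori-reducedness of $f$.

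Granting for the moment that $\cB = f_{*}\cO_{X}$ is coherent (i.e.\ finite) as an $\cO_{\Gamma}$-module, the conclusion is formal. Then $\Delta \eqdef \underspec_{\Gamma}\cB \arr \Gamma$ is a finite representable morphism, so $\Delta$ is a finite stack over $\kappa$. The canonical morphism $X \arr \Delta$ over $\Gamma$ corresponds, under the relative-$\spec$ adjunction, to the identity homomorphism $\cB \arr f_{*}\cO_{X}$; since this algebra map is injective, the scheme-theoretic image of $X$ in $\Delta$ is all of $\Delta$. As $X$ is inflexible (a hypothesis built into the notion of Nori-reduced), this scheme-theoretic image is a gerbe, so $\Delta$ is a finite gerbe and the representable structure morphism $\Delta \arr \Gamma$ is a morphism of finite gerbes. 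Because the composite $X \arr \Delta \arr \Gamma$ is canonically identified with $f$, Nori-reducedness of $f$ forces $\Delta \arr \Gamma$ to be an isomorphism, whence $\cB = \cO_{\Gamma}$.

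The main obstacle is therefore precisely the coherence of $\cB = f_{*}\cO_{X}$: a priori the pushforward could be an infinite-dimensional quasi-coherent algebra, in which case $\underspec_{\Gamma}\cB$ is merely affine over $\Gamma$ and the finite-stack machinery (hence inflexibility) does not apply. This is where pseudo-properness is essential. The category $\rep \Gamma = \coh \Gamma$ is equivalent to the category of finite-dimensional modules over a finite-dimensional $\kappa$-algebra; in particular it has finite length, finitely many simple objects, and a projective generator $P$, and the exact faithful functor $\hom_{\Gamma}(P, -)$ satisfies $\dim_{\kappa}\hom_{\Gamma}(P, M) \ge \operatorname{length}(M)$ for every object $M$. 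Writing $f_{*}\cO_{X} = \indlim_{j} A_{j}$ as the filtered union of its coherent (hence locally free) subsheaves, left-exactness of $\hom_{\Gamma}(P,-)$ and the projection adjunction give
\[
   \dim_{\kappa}\hom_{\Gamma}(P, A_{j}) \le \dim_{\kappa}\hom_{\Gamma}(P, f_{*}\cO_{X})
   = \dim_{\kappa}\hom_{X}(f^{*}P, \cO_{X})
   = \dim_{\kappa}\H^{0}\bigl(X, f^{*}(P^{\vee})\bigr),
\]
and the right-hand side is finite since $f^{*}(P^{\vee})$ is a vector bundle on the pseudo-proper stack $X$. Hence the lengths $\operatorname{length}(A_{j})$ are uniformly bounded, so $f_{*}\cO_{X}$ has finite length and is coherent, completing the proof.

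Two technical points will need care. First, one must justify that $\rep \Gamma$ has a projective generator with the stated length estimate; I would deduce this from the tannakian description of a finite gerbe, or directly after a faithfully flat base change $\spec K \arr \Gamma$ reducing to finite-dimensional comodules, noting that coherence of $f_{*}\cO_{X}$ may be checked after such a base change (using Remark~\ref{rmk:base-change} for the compatibility of $\H^{0}$ with field extension). Second, one must check the adjunction and base-change identities used for $\hom_{X}(f^{*}P, \cO_{X}) = \H^{0}(X, f^{*}(P^{\vee}))$ at the level of fibered categories; these are routine given that $f^{*}$ of a locally free sheaf on $\Gamma$ is locally free on $X$.
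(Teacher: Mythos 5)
Your proof is correct in outline, and its second half takes a genuinely different route from the paper's. The reduction step---coherence of $\cB = f_{*}\cO_{X}$ implies the lemma, via the relative spectrum, inflexibility forcing the scheme-theoretic image of $X$ to be a gerbe, and Nori-reducedness forcing the finite representable (hence faithful) morphism $\underspec_{\Gamma}\cB \arr \Gamma$ to be an isomorphism---is exactly the paper's argument (note that inflexibility and pseudo-properness of $X$ are ambient hypotheses of the section rather than part of the definition of Nori-reduced, but the paper's own proof uses them in the same implicit way). Where you diverge is the coherence of $f_{*}\cO_{X}$, which the paper isolates as Lemma~\ref{lem:image-coherent}: there one pulls back along a finite flat chart $\pi\colon T \arr \Gamma$ with $T$ a finite scheme, observes that $X' = X \times_{\Gamma} T$ is again \psp so that $f'_{*}\cO_{X'}$ is a coherent $\cO_{T}$-algebra because $\H^{0}(X', \cO)$ is finite-dimensional, and concludes by pushing forward along the finite $\pi$ and using $\cO_{X} \into \rho_{*}\cO_{X'}$. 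Your alternative---bounding the lengths of the coherent subsheaves $A_{j}$ of $f_{*}\cO_{X}$ by $\dim_{\kappa}\hom_{\Gamma}(P, f_{*}\cO_{X}) = \dim_{\kappa}\H^{0}(X, f^{*}(P^{\vee}))$ for a projective generator $P$---is a valid replacement, and your hedge about the existence of $P$ is resolved exactly as you suggest: since $\Gamma$ is finite there is a finite extension $K/\kappa$ with $\Gamma(K) \neq \emptyset$, so $\Gamma_{K} \simeq \cB_{K}G$ and $\coh \Gamma_{K}$ is the category of finite-dimensional modules over the finite-dimensional dual algebra of $K[G]$; coherence descends along the faithfully flat $\spec K \arr \spec \kappa$, the formation of $f_{*}\cO_{X}$ commutes with this flat base change by the equalizer description of Remark~\ref{rmk:base-change}, and $X_{K}$ stays \psp because $K/\kappa$ is finite. (Note also that $\coh \Gamma = \vect \Gamma$ since a finite gerbe is reduced, by Proposition~\ref{prop:gerbe<->geom} and Lemma~\refall{lem:reduced-finite-stacks}{a}, so $P^{\vee}$ and $f^{*}P$ are indeed locally free.) As for what each approach buys: the paper's argument is more elementary---no projective generators or length estimates---and, importantly, it proves coherence for a morphism to an \emph{arbitrary} finite stack $\Gamma$, which is reused in the proof of Theorem~\ref{thm:char-infl}; your argument is confined to gerbes, since it needs $\coh \Gamma$ to be a finite-length category with a projective generator, which suffices for this lemma but not for that later application.

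One small gap: before writing $f_{*}\cO_{X} = \indlim_{j} A_{j}$ as the filtered union of its coherent subsheaves (and before invoking base-change compatibility) you must know that $f_{*}\cO_{X}$ is quasi-coherent, which is not automatic for a pushforward from a fibered category. The paper handles this with a short descent argument: choosing $U \arr X$ as in the definition of pseudo-properness and setting $R \eqdef U \times_{X} U$, the sheaf $f_{*}\cO_{X}$ is the equalizer of ${f_{U}}_{*}\cO_{U} \double {f_{R}}_{*}\cO_{R}$, and these are quasi-coherent because $U$ and $R$ are quasi-compact and quasi-separated. The filtered-union statement itself is \cite[Proposition~15.4]{laumon-moret-bailly}, as used in the proof of Lemma~\refall{lem:reduced-finite-stacks}{b}. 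With these points added, your proof is complete.
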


\begin{proof}
From Lemma~\ref{lem:image-coherent} below, we have that $f_{*}\cO_{X}$ is a coherent sheaf of $\cO_{\Gamma}$-algebras; let $\Gamma' \arr \Gamma$ be its relative spectrum. Then $\Gamma'$ is a finite stack, the morphism $\Gamma' \arr \Gamma$ is finite and representable, and $f$ factors as $X \xarr{f'}\Gamma' \arr \Gamma$. Since $X$ is \infl, there exists a closed subgerbe $\Gamma'' \subseteq \Gamma'$ such that $f'$ factors through $\Gamma''$. If $I \subseteq \cO_{\Gamma'}$ is the sheaf of ideals of $\Gamma''$ in $\Gamma'$, we have that all the elements of $I$ pull back to zero on $X$. By the definition of $\Gamma'$ it follows that $I = 0$, so $\Gamma' = \Gamma''$ is a gerbe. But $f$ is Nori-reduced, so $\Gamma' = \Gamma$, and $f_{*}\cO_{X} = \cO_{\Gamma}$.
\end{proof}

Next we need to show that every essentially finite locally free sheaf is isomorphic to a pullback from $\univ X$. Since the functor $F^{*}$ is exact and the category $\rep \univ X$ is abelian, we may assume that $E$ is finite.

Let $f$ and $g$ be distinct polynomials in $\NN[x]$, such that there exists an isomorphism $\sigma$ of $f(E)$ with $g(E)$. Let $r$ be the rank of $E$. Set $V \eqdef k^{r}$, and denote by $I$ the scheme representing the isomorphisms of $f(V)$ with $g(V)$. It is isomorphic to $\GL_{N}$, where $N \eqdef f(r) = g(r)$; in particular, it is affine. There is a natural left action of $\GL_{r}$ on $I$; the isomorphism $\sigma$ gives a lifting $X \arr [I/\GL_{r}]$ of the morphism $X \arr \cB_{\kappa}\GL_{r}$ corresponding to the locally free sheaf $E$. We need to show that the morphism $X \arr \cB_{\kappa}\GL_{r}$ factors through a finite gerbe; for this it is sufficient to prove that the scheme-theoretic image of $X$ in $[I/\GL_{r}]$ is a finite stack.

\begin{lemma}\label{lem:finite-stabilizers}
The action of $\GL_{r}$ on $I$ has finite stabilizers.
\end{lemma}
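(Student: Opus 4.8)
The plan is to prove that the stabilizer $H \eqdef \mathrm{Stab}_{\GL_r}(\sigma)$, which is a closed subgroup scheme of $\GL_r$ and hence affine of finite type over $\kappa$, is finite. Such a group scheme is finite exactly when $\dim H = 0$, and this can be tested after base change to $\bar\kappa$, where it amounts to $H(\bar\kappa)$ being finite, equivalently to $H^{0}_{\mathrm{red}}$ (a smooth connected group over $\bar\kappa$) containing no nontrivial one-parameter subgroup $\gm$ or $\ga$. Write $\rho_f,\rho_g\colon \GL_r \to \GL_N$ for the representations on $f(V)$ and $g(V)$ built from the standard representation $V$ by the direct sums and tensor powers encoded in $f$ and $g$. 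By definition of the action, $A\in H$ iff $\sigma\,\rho_f(A) = \rho_g(A)\,\sigma$, so $\sigma$ exhibits an isomorphism $\rho_f|_H \simeq \rho_g|_H$. The structural input I will use throughout is that, since $f,g\in\NN[x]$ involve no duals, the characters are $\mathrm{ch}\,\rho_f = f(\chi_V)$ and $\mathrm{ch}\,\rho_g = g(\chi_V)$, where $\chi_V$ is the character of $V$ (because $\mathrm{ch}(V^{\otimes n})=\chi_V^{\,n}$).

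First I would rule out a nontrivial torus. If $H^{0}_{\mathrm{red}}$ contained a $\gm$ given by a cocharacter with weights $w_1,\dots,w_r$ not all zero, then restricting the isomorphic representations and comparing characters yields $f(P(q)) = g(P(q))$ in $\ZZ[q,q^{-1}]$, where $P(q)=\sum_i q^{w_i}$ is the character of $V$ along the torus. Since the $w_i$ are not all zero, $P$ is a nonconstant Laurent polynomial; as $f-g\neq 0$ and $(f-g)(r)=0$ forces $\deg(f-g)\geq 1$, the composite $(f-g)(P(q))$ is a nonzero Laurent polynomial, a contradiction. (This is exactly the elementary observation that $f(\mathrm{tr}\,A)=g(\mathrm{tr}\,A)$, i.e. $\mathrm{tr}\,A$ lies in the finite root set of $f-g$, run along the torus.) Hence $H$ contains no nontrivial torus.

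The remaining, decisive, case is a nontrivial additive subgroup $U\simeq \ga\subseteq H$. Because the grading of $f(V)$ and $g(V)$ by tensor degree is $\GL_r$-stable, $U$ preserves each graded piece, so as $U$-modules $f(V)\simeq\bigoplus_n a_n\,T^{\otimes n}$ and $g(V)\simeq\bigoplus_n b_n\,T^{\otimes n}$, where $T\eqdef V|_U$ is the (nontrivial) restriction of $V$. The Krull--Schmidt theorem for $\ga$-modules then forces the two classes to agree in the Green ring, giving $f([T])=g([T])$, i.e. $(f-g)([T])=0$. In characteristic $0$ this is immediately contradictory: Jacobson--Morozov places the nilpotent generator of $\mathrm{Lie}\,U$ in an $\mathfrak{sl}_2$-triple, and the associated weight map is a ring homomorphism from the Green ring to $\ZZ[q,q^{-1}]$ sending $[T]$ to a nonconstant Laurent polynomial $w(q)$ with $w(1)=r$; thus $(f-g)(w(q))\neq 0$, so $\dim H=0$ and $H$ is finite.

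The main obstacle is precisely this additive case in positive characteristic. There the eigenvalue and character methods give nothing: every unipotent $A$ satisfies $\mathrm{tr}(A^m)=r$ and $f(r)=g(r)$, so traces and characteristic polynomials are blind to $U$, and the $\mathfrak{sl}_2$ weight homomorphism is unavailable. Moreover the first-order datum is genuinely insufficient, as a small example shows: for $r=2$, $f=x^2$, $g=2x$ in characteristic $2$ the nilpotent operators $\mathrm{d}\rho_f(X)$ and $\mathrm{d}\rho_g(X)$ both have Jordan type $(2,2)$, so restriction to $\mathrm{Lie}\,U$, or to the first Frobenius kernel $\alpha_p$, fails to separate $f(V)$ from $g(V)$. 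What does separate them is the full $\ga$-module structure --- the higher divided-power (Hasse--Schmidt) operators beyond the differential (in the example, the surviving $s^2$-term $X^{\otimes 2}$) --- so the argument must exploit the restrictions to the higher Frobenius kernels of $\ga$ and their module invariants to conclude $(f-g)([T])\neq 0$. Establishing this positive-characteristic separation is the crux, and is where I expect the real work to lie.
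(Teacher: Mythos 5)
Your overall strategy coincides with the paper's: extend to $\bar\kappa$, reduce finiteness of the stabilizer to showing that it contains no copy of $\gm$ or $\ga$, and kill the torus case by a character computation. Your $\gm$ argument is in substance identical to the paper's, which phrases the same transcendence observation as: the representation ring of $\gm$ is $\ZZ[t^{\pm 1}]$, $\ZZ$ is algebraically closed in $\ZZ[t^{\pm 1}]$, the class of a nontrivial $V$ does not lie in $\ZZ$, and semisimplicity of $\gm$-representations upgrades the equality of classes to an isomorphism statement. But your proof is not complete where it matters: Theorem~\ref{thm:tannakian-description} carries no hypothesis on $\cha\kappa$, and for the additive case you only settle characteristic $0$ (via Jacobson--Morozov and the $\mathfrak{sl}_2$ weight homomorphism), explicitly deferring positive characteristic as the place ``where the real work lies.'' That deferred step is a genuine gap, not a routine verification, and your proposed route through higher Frobenius kernels is never carried out.

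The paper closes exactly this gap with an elementary invariant that works uniformly in all characteristics, after one preliminary reduction: by Krull--Schmidt one may cancel the common monomials of $f$ and $g$ and assume $\deg f \neq \deg g$ (polynomials with disjoint supports have top terms in distinct degrees). Then, for a representation $V$ of $\ga$, write the action as an invertible matrix with entries in $\kappa[t]$ and let $\delta(V)$ be the largest degree of an entry; one checks $\delta(V \oplus W) = \max\bigl(\delta(V), \delta(W)\bigr)$, $\delta(V \otimes W) = \delta(V) + \delta(W)$, and $\delta(V) = 0$ if and only if $V$ is trivial. Hence $\delta\bigl(f(V)\bigr) = (\deg f)\,\delta(V) \neq (\deg g)\,\delta(V) = \delta\bigl(g(V)\bigr)$, so $f(V) \not\simeq g(V)$. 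Your diagnosis was in fact correct --- first-order data are insufficient, and your char-$2$ example with $f = x^{2}$, $g = 2x$ is apt, since the Jordan types agree while $\delta$ gives $2 \neq 1$ --- and $\delta$ is precisely an invariant of the full Hasse--Schmidt expansion of the $\ga$-action that you gestured at; but the paper shows no Frobenius-kernel or Green-ring machinery is needed. Note also that the Krull--Schmidt reduction to $\deg f \neq \deg g$, which your characteristic-$0$ transcendence argument could afford to skip, is what makes the one-line $\delta$ computation decisive, since $\delta$ only sees the top tensor degree.
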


Let us take this for granted for the time being, and let us complete the proof. By the Lemma, all geometric orbits of $\GL_{r}$ on $I$ are closed. Since $I$ is affine and $\GL_{r}$ is geometrically reductive, there exists an affine geometric quotient $I \arr I/\GL_{r}$, whose geometric fibers are, set-theoretically, the geometric orbits of the action of $\GL_{r}$ on $I$. The composite $X \arr [I/\GL_{r}] \arr I/\GL_{r}$ must factor through a rational point $\spec\kappa \arr I/\GL_{r}$, since $\H^{0}(X, \cO_{X}) = \kappa$ and $I/\GL_{r}$ is affine. Call $\Omega$ the fiber of $I$ over this rational point; the morphism $X \arr [I/\GL_{r}]$ factors through $[\Omega/\GL_{r}]$. To conclude it is enough to show that $[\Omega/\GL_{r}]$ is a finite stack. But $[\Omega/\GL_{r}](\overline{\kappa})$ is a connected groupoid, and has quasi-finite diagonal, since the stabilizers are finite, so the result follows from Proposition~\ref{prop:char-finite}.

\begin{proof}[Proof of Lemma~\ref{lem:finite-stabilizers}]

To prove the Lemma we may extend the base field $\kappa$, and assume that it is algebraically closed. Let $\phi\colon f(V) \simeq g(V)$ be an isomorphism, and call $G$ its stabilizer; we need to show that $G$ is finite. Since the Krull--Schmidt property holds, we may assume that $\deg f \neq \deg g$.

Let $H$ be a subgroup of $G$; then $V$ has the property that $f(V)$ is isomorphic to $g(V)$ as a representation of $H$.

If $G$ is positive-dimensional, then it must contain either a copy of $\ga$ or of $\gm$; hence it is enough to show that if $H = \ga$ or $H = \gm$ and $V$ is a faithful representation of $H$, then $f(V) \not\simeq g(V)$.

For $H = \gm$ this is easy; since every representation of $\gm$ is semisimple, two representations of $\gm$ that have the same class in the ring of representations of $\gm$ are isomorphic. The ring of representations of $\gm$ is well known to be isomorphic to $\ZZ[t^{\pm 1}]$; since $\ZZ$ is algebraically closed in $\ZZ[t^{\pm 1}]$ and the class of $V$ is not in $\ZZ$, because $V$ is not trivial, $f(V) \not\simeq g(V)$.

Suppose that $H = \ga$. For any representation $V$ of $\ga$, define the $\delta$-invariant $\delta(V)$ as follows. Fix a basis of $V$, giving an isomorphism $\GL(V) \simeq \GL_{n}$. Write the action as an invertible matrix whose entries are polynomials in $\kappa[t]$; the largest degree of one of these polynomials is $\delta(V)$. It is immediate to see that $\delta(V)$ does not depend on the basis. Furthermore, we have that $\delta(V \oplus W) = \max\bigl(\delta(V), \delta(W)\bigr)$, $\delta(V \otimes W) = \delta(V) + \delta(W)$, and $\delta(V) = 0$ if and only if $V$ is trivial. Hence we have $\delta\bigl(f(V)\bigr) = (\deg f)\delta(V)$ and $\delta\bigl(g(V)\bigr) = (\deg g)\delta(V)$. Since $\delta(V) > 0$ and $\deg f \neq \deg g$ we have $\delta\bigl(f(V)\bigr) \neq \delta\bigl(g(V)\bigr)$, and $f(V) \not\simeq g(V)$.
\end{proof}

This completes the proof of Theorem~\ref{thm:tannakian-description}.

It is interesting to observe this theorem has a converse, showing that indeed the concept of \infl stack is a very natural one.

\begin{theorem}\label{thm:char-infl}
Let $X$ be a \psp fibered category over $\spec \kappa$. Then $X$ is \infl if and only if the tensor category $\efin X$ is tannakian.
\end{theorem}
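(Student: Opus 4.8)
The plan is to prove the two directions separately, leaning on the machinery already established. The forward direction (if $X$ is \infl, then $\efin X$ is tannakian) is essentially Theorem~\ref{thm:tannakian-description}: once $X$ is \infl and \psp, that theorem identifies $\efin X$ with $\rep \univ X$ as a tensor category, and $\rep \Phi$ for a profinite (hence affine fpqc) gerbe $\Phi$ is tannakian by the discussion in Section~\ref{sec:projective-limits}. So this direction requires essentially no new work beyond citing the earlier theorem.

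\medskip

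The substance is the converse: assuming $\efin X$ is tannakian, I must show $X$ is \infl. The first thing I would extract from the tannakian hypothesis is that $\efin X$ has a fiber functor to $\vect_{K}$ for some field extension $K/\kappa$, and in particular that $\cO_{X}$ is the unit object with $\End(\cO_{X}) = \H^{0}(X,\cO_X)$ a field (indeed a finite field extension of $\kappa$, by the \psp hypothesis). The tannakian axioms force the endomorphism ring of the unit to be $\kappa$ itself: in a tannakian category over $\kappa$ the unit object is simple and $\End(\mathbf 1) = \kappa$. This already delivers the conclusion of Proposition~\refall{prop:properties-infl}{a}, namely $\H^{0}(X,\cO) = \kappa$. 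The real task is to upgrade this to the full \infl condition, i.e.\ to show that every morphism $f\colon X \arr \Gamma$ to a finite stack factors through a closed subgerbe.

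\medskip

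To do this I would take $f\colon X \arr \Gamma$ to a finite stack and replace $\Gamma$ by the scheme-theoretic image $\Gamma'$, so that $\cO_{\Gamma'} \arr f_*\cO_X$ is injective; I must show $\Gamma'$ is a gerbe. By Proposition~\ref{prop:gerbe<->geom} it suffices to show $\Gamma'$ is \geom. Connectedness should follow from $\H^0(X,\cO)=\kappa$ together with the base-change behavior of $\H^0$ in Remark~\ref{rmk:base-change}, much as in Proposition~\refall{prop:geometric-properties-infl}{a}. For reducedness (the genuinely new input), the plan is to use that any vector bundle pulled back from $\Gamma'$ lands in $\efin X$ (finite stacks have essentially finite, indeed finite, bundles by Proposition~\ref{prop:all-essentially-finite}), and that the tannakian structure on $\efin X$ forces the pullback functor $\rep \Gamma' \arr \efin X$ to be exact and to reflect the abelian structure. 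The key point I expect to grind on is ruling out a nonreduced thickening: if $N$ is the nilradical ideal sheaf of $\Gamma'_{\mathrm{red}}$ in $\Gamma'$, then the structure sheaf sits in a non-split extension $0 \arr N \arr \cO_{\Gamma'} \arr \cO_{\Gamma'_{\mathrm{red}}} \arr 0$, and its pullback to $X$ would give a nontrivial self-extension of $\cO_X$ (or a nilpotent endomorphism of a finite bundle) inside $\efin X$; in a tannakian category such nilpotents on the unit are forbidden, since the unit is simple and the category is abelian semisimple-at-the-unit. This is where I expect the main obstacle to lie: carefully arranging the pullback so that the failure of reducedness of $\Gamma'$ produces an honest object or morphism in $\efin X$ that contradicts one of the tannakian axioms (rigidity, or $\End(\mathbf 1)=\kappa$). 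Once reducedness is secured geometrically over $\overline{\kappa}$ via the base-change compatibility of Remark~\ref{rmk:base-change}, Proposition~\ref{prop:gerbe<->geom} finishes the argument.
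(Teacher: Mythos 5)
Your skeleton matches the paper's (forward direction from Theorem~\ref{thm:tannakian-description}; $\H^{0}(X,\cO)=\kappa$ because $\cO_{X}$ is the unit; reduce to showing the image stack is \geom), but the heart of your converse --- ruling out nilpotents --- rests on a false principle. You claim that a non-split extension $0 \arr N \arr \cO_{\Gamma'} \arr \cO_{\Gamma'_{\mathrm{red}}} \arr 0$ would, after pullback, violate a tannakian axiom because ``nilpotents on the unit are forbidden'' and the category is ``semisimple-at-the-unit.'' No such axiom exists: a tannakian category need not be semisimple, the unit can have non-trivial self-extensions (in $\rep_{\kappa}\ga$, char~$0$, the standard two-dimensional representation is a non-split self-extension of the trivial one), and objects can have nilpotent endomorphisms. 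Moreover the pullback you propose is not even available inside $\efin X$: the sheaves $N$ and $\cO_{\Gamma'_{\mathrm{red}}}$ are not locally free on the (possibly non-reduced) $\Gamma'$, and $f^{*}$ is only right exact since $f$ is not flat. The paper's actual mechanism is different: first replace $\Gamma$ by the relative spectrum of $f_{*}\cO_{X}$ (coherent by Lemma~\ref{lem:image-coherent}), so that $\cO_{\Gamma} = f_{*}\cO_{X}$ exactly, not just injectively --- this equality is essential later. Then the key new input is Lemma~\ref{lem:exact-subcategory}: the tannakian hypothesis produces a morphism $X \arr \Phi$ to an fpqc gerbe realizing $\rep\Phi \simeq \efin X$, which is faithfully flat by Proposition~\refall{prop:properties-fpqc-gerbe}{b}, so $\efin X$ is an \emph{exact abelian subcategory} of $\cO_{X}$-modules. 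With this, one pushes forward along a finite flat chart $\rho\colon U \arr \Gamma$: a surjection $\cO_{U}^{n} \arr \rho^{*}N$ gives, after the exact $\rho_{*}$ and the right-exact $f^{*}$, a presentation of $f^{*}\rho_{*}\cO_{U_{0}}$ as a cokernel of a map of finite bundles (Lemma~\refall{lem:reduced-finite-stacks}{c}), hence locally free and nonzero; its annihilator in $\cO_{X}$ is therefore $0$, and since $\cO_{\Gamma} = f_{*}\cO_{X}$, the annihilator $N$ of $\rho_{*}\cO_{U_{0}}$ in $\cO_{\Gamma}$ vanishes. So reducedness is extracted by an annihilator computation, not by a self-extension contradiction.

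A second genuine gap: your final sentence secures \emph{geometric} reducedness ``via the base-change compatibility of Remark~\ref{rmk:base-change},'' but that remark only controls $\H^{0}$ under field extension (good enough for geometric connectedness, as you use it), whereas reducedness can fail under inseparable extension. When $\kappa$ is imperfect the paper needs a separate argument: once $\Gamma$ is reduced, Lemma~\ref{lem:reduced-finite-stacks} gives $\efin\Gamma = \coh\Gamma$, which is tannakian with gerbe $\Phi$; for a finite extension $\kappa'$ one identifies $\coh\Gamma_{\kappa'}$ with $\coh\Phi_{\kappa'}$ (coherent sheaves with a $\kappa'$-action on endomorphisms), so every coherent sheaf on $\Gamma_{\kappa'}$ is locally free, forcing $\Gamma_{\kappa'}$ to be reduced. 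Without some such argument your proof only works over perfect fields.
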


\begin{proof}
We have already seen that if $X$ is inflexible, then $\efin X$ is tannakian. So, suppose that $\efin X$ is tannakian.

Let $\Gamma$ be a finite stack with a morphism $X \arr \Gamma$; we need to show that the stack-theoretic image $\Gamma'$ of $X$ in $\Gamma$ is a gerbe. By Lemma~\ref{lem:image-coherent}, the sheaf $f_{*}\cO_{X}$ is coherent. If $\Gamma'$ is its relative spectrum over $\Gamma$, the morphism $f\colon X \arr \Gamma$ factors through $\Gamma'$; by Proposition \refall{prop:properties-infl}{b}, it is enough to prove that $\Gamma'$ is a gerbe. By replacing $\Gamma$ with $\Gamma'$, we may assume that $\cO_{\Gamma} = f_{*}\cO_{X}$.

\begin{lemma}\label{lem:exact-subcategory}
If the category $\efin X$ is tannakian, it is an exact abelian subcategory of the category of sheaves of $\cO_{X}$-modules.
\end{lemma}

\begin{proof}
Let $\Phi$ be an fpqc gerbe with an equivalence of Tannaka categories $\rep \Phi \simeq \efin X$. This equivalence is realized by a morphism $f\colon X \arr \Phi$. The pullback from $\rep \Phi$ to sheaves of $\cO_{X}$-modules is exact, by Proposition \refall{prop:properties-fpqc-gerbe}{b}, and this proves the Lemma.
\end{proof}

We have $\H^{0}(X, \cO) = \kappa$, because $\cO_{X}$ is the unit in the tannakian category $\efin X$. Since $\H^{0}(\Gamma, \cO) = \H^{0}(X, \cO) = \kappa$, we see that $\Gamma$ is geometrically connected. Now let us show that $\Gamma$ is reduced. Let $N \subseteq \cO_{\Gamma}$ the sheaf of nilpotent sections. Let $\rho\colon U \arr \Gamma$ a faithfully flat morphism from a finite connected scheme $U$, and call $U_{0}$ the inverse image of $\Gamma_{\mathrm{red}}$ in $U$; this is the subscheme whose sheaf of ideals is $\rho^{*}N$. Choose a surjective homomorphism $\cO_{U}^{n} \arr \rho^{*}N$; applying $\rho_{*}$, which is exact, because $\rho$ is representable and finite, and then pulling back along $f$ we obtain an exact sequence
   \[
   f^{*}\rho_{*}\cO_{U}^{n} \arr f^{*}\rho_{*}\cO_{U} \arr f^{*}\rho_{*}\cO_{U_{0}} \arr 0\,.
   \]
From Lemmas~\ref{lem:exact-subcategory} and \refall{lem:reduced-finite-stacks}{c},  we have that $f^{*}\rho_{*}\cO_{U_{0}}$ is locally free. Restricting to a point of $X$ we see that it can not be $0$; hence its annihilator in $\cO_{X}$ must be $0$. Since $\cO_{\Gamma} = f_{*}\cO_{X}$, the annihilator of $\rho_{*}\cO_{U_{0}}$ in $\cO_{\Gamma}$, which is $N$, must also be $0$. So $\Gamma$ is reduced, as claimed. If $\kappa$ is perfect, this is enough to conclude, by Lemma~\ref{prop:gerbe<->geom}. When $\kappa$ is not perfect we need some additional work to show that $\Gamma$ is geometrically reduced.

From Lemma~\ref{lem:reduced-finite-stacks}, we have the equality $\efin \Gamma = \coh \Gamma$. From the equality $\H^{0}(\Gamma, \cO) = \kappa$ and from the fact that $\efin \Gamma$ is an abelian category, we see that $\efin \Gamma$ is tannakian. Let $\Phi$ be the corresponding fpqc gerbe; the equivalence $\coh \Phi = \rep \Phi \simeq \efin \Gamma$ is realized as pullback along a morphism $\phi\colon \Gamma\arr \Phi$. 

Let $\kappa'$ be a finite extension of $\kappa$. The category of coherent sheaves on $\Gamma_{\kappa'}$ is equivalent to the category of coherent sheaves $F$ on $\Gamma$, with a homomorphism of $\kappa$-algebras $\kappa' \arr \End_{\cO_{\Gamma}}(F)$, and analogously for $\Phi$. Hence pullback along the natural morphism $\Gamma_{\kappa'} \arr \Phi_{\kappa'}$ yields an equivalence of categories between $\coh \Gamma_{\kappa'}$ and $\coh \Phi_{\kappa'}$. Since $\Phi_\kappa'$ is a gerbe we have $\coh \Phi_{\kappa'} = \vect \Phi_{\kappa'}$; it follows that every coherent sheaf on $\Gamma_{\kappa'}$ is locally free. This implies that $\Gamma_{\kappa'}$ is reduced (for otherwise the structure sheaf of $(\Gamma_{\kappa'})_{\mathrm{red}}$ would not be locally free). This shows that $\Gamma$ is geometrically reduced, and completes the proof of the Theorem.
\end{proof}

\subsection*{Some technical lemmas} Here we collect some lemmas that were used in the proof of the results above, in order to unclutter the exposition.

\begin{lemma}\call{lem:reduced-finite-stacks}
Let $\Gamma$ be a finite stack over $\kappa$.

\begin{enumeratea}

\itemref{c} Let $\rho\colon T \arr \Gamma$ be a faithfully flat morphism, where $T$ is a connected finite scheme over $\kappa$. Then every locally free sheaf on $\Gamma$ is a subsheaf of $(\rho_{*}\cO_{T})^{\oplus r}$ for some $r \geq 0$, and $\rho_{*}\cO_{T}$ is a finite locally free sheaf.

\itemref{a} Assume moreover that $\Gamma$ is reduced. Then every coherent sheaf on $\Gamma$ is locally free, and essentially finite.

\itemref{b} Assume again that $\Gamma$ is reduced. Every morphism from an algebraic stack to $\Gamma$ is flat.

\end{enumeratea}
\end{lemma}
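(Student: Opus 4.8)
The plan is to prove (c) by a direct computation, and then to extract (a) and (b) from a structural description of reduced finite stacks obtained after passing to the separable closure.

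First I would dispose of (c). The morphism $\rho\colon T \arr \Gamma$ is representable because $\Gamma$ has finite diagonal, and it is in fact finite: base-changing along a finite flat presentation $U \arr \Gamma$, the projection $T\times_{\Gamma}U \arr T\times U$ is the pullback of the (finite) diagonal of $\Gamma$, so $T\times_{\Gamma}U$ is finite over $\kappa$, hence finite over $U$; finiteness then descends to $\rho$. Being finite and flat, $\rho$ has $\rho_{*}\cO_{T}$ locally free of finite rank, which is the second assertion. For the first, note that $T$ connected and finite over $\kappa$ forces $\cO_{T}$ to be a local Artinian ring, so the vector bundle $\rho^{*}E$, having constant rank $r$, is free: $\rho^{*}E \simeq \cO_{T}^{\oplus r}$. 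The unit $E \arr \rho_{*}\rho^{*}E \simeq (\rho_{*}\cO_{T})^{\oplus r}$ is injective because $\rho$ is faithfully flat, giving the desired embedding.

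The heart of the matter is a structural statement: a reduced finite stack becomes, after the (reducedness-preserving) base change to $\kappa\sep$, a finite disjoint union $\coprod_{i}\cB_{L_{i}}G_{i}$, where each $L_{i}$ is a finite purely inseparable extension of $\kappa\sep$ and each $G_{i}$ is a finite constant group. The key input is that a reduced finite group scheme over any field is \'etale, since its identity component is a connected reduced finite group with rational identity, hence trivial; applying this to the automorphism group schemes of the geometric points shows the stabilizers are \'etale, and combined with Proposition~\ref{prop:char-finite} and Proposition~\ref{prop:gerbe<->geom} this yields the stated decomposition. \emph{This is the step I expect to be the main obstacle}, the delicate point being the control of geometric reducedness over an imperfect field; passing first to $\kappa\sep$, where reducedness is preserved and the residual coarse fields $L_{i}$ are purely inseparable, is what makes it tractable. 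In particular each factor $\cB_{L_{i}}G_{i}$ admits the faithfully flat cover $\spec L_{i} \arr \cB_{L_{i}}G_{i}$.

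Granting this, I would derive (a) as follows. Local freeness of a coherent sheaf $F$ may be checked after the faithfully flat base change to $\kappa\sep$ and then pulled back along the field covers $\spec L_{i}\arr \cB_{L_{i}}G_{i}$; but the pullback of $F$ to $\spec L_{i}$ is a module over the field $L_{i}$, hence free, so $F$ is locally free. For essential finiteness I would use (c): writing $\rho\colon T\arr\Gamma$ with $T$ a connected component of a presentation, $F$ embeds in $P\eqdef(\rho_{*}\cO_{T})^{\oplus r}$ with coherent, hence (by the first part) locally free, cokernel $Q$; embedding $Q$ in turn into $P'\eqdef(\rho_{*}\cO_{T})^{\oplus s}$ exhibits $F=\ker(P\arr P')$. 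It remains to see that $\rho_{*}\cO_{T}$ is a \emph{finite} object of $\vect\Gamma$, which after base change is the regular representation $L_{i}[G_{i}]$; the isomorphisms $L_{i}[G_{i}]\otimes V \simeq L_{i}[G_{i}]^{\oplus\dim V}$ show that all its tensor powers decompose into the finitely many indecomposable summands of $L_{i}[G_{i}]$, so it is finite, and this finiteness descends back to $\kappa$. Hence $F$ is the kernel of a morphism of finite objects, i.e.\ essentially finite. Finally, (b) is the easiest once the field covers are in hand: flatness of a morphism $g\colon Y\arr\Gamma$ from an algebraic stack is fppf-local on the target and insensitive to extension of $\kappa$, so I may replace $\Gamma$ by $\coprod_{i}\cB_{L_{i}}G_{i}$ and base-change $g$ along $\coprod_{i}\spec L_{i}\arr\Gamma$; the resulting morphism has target a disjoint union of spectra of fields, over which every module is flat, so the base change of $g$ is flat and flatness descends to $g$.
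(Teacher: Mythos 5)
Your proof of (c) is correct as far as the embedding $E\into(\rho_{*}\cO_{T})^{\oplus r}$ goes, and there it coincides with the paper's argument (local artinian $T$, free pullback, injective adjunction unit). But you have misread the second assertion of (c): ``$\rho_{*}\cO_{T}$ is a \emph{finite} locally free sheaf'' means finite in Nori's tensor-categorical sense ($f(E)\simeq g(E)$ for distinct $f,g\in\NN[x]$), not merely locally free of finite rank. The paper proves this on the spot, with \emph{no} reducedness hypothesis, by the projection formula: with $d=\deg\rho$ one has $\rho_{*}\cO_{T}\otimes\rho_{*}\cO_{T}\simeq\rho_{*}(\cO_{T}\otimes\rho^{*}\rho_{*}\cO_{T})\simeq(\rho_{*}\cO_{T})^{\oplus d}$, i.e.\ $E^{\otimes 2}\simeq E^{\oplus d}$. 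The generality matters, because (c) is later invoked for stacks that are not assumed reduced (in the proofs of Theorem~\ref{thm:char-infl} and Lemma~\ref{lem:faithful->tame}), whereas your proof of Nori-finiteness is deferred to (a) and attempted only under the reducedness hypothesis.

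The genuine gap is your structural claim, and it is the step you yourself flagged as the main obstacle. Reducedness of a finite stack does \emph{not} imply reducedness of its stabilizer group schemes: every gerbe is reduced, since it admits a faithfully flat cover by the spectrum of a field (Proposition~\refall{prop:properties-fpqc-gerbe}{a}); concretely, in characteristic $p$ the stacks $\cB_{k}\mmu_{p}\simeq[\gm/\gm]$ (via the Kummer torsor $\gm\arr\gm$, $x\arrto x^{p}$) and $\cB_{k}\ba_{p}\simeq[\ga/\ga]$ (via Frobenius) are reduced connected finite stacks over $k=\kappa\sep$ with infinitesimal automorphism groups. So the inference ``stack reduced $\Rightarrow$ stabilizers reduced, hence \'etale'' is false, and with it the decomposition $\coprod_{i}\cB_{L_{i}}G_{i}$ with $G_{i}$ \emph{constant}. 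Even the weakened statement with arbitrary finite group schemes $G_{i}$ would require each reduced connected component of $\Gamma_{\kappa\sep}$ to be a gerbe, hence geometrically reduced (Proposition~\ref{prop:gerbe<->geom}), which can fail over an imperfect separably closed field --- passing to $\kappa\sep$ does not make this tractable. Since all three of your downstream arguments (local freeness via pullback along $\spec L_{i}\arr\cB_{L_{i}}G_{i}$, finiteness of $\rho_{*}\cO_{T}$ via the regular representation $L_{i}[G_{i}]$, and flatness in (b) via base change along field covers) rest on this decomposition, they collapse in positive characteristic, the only case where the lemma has real content. The paper argues without any structure theory: for local freeness in (a) it uses generic flatness on a smooth presentation $\pi\colon U\arr\Gamma$ ($\pi^{*}F$ is locally free on a dense open $V\subseteq U$, and $V$ still surjects onto the zero-dimensional $\Gamma$); essential finiteness then follows by exactly your two-step embedding, but with finiteness of $\rho_{*}\cO_{T}$ supplied by the projection formula; and (b) reduces to (a) because morphisms to $\Gamma$ are affine and every quasi-coherent sheaf on $\Gamma$ is a colimit of coherent ones. (A secondary, fixable point: the descent of finiteness from $\kappa\sep$ to $\kappa$ that you assert in passing requires a Noether--Deuring type argument in the Krull--Schmidt category $\vect\Gamma$.)
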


\begin{proof}For all this, we may assume that $\Gamma$ is connected.

For \refpart{lem:reduced-finite-stacks}{c}, call $d$ the degree of $\rho$. Since $F$ is locally free, the sheaf $\rho^{*}F$ is free over $T$ (since $T$ is the spectrum of a local artinian ring, every locally free sheaf on $T$ is free); fix an isomorphism $\rho^{*}F \simeq \cO_{T}^{\oplus r}$. The adjunction homomorphism $F \arr \rho_{*}\rho^{*}F \simeq (\rho_{*}\cO_{T})^{\oplus r}$ is injective. This proves the first part of the statement. For the second part, it follows from the projection formula that
 
\begin{align*}
   \rho_{*}\cO_{T} \otimes \rho_{*}\cO_{T} &\simeq
   \rho_{*}(\cO_{T} \otimes \rho^{*}\rho_{*}\cO_{T})\\
   & \simeq \rho_{*}(\cO_{T} \otimes \cO_{T}^{\oplus d})\\
   & \simeq \rho_{*}(\cO_{T})^{\oplus d}\,.\\
   \end{align*}

For \refpart{lem:reduced-finite-stacks}{a}, take a smooth surjective morphism $\pi\colon U \arr \Gamma$, where $U$ is a scheme. Let $F$ be a coherent sheaf on $\Gamma$. Then $\pi^{*}F$ is a coherent sheaf on the reduced noetherian scheme $U$; such a sheaf is locally free on an open dense subscheme $V \subseteq U$. But $\Gamma$ is finite, so $V$ also surjects onto $\Gamma$, hence $F$ is locally free. Now take a faithfully flat morphism $\rho\colon T \arr \Gamma$, where $T$ is a connected finite scheme over $\kappa$. By embedding the cokernel of $F \arr \rho_{*}\rho^{*} F$ into a finite representation, we see that $F$ is essentially finite.

As to \refpart{lem:reduced-finite-stacks}{b}, notice that it is enough to prove that every morphism from an affine scheme to $\Gamma$ is flat. The diagonal of $\Gamma$ is affine, so such a morphism is affine, and it is enough to show that all quasi-coherent sheaves on $\Gamma$ are flat. By \cite[Proposition 15.4]{laumon-moret-bailly} every quasi-coherent sheaf on $\Gamma$ is a colimit of coherent sheaves, so the statement follows from \refpart{lem:reduced-finite-stacks}{a}.
\end{proof}

\begin{lemma}\label{lem:image-coherent}
Suppose that $f\colon X \arr \Gamma$ is a morphism of $\kappa$-algebraic stacks, where $X$ is \psp and $\Gamma$ is finite. Then $f_{*}\cO_{X}$ is a coherent sheaf of $\cO_{\Gamma}$-modules.
\end{lemma}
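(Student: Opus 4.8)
The plan is to verify coherence of $f_{*}\cO_{X}$ fppf-locally on $\Gamma$, pulling back along a finite flat cover, and to reduce the required finiteness to pseudo-properness of $X$ via flat base change. Since $\Gamma$ is a finite stack, it comes with an fppf presentation $R \double T$ by finite $\kappa$-schemes, and the cover $\rho\colon T \arr \Gamma$ is flat, surjective, and representable (the diagonal of $\Gamma$ being finite); moreover $T\times_{\Gamma}T = R$ is finite over $T$, so by fppf descent of finiteness $\rho$ is itself finite and finitely presented. As $\Gamma$ is Noetherian, coherence of the quasi-coherent sheaf $f_{*}\cO_{X}$ may be checked after the faithfully flat quasi-compact cover $\rho$ (finite type for quasi-coherent sheaves descends along fppf morphisms); here $f_{*}\cO_{X}$ is quasi-coherent because $f$ is quasi-compact and quasi-separated, which follows from the pseudo-proper cover $U \arr X$ together with the fact that its pullback $U\times_{\Gamma}T \arr U$ is finite. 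Thus it suffices to prove that $\rho^{*}f_{*}\cO_{X}$ is coherent on $T$.

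Form the Cartesian square with $X_{T}\eqdef X\times_{\Gamma}T$, and write $\pi\colon X_{T}\arr X$ and $f_{T}\colon X_{T}\arr T$ for the two projections. Since $\rho$ is flat and $f$ is quasi-compact and quasi-separated, flat base change yields $\rho^{*}f_{*}\cO_{X} \simeq (f_{T})_{*}\cO_{X_{T}}$. Being the base change of $\rho$, the morphism $\pi$ is finite, flat, and finitely presented, so $\pi_{*}\cO_{X_{T}}$ is a locally free sheaf of finite rank on $X$. Because $\pi$ is affine and $T = \spec B$ is affine, we obtain
\[
\rho^{*}f_{*}\cO_{X} \simeq (f_{T})_{*}\cO_{X_{T}} = \widetilde{M}, \qquad M \eqdef \H^{0}(X_{T},\cO_{X_{T}}) = \H^{0}\bigl(X,\pi_{*}\cO_{X_{T}}\bigr).
\]

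The decisive step is now to invoke pseudo-properness: as $\pi_{*}\cO_{X_{T}}$ is locally free of finite rank on the \psp fibered category $X$, condition \refpart{def:psp}{b} shows that $M = \H^{0}(X,\pi_{*}\cO_{X_{T}})$ is finite-dimensional over $\kappa$; since $B$ is finite over $\kappa$, $M$ is a fortiori a finite $B$-module, so $\rho^{*}f_{*}\cO_{X} = \widetilde{M}$ is coherent on $T$. Faithfully flat descent then gives that $f_{*}\cO_{X}$ is coherent on $\Gamma$. I expect the main obstacle to be the flat base change isomorphism in this stacky (and possibly non-finite-type) setting: one must arrange that $f$ is quasi-compact and quasi-separated so that the standard flat base change theorem for algebraic stacks applies, and that $\pi_{*}\cO_{X_{T}}$ is genuinely locally free (rather than merely coherent) so that condition \refpart{def:psp}{b} is applicable. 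Once these points are secured, the finiteness is an immediate consequence of pseudo-properness.
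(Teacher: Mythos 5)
Your proof is correct, and it rests on exactly the same finiteness input as the paper's: after passing to the finite flat chart $\rho\colon T \arr \Gamma$, everything reduces to the finite-dimensionality of $\H^{0}(X\times_{\Gamma}T,\cO)$, which holds because $\pi_{*}\cO_{X_{T}}$ is finite locally free on the \psp category $X$ (the paper phrases this as ``$X'$ is \psp'', justified by the very projection-formula computation you perform). Where you genuinely diverge is the descent mechanism at the end. You identify $\rho^{*}f_{*}\cO_{X}\simeq (f_{T})_{*}\cO_{X_{T}}$ by flat base change and then descend finite-type-ness of a quasi-coherent module along the fppf cover $\rho$. The paper avoids base change altogether: since $\pi\colon X_{T}\arr X$ is faithfully flat, $\cO_{X}$ injects into $\pi_{*}\cO_{X_{T}}$, hence the quasi-coherent sheaf $f_{*}\cO_{X}$ embeds into $f_{*}\pi_{*}\cO_{X_{T}}=\rho_{*}(f_{T})_{*}\cO_{X_{T}}$, which is coherent; a quasi-coherent subsheaf of a coherent sheaf on the noetherian stack $\Gamma$ is coherent. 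The paper's trick buys freedom from any base-change theorem for the (possibly non-noetherian) stacky morphism $f$; your route proves slightly more — the exact identification of $\rho^{*}f_{*}\cO_{X}$, i.e., that formation of $f_{*}\cO_{X}$ commutes with this base change — at the cost of verifying that $f$ is quasi-compact and quasi-separated.

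On that one point your justification is off: the finiteness of $U\times_{\Gamma}T\arr U$ is a statement about pullbacks of $\rho$ and has no bearing on quasi-separatedness of $f$. The correct argument is: $X$ is quasi-compact since the quasi-compact scheme $U$ surjects onto it, and quasi-separated since $U\times_{X}U\arr U\times U$ is quasi-compact (because $U\arr X$ is representable, quasi-compact and quasi-separated, and $U$ is a quasi-compact quasi-separated scheme); as $\Gamma$ is noetherian with finite diagonal, $f$ is then quasi-compact and quasi-separated. Alternatively, you can sidestep the issue entirely by the device the paper uses for quasi-coherence: write $f_{*}\cO_{X}$ as the equalizer of ${f_{U}}_{*}\cO_{U}\double {f_{R}}_{*}\cO_{R}$ with $R\eqdef U\times_{X}U$; flat pullback preserves equalizers, and flat base change for the quasi-compact quasi-separated schemes and algebraic spaces $U$ and $R$ then yields your base-change isomorphism directly. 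With either repair your proof is complete.
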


\begin{proof}
First of all, let us show that $f_{*}\cO_{X}$ is quasi-coherent. Choose a faithfully flat quasi-compact morphism $U \arr X$, and set $R \eqdef U \times_{X} U$. Call $f_{U}\colon U \arr \Gamma$ and $f_{R}\colon R \arr \Gamma$ the composites of $f$ with the morphisms $U \arr X$ and $R \arr X$. Then it is easily checked that $f_{*}\cO_{X}$ is the equalizer of the two morphisms ${f_{U}}_{*}\cO_{U} \double {f_{R}}_{*}\cO_{R}$ resulting from the projections $R \double U$. Since $U$ and $R$ are quasi-compact and quasi-separated, ${f_{U}}_{*}\cO_{U}$ and ${f_{R}}_{*}\cO_{R}$ are quasi-coherent, so the result follows.

To prove that $f_{*}\cO_{X}$ is coherent, choose a faithfully flat morphism $\pi\colon T \arr \Gamma$, where $T$ is a finite $\kappa$-scheme, and consider the cartesian diagram
   \[
   \xymatrix{
   X' \ar[r] ^-{f'} \ar[d]^{\rho} &T \ar[d]^\pi\\
   X  \ar[r] ^-{f}                &\Gamma\,.
   }
   \]
Clearly, $X'$ is \psp.
Since $\cO_{X}$ is contained in $\rho_{*}\cO_{X'}$, because $\rho$ is faithfully flat, it is enough to show that $f_{*}\rho_{*}\cO_{X'} = \pi_{*}f'_{*}\cO_{X'}$ is coherent. But $f'_{*}\cO_{X'}$ is a coherent sheaf of $\cO_{T}$-algebras, because $\H^{0}(X', \cO)$ is a finite dimensional vector space over $\kappa$. Since $\pi$ is finite, we have that $\pi_{*}f'_{*}\cO_{X'}$ is coherent.
\end{proof}

\begin{lemma}\label{lem:pullback-faithfully-flat}
Suppose that $f\colon X \arr Y$ is a morphism of fibered categories. Suppose that the natural homomorphism $\cO_{Y} \arr f_{*}\cO_{X}$ is an isomorphism. Then the pullback functor $f^{*}\colon \vect Y \arr \vect X$ is fully faithful.
\end{lemma}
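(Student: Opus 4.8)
The plan is to prove that for all vector bundles $E,F$ on $Y$ the pullback map $\hom_{Y}(E,F)\arr\hom_{X}(f^{*}E,f^{*}F)$ is bijective, and to reduce this to a single statement about global sections. Since $E$ is locally free it is dualizable, so $\hom_{Y}(E,F)=\H^{0}\bigl(Y,E^{\vee}\otimes F\bigr)$; and because $f^{*}$ is a tensor functor that commutes with duals, we likewise have $\hom_{X}(f^{*}E,f^{*}F)=\H^{0}\bigl(X,f^{*}(E^{\vee}\otimes F)\bigr)$. Under these identifications the map induced by $f^{*}$ becomes the canonical pullback map on cohomology. Hence, writing $G\eqdef E^{\vee}\otimes F$, it will be enough to prove that for every $G\in\vect Y$ the natural map $\H^{0}(Y,G)\arr\H^{0}(X,f^{*}G)$ is an isomorphism.

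Next I would rewrite the target using the adjunction between $f^{*}$ and $f_{*}$ on sheaves of $\cO$-modules. Since $\cO_{X}=f^{*}\cO_{Y}$, we obtain
\[
\H^{0}(X,f^{*}G)=\hom_{X}(\cO_{X},f^{*}G)=\hom_{X}(f^{*}\cO_{Y},f^{*}G)=\hom_{Y}(\cO_{Y},f_{*}f^{*}G)=\H^{0}(Y,f_{*}f^{*}G),
\]
and one checks that under this chain the pullback map $\H^{0}(Y,G)\arr\H^{0}(X,f^{*}G)$ is exactly $\H^{0}$ applied to the unit of adjunction $G\arr f_{*}f^{*}G$. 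So it suffices to show that this unit is an isomorphism.

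Here is where the hypothesis enters, through the projection formula. For $G$ locally free of finite rank (hence dualizable) there is a canonical isomorphism $G\otimes_{\cO_{Y}}f_{*}\cO_{X}\larrowsim f_{*}f^{*}G$: indeed, for an arbitrary $\cO_{Y}$-module $H$, using the $(f^{*},f_{*})$-adjunction together with the dualizability of $G$ and of $f^{*}G$, one has
\[
\hom_{Y}(H,f_{*}f^{*}G)=\hom_{X}(f^{*}H,f^{*}G)=\hom_{X}\bigl(f^{*}(H\otimes G^{\vee}),\cO_{X}\bigr)=\hom_{Y}(H\otimes G^{\vee},f_{*}\cO_{X})=\hom_{Y}(H,G\otimes f_{*}\cO_{X}),
\]
so the isomorphism follows by Yoneda, and one verifies that the unit $G\arr f_{*}f^{*}G$ corresponds under it to $\id_{G}\otimes(\cO_{Y}\arr f_{*}\cO_{X})$. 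By assumption $\cO_{Y}\arr f_{*}\cO_{X}$ is an isomorphism, so the unit is an isomorphism, and therefore so is $\H^{0}(Y,G)\arr\H^{0}(X,f^{*}G)$, which completes the argument.

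I expect the only delicate point to be this projection-formula step and the bookkeeping identifying the unit map with $\id_{G}\otimes(\cO_{Y}\arr f_{*}\cO_{X})$. Everything there is formal once one uses that $G$ is dualizable and the adjunction $f^{*}\dashv f_{*}$ for modules; in particular, no base-change statement and no finiteness hypothesis on $f$ are required, which is what makes the lemma valid at the level of general fibered categories.
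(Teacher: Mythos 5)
Your argument is correct and is precisely the route the paper takes: its entire proof reads ``This is a standard application of the projection formula,'' and your write-up simply makes that application explicit --- reduction of $\hom$'s to global sections via dualizability, the projection formula $G\otimes f_{*}\cO_{X}\simeq f_{*}f^{*}G$ established formally through the $(f^{*},f_{*})$-adjunction, and the identification of the unit with $\id_{G}\otimes(\cO_{Y}\arr f_{*}\cO_{X})$. Your closing observation that no base change or finiteness of $f$ is needed is exactly why the lemma holds at the level of general fibered categories, as the paper intends.
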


\begin{proof}
This is a standard application of the projection formula.
\end{proof}

\section{The étale fundamental gerbe}\label{sec:etale}

Let $X$ be an \infl algebraic stack, $\univ X = \projlim_{i} \Gamma_{i}$ its fundamental gerbe, expressed as a projective limit along the category $I$ whose objects are Nori-reduced morphisms $X \arr \Gamma_{i}$. Consider the full subcategory $I^{\et}$ of $I$ whose objects consist of Nori-reduced morphisms $X \arr \Gamma_{j}$ in which $\Gamma_{j}$ is étale. In characteristic~$0$ we have $I^{\et} = I$. Since the fibered product of two étale gerbes is an étale stack, we have  that $I^{\et}$ is a 2-cofiltered category. 

\begin{definition}
The étale fundamental gerbe of $X$ is the profinite gerbe $\etuniv X \eqdef \projlim_{i \in I^{\et}} \Gamma_{i}$.
\end{definition}

Since $I^{\et}$ is a subcategory of $I$, there is a natural projection $\univ X  \arr \etuniv X$. The natural morphism $X \arr \etuniv X$ is easily seen to be universal among all morphisms to a pro-étale gerbe.

The profinite étale gerbe $\etuniv X$ is isomorphic to the gerbe associated with the relative fundamental groupoid introduced by Deligne in \cite[10.17-18]{deligne-minus-three-points}.

First of all, let us recall Deligne's construction. Suppose that $X$ is a geometrically connected algebraic stack over $\kappa$. Let $Y \arr X$ be a connected Galois cover\footnote{By \emph{cover} we mean a representable morphism that is finite and étale.}. This corresponds to a morphism $X \arr \cB_{\kappa}(\aut Y)$, where $\aut Y \eqdef \aut_{X}Y$ is the Galois group, which is an fpqc cover; hence $\cB_{\kappa}(\aut Y)$ is equivalent to the stack of fpqc torsors under the groupoid $X \times_{\cB_{\kappa}(\aut Y)} X \double X$. The fibered product $X \times_{\cB_{\kappa}(\aut Y)} X$ is equivalent to the $X$-scheme $P_{Y} \eqdef \underisom_{X\times_S X}^{\aut Y}(\pr_{2}^{*}Y, \pr_{1}^{*}Y)$ of $\aut Y$-equivariant isomorphisms of the two pullbacks of $Y$. Let $P_{Y}^{0}$ be the connected component of $P_{Y}$ containing the image of the diagonal $X \arr X \times_{\cB_{\kappa}(\aut Y)} X$; then $P^{0}_{Y} \double X$ is an fpqc groupoid.

Now, let $Z \arr X$ be another connected Galois étale cover of $X$, and let $f\colon Z \arr Y$ be a morphism of coverings. Then $f$ induces a morphism $P_{Z} \arr P_{Y}$, which is easily seen to be independent of $f$; this sends $P^{0}_{Z}$ into $P^{0}_{Y}$, giving a morphism of groupoids from $P^{0}_{Z}\double X$ to $P^{0}_{Y}\double X$.

Let $\{Y_{j}\}_{j \in J}$ be a set of representatives for isomorphism classes of Galois connected covers of $X$; we introduce a partial ordering on $J$, saying that $j \leq k$ if there exists a morphism of coverings $Y_{j} \arr Y_{k}$. So $J$ becomes a cofiltered set. Deligne's absolute groupoid is $\widehat{P}_{X} \double X$, where $\widehat{P}_{X} \eqdef \projlim_{j \in J}P_{Y_{j}}$, and Deligne's relative groupoid is $\widehat{P}^0_{X} \eqdef \projlim_{j \in J}P^{0}_{Y_{j}}$.

\begin{definition}
\label{def:Deligne-fund-gerbe}
	Let $X$ be a geometrically connected algebraic stack over $\kappa$. We define Deligne's absolute fundamental gerbe $\Pi_X^\rD$ (respectively Deligne's relative fundamental gerbe $\Pi_{X/\kappa}^\rD$) as the stack associated with the groupoid $\widehat{P}_{X} \double X$ (respectively $\widehat{P}^0_{X} \double X$).
\end{definition}

Notice that, by construction, we are given natural morphisms $X \arr \Pi_{X/\kappa}^\rD \arr \Pi_{X}^\rD $. Since $\Pi_{X/\kappa}^\rD$ is an étale profinite gerbe, the composite $X \arr \Pi_{X/\kappa}^\rD$ induces a morphism $\etuniv X \arr \Pi_{X/\kappa}^\rD$.

\begin{theorem}	\label{rmk:compare-Deligne}
Assume that $X$ is \infl. The natural morphism $\etuniv X \simeq \Pi_{X/\kappa}^\rD$ is an isomorphism.
\end{theorem}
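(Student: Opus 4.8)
The plan is to exhibit a morphism $\etuniv X \arr \Pi_{X/\kappa}^\rD$ that is an isomorphism by showing it is a representable morphism of profinite gerbes whose finite quotients match up. Both objects are profinite \'etale gerbes built as projective limits: $\etuniv X = \projlim_{i \in I^{\et}} \Gamma_{i}$ over Nori-reduced morphisms to finite \'etale gerbes, while $\Pi_{X/\kappa}^\rD$ is the stack associated with $\widehat{P}^0_{X} \double X = \projlim_{j} (P^{0}_{Y_{j}} \double X)$. The morphism $\etuniv X \arr \Pi_{X/\kappa}^\rD$ already exists by the universal property noted just before the statement, since $\Pi_{X/\kappa}^\rD$ is an \'etale profinite gerbe receiving a morphism from $X$. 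So the real content is that this comparison map is an equivalence; equivalently, that $\Pi_{X/\kappa}^\rD$ \emph{itself} satisfies the universal property defining $\etuniv X$, namely that every morphism from $X$ to a finite \'etale gerbe factors uniquely through it.

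\textbf{Reducing to individual finite \'etale gerbes.} First I would use Remark~\ref{rmk:viewpoint-groupoids}, which describes $\projlim \Gamma$ as the fpqc quotient of the groupoid $R \double X$ with $R = \projlim R_{i}$, $R_{i} = X \times_{\Gamma_{i}} X$. The key is to identify, for each connected Galois cover $Y_{j} \arr X$, the corresponding finite \'etale gerbe. A Galois cover $Y \arr X$ with group $\aut Y$ gives a morphism $X \arr \cB_{\kappa}(\aut Y)$, and the connected component $P^{0}_{Y}$ of $P_Y = X \times_{\cB_\kappa(\aut Y)} X$ containing the diagonal is exactly the $R_i$ arising from the \emph{Nori-reduced} finite \'etale gerbe $\Gamma_j$ through which $X \arr \cB_{\kappa}(\aut Y)$ factors. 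Indeed, the remark on Nori-reduced morphisms earlier in the paper shows that $X \arr \cB_{\kappa}(\aut Y)$ is Nori-reduced precisely when $Y$ is geometrically connected, and in general the factorization $X \arr \Gamma_j \arr \cB_\kappa(\aut Y)$ from Lemma~\ref{lem:factor-through-Nori-reduced} corresponds on fibered products to passing from $P_Y$ to its connected component $P^0_Y$. So I would verify term-by-term that $R_i = P^0_{Y_j}$ as $X$-schemes, whence the two cofiltered systems $\{R_i\}_{i \in I^{\et}}$ and $\{P^0_{Y_j}\}_{j \in J}$ are cofinal in one another.

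\textbf{Matching the index categories.} The crux is the comparison of the two index systems. Deligne's $J$ is a partially ordered \emph{set} of isomorphism classes of Galois covers, whereas $I^{\et}$ is a cofiltered $2$-category of Nori-reduced morphisms to finite \'etale gerbes; by the discussion following the definition of cofiltered $2$-category, $I^{\et}$ is equivalent to its associated poset $\overline{I^{\et}}$. I would set up a functor $J \arr \overline{I^{\et}}$ sending a Galois cover $Y_j$ to the Nori-reduced finite \'etale gerbe $\Gamma_j$ (the scheme-theoretic/gerbe image of $X$ in $\cB_\kappa(\aut Y_j)$), and check it is cofinal: every Nori-reduced morphism $X \arr \Gamma$ to a finite \'etale gerbe arises, after choosing a representable $\Gamma \arr \cB_\kappa G$ for a suitable finite \'etale $G$, from a connected Galois cover, and morphisms of covers induce the transition maps $P^0_{Y_k} \arr P^0_{Y_j}$ compatibly. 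Cofinality then forces $\projlim_{j} P^0_{Y_j} = \projlim_{i} R_i$, and taking associated fpqc quotient stacks yields $\Pi_{X/\kappa}^\rD \simeq \etuniv X$.

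\textbf{Main obstacle.} The delicate point is the precise identification $P^0_{Y} = X \times_{\Gamma} X$ for the Nori-reduced $\Gamma$, i.e.\ showing that Deligne's operation of passing to the connected component of $P_Y$ containing the diagonal coincides exactly with the gerbe-theoretic operation of replacing $\cB_\kappa(\aut Y)$ by the scheme-theoretic image $\Gamma$ of $X$. This is where the hypothesis that $X$ is \infl is essential: it guarantees the image is a gerbe (not merely a stack), so that $X \times_\Gamma X$ is connected and faithfully flat over $X$, matching the single connected component $P^0_Y$. I would expect to spend most of the effort verifying that these two connected groupoids agree as fpqc groupoids over $X$ and that the identification is functorial in the cover, since the rest is formal cofinality of projective limits.
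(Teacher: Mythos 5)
Your proposal is correct and follows essentially the same route as the paper: the paper proves your key identification $P^{0}_{Y} = X \times_{\Gamma} X$ via Lemma~\ref{lem:con-Nori-red} (showing directly that $X \arr [X/P^{0}_{Y}]$ is Nori-reduced, using that the diagonals are \'etale so any $X \times_{\mathcal{G}'} X \arr P^{0}_{Y}$ is an \'etale immersion, hence an isomorphism by connectedness of $P^{0}_{Y}$) together with the uniqueness part of Lemma~\ref{lem:factor-through-Nori-reduced}, and then establishes cofinality of $J$ in $\overline{I^{\et}}$ exactly as you sketch, producing a representable $\Gamma \arr \cB_{\kappa}\rS_{n}$ from an \'etale chart $\spec \kappa' \arr \Gamma$, passing to a connected component $Y$ of the induced torsor, and using inflexibility plus Nori-reducedness to obtain the lift $\Gamma \arr \cB_{\kappa}G$, before concluding via Remark~\ref{rmk:viewpoint-groupoids}. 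The one nuance is that connectedness of $X \times_{\Gamma} X$ is deduced from Nori-reducedness (minimality), not from the image merely being a gerbe as your last paragraph suggests, which is why the paper argues in the direction ``$X \arr [X/P^{0}_{Y}]$ is Nori-reduced, hence equals the canonical factorization'' rather than the reverse.
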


\begin{proof}
We start with a useful lemma.

\begin{lemma}
	\label{lem:con-Nori-red}
Let $Y \arr X$ be a connected étale Galois cover and $G \eqdef \aut_{X} Y$. The morphism $X \arr [X/P^{0}_{Y}]$ is Nori-reduced.
\end{lemma}
\begin{proof}
	Suppose that $X \arr [X/P^{0}_{Y}]=\mathcal G$ factors trough a representable morphism $\mathcal G' \to \mathcal G$, where $\mathcal G'$ is a finite gerbe. Since the diagonal of $\mathcal G$ is étale, so is the diagonal of $\mathcal G'$, and we deduce that both morphisms $X\times_{\mathcal G} X \to X\times_S X$ and $X\times_{\mathcal G'} X \to X\times_S X$ are also étale. Hence the natural morphism $X\times_{\mathcal G'} X \to X\times_{\mathcal G} X$ is étale. Since $\mathcal G' \to \mathcal G$ is representable, this morphism is an immersion, since $X\times_{\mathcal G} X=P_Y^0$ is connected, this must be an isomorphism.
\end{proof}

Hence with the notations of the lemma, the factorization $X \arr [X/P^{0}_{Y}]$ of $X \arr \cB_{\kappa}G$ is the canonical factorization of Lemma~\ref{lem:factor-through-Nori-reduced}. This defines a lax 2-functor $J \arr I^{\et}$, which induces a homomorphism of preordered sets $\lambda\colon J \arr \overline{I^{\et}}$, where $\overline{I^{\et}}$ is the preordered set associated with $I^{\et}$, as in Remark~\ref{rmk:viewpoint-groupoids}, such that for each $j \in J$ we have a canonical isomorphism $P^{0}_{Y_{j}} \simeq X \times_{\Gamma_{\lambda(j)}} X$. We need to show that the induced morphism $\projlim P^{0}_{Y_{j}} = \projlim (X \times_{\Gamma_{\lambda(j)}} X)$ into $X \times_{\etuniv X}X$, which, by Remark~\ref{rmk:viewpoint-groupoids} coincides with $\projlim_{\overline{I^{\et}}}(X \times_{\Gamma_{i}} X)$, is an isomorphism. In fact, let us show that the image of $J$ is cofinal in $\overline{I^{\et}}$; in other words, we need to show that given a Nori-reduced morphism $X \arr \Gamma$, where $\Gamma$ is a finite étale gerbe, there exists a finite group $G$ and a representable morphism $\Gamma \arr \cB_{\kappa}G$, such that the composite $X \arr \cB_{\kappa}G$ corresponds to a connected $G$-cover of $X$.

Since $\Gamma$ is finite and étale, there is a finite separable extension $\kappa'$ of $\kappa$ and a morphism $\spec \kappa' \arr \Gamma$, which is an étale covering; call $n$ its degree. The covering above corresponds to a representable morphism $\Gamma \arr \cB_{\kappa}\rS_{n}$ (where $\rS_{n}$ is the symmetric group on $n$ letters). Consider the $\rS_{n}$-torsor $P \arr X$ corresponding to the composite $X \arr \Gamma \arr \cB_{\kappa}\rS_{n}$, and take a connected component $Y \subseteq P$; this is a connected $G$-torsor for a subgroup $G \subseteq \rS_{n}$. There is a commutative diagram
   \[
   \xymatrix{
   &&\cB_{\kappa}G\ar[d]\\
   X\ar[r]\ar[urr]& \Gamma \ar[r] \ar@{-->}[ur] & \cB_{\kappa}\rS_{n}\,;
   }
   \]
we need to show that we can insert a dashed arrow into it. Consider the morphism $X \arr \Gamma \times_{\cB_{\kappa}\rS_{n}}\cB_{\kappa}G$ induced by the diagram above: since $X$ is \infl, the scheme-theoretic image $\Delta$ of $X$ is a finite gerbe. The projection $\Delta \arr \Gamma$ is representable, since $\cB_{\kappa}G$ is representable over $\cB_{\kappa}\rS_{n}$; since $X \arr \Gamma$ is Nori-reduced, the morphism $\Delta \arr \Gamma$ must be an isomorphism. The inverse $\Gamma \arr \Delta$, followed by the embedding $\Delta \subseteq \Gamma \times_{\cB_{\kappa}\rS_{n}}\cB_{\kappa}G$ and the projection $\Gamma \times_{\cB_{\kappa}\rS_{n}}\cB_{\kappa}G \arr \cB_{\kappa}G$ gives the required morphism.
\end{proof}

\section{The fundamental gerbe and the section conjecture}\label{sec:section}

As an application of the formalism in Section~\ref{sec:etale}, let us show that the rational points of the gerbe $\etuniv X$ have a natural interpretation in terms of sections of Grothendieck's fundamental exact sequence. Let us first recall what this means.

\begin{theorem}[\cite{grothendieck_revetements-etales-groupe-fondamental} IX, Théorème 6.1]
   \label{thm:fund-ex-seq}
   Let $X/\kappa$ be a quasi\dash compact, quasi\dash separated and geometrically connected algebraic stack, and fix a geometric point $\overline x:\spec  \Omega\rightarrow X$.  Then the following sequence is exact:
 \begin{equation}
	 1 \rightarrow \pi_1(X_{\overline \kappa},\overline x)\rightarrow  \pi_1(X,\overline x) \rightarrow G_\kappa\rightarrow 1
   \end{equation}
   where $G_\kappa$ denotes the absolute Galois group of $\kappa$ relative to the separable closure of $k$ in $\Omega$.
\end{theorem}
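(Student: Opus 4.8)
This is Grothendieck's fundamental exact sequence (\cite{grothendieck_revetements-etales-groupe-fondamental}, Expos\'e~IX), so I would treat it as a known result and simply invoke the reference; but here is how the proof runs in the language of Galois categories, an argument that applies equally to algebraic stacks once one has the notion of a cover (a representable finite \'etale morphism). The three groups appear as fundamental groups of the Galois categories of finite \'etale covers of $X_{\overline\kappa}$, of $X$, and of $\spec\kappa$, equipped with the fiber functors determined by $\overline x$. The structure morphism $X \arr \spec\kappa$ and the projection $X_{\overline\kappa} \arr X$ induce exact pullback functors between these categories, hence the two continuous homomorphisms in the sequence, and the plan is to verify the standard criterion for exactness of a homotopy sequence of fundamental groups (as in \cite{grothendieck_revetements-etales-groupe-fondamental}, Expos\'e~V).

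The verification splits into three parts. First, the composite $X_{\overline\kappa} \arr \spec\kappa$ factors through $\spec\overline\kappa$, whose fundamental group is trivial; hence every cover pulled back from $\spec\kappa$ splits completely over $X_{\overline\kappa}$, which gives both the triviality of the composite $\pi_1(X_{\overline\kappa}) \arr G_\kappa$ and the inclusion $\im \pi_1(X_{\overline\kappa}) \subseteq \ker\bigl(\pi_1(X) \arr G_\kappa\bigr)$. Second, surjectivity of $\pi_1(X) \arr G_\kappa$ amounts to the base change $X_{\kappa'}$ being connected for every finite separable extension $\kappa'/\kappa$, which is exactly the hypothesis that $X$ is geometrically connected: since $X_{\overline\kappa}$ is connected, so is every $X_{\kappa'}$. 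Third, and this is the substance of the statement, one proves exactness in the middle together with injectivity of $\pi_1(X_{\overline\kappa}) \arr \pi_1(X)$, concretely: that every connected cover of $X_{\overline\kappa}$ is a connected component of the pullback of a cover of $X$, and that a connected cover of $X$ whose geometric pullback admits a section descends to a cover of $\spec\kappa$.

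The main obstacle is this last point, and it is where the quasi\dash compactness and quasi\dash separatedness hypotheses are used. The key is a spreading-out argument: writing $X_{\overline\kappa} = \projlim_{\kappa'} X_{\kappa'}$ as a cofiltered limit over the finite separable subextensions of $\overline\kappa/\kappa$, with affine transition morphisms, the limit theorems for finitely presented morphisms show that any finite \'etale cover of $X_{\overline\kappa}$ is already defined over some finite level $X_{\kappa'}$. Since $X_{\kappa'} \arr X$ is itself finite \'etale, composition (or Weil restriction) turns a cover of $X_{\kappa'}$ into a cover of $X$ whose geometric pullback contains the original cover as a connected component, yielding injectivity; a parallel analysis using Galois closures gives exactness in the middle. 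I expect the delicate bookkeeping to be the compatibility of all these descent and spreading-out operations with the chosen geometric point and fiber functors, so that the set-level statements about covers translate faithfully into the asserted exactness of profinite groups.
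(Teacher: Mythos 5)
The paper gives no proof of this theorem---it is quoted verbatim as a known result from SGA1, Expos\'e~IX, Th\'eor\`eme~6.1, which is exactly what you do in your opening line, so your approach coincides with the paper's. Your appended sketch is the standard SGA1 homotopy-sequence argument and is correct in outline, including the identification of where the quasi-compact and quasi-separated hypotheses enter (the spreading-out of covers of $X_{\overline\kappa}$ to a finite separable level, which yields left-exactness).
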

By functoriality, a rational point of $X$ induces a section of this exact sequence, well defined up to conjugacy by an element of $\pi_1(X_{\overline \kappa},\overline x)$. Denoting by $\IsCl$ the set of isomorphism classes of a small category, and by $\homext_{G_\kappa}(G_\kappa,\pi_1(X,\overline x))$ this set of equivalence classes of sections, we thus obtain an application: 
\[s_X: \IsCl(X(\kappa)) \rightarrow \homext_{G_\kappa}(G_\kappa,\pi_1(X,\overline x))  \]

Grothendieck's famous section conjecture is stated as follows:

 \begin{conjecture}[\cite{grothendieck_brief-faltings}]
   \label{conj:section}
If $X$ is a proper, smooth and geometrically connected curve of genus at least $2$ over a finitely generated extension $\kappa$ of $\mathbb Q$, the application $s_X$ is one to one. 
\end{conjecture}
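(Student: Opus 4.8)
This statement is Grothendieck's section conjecture, and I must be candid: it is one of the deepest open problems in anabelian geometry, and I do not expect to prove it — no proof is known. What the formalism of Section~\ref{sec:etale} does make possible, and what I would therefore carry out, is to \emph{reformulate} the conjecture as a transparent statement about the \'etale fundamental gerbe $\etuniv X$, to dispose of the accessible direction, and to pin down exactly where the real difficulty lies.

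The first step is to build a dictionary turning $s_X$ into the map induced by $X \arr \etuniv X$ on rational points. By the comparison Theorem~\ref{rmk:compare-Deligne}, $\etuniv X$ is Deligne's relative fundamental gerbe, so a $\kappa$-point of $\etuniv X$ amounts to a compatible family of trivializations over the cofiltered system of connected Galois covers of $X$; passing to automorphism group schemes and taking the profinite limit, this is exactly a continuous splitting of the fundamental exact sequence of Theorem~\ref{thm:fund-ex-seq}, well defined up to conjugacy by $\pi_1(X_{\overline\kappa}, \overline x)$. This yields a canonical identification of $\IsCl\bigl((\etuniv X)(\kappa)\bigr)$ with $\homext_{G_\kappa}\bigl(G_\kappa, \pi_1(X,\overline x)\bigr)$ under which the tautological map $\IsCl(X(\kappa)) \arr \IsCl\bigl((\etuniv X)(\kappa)\bigr)$ becomes $s_X$. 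Conjecture~\ref{conj:section} is thereby equivalent to the assertion that $X \arr \etuniv X$ induces a bijection on isomorphism classes of $\kappa$-points.

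Injectivity of $s_X$ is the direction I would expect to settle. Two rational points inducing the same section have the same image in every finite \'etale quotient, hence after composing with the Abel--Jacobi embedding $X \into J$ into the Jacobian (a closed immersion for genus at least $2$) they have the same image in $\projlim_n J(\kappa)/nJ(\kappa)$; but $J(\kappa) \arr \projlim_n J(\kappa)/nJ(\kappa)$ is injective, since $J(\kappa)$ is finitely generated by the Mordell--Weil theorem (valid as $\kappa$ is finitely generated over $\QQ$), so the two points coincide. This uses only the abelian part of $\pi_1$.

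The genuine obstacle is surjectivity: from an abstract section, equivalently a $\kappa$-point $\xi$ of $\etuniv X$, one must manufacture a true rational point $x \in X(\kappa)$ mapping to $\xi$. This is the full anabelian content, and I know of no general mechanism for it — the section is a purely gerbe-theoretic datum, and producing an honest point requires showing that the associated system of torsors is \emph{represented} by a point of the curve rather than merely by a compatible family of classes. The $p$-adic local analogues, the birational variants, and cuspidalization techniques indicate the terrain, but the global statement over number fields is precisely the point at which every known method stops. The realistic output of this plan is thus the gerbe-theoretic reformulation of Conjecture~\ref{conj:section}, not its resolution.
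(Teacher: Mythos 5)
You are right that no proof is possible here: the statement is Grothendieck's section conjecture, which the paper records as an open conjecture and does not prove. What you propose instead --- the gerbe-theoretic reformulation plus a proof of the injectivity direction --- is essentially what the paper itself carries out, with two differences worth noting. First, your dictionary between $\kappa$-points of $\etuniv X \simeq \Pi^{\rD}_{X/\kappa}$ and conjugacy classes of sections is the content of Proposition~\ref{prop:comp_section}, proved there via the cartesian square of Lemma~\ref{lem:hom-seq-Del}; the paper is careful to call the resulting equivalence $\Pi_{X/\kappa}^{\rD}(\kappa) \to \HomExt_{G_\kappa}\bigl(G_\kappa,\pi_1(X,\overline x)\bigr)$ \emph{non}-canonical (it depends on a choice of point and of base section), whereas you assert a canonical identification --- harmless for the reformulation Conjecture~\ref{conj:section_reform}, but imprecise as stated. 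Second, for injectivity your route is the classical Kummer-theoretic one: embed $X$ in its Jacobian $J$ and use the maps $J(\kappa)/nJ(\kappa) \into \h^1(\kappa, J[n])$ coming from the \'etale quotients, then Mordell--Weil. The paper's Proposition~\ref{prop:section_function_field} instead works with the \emph{full} Nori fundamental gerbe and invokes Nori's generalization of the Lang--Serre theorem to obtain $\Pi_{A/\kappa} \simeq \projlim_n \cB_\kappa A[n]$, concluding that $a-b \in nA(\kappa)$ for all $n$ before applying Mordell--Weil. Over a finitely generated extension of $\QQ$ the two arguments coincide, since every $A[n]$ is \'etale; the paper's version buys validity over finitely generated fields of arbitrary characteristic (and genus $\geq 1$), and its subsequent Remark flags exactly where your variant would stall in characteristic $p$: identifying $\ker\bigl(A(\kappa) \to \h^1(\kappa, A[n]^{\et})\bigr)$ is unclear, which is why the paper uses $\univ X$ rather than $\etuniv X$ there. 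Your diagnosis of surjectivity as the genuinely open anabelian content likewise matches the paper's discussion, including the failure of surjectivity over finite fields that motivates Conjecture~\ref{conj:section_extend}.
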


As we will explain now, this conjecture can be stated in our terms by saying that the natural morphism $X\to \Pi_{X/\kappa}^\rD$ induces a bijection of isomorphism classes of $\kappa$-rational points. Rather than working with isomorphism classes, it is natural to consider the category $X(\kappa)$ and the category $\HomExt_{G_\kappa}(G_\kappa,\pi_1(X,\overline x))$ whose objects are sections, and morphisms are given by conjugacy by elements of $\pi_1(X_{\overline \kappa},\overline x)$, in the natural way.

\begin{proposition}
	\label{prop:comp_section}
	Let $X/\kappa$ be a quasi-compact, quasi-separated and \infl algebraic stack, and fix a geometric point $\overline x:\spec  \Omega\rightarrow X$.
	There is a (non canonical) equivalence of categories $\Pi_{X/\kappa}^\rD(\kappa) \to \HomExt_{G_\kappa}(G_\kappa,\pi_1(X,\overline x))$ that composed with the canonical functor 
	$X(\kappa)\to \Pi_{X/\kappa}^\rD(\kappa)$ is a lifting of $s_X$ at the level of categories. \end{proposition}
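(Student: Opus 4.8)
The plan is to reduce to the \'etale fundamental gerbe and then set up a Galois-descent dictionary between its $\kappa$-points and sections of the fundamental exact sequence. By Theorem~\ref{rmk:compare-Deligne} the natural morphism identifies $\Pi_{X/\kappa}^\rD$ with the \'etale fundamental gerbe $\etuniv X$, so it suffices to construct a (non-canonical) equivalence of groupoids $\etuniv X(\kappa) \to \HomExt_{G_\kappa}(G_\kappa,\pi_1(X,\overline x))$ sending a $\kappa$-point to a section, and to check at the end that it lifts $s_X$.

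First I would trivialize the gerbe over the separable closure. Composing $\overline x\colon \spec\Omega \to X$ with the universal morphism $X \to \etuniv X$ gives an object $\overline\xi$ of $\etuniv X(\Omega)$, and I claim that after base change to $\kappa\sep$ the gerbe becomes neutral with band the geometric fundamental group. Writing $\Pi \eqdef \pi_1(X_{\overline\kappa},\overline x)$, one gets $(\etuniv X)_{\kappa\sep} \simeq \cB_{\kappa\sep}\Pi$, with $\overline\xi$ the trivializing object and $\underaut_{\kappa\sep}\overline\xi \simeq \Pi$. This rests on base change for the fundamental gerbe along the separable extension $\kappa\sep/\kappa$, which is available by Proposition~\ref{prop:base-change}\refpart{}{b} since $X$ is quasi-compact and quasi-separated (the \'etale quotient being compatible with base change), combined with Remark~\ref{rmk:gerbe-vs-group} over the separably closed field, where the \'etale fundamental group is the constant profinite group $\Pi$. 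Moreover, since $\etuniv X$ is universal for maps to finite \'etale gerbes, finite \'etale covers of $X$ and of $\etuniv X$ coincide under pullback, so the universal morphism induces an isomorphism of arithmetic fundamental groups $\pi_1(X,\overline x) \simeq \pi_1(\etuniv X,\overline\xi)$, compatibly with the projections to $G_\kappa = \pi_1(\spec\kappa)$ and with the subgroups $\Pi$.

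The heart of the argument is the descent dictionary. A $\kappa$-point $\xi$ of the pro-\'etale gerbe $\etuniv X$ is the same datum as an object of $(\etuniv X)_{\kappa\sep}$ together with continuous $G_\kappa$-descent data. Since every object of $(\etuniv X)_{\kappa\sep}$ is isomorphic to $\overline\xi$, I would choose a trivialization $\xi_{\kappa\sep} \simeq \overline\xi$ and transport the descent datum to a continuous $1$-cocycle valued in the $G_\kappa$-twisted automorphisms of $\overline\xi$; under the identification $\pi_1(\etuniv X,\overline\xi) \simeq \pi_1(X,\overline x)$ this is precisely a continuous section of $\pi_1(X,\overline x) \to G_\kappa$. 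Changing the trivialization $\xi_{\kappa\sep} \simeq \overline\xi$ replaces the section by its conjugate under an element of $\Pi$, so the assignment is well defined on the groupoid $\etuniv X(\kappa)$ and matches isomorphisms of $\kappa$-points with $\Pi$-conjugacies; running the dictionary in reverse builds a quasi-inverse. The dependence on the trivialization accounts for the non-canonicity in the statement.

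Finally I would verify the lifting of $s_X$: the canonical functor $X(\kappa) \to \etuniv X(\kappa)$ sends a rational point $x$ to the image point whose associated section is obtained by exactly the $\pi_1$-functoriality defining $s_X$, so on isomorphism classes the composite recovers $s_X$. The main obstacle is the descent dictionary of the third step: one must set up the comparison between Galois descent data for a point of the pro-\'etale gerbe and group-theoretic sections carefully enough to see that it is functorial and continuous, that it matches morphisms with $\Pi$-conjugacies, and that it is compatible with the passage to the limit over the cofiltered system of finite \'etale gerbes defining $\etuniv X$.
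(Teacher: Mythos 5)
Your proposal is correct in substance and lands on the same underlying Galois-descent dictionary as the paper, but the scaffolding is genuinely different. The paper never trivializes the gerbe over $\kappa\sep$: its key input is Lemma~\ref{lem:hom-seq-Del}, showing that $\Pi_{X/\kappa}^\rD$ is the fiber of $\Pi_X^\rD \arr \Pi_{\spec\kappa}^\rD$ over $\spec\kappa$, proved at finite level via uniqueness of Nori-reduced factorizations (Lemmas~\ref{lem:factor-through-Nori-reduced} and~\ref{lem:con-Nori-red}); it then reads the fundamental exact sequence as the sequence of automorphism groups of this fibration at $\overline x$, reduces to the case where $X=\mathcal G$ is a profinite \'etale gerbe, fixes a point $x\in\mathcal G(\kappa)$, and identifies both sides with torsor categories: $\mathcal G(\kappa)\simeq \tors_\kappa \aut_\kappa x$ on one side, torsors under $\aut_{\mathcal G}(\overline x)$ in continuous $G_\kappa$-sets on the other. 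You instead anchor everything at a trivialization over $\kappa\sep$, obtained from base change, and translate $\kappa$-points directly into continuous cocycles, i.e.\ sections. Both arguments are Galois descent at heart; yours is more explicitly cocycle-theoretic and has the merit of being visibly bidirectional (from a section you rebuild a point by effectivity of fpqc descent for the gerbe, so the two sides are simultaneously empty or nonempty --- a point the paper's proof glosses over by simply fixing $x \in \mathcal G(\kappa)$), while the paper's cartesian-square route identifies the band and the $\aut$-sequence with the fundamental exact sequence without ever needing to neutralize anything over $\kappa\sep$. Two loose ends in your version, both patchable: Proposition~\ref{prop:base-change} is stated for $\univ X$, so your parenthetical claim that the \'etale quotient is compatible with base change needs the \'etale analogue of Lemma~\ref{properties-weil-restriction} (the Weil restriction of a finite \'etale stack along a finite separable extension is again finite \'etale, since its base change to $\kappa\sep$ is a product of conjugates); and $\overline\xi$ lives over $\Omega$, not over $\kappa\sep$, so before you may call it ``the trivializing object'' you need neutrality of the pro-\'etale gerbe over $\kappa\sep$, which at each finite level is clear but for the limit requires a standard compactness argument (nonemptiness of a cofiltered limit of nonempty groupoids with finite automorphism groups). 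With those patches your argument is complete, and its reliance on the identification $\pi_1(X,\overline x)\simeq \pi_1(\etuniv X,\overline\xi)$ is no heavier than the paper's own one-line assertion that $X \arr \Pi_{X/\kappa}^\rD$ induces an isomorphism of fundamental exact sequences.
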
	

\begin{proof}
This will follow from the next lemma.
\begin{lemma}
	\label{lem:hom-seq-Del}
The natural diagram:
   \[
   \xymatrix{
   \Pi_{X/\kappa}^\rD \ar[d] \ar[r] &  \Pi_X^\rD \ar[d] \\
   \spec \kappa \ar[r] & \Pi_{\spec \kappa}^\rD
   }
   \]
   is cartesian.\qedhere
\end{lemma}

\end{proof}
Indeed, let us admit this for a while. 

If we fix a geometric point $\overline x:\spec  \Omega\rightarrow X$, then by definition of Deligne absolute fundamental gerbe, there is a canonical isomorphism $\aut_{\Pi_X^\rD}(\overline x)\simeq \pi_1(X,\overline x)$, and it follows that there is a compatible isomorphism $\aut_{\Pi_{X/\kappa}^\rD}(\overline x)\simeq \pi_1(X_{\overline \kappa},\overline x)$. In other words, the fundamental exact sequence can be interpreted as the exact sequence of $\aut$-groups associated with the fibration at $\overline x$.

We note that we can safely replace $X$ by its étale fundamental gerbe: indeed, if $\overline x:\spec  \Omega\rightarrow X$ is a geometric point, it induces a morphism of the corresponding fundamental exact sequences, that is in fact an isomorphism of exact sequences. Since $\Pi_{X/\kappa}^\rD$ is a profinite étale gerbe, the morphism $\Pi_{X/\kappa}^\rD\to \Pi_{\Pi_{X/\kappa}^\rD/\kappa}^\rD$ is an isomorphism, and the lemma implies then that the morphism $\Pi_{X}^\rD\to \Pi_{\Pi_{X}^\rD/\kappa}^\rD$ is an isomorphism. 

So let us assume $X=\mathcal G$ is a profinite étale gerbe, and fix a point $x\in \mathcal G(S)$. It induces a canonical equivalence $ \mathcal G(S)\simeq \tors_\kappa \aut_\kappa x$, the category of torsors over $\kappa$ under the $\kappa$-group  $\aut_\kappa x$. It induces also a canonical section of the exact sequence of $\aut$-groups associated with the fibration at the corresponding geometric point $\overline x$. This section induces, in turn, as is well known, a canonical equivalence $\HomExt_{\aut_{\Pi_S}(\overline x)}(\aut_{\Pi_S}(\overline x),\aut_{\Pi_{\mathcal G}}(\overline x))\simeq \tors_{\aut_{\Pi_S}(\overline x)-\mathrm{sets}}(\aut_{\mathcal G}(\overline x))$, the category of torsors under the $\aut_{\Pi_S}(\overline x)$-group $\aut_{\mathcal G}(\overline x)$ in the category of $\aut_{\Pi_S}(\overline x)$-sets. Hence we get the (non canonical) equivalence we needed.

\begin{proof}[Proof of Lemma \ref{lem:hom-seq-Del}]
 Let $Y\to X$ be a connected Galois cover, with Galois group $G$. Let $\kappa'/\kappa$ be the largest separable extension such that $X_{\kappa'}\to X$ is a quotient of $Y\to X$, and let $H$ be its Galois group. It is enough to show that the following diagram is cartesian:   
 \[
   \xymatrix{
   [X/P^{0}_{Y}] \ar[d] \ar[r] &  \cB_{\kappa}G \ar[d] \\
   \spec \kappa \ar[r] & \cB_{\kappa}H
   }
   \]
   Denote by $\mathcal G\eqdef \spec \kappa\times_{\cB_{\kappa}H}\cB_{\kappa}G$, this is clearly a gerbe, since $G\to H$ is an epimorphism. Because of uniqueness of Nori-reduced factorization (Lemma \ref{lem:factor-through-Nori-reduced}) and the fact that $X\to [X/P^{0}_{Y}]$ is Nori-reduced (Lemma \ref{lem:con-Nori-red}), it is enough to show that the natural morphism $X\to \mathcal G$ is Nori-reduced. So assume that $X \to \mathcal G$ factors trough a representable morphism $\mathcal G' \to \mathcal G$, where $\mathcal G'$ is a finite gerbe. Let $U\eqdef \mathcal G'\times _{\cB_{\kappa}G}\spec \kappa$, this is a finite $\kappa$-scheme, endowed with a transitive action of $G$, and a $G$-equivariant map $Y\to U$. Since $Y$ is non-empty, $Y\to U$ is an epimorphism. The scheme $U$ is also endowed with a natural morphism $U\to \spec \kappa '=\mathcal G\times _{\cB_{\kappa}G}\spec \kappa$. Since $Y$ is geometrically connected over $\kappa '$ by construction, so is $U$. Moreover the morphism $U\to \mathcal G'$ is étale, and so by Proposition \ref{prop:gerbe<->geom} the scheme $U$ is reduced, hence is étale. We conclude that $U\to  \spec \kappa'$ is an isomorphism, and so is $\mathcal G' \to \mathcal G$.
\end{proof}

So according to Proposition \ref{prop:comp_section}, we can reformulate Conjecture \ref{conj:section} into the following one, that can be stated without choosing a geometric point:

 \begin{conjecture}   \label{conj:section_reform}
If $X$ is a proper, smooth and geometrically connected curve of genus at least $2$ over a finitely generated extension $\kappa$ of $\mathbb Q$, the natural morphism $X\to \Pi_{X/\kappa}^\rD$ induces a bijection of isomorphism classes of $\kappa$-rational points. 
\end{conjecture}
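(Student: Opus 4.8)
Since Conjecture~\ref{conj:section_reform} is Grothendieck's section conjecture in disguise, it cannot be proved outright; what can (and must) be established is that it is genuinely a reformulation, i.e.\ that it is \emph{equivalent} to Conjecture~\ref{conj:section}. Concretely, I would show that the assertion ``$X \to \Pi_{X/\kappa}^\rD$ induces a bijection on isomorphism classes of $\kappa$-points'' holds if and only if $s_X$ is a bijection. The entire argument is a formal consequence of Proposition~\ref{prop:comp_section}, obtained by applying the ``set of isomorphism classes'' functor $\IsCl$ to the equivalence of categories it supplies.

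First I would verify that Proposition~\ref{prop:comp_section} is applicable. A proper, smooth, geometrically connected curve over $\kappa$ is geometrically reduced (smoothness is preserved by base change) and geometrically connected, hence \infl by Proposition~\refall{prop:geometric-properties-infl}{b}; it is also quasi-compact and quasi-separated. The curve hypotheses (genus $\geq 2$, $\kappa$ finitely generated over $\QQ$) play no role here --- they only fix the setting of the section conjecture. Fixing a geometric point $\overline x$, Proposition~\ref{prop:comp_section} then yields an equivalence $\Pi_{X/\kappa}^\rD(\kappa) \to \HomExt_{G_\kappa}(G_\kappa, \pi_1(X, \overline x))$ whose composite with the canonical functor $X(\kappa) \to \Pi_{X/\kappa}^\rD(\kappa)$ lifts $s_X$.

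Now apply $\IsCl$. An equivalence of categories is bijective on isomorphism classes, so $\IsCl\bigl(\Pi_{X/\kappa}^\rD(\kappa)\bigr) \to \IsCl\bigl(\HomExt_{G_\kappa}(G_\kappa, \pi_1(X, \overline x))\bigr)$ is a bijection. The one identification to pin down is that $\IsCl$ of the category $\HomExt_{G_\kappa}$ --- whose objects are sections of the fundamental exact sequence and whose morphisms are conjugacies by elements of $\pi_1(X_{\overline\kappa}, \overline x)$ --- is exactly the set $\homext_{G_\kappa}(G_\kappa, \pi_1(X, \overline x))$ of conjugacy classes of sections used to define $s_X$. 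Granting this, the compatibility clause of Proposition~\ref{prop:comp_section} becomes the statement that the composite
\[
\IsCl\bigl(X(\kappa)\bigr) \longrightarrow \IsCl\bigl(\Pi_{X/\kappa}^\rD(\kappa)\bigr) \xrightarrow{\ \sim\ } \homext_{G_\kappa}(G_\kappa, \pi_1(X, \overline x))
\]
coincides with $s_X$. Because the second arrow is a bijection, the first is a bijection precisely when $s_X$ is, which is exactly the equivalence of the two conjectures.

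The main obstacle is the bookkeeping in this last step: one must check that ``lifting at the level of categories'' descends, under $\IsCl$, to the set-theoretic map $s_X$ attached to the chosen base point $\overline x$, and that the (explicitly non-canonical) equivalence of Proposition~\ref{prop:comp_section} nonetheless induces a well-defined bijection on isomorphism classes independent of the choices involved. Once the target $\IsCl(\HomExt_{G_\kappa}) = \homext_{G_\kappa}$ is correctly identified, everything else reduces to the trivial observation that equivalences of categories are bijective on isomorphism classes, so the reformulation is complete.
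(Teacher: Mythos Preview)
Your proposal is correct and matches the paper's approach exactly: the paper does not prove this conjecture but presents it as a reformulation of Conjecture~\ref{conj:section}, justified in a single sentence (``So according to Proposition~\ref{prop:comp_section}, we can reformulate Conjecture~\ref{conj:section} into the following one\dots''). You have simply unpacked that sentence, verifying the hypotheses of Proposition~\ref{prop:comp_section} and spelling out the passage to isomorphism classes via $\IsCl$; there is nothing more to do.
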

The injectivity is known as a consequence of the Mordell-Weil theorem (see for instance \cite{stix_cuspidal_2008}, Appendix B). This goes trough over a function field if one uses the Nori fundamental gerbe:

\begin{proposition}\label{prop:section_function_field}
	Let $\kappa$ be a field that is finitely generated over its prime subfield, and $X/\kappa$ a proper, smooth, and geometrically connected curve of genus at least $1$. Then the natural functor
	\[ X(\kappa) \to \Pi_{X/\kappa}(\kappa)\]
	is injective on isomorphism classes.
\end{proposition}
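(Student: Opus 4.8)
The plan is to reduce the statement to a divisibility property of the Mordell--Weil group of the Jacobian, detected through the Nori fundamental gerbe. Since $X$ is a scheme, the category $X(\kappa)$ is discrete, so I must show that if $x_{0},x_{1}\in X(\kappa)$ have isomorphic images in $\univ X(\kappa)$, then $x_{0}=x_{1}$. Taking $x_{0}$ as base point, I would set $J\eqdef\pic^{0}_{X/\kappa}$, an abelian variety over $\kappa$, and consider the Abel--Jacobi morphism $\alpha\colon X\arr J$, $x\arrto[\cO_{X}(x-x_{0})]$, which is defined over $\kappa$. On a curve of genus $\geq 1$ two $\kappa$-points are linearly equivalent if and only if they are equal, so $\alpha$ is injective on $\kappa$-points; hence it suffices to show that the class $a\eqdef[\cO_{X}(x_{1}-x_{0})]\in J(\kappa)$ vanishes, where $\alpha(x_{0})=0$ and $\alpha(x_{1})=a$.

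First I would transport the hypothesis along $\alpha$. Composing $X\arr J\arr\univ J$ with the universal property (Proposition~\ref{prop:fund-gerbe-unique}) yields an induced morphism $\univ X\arr\univ J$ compatible with $X\arr\univ X$ and $J\arr\univ J$; hence the images of $0$ and of $a$ under $J(\kappa)\arr\univ J(\kappa)$ are isomorphic objects. Now for each $n\geq 1$ the multiplication morphism $[n]\colon J\arr J$ exhibits its source as a torsor under the finite $\kappa$-group scheme $J[n]$ over its target, hence is classified by a morphism $J\arr\cB_{\kappa}J[n]$. Since $J[n]$ is finite, $\cB_{\kappa}J[n]$ is a finite gerbe, so by Definition~\ref{def:fund-gerbe} this morphism factors through $\univ J$. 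Composing, the two isomorphic objects above have isomorphic images in $\cB_{\kappa}J[n](\kappa)$. But the image of a point $b\in J(\kappa)$ in $\cB_{\kappa}J[n](\kappa)$ is the fibre $[n]^{-1}(b)$ viewed as a $J[n]$-torsor over $\kappa$, and the fibre over $0$ is the trivial torsor; I conclude that $[n]^{-1}(a)$ is trivial, i.e. $a\in nJ(\kappa)$. As $n$ is arbitrary, $a\in\bigcap_{n\geq 1}nJ(\kappa)$. It is exactly here that the Nori gerbe is indispensable: when $p\mid n$ the group scheme $J[n]$ is not étale, so the torsor $[n]\colon J\arr J$ is invisible to $\etuniv J$, and only the finer $\univ J$ sees all the multiplications $[n]$.

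It remains to prove $\bigcap_{n\geq 1}nJ(\kappa)=0$, for which it suffices that $J(\kappa)$ be finitely generated. Let $k_{0}$ be the algebraic closure of the prime field inside $\kappa$. Because $\kappa$ is finitely generated over its prime field, $k_{0}$ is a number field in characteristic $0$ and a finite field in characteristic $p$; moreover $k_{0}$ is algebraically closed in $\kappa$ and $\kappa/k_{0}$ is a regular extension (in characteristic $p$ one uses that a finitely generated extension of the perfect field $k_{0}$ is separably generated). By the Lang--Néron theorem the quotient of $J(\kappa)$ by the image of the $\kappa/k_{0}$-trace $B(k_{0})$ is finitely generated, while $B(k_{0})$ is itself finitely generated, being the group of rational points of an abelian variety over a number field (Mordell--Weil) or over a finite field (where it is finite). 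Hence $J(\kappa)$ is finitely generated, so $\bigcap_{n}nJ(\kappa)=0$, giving $a=0$ and therefore $x_{1}=x_{0}$.

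I expect the main obstacle to be the correct handling of the non-étale isogenies in the second paragraph: verifying that $\cB_{\kappa}J[n]$ genuinely is a finite gerbe in the sense of Section~\ref{sec:finite-stacks} when $p\mid n$, and that the fibre interpretation of the classifying map $J\arr\cB_{\kappa}J[n]$ remains valid for the full (possibly infinitesimal) group scheme $J[n]$, since this is precisely the step where passing from the étale to the Nori gerbe does the work. The finite-generation input, by contrast, is standard once $k_{0}$ is chosen so that its own Mordell--Weil contribution is finitely generated.
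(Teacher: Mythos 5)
Your proof is correct, and its skeleton is the same as the paper's: reduce to an abelian variety via the Abel--Jacobi map, extract divisibility of the difference of the two points from $n$-torsion torsors detected by the fundamental gerbe, and conclude by finite generation of the Mordell--Weil group over a finitely generated field (your Lang--N\'eron argument is exactly the content behind the paper's citations of Lang and Conrad). The one genuine divergence is in how divisibility is extracted. The paper invokes Nori's generalization of the Lang--Serre theorem to identify $\univ A \simeq \projlim_n \cB_\kappa A[n]$, then identifies isomorphism classes in $\cB_\kappa A[n](\kappa)$ with $\h^1(\kappa, A[n])$ and uses that the Kummer coboundary $A(\kappa) \arr \h^1(\kappa, A[n])$ has kernel $nA(\kappa)$. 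You bypass both steps: you only need that the classifying morphism $J \arr \cB_\kappa J[n]$ of the fppf $J[n]$-torsor $[n]\colon J \arr J$ factors through $\univ J$, which is immediate from Definition~\ref{def:fund-gerbe} once one knows $\cB_\kappa J[n]$ is a finite gerbe --- and your closing worry about the case $p \mid n$ is unfounded, since the paper's finite gerbes are fppf gerbes with a presentation by finite flat schemes (here $J[n] \double \spec\kappa$) and \'etaleness is never required; likewise the fibre interpretation of the classifying map is valid for fppf torsors. Your normalization $\alpha(x_0)=0$ then lets you read off triviality of $[n]^{-1}(a)$ directly, rather than subtracting classes in $\h^1$, which is equivalent since the coboundary is a group homomorphism. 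What each route buys: the paper's identification records the full structure of $\univ A$, which is of independent interest and makes the cohomological meaning transparent, and it is precisely the availability of $\cB_\kappa A[n]$ for \emph{all} $n$ (not just $n$ prime to $p$) in the Nori gerbe, as you correctly emphasize, that makes the argument work in positive characteristic; your version is more economical, self-contained within the paper's formalism, and requires no computation of $\univ J$ at all.
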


\begin{proof}
	Let us denote by $s_X(x)$ the image of a rational point $x\in X(\kappa)$ in $\Pi_{X/\kappa}(\kappa)$. We have to show that if two rational points $x,y \in  X(\kappa)$ give rise to isomorphic sections $s_X(x)\simeq s_X(y)$, then $x=y$. Since we can embed $X$ into an abelian variety $A/\kappa$, and $\Pi_{X/\kappa}$ is covariant in $X$, it is enough to show the corresponding statement for $A$. So let $a,b \in A(\kappa)$ such that $s_A(a)\simeq s_A(b)$. Nori's generalization of Lang-Serre theorem \cite{nori_fundamental_1983} and Remark \ref{rmk:gerbe-vs-group} imply that
	\[ \Pi_{A/S}\simeq \projlim_n \cB_\kappa A[n] \, .\]
	Since isomorphism classes in $\cB_\kappa A[n](\kappa)$ are in one to one correspondence with $\h^1(\kappa, A[n])$, it follows, for any non negative integer $n$, that $a-b$ lies in $\ker(A(\kappa)\to \h^1(\kappa, A[n]))=nA(\kappa)$, in other words $a-b$ is divisible in $A(\kappa)$. But the Mordell-Weil theorem, that asserts that $A(\kappa)$ is of finite type, holds for $A/\kappa$ (\cite{lang_rational_1959}, Theorem~1, see also \cite{conrad_chows_2006}, Corollary 7.2).
\end{proof}

\begin{remark}
	It is unclear whether Proposition \ref{prop:section_function_field} remains true when one replaces $\Pi_{X/\kappa}$ by $\Pi_{X/\kappa}^{\et}$. The critical point is to identify $\ker(A(\kappa)\to \h^1(\kappa, A[n]^\et))$, which does not seem to be an easy task. 
	
Also, when $k$ is a finite field, $\etuniv X(\kappa)$ is never empty, because the Galois group of $k$ is free. However, there examples of hyperbolic curves over a finite field with no rational points. Hence for finitely generated fields of positive characteristic $X(\kappa) \to \Pi_{X/\kappa}(\kappa)$ is in general not surjective on isomorphism classes, as and the section conjecture, as originally stated, fails. 
\end{remark}

This leads naturally to the following extension of the Grothendieck section conjecture to arbitrary characteristic.

 \begin{conjecture}   \label{conj:section_extend}
If $X$ is a proper, smooth and geometrically connected curve of genus at least $2$ over a finitely generated field $\kappa$, the natural morphism $X\to \univ X$ induces a bijection of isomorphism classes of $\kappa$-rational points.
\end{conjecture}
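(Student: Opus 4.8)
The plan is to separate the two halves of the asserted bijection, which are of very different depths. Injectivity I would establish exactly as in Proposition~\ref{prop:section_function_field}, which already covers genus at least~$1$. Embed $X$ into an abelian variety $A$ over $\kappa$, as in the proof of that proposition; functoriality of the fundamental gerbe gives a morphism $\univ X \arr \univ A$, so it suffices to treat $A$. By Nori's generalization of the Lang--Serre theorem (\cite{nori_fundamental_1983}) together with Remark~\ref{rmk:gerbe-vs-group} one has $\univ A \simeq \projlim_n \cB_\kappa A[n]$, and $\IsCl\bigl(\cB_\kappa A[n](\kappa)\bigr) = \h^1(\kappa, A[n])$. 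If $a, b \in A(\kappa)$ have isomorphic image in $\univ A(\kappa)$, then $a - b$ maps to $0$ in every $\h^1(\kappa, A[n])$, hence $a - b \in nA(\kappa)$ for all $n$; since $A(\kappa)$ is finitely generated by the Mordell--Weil theorem over finitely generated fields (\cite{lang_rational_1959}), its maximal divisible subgroup $\bigcap_n nA(\kappa)$ vanishes, so $a = b$.

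Surjectivity is the real content, and I would first reformulate it gerbe-theoretically. Because $X \arr \univ X$ is an fpqc cover (Proposition~\refall{prop:properties-fpqc-gerbe}{b}), a point $\xi \in \univ X(\kappa)$ lies in the image of $X(\kappa)$ if and only if the fibre $X_\xi \eqdef X \times_{\univ X, \xi} \spec\kappa$ has a $\kappa$-rational point: such a point is an $x \in X(\kappa)$ together with an isomorphism of its image with $\xi$. Writing $\univ X = \projlim_i \Gamma_i$ along the Nori-reduced tower, $X_\xi = \projlim_i \bigl(X \times_{\Gamma_i} \spec\kappa\bigr)$ is a cofiltered limit of torsors over $X$ under the finite group schemes $\aut_\kappa\xi_i$. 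Thus surjectivity asserts that every such pro-torsor attached to a point of the fundamental gerbe admits a $\kappa$-rational point. Under Proposition~\ref{prop:comp_section} and Theorem~\ref{rmk:compare-Deligne} the \'etale part of this datum is exactly a conjugacy class of sections of the fundamental exact sequence; the additional non-\'etale levels seen by $\univ X$ but not by $\etuniv X$ are precisely what one hopes will repair the positive-characteristic situation, where the Remark preceding the conjecture shows the \'etale statement to fail.

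The strategy I would attempt for surjectivity is to produce the point level by level and pass to the limit. For fixed $x \in X(\kappa)$ the lifts of $x$ to $X_\xi$ form a cofiltered limit of the fibres of $X \times_{\Gamma_i}\spec\kappa$ over $x$, each a torsor under the finite group scheme $\aut_\kappa\xi_i$ and hence a finite set; so once one knows that the relevant neighborhoods $X \times_{\Gamma_i}\spec\kappa$ carry $\kappa$-rational points lying over a coherent choice of $x$, an inverse-limit-of-nonempty-finite-sets argument delivers the desired rational point of $X$, which is unique by the injectivity already proved. In the classical number-field, \'etale case the neighborhoods are again hyperbolic curves of genus at least~$2$, so each $X \times_{\Gamma_i}\spec\kappa$ has only finitely many rational points by Faltings's theorem, which is what legitimizes the compactness step.

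The entire difficulty is therefore concentrated in the nonemptiness of these finite levels: the existence, for each $\Gamma_i$ through which $\xi$ descends, of a rational point of the associated neighborhood. This is exactly the surjectivity direction of Grothendieck's section conjecture, which is open already for $\etuniv X$ over number fields, and supplying it demands deep arithmetic input specific to finitely generated fields --- local-global principles and Brauer--Manin type obstructions in characteristic~$0$, and presumably further infinitesimal phenomena (captured by the non-\'etale part of $\univ X$) in positive characteristic. No unconditional mechanism is currently available for this step, which is why the statement is posed as a conjecture rather than a theorem; I expect this nonemptiness to be the main --- indeed the only essential --- obstacle, the injectivity half and the gerbe-theoretic reformulation being routine consequences of the machinery developed above.
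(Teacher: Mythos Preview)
There is nothing to compare against: Conjecture~\ref{conj:section_extend} is stated in the paper \emph{as a conjecture}, with no proof. The paper's contribution here is only the formulation and the observation (Proposition~\ref{prop:section_function_field}) that the injectivity half is already known. Your write-up correctly identifies this: you reproduce the paper's injectivity argument verbatim (embedding into an abelian variety, Nori's Lang--Serre, Mordell--Weil), and you explicitly concede that surjectivity is open and that ``no unconditional mechanism is currently available''. So as a proof this is not one, and you know it; as commentary on what would be required, it is reasonable and broadly in line with how the classical section conjecture is discussed.

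One small wobble in your surjectivity outline: when you write ``for fixed $x \in X(\kappa)$ the lifts of $x$ to $X_\xi$ form a cofiltered limit \dots\ each a torsor under the finite group scheme $\aut_\kappa\xi_i$ and hence a finite set'', the logic is tangled. You cannot fix $x$ in advance --- finding the correct $x$ is the whole problem --- and a torsor under a finite group scheme is a finite \emph{scheme}, not a finite set of $\kappa$-points; its $\kappa$-points may well be empty. The honest compactness argument is the one you state next: the $\kappa$-points of the finite-level neighbourhoods $X\times_{\Gamma_i}\spec\kappa$ form a cofiltered system of finite sets (finiteness requiring Faltings in characteristic~$0$, and something analogous otherwise), so nonemptiness at every level would yield a point in the limit. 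That nonemptiness is, as you say, the entire conjecture.
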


\section{The tame fundamental gerbe}\label{sec:tame}

Unfortunately, the étale fundamental gerbe does not seem to have a natural tannakian interpretation purely in terms of vector bundles. There is another quotient of $\univ X$, however, that does.

\begin{definition}
A finite stack $\Gamma$ over $\kappa$ is \emph{tame} if the functor $\coh \Gamma \arr \vect_{\kappa}$ given by $F \arrto \H^{0}(\Gamma, F)$ is exact.
\end{definition}

A finite stack $\Gamma$ has a moduli space $\pi\colon \Gamma \arr M$, which is finite over $\kappa$, hence $\Gamma$ is tame if and only if $\pi_{*}\colon \coh \Gamma \arr \coh M$ is exact; so this is a particular case of the notion of tame stack in \cite{dan-olsson-vistoli1}.

From \cite[Theorem~3.2]{dan-olsson-vistoli1} we obtain the following.

\begin{proposition}\label{prop:char-tame-stacks}
A finite stack $\Gamma$ is tame if and only if for any object $\xi$ in $\Gamma(\Omega)$, where $\Omega$ is an algebraically closed extension of $\kappa$, the group scheme $\underaut_{\Omega}\xi \arr \spec \Omega$ is linearly reductive.

\end{proposition}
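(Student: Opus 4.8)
The plan is to reduce the tameness condition imposed here, exactness of $F \mapsto \H^{0}(\Gamma, F)$, to the notion of tame stack of \cite{dan-olsson-vistoli1}, and then to invoke their structure theorem; the only work genuinely internal to this paper is the first reduction, the rest being a citation.

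First I would fix the coarse moduli space $\pi\colon \Gamma \arr M$ recalled above, with $M$ finite, hence affine, over $\kappa$. Writing $M = \spec B$ with $B$ a finite $\kappa$-algebra, the functor $\H^{0}(M, -)\colon \coh M \arr \vect_{\kappa}$ factors as the equivalence $\coh M \simeq \{\text{finite } B\text{-modules}\}$ followed by the forgetful functor to $\vect_{\kappa}$; it is therefore exact and faithful, and so reflects exactness. Since $\H^{0}(\Gamma, -) = \H^{0}(M, -)\circ \pi_{*}$ as functors to $\vect_{\kappa}$ and $\pi_{*}$ is always left exact, I claim that exactness of $\H^{0}(\Gamma, -)$ is equivalent to exactness of $\pi_{*}\colon \coh\Gamma \arr \coh M$. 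One implication is clear, as the composite of exact functors is exact. For the converse, given a short exact sequence $0 \arr F' \arr F \arr F'' \arr 0$ in $\coh\Gamma$, left exactness of $\pi_{*}$ together with exactness of $\H^{0}(\Gamma, -)$ forces $\H^{0}(M, \pi_{*}F) \arr \H^{0}(M, \pi_{*}F'')$ to be surjective; applying the exact functor $\H^{0}(M, -)$ to the cokernel $C$ of $\pi_{*}F \arr \pi_{*}F''$ then gives $\H^{0}(M, C) = 0$, whence $C = 0$ by faithfulness, so $\pi_{*}$ is exact. Thus $\Gamma$ is tame in the sense of this section precisely when $\pi_{*}$ is exact, which is the definition of a tame stack in \cite{dan-olsson-vistoli1}.

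The final step is a direct appeal to \cite[Theorem~3.2]{dan-olsson-vistoli1}: a stack with finite inertia (and $\Gamma$, having finite diagonal, qualifies) has exact pushforward to its coarse space if and only if the automorphism group scheme $\underaut_{\Omega}\xi$ of every object $\xi \in \Gamma(\Omega)$, with $\Omega$ an algebraically closed extension of $\kappa$, is linearly reductive. Combined with the previous step this gives the proposition. The substantive content sits entirely in \cite[Theorem~3.2]{dan-olsson-vistoli1}; the one point to watch in our own argument is that $M$ must be affine, so that $\H^{0}(M, -)$ both preserves and reflects exactness.
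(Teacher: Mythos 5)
Your proposal is correct and takes essentially the same route as the paper: the paper's entire justification consists of the observation that, since the moduli space $M$ is finite (hence affine) over $\kappa$, tameness in the sense of exactness of $F \arrto \H^{0}(\Gamma, F)$ is equivalent to exactness of $\pi_{*}\colon \coh \Gamma \arr \coh M$, followed by a direct citation of \cite[Theorem~3.2]{dan-olsson-vistoli1}. You merely spell out the details of that reduction (exactness and faithfulness of $\H^{0}(M,-)$ on the affine $M$, plus left exactness of $\pi_{*}$), and you do so correctly.
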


Recall, again from \cite{dan-olsson-vistoli1}, that a finite group scheme $G$ over a field is linearly reductive if every finite-dimensional representation of $G$ is a sum of irreducible representations. Over an algebraically closed field $\Omega$, the finite group scheme $G$ is linearly reductive if and only if it is a semidirect product $H \ltimes D$, where $H$ is a finite constant group of order prime to the characteristic of $\Omega$, and $D$ is a finite diagonalizable group (\cite[Proposition~2.10]{dan-olsson-vistoli1}). Of course in characteristic~$0$ every finite group scheme is linearly reductive, so every finite stack is tame.

\begin{proposition}\call{prop:properties-tame}\hfil
\begin{enumeratea}

\itemref{a} If $\Delta \arr \Gamma$ is a representable morphism of tame stacks, and $\Gamma$ is tame, then so is $\Delta$.

\itemref{b} If $\Delta_{1} \arr \Gamma$ and $\Delta_{2}\arr \Gamma$ are morphisms of tame finite stacks, the fibered product $\Delta_{1} \times_{\Gamma} \Delta_{2}$ is also tame.

\itemref{c} If $\kappa'$ is an extension of $\kappa$, then $\Gamma_{\kappa'}$ is tame over $\kappa'$ if and only if $\Gamma$ is tame over $\kappa$.

\itemref{d} If $G$ is a finite group scheme over $\kappa$, then $\cB_{\kappa}G$ is tame if and only if $G$ is linearly reductive.

\itemref{e} A profinite gerbe $\Phi$ is tame if and only if the category $\rep \Phi$ is semisimple.
\end{enumeratea}
\end{proposition}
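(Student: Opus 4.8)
The plan is to handle parts \refpart{prop:properties-tame}{a}--\refpart{prop:properties-tame}{d} through the geometric criterion of Proposition~\ref{prop:char-tame-stacks}, reducing each to an elementary closure property of \lr finite group schemes over an algebraically closed field, and to treat part~\refpart{prop:properties-tame}{e} directly by tannakian formalism. Throughout I will use that over an algebraically closed $\Omega$ a finite group scheme is \lr exactly when it is of the form $H \ltimes D$ with $H$ finite constant of order prime to $\cha\Omega$ and $D$ diagonalizable, and that this class is stable under closed subgroup schemes, finite products, and extension of the algebraically closed ground field (see \cite{dan-olsson-vistoli1}). Granting this, \refpart{prop:properties-tame}{d} is immediate: for algebraically closed $\Omega \supseteq \kappa$ every object of $(\cB_{\kappa}G)(\Omega)$ is a trivial torsor with automorphism group scheme $G_{\Omega}$, so Proposition~\ref{prop:char-tame-stacks} says $\cB_{\kappa}G$ is tame iff $G_{\Omega}$ is \lr, which is equivalent to $G$ being \lr. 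The same remark gives \refpart{prop:properties-tame}{c}, since the objects of $\Gamma_{\kappa'}$ over algebraically closed fields are objects of $\Gamma$ over those same fields with the same automorphism group schemes, and linear reductivity is insensitive to enlarging the algebraically closed base.

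For \refpart{prop:properties-tame}{a}, given $\xi \in \Delta(\Omega)$ with image $\eta \in \Gamma(\Omega)$, representability of $\Delta \arr \Gamma$ forces $\underaut_{\Omega}\xi \arr \underaut_{\Omega}\eta$ to be a closed immersion of group schemes; since $\underaut_{\Omega}\eta$ is \lr by tameness of $\Gamma$, so is its subgroup scheme $\underaut_{\Omega}\xi$, and Proposition~\ref{prop:char-tame-stacks} yields tameness of $\Delta$. I then deduce \refpart{prop:properties-tame}{b} from \refpart{prop:properties-tame}{a}: first $\Delta_{1}\times\Delta_{2}$ is tame, because the automorphism group scheme of an object there is a product $\underaut_{\Omega}\xi_{1} \times \underaut_{\Omega}\xi_{2}$ of \lr group schemes; and since the projection $\Delta_{1}\times_{\Gamma}\Delta_{2} \arr \Delta_{1}\times\Delta_{2}$ is a base change of the finite (hence representable) diagonal $\Gamma \arr \Gamma\times\Gamma$, it is representable, so \refpart{prop:properties-tame}{a} applies and gives tameness of $\Delta_{1}\times_{\Gamma}\Delta_{2}$.

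Finally, for \refpart{prop:properties-tame}{e} I would argue entirely inside the rigid abelian tensor (tannakian) category $\rep \Phi$, recalling that $\Phi$ is tame precisely when the global-sections functor $\H^{0}(\Phi, -)\colon \coh\Phi = \rep\Phi \arr \vect_{\kappa}$ is exact, and that $\H^{0}(\Phi, F) = \hom_{\rep \Phi}(\cO_{\Phi}, F)$. If $\rep \Phi$ is semisimple then every short exact sequence splits, so $\hom(\cO_{\Phi}, -)$ is exact and $\Phi$ is tame. Conversely, assume $\H^{0}$ is exact and take $0 \arr W \arr V \arr V/W \arr 0$ in $\rep \Phi$; applying the exact functor $\underhom(-, W) = (-)^{\vee}\otimes W$ and then $\H^{0}$ produces a surjection $\hom(V, W) \arr \hom(W, W)$, and any preimage of $\id_{W}$ is a retraction splitting the sequence. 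Hence $\rep \Phi$ is semisimple.

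I expect the sole genuine technical point to be the stability of linear reductivity under closed subgroup schemes used in \refpart{prop:properties-tame}{a} (and through it in \refpart{prop:properties-tame}{b}); the rest is formal once Proposition~\ref{prop:char-tame-stacks} and the tannakian description of $\rep \Phi$ are available. Should a direct argument be preferred to a citation, it follows from the $H \ltimes D$ structure: a closed subgroup scheme $G'$ meets $D$ in a diagonalizable normal subgroup $D'$, and $G'/D'$ embeds in $H$ as a constant subgroup of order prime to $\cha\Omega$, so the extension splits and exhibits $G'$ again in the form $H' \ltimes D'$.
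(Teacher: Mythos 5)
Your main line of argument is correct and is essentially the paper's own: parts \refpart{prop:properties-tame}{a} and \refpart{prop:properties-tame}{b} via the geometric criterion of Proposition~\ref{prop:char-tame-stacks} together with stability of \lr finite group schemes under closed subgroup schemes and products \cite[Proposition~2.5~(a),~(c)]{dan-olsson-vistoli1}, and part \refpart{prop:properties-tame}{e} by the standard rigid-tensor splitting argument (exactness of $\H^{0} = \hom(\cO_{\Phi}, -)$ composed with the exact internal Hom $(-)^{\vee}\otimes W$ lifts $\id_{W}$ to a retraction), which is surely the ``easy'' proof left to the reader. The only divergence is cosmetic: the paper handles \refpart{prop:properties-tame}{d} directly from the definition (tameness of $\cB_{\kappa}G$ \emph{is} exactness of invariants, i.e.\ linear reductivity of $G$) and \refpart{prop:properties-tame}{c} by citing \cite[Corollary~3.4]{dan-olsson-vistoli1}, whereas you route both through the geometric criterion; this is fine, but it makes you depend on the further fact --- also from \cite{dan-olsson-vistoli1} --- that linear reductivity is insensitive to extension of (algebraically closed) base fields, which you do acknowledge. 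In the ``only if'' direction of \refpart{prop:properties-tame}{c}, note that a geometric point $\spec\Omega \arr \Gamma$ need not factor through $\kappa'$, so you must pass to a common algebraically closed overfield of $\Omega$ and $\kappa'$ and then descend linear reductivity back to $\Omega$; your invoked invariance covers this, but it deserves a sentence.

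The one genuine error is the direct argument for subgroup stability sketched in your final paragraph. With $D' \eqdef G' \cap D$ and $H' \eqdef G'/D'$, the extension $1 \arr D' \arr G' \arr H' \arr 1$ need \emph{not} split: take $\Omega = \CC$, $G = (\ZZ/2) \times \mmu_{4}$ with $H = \ZZ/2$ and $D = \mmu_{4}$, and let $G' \subseteq G$ be the copy of $\ZZ/4$ generated by $(1, i)$; then $G' \cap D \simeq \mmu_{2}$ and $G'/D' \simeq \ZZ/2$, and $1 \arr \mmu_{2} \arr \ZZ/4 \arr \ZZ/2 \arr 1$ is the nonsplit extension --- even though $G'$ is of course still \lr (it is constant of order prime to $\cha\Omega$). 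The point is that the semidirect decomposition must be taken along the connected--\'etale filtration, not along the given $D$: over algebraically closed $\Omega$ one has $(G')^{0} = G' \cap G^{0}$, a subgroup scheme of a diagonalizable group and hence diagonalizable; since $G(\Omega) = H \ltimes D(\Omega)$ has order prime to $p$, the \'etale quotient $G'/(G')^{0}$ is constant of order prime to $p$; and this extension splits canonically because $G'_{\mathrm{red}}$ is a subgroup scheme ($\Omega$ being perfect) mapping isomorphically onto the \'etale quotient. If you prefer not to redo this, the citation of \cite[Proposition~2.5~(a)]{dan-olsson-vistoli1} --- which is what the paper itself uses --- is the safe course.
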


\begin{proof}
Part \refpart{prop:properties-tame}{d} follows immediately from the definitions.

Parts \refpart{prop:properties-tame}{a} and \refpart{prop:properties-tame}{b} are easy consequences of the following facts.

\begin{enumeratea}

\item A subgroup of a finite linearly reductive group scheme is linearly reductive \cite[Proposition 2.5~(a)]{dan-olsson-vistoli1}.

\item A product of finite linearly reductive group schemes is linearly reductive \cite[Proposition 2.5~(c)]{dan-olsson-vistoli1}.

\end{enumeratea}

\refpart{prop:properties-tame}{c} is a particular case of \cite[Corollary~3.4]{dan-olsson-vistoli1}.

\refpart{prop:properties-tame}{e} is easy and left to the reader.
\end{proof}

Let $X$ be an \infl algebraic stack, $\univ X = \projlim_{i} \Gamma_{i}$ as above. Consider the full subcategory $I^{\rm tame}$ of $I$ whose objects consist of Nori-reduced morphisms $X \arr \Gamma_{j}$ in which $\Gamma_{j}$ is tame. Since the fibered product of two tame gerbes is a tame stack, we have that $I^{\rm tame}$ is a 2-cofiltered category.

\begin{definition}
Let $X$ be an algebraic stack over $\kappa$. 
The tame fundamental gerbe of $X$ is the profinite gerbe $\tuniv X \eqdef \projlim_{j \in I^{\rm tame}} \Gamma_{j}$.
\end{definition}

Again, there is a natural morphism $\univ X \arr \tuniv X$. It is easy to see that for any tame finite stack $\Gamma$ over $\kappa$, the natural functor
   \[
   \hom(\tuniv X, \Gamma) \arr \hom(X, \Gamma)
   \]
is an equivalence. Also, the morphism $X \arr \tuniv X$ is universal among all maps from $X$ to a tame profinite gerbe.

\section{The tannakian interpretation of the étale fundamental gerbe}\label{sec:etale-tannakian}

Let $X$ be a connected algebraic stack. We endow it with the finite étale topology, where coverings are given by surjective families of finite étale morphisms. We denote by $X\fet$ the corresponding site. Let $\LC(X\fet,\kappa)$ be the category of local systems of finite dimensional $\kappa$-vector spaces. This is a Tannaka category. The aim of this section is to relate it to $\efin X$, when $X$ is defined over $\kappa$, inflexible and \psp.

We can describe the objects of $\LC(X\fet,\kappa)$ in terms of gerbes in the following way. Since $X$ is connected, we can stick to covers consisting of a single finite étale morphism $Y\rightarrow X$, that we can suppose Galois, with Galois group $G$. A local system $V$ in $\LC(X\fet,\kappa)$ trivialized by such a $G$-torsor $Y\to X$ corresponds to a morphism $X\to \cB_\ZZ G$, and a local system $V_0$ on $\cB_\ZZ G$, which is nothing else than a finite dimensional $\kappa$-linear representation $\rho_0\colon G\to \GL(V_0)$ of the constant group $G$.

In view of this, it is clear that the gerbe associated with the Tannaka category $\LC(X\fet,\kappa)$ is canonically isomorphic to Deligne's absolute fundamental gerbe $\Pi_X^\rD$ (Definition \ref{def:Deligne-fund-gerbe}).

\begin{definition}
	Let $X$ be a connected algebraic stack over $\kappa$. The Riemann-Hilbert functor $\RH_{X/\kappa}:\LC(X\fet,\kappa)\rightarrow \vect X$ is defined by the formula $\RH_{X/\kappa}(V)=\mathcal O_X\otimes_\kappa V$. 
\end{definition}
In this description, it is implicit that we use descent theory to get a vector bundle on $X$. In other terms, if $V$ comes from a morphism $X\to B_\ZZ G$ and a representation $\rho_0:G\to \GL(V_0)$ as above, we can use the diagram:
   \[
   \xymatrix{
   &\spec\kappa \ar[r]\ar[d] &\spec\ZZ\ar[d] \\
   X \ar[r] &B_\kappa G \ar[r] &B_\ZZ G
      }
   \]
   to descend the trivial bundle $\mathcal O_{\spec\kappa}\otimes_\kappa V_0$ to $B_\kappa G$, and then pull it back to $X$, to get $\RH_{X/\kappa}(V)$. This construction is independent (up to canonical isomorphism) of the choice of the pair $(X\to B_\ZZ G,\rho_0:G\to \GL(V_0))$. Moreover, this description makes it clear that, when $X$ is inflexible and pseudo-proper, the functor $\RH_{X/\kappa}$ factors trough $\efin X$ (see Lemma \refall{lem:reduced-finite-stacks}{a}). It is also clear that the functor $\RH_{X/\kappa}$ is functorial in $X/\kappa$. 

  To state our result we need two additional definitions.
\begin{definition}
	  Let $X/\kappa$ be an \infl and \psp algebraic stack. We denote by $\efin^\et X$ the full subcategory of $\efin X$ whose objects are essentially finite locally free sheaves with étale holonomy gerbe.
  \end{definition}
  In other words, $\efin^\et X$ is the category of representations of $\etuniv X$. It is clear that the functor $\RH_{X/\kappa}$ factors trough $\efin^\et  X$.

  \begin{definition}[\cite{milne_quotients_2007}] \label{def:quotient-tannaka}
	  Let $\mathcal C$ be a Tannaka category over $\kappa$, and $\mathcal B$ be a Tannaka subcategory. We say that a tensor functor $F:\mathcal C \to \mathcal D$ to a Tannaka category $\mathcal D$ identifies $\mathcal D$ with the quotient of $\mathcal C$ by $\mathcal B$ if:
\begin{enumerate}
	\item the objects of $\mathcal C$ whose image by $F$ is trivial are exactly those of $\mathcal B$,
	\item any object of $\mathcal D$ is a subquotient of an object in the image of $F$.
\end{enumerate}
  \end{definition}

\begin{theorem}
	\label{thm:tannaka-int-etale}
 Let $X/\kappa$ be a \infl and \psp stack, and denote by $s:X\rightarrow \spec \kappa$ the structural morphism. Then the functor $s^*:\LC (  ( \spec \kappa)\fet,\kappa)\to\LC(X\fet,\kappa)$ is fully faithful and the functor $\RH_{X/\kappa}$ identifies $\efin^\et X$ with the quotient of $\LC(X\fet,\kappa)$ by $\LC( (\spec \kappa)\fet,\kappa)$.

\end{theorem}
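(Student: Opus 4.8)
The plan is to reduce both assertions to the tannakian formalism of quotients applied to the exact sequence of profinite gerbes furnished by Lemma~\ref{lem:hom-seq-Del}. Throughout I would use the three identifications already at our disposal: the gerbe attached to $\LC(X\fet,\kappa)$ is $\Pi_X^\rD$ and, applied to the base, the gerbe attached to $\LC((\spec\kappa)\fet,\kappa)$ is $\Pi_{\spec\kappa}^\rD$; moreover $\efin^\et X\simeq\rep\etuniv X\simeq\rep\Pi_{X/\kappa}^\rD$ by Theorem~\ref{rmk:compare-Deligne}. Under these dictionaries $s^*$ is pullback along the morphism $s_*\colon\Pi_X^\rD\to\Pi_{\spec\kappa}^\rD$ induced by $s$, and $\RH_{X/\kappa}$ is pullback along the natural morphism $p\colon\Pi_{X/\kappa}^\rD\to\Pi_X^\rD$. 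The first thing I would pin down is this last identification: unwinding the descent definition of $\RH_{X/\kappa}(V)=\cO_X\otimes_\kappa V$ shows that the resulting object of $\efin^\et X=\rep\Pi_{X/\kappa}^\rD$ is precisely $p^*V$. With this in hand, the cartesian square of Lemma~\ref{lem:hom-seq-Del} reads as the statement that $\Pi_{X/\kappa}^\rD$ is the kernel of $s_*$, so that on bands it yields the exactness (on the left and in the middle) of $1\to\pi_1(X_{\overline\kappa})\to\pi_1(X)\to G_\kappa$.

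To prove full faithfulness of $s^*$, I would show that $s_*$ induces a \emph{surjection} on bands, equivalently that $\pi_1(X)\to G_\kappa$ is onto. This is exactly the geometric connectedness of $X$: since $X$ is \infl and \psp, Proposition~\ref{prop:base-change} applies (pseudo-properness provides a cover as in its hypothesis~(b)), so $X_{\kappa'}$ is again \infl for every finite separable $\kappa'/\kappa$, whence $\H^0(X_{\kappa'},\cO)=\kappa'$ and $X$ is geometrically connected; the surjectivity is then the right-hand exactness in Grothendieck's sequence (Theorem~\ref{thm:fund-ex-seq}). Restriction of representations along a quotient map of gerbes is fully faithful—a morphism between two representations inflated from $\Pi_{\spec\kappa}^\rD$ is automatically equivariant for all of $\Pi_X^\rD$ once $\pi_1(X)\to G_\kappa$ is surjective—so $s^*$ is fully faithful and its essential image $\mathcal B\simeq\rep\Pi_{\spec\kappa}^\rD$ is a Tannaka subcategory of $\LC(X\fet,\kappa)$.

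For the quotient assertion I would invoke \cite{milne_quotients_2007}: for the exact sequence $1\to\Pi_{X/\kappa}^\rD\to\Pi_X^\rD\to\Pi_{\spec\kappa}^\rD\to1$ just obtained, the category $\rep\Pi_{X/\kappa}^\rD$ is the tannakian quotient of $\rep\Pi_X^\rD$ by $\rep\Pi_{\spec\kappa}^\rD$, the quotient functor being $p^*=\RH_{X/\kappa}$. Concretely, the two conditions of Definition~\ref{def:quotient-tannaka} are verified as follows. A representation $V$ of $\Pi_X^\rD$ has $p^*V=\RH_{X/\kappa}(V)$ trivial if and only if the $\Pi_X^\rD$-action is trivial on the kernel $\Pi_{X/\kappa}^\rD$; since that kernel is $\pi_1(X_{\overline\kappa})=\ker(\pi_1(X)\to G_\kappa)$ (the cartesian square of Lemma~\ref{lem:hom-seq-Del}), this happens exactly when $V$ is inflated from $\Pi_{\spec\kappa}^\rD$, i.e. when $V$ lies in the image of $s^*$; this is condition~(1). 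Condition~(2), that every object of $\efin^\et X=\rep\Pi_{X/\kappa}^\rD$ is a subquotient of some $p^*V$, is the Deligne--Milne criterion (\cite{deligne-tannakian}) for the band map $\pi_1(X_{\overline\kappa})\hookrightarrow\pi_1(X)$ to be a closed immersion, which holds because that map is injective. These two verifications may be carried out after the faithfully flat base change to $\kappa\sep$, where the gerbes neutralize and the statements become the classical facts about representation categories of an exact sequence of profinite groups.

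The main obstacle is the opening step: making the two dictionary identifications completely precise—above all the identity $\RH_{X/\kappa}=p^*$, and the reading of the cartesian square of Lemma~\ref{lem:hom-seq-Del} as an exact sequence of gerbes with $\Pi_{X/\kappa}^\rD$ as honest kernel. One must also be careful that the relevant tannakian criteria are statements about the \emph{band} homomorphisms $\pi_1(X_{\overline\kappa})\to\pi_1(X)\to G_\kappa$ and not about flatness of the gerbe maps (which is automatic and carries no information here); the non-neutrality is best disposed of by base change to $\kappa\sep$. Once these points are secured, both parts of the theorem follow formally from the tannakian theory of quotients.
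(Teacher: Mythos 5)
Your strategy is sound in outline and genuinely different from the paper's. The paper never assembles an exact sequence of gerbes: it proves full faithfulness of $s^*$ by the projection formula from $s_*\kappa_X=\kappa_{\spec\kappa}$ (geometric connectedness via Proposition \refall{prop:geometric-properties-infl}{a}); it checks condition (1) of Definition \ref{def:quotient-tannaka} by factoring through a Nori-reduced morphism to a finite gerbe $\Gamma$ (Lemma \ref{lem:factor-through-Nori-reduced}, with full faithfulness of $f^*$ from Lemmas \ref{lem:pullback-faithfully-flat} and \ref{lem:Nori-reduced-coh-flat}) and then a descent lemma over $\Gamma_{\kappa\sep}$; and it checks condition (2) at \emph{finite} level: by Theorem \ref{rmk:compare-Deligne} one may take a Galois cover $Y \arr X$ with group $G$ so that $E$ comes from the Nori-reduced $[X/P^{0}_{Y}]$, the representable morphism $f\colon [X/P^{0}_{Y}] \arr \cB_{\kappa}G$ is affine, so $f^*f_*E \arr E$ is surjective and $f_*E$ is trivialized by $\spec\kappa \arr \cB_{\kappa}G$, i.e.\ comes from a local system. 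You instead read Lemma \ref{lem:hom-seq-Del} as exhibiting $\Pi_{X/\kappa}^\rD$ as the kernel of $\Pi_X^\rD \arr \Pi_{\spec\kappa}^\rD$ and reduce to the representation theory of the profinite sequence after neutralization; your identification $\RH_{X/\kappa}=p^*$ is correct (compare the descent construction of $\RH$ with the compatibility of $X \arr \Pi_{X/\kappa}^\rD \arr \Pi_X^\rD \arr [X/P_Y]=\cB_\kappa(\aut Y)$), and your full-faithfulness argument via surjectivity of $\pi_1(X)\arr G_\kappa$ is equivalent to the paper's. What you gain is that the theorem becomes visibly the Tannakian shadow of Grothendieck's exact sequence, reusing a lemma the paper proves anyway for Section \ref{sec:section}; what the paper gains is that it works at finite level over $\kappa$ throughout.

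The one step that fails as written is the closing claim that \emph{both} conditions of Definition \ref{def:quotient-tannaka} "may be carried out after faithfully flat base change to $\kappa\sep$". For condition (1) this is defensible: triviality of $p^*V$ is detected after flat field extension (since $\H^0$ commutes with it), and inflatedness descends because $V$ is inflated if and only if the unit $q^*q_*V \arr V$ is an isomorphism, a condition compatible with base change — the inner descent lemma in the paper's proof is exactly this point. But condition (2), "$E$ is a subquotient of an object in the image of $F$", is \emph{not} an fpqc-local property: a witness $V'$ over $\kappa\sep$ does not by itself yield one over $\kappa$, and you give no mechanism to descend it. Two standard repairs: spread $V'$ out to a finite subextension $\kappa'/\kappa$ and push forward, using $E \into \pi_*E_{\kappa'}$ and $\pi_*p_{\kappa'}^*V' \simeq p^*\pi_*V'$, so that $E$ is a subquotient of $p^*(\pi_*V')$; or argue directly over $\kappa$: the cartesian square of Lemma \ref{lem:hom-seq-Del} shows $p$ is affine (a base change of $\spec\kappa \arr \Pi_{\spec\kappa}^\rD$), so $p^*p_*E \arr E$ is surjective and $p_*E$ is a filtered union of finite-dimensional subrepresentations, whence $E$ is a quotient of some $p^*V$. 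This second repair is precisely the pro-level form of the paper's finite-level argument; with it, your proof is complete.
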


\begin{proof}
	According to Proposition \refall{prop:geometric-properties-infl}{a}, the stack $X/\kappa$ is geometrically connected, and one deduces that, if $\kappa_X$ denotes the $1$-dimensional constant sheaf on $X$, we have $s_* \kappa_X=\kappa_{\spec \kappa}$. It now follows from the projection formula that the functor $s^*$ is fully faithful.

	To check the first condition of Definition \ref{def:quotient-tannaka}, we have to prove that, given an object $V$ of $\LC(X\fet,\kappa)$, its image $\RH_{X/\kappa}(V)$ is trivial (that is, a free vector bundle) if and only if there exists an object $W$ of $\LC( (  \spec \kappa)\fet,\kappa)$ such that $V\simeq s^*W$. The ``if'' direction is clear, because of the functoriality of $\RH_{X/\kappa}$ in $X/\kappa$. To prove the ``only if'' direction, let us start from a local system $V$ in $\LC(X\fet,\kappa)$ such that $\RH_{X/\kappa}(V)$ is a trivial vector bundle. We can assume that $V$ comes from a morphism $X \rightarrow B_\kappa G$, and a local system $V_0$ on $B_\kappa G$, as above. Since $X$ is \infl, we can use Lemma \ref{lem:factor-through-Nori-reduced} to factor $X \rightarrow B_\kappa G$ trough a Nori-reduced morphism to a finite gerbe  $f\colon X \rightarrow \Gamma$. Let $V_1$ be the pull-back of $V_0$ along  the morphism $\Gamma \to B_\kappa G$. Using the functoriality of $\RH_{X/\kappa}$ in $X/\kappa$ again, and the fact that the functor $f^*: \vect \Gamma \to \vect X$ is fully faithful (Lemmas \ref{lem:pullback-faithfully-flat}, \ref{lem:Nori-reduced-coh-flat}), we see that $\RH_{\Gamma/\kappa}(V_1)$ is trivial. So we can finally assume that $X=\Gamma$ is a finite gerbe, and $V=V_1$.
\begin{lemma}
	Let $\Gamma/\kappa$ be a finite gerbe, $\kappa^{\rm sep}$ a separable closure of  $\kappa$, and $V$ an object of\/ $\LC(\Gamma\fet,\kappa)$. Then $V$ descends to an object of  $\LC((  \spec \kappa)\fet,\kappa)$ if and only if $V_{|\Gamma_{\kappa^{\rm sep}}}$ is constant.	
\end{lemma}
\begin{proof}
	This is a straightforward consequence of descent theory, since we can see $(\Gamma \to \spec \kappa, \spec \kappa^{\rm sep}\to \spec \kappa)$ as a cover of $\spec \kappa$.
\end{proof}
So the last thing to do to prove the first condition of Definition \ref{def:quotient-tannaka} is to show that if $\RH_{\Gamma/\kappa}(V)$ is trivial, then $V_{|\Gamma_{\kappa^{\rm sep}}}$ is constant. This is a consequence of  the functoriality of $\RH_{X/\kappa}$ in $X/k$ and of the following obvious lemma.
\begin{lemma}
	The functor $\RH_{\Gamma_{\kappa^{\rm sep}}/\kappa^{\rm sep}}:\LC({\Gamma_{\kappa^{\rm sep}}}\fet,\kappa^{\rm sep})\to \vect\Gamma_{\kappa^{\rm sep}}$ is an equivalence.
\end{lemma}
\begin{proof}
	This is clear, since $\Gamma_{\kappa^{\rm sep}}\simeq B_{k^{\rm sep}}G$ for a finite constant group $G$, and both categories then identify with the category of finite dimensional representations of $G$ with values in $\kappa^{\rm sep}$.
\end{proof}
Note that since $V\otimes_\kappa \kappa^{\rm sep}$ is constant, the same holds for $V_{|\Gamma_{\kappa^{\rm sep}}}$, and this finishes the proof of the first condition of Definition \ref{def:quotient-tannaka}.

We now prove the second condition. Let $E$ be an object of $ \efin^\et X=\rep \etuniv X$. According to Remark \ref{rmk:compare-Deligne}, we can choose a connected Galois cover $Y\to X$ of group $G$ so that $E$ comes from a representation of the groupoid $[X/P^{0}_{Y}]$ along the Nori-reduced morphism $X \arr [X/P^{0}_{Y}]$.  This morphism is the canonical factorization of the given morphism $X\arr \cB_\kappa {G}$, in particular, the morphism $f\colon  [X/P^{0}_{Y}]\arr \cB_\kappa{G}$ is representable.

\begin{lemma}
Let $f\colon \Gamma \arr \Delta$ be a representable morphism of finite gerbes.  Then every representation of $\Gamma$ is a quotient of a representation of\/ $\Delta$.
\end{lemma}

\begin{proof}
	It is enough to show that $f$ is affine, since then for any vector bundle $E$ on $\Gamma$, the canonical morphism $f^*f_* E\to E$ is an epimorphism.
 We may base change so that $\Gamma(\spec\kappa)$ is not empty. Then $f\colon \Gamma \arr \Delta$ can be identified with a morphism $\phi\colon \cB_{\kappa}H \arr \cB_{\kappa}G$ induced by an injective homomorphism $H \arr G$ of finite $\kappa$-group schemes. But then the base change of $\phi$ in the chart $\spec \kappa \to \cB_{\kappa}G$ is $G/H\arr \spec \kappa$, which is affine, since $H$ and $G$ are finite.
\end{proof}

So the vector bundle $E$, seen as a representation of $[X/P^{0}_{Y}]$, is a quotient of $f^*f_*E$, where $f$ is the natural morphism: $f\colon [X/P^{0}_{Y}]\arr \cB_\kappa{G}$ . But $f_*E$ is trivialized by the $G$-torsor $\spec \kappa \to \cB_{\kappa}G$, hence corresponds to a representation of the finite constant group $G$ with values in finite dimensional $\kappa$-vector spaces. This gives rise to a local system on $X$ which is sent to $f_*E$ by $\RH_{X/\kappa}$. This proves the second condition of Definition \ref{def:quotient-tannaka}, hence the Theorem.
\end{proof}

\section{The tannakian interpretation of the tame fundamental gerbe}\label{sec:tame-tannakian}

Let us fix an \infl \psp algebraic stack $X$ over $\kappa$. Then we have an equivalence of Tannaka categories $\rep \univ X \arr \efin X$.

\begin{definition}
A locally free sheaf $E$ on $X$ is \emph{tamely finite} when it is finite, and all the indecomposable components of all the tensor powers $E^{\otimes n}$ are irreducible in $\efin X$.

We denote by $\tfin X$ the full subcategory of $\efin X$ consisting of tamely finite objects.
\end{definition}

It is easy to see that $\tfin X$ is an exact abelian subcategory of $\efin X$; however, it does not seem obvious to us that it is a tannakian subcategory, that is, that the tensor product of two tamely finite sheaves is tamely finite.

Clearly, if $\Gamma$ is a tame finite gerbe every object of $\rep \Gamma$ is tamely finite; hence the pullback $\rep \tuniv X \arr \rep\univ X$, which according to Lemma \ref{lem:pullback-faithfully-flat} is fully faithful, gives an equivalence of $\rep \tuniv X$ with a tannakian semisimple subcategory of $\rep \univ X$, which is contained in $\tfin X$. The following theorem says that this is an equivalence.

\begin{theorem}
The pullback $\rep \tuniv X \arr \efin X$ induces an equivalence of the Tannaka category $\rep \tuniv X$ with $\tfin X$.

In particular, $\tfin X$ is a tannakian subcategory of $\efin X$.
\end{theorem}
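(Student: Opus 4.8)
The plan is to prove the single missing implication: that the fully faithful functor $\rep \tuniv X \arr \tfin X$ (constructed in the paragraph preceding the statement) is \emph{essentially surjective}, i.e. that every tamely finite locally free sheaf $E$ lies in the image of $\rep \tuniv X$. Since $E$ is finite, Theorem~\ref{thm:tannakian-description} lets me regard it as an object of $\rep \univ X$, and the tannakian subcategory $\langle E\rangle$ it generates corresponds to a \emph{finite} quotient gerbe $\Gamma$ of $\univ X$; concretely, $E$ is pulled back along a Nori-reduced morphism $f\colon X \arr \Gamma$ (Lemma~\ref{lem:factor-through-Nori-reduced}), and $f^{*}\colon \rep \Gamma \arr \efin X$ identifies $\rep \Gamma$ with $\langle E\rangle$. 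If I can show that $\Gamma$ is \emph{tame}, then $f$ is an object of $I^{\rm tame}$, so $\Gamma$ is a quotient of $\tuniv X$; hence $\rep \Gamma \subseteq \rep \tuniv X$ and $E \in \rep \tuniv X$, as wanted. By Proposition~\refall{prop:properties-tame}{e} it suffices to prove that $\rep \Gamma$ is semisimple, using only that each tensor power $E^{\otimes n}$ is semisimple in $\efin X$ (this is exactly the tamely finite hypothesis), and hence in $\rep \Gamma$.

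The heart of the matter is to reduce semisimplicity of $\rep\Gamma$ to the single statement that every object is a subquotient of a finite direct sum of \emph{pure} tensor powers $E^{\otimes n}$: granting this, every object is a subquotient of a semisimple object, hence semisimple. To produce such presentations I use the regular representation. Choose, by Proposition~\refall{prop:properties-fpqc-gerbe}{a}, a faithfully flat affine morphism $\pi\colon \spec K \arr \Gamma$ with $K$ a field, and set $\cR \eqdef \pi_{*}\cO_{\spec K}$. For any $M \in \rep \Gamma$ the unit $M \arr \pi_{*}\pi^{*}M$ is injective (because $\pi$ is faithfully flat), and $\pi^{*}M$ is free on the point $\spec K$, so $\pi_{*}\pi^{*}M \simeq \cR^{\oplus \rk M}$; thus every object of $\rep\Gamma$ embeds in a direct sum of copies of $\cR$. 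It therefore remains to show that $\cR$ itself is a subquotient of a finite direct sum of pure tensor powers of $E$.

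This last point is the tannakian incarnation of the statement that the matrix coefficients of a faithful representation generate its coordinate algebra. Working with the fpqc presentation $R \double \spec K$ of $\Gamma$ (where $R \eqdef \spec K\times_{\Gamma}\spec K$ is affine), the object $\cR$ carries an increasing filtration by the subobjects spanned by the coefficients of $E^{\otimes \le n}$; each such coefficient subobject is, as an object of $\rep\Gamma$, a direct sum of copies of the single pure power $E^{\otimes n}$ (the standard one-sided isotypy of a coefficient space), hence is semisimple by hypothesis. Because $E$ generates $\rep\Gamma$ — equivalently, the associated representation is faithful — this filtration exhausts $\cR$; the prototype is the neutral case $\Gamma\simeq\cB_{K}G$, where $\cR$ becomes $\cO(G)$ and one uses that a subbialgebra of the finite-dimensional Hopf algebra $\cO(G)$ through which a faithful representation factors must be all of $\cO(G)$. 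An increasing union of semisimple subobjects is semisimple, so $\cR$, and therefore every object of $\rep\Gamma$, is semisimple.

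Putting these steps together, $\rep\Gamma$ is semisimple, so $\Gamma$ is tame and $E \in \rep \tuniv X$; combined with the inclusion $\rep \tuniv X \subseteq \tfin X$ already established, this yields the equivalence $\rep \tuniv X \simeq \tfin X$, and in particular shows that $\tfin X$ is a tannakian subcategory of $\efin X$. I expect the exhaustion-and-isotypy step of the third paragraph to be the main obstacle: it is precisely the upgrade from the generic tannakian generation by \emph{mixed} tensors $E^{\otimes a}\otimes (E^{\vee})^{\otimes b}$ to generation by \emph{pure} powers $E^{\otimes n}$, and it is where faithfulness of $E$ enters. The delicate point is to run this through the groupoid $R\double \spec K$ directly over $\kappa$ — rather than via semisimplicity of the individual base-changed tensor powers, which need not descend along inseparable extensions — so that only the $\kappa$-semisimplicity of the $E^{\otimes n}$ is ever used.
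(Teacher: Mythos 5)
Your proposal is correct in outline and shares the paper's skeleton: reduce essential surjectivity to tameness of a finite gerbe $\Gamma$ on which $E$ becomes a faithful tamely finite representation $V$, prove tameness via semisimplicity of $\rep \Gamma$ (Proposition \refall{prop:properties-tame}{e}), and get semisimplicity of everything by embedding every object into copies of the ``regular'' object $\cR = \pi_{*}\cO_{\spec K}$ --- this embedding is exactly the paper's Lemma \refall{lem:reduced-finite-stacks}{c}. Where you genuinely diverge is the final mechanism. The paper uses that $\Gamma \arr \cB_{\kappa}\GL_{r}$ is affine, pulls back the explicit $\GL_{r}$-equivariant presentation of $\kappa[\GL_{r}]$ as a quotient of $\bigoplus_{m,n}\det(W^{\vee})^{\otimes m}\otimes\sym^{n}(W\otimes W^{\vee})$, and concludes from semisimplicity of the \emph{mixed} tensor powers of $V$ and $V^{\vee}$ --- a step the paper justifies only by the gloss that ``$V$, and hence $V^{\vee}$, is tamely finite'', which itself secretly rests on $\det V$ having finite order on a finite gerbe. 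You instead filter $\cR$ by the coefficient subobjects of the \emph{pure} powers $V^{\otimes n}$ and invoke the classical generation theorem for faithful representations of finite group schemes (via the same finite-order-of-determinant trick, or the subbialgebra fact for the finite-dimensional Hopf algebra $\cO(G)$). This buys two things: your argument uses only the powers $E^{\otimes n}$, matching the definition of tamely finite on the nose rather than needing semisimplicity of duals and mixed tensors; and your bookkeeping about base change is exactly right --- semisimplicity (which may not descend along inseparable extensions) is never base-changed, only the exhaustion claim, which is an equality of subobjects of $\cR$ and therefore may be verified after a faithfully flat extension neutralizing $\Gamma$.

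Three repairs are needed to make it airtight. First, take $K$ \emph{finite} over $\kappa$ (possible since $\Gamma$ is a finite gerbe, as in the paper's proof of Lemma~\ref{lem:faithful->tame}); Proposition \refall{prop:properties-fpqc-gerbe}{a} alone may hand you an infinite extension, making $\cR$ non-coherent and the filtration argument messier. Second, Lemma~\ref{lem:factor-through-Nori-reduced} does not by itself produce a $\Gamma$ with $f^{*}$ identifying $\rep\Gamma$ with $\langle E\rangle$: Nori-reducedness of $X \arr \Gamma$ does not make $V$ faithful on $\Gamma$, and $\cB_{\kappa}\GL_{r}$ is not a finite stack, so the lemma does not apply directly. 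You must either factor through a finite quotient $\Gamma_{i}$ of $\univ X$ and then perform the paper's rigidification $\Gamma_{i}\thickslash G$ by the kernel of inertia acting on $V$ (\cite[Theorem~A.1]{dan-olsson-vistoli1}), or argue separately that the tannakian gerbe of $\langle E\rangle$ is finite. Third, a coefficient subobject is a \emph{quotient} of a direct sum of copies of $V^{\otimes n}$, not literally isotypic; this is harmless precisely because $V^{\otimes n}$ is semisimple in $\rep\Gamma$, which in turn requires transferring semisimplicity of $E^{\otimes n}$ along the fully faithful exact functor $f^{*}$ (a splitting of a subobject of $f^{*}V^{\otimes n}$ descends by full faithfulness) --- you assert this transfer but it deserves the one-line argument.
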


It follows from the Theorem that $\tfin X$ is the largest tannakian semisimple subcategory of $\efin X$.

\begin{proof}
The proof of the Theorem is based on the following fact. Let $V$ be a representation of degree $r$ of a finite gerbe $\Gamma$, corresponding to a morphism $\Gamma \arr \cB_{\kappa}\GL_{r}$. We say that $V$ is \emph{faithful} if the morphism is representable. Suppose that $\kappa'$ is extension of $\kappa$, such that $\Gamma(\kappa') \neq \emptyset$; then $\Gamma_{\kappa'} \simeq \cB_{\kappa'}G$ for a finite group scheme $G$ over $\kappa'$. The pullback $V_{\kappa'}$ of $V$ to $\Gamma_{\kappa'}$ comes from a representation $G \arr \GL_{r}$; then $V$ is faithful if and only if the representation $G \arr \GL_{r}$ is faithful, in the sense that it has trivial kernel.

\begin{lemma}\label{lem:faithful->tame}
Suppose that a finite gerbe $\Gamma$ has a faithful tamely finite representation. Then $\Gamma$ is tame.
\end{lemma}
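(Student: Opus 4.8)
The plan is to prove directly that the tensor category $\rep\Gamma$ is semisimple, and then to conclude by Proposition~\refall{prop:properties-tame}{e}. Write $V$ for the given faithful tamely finite representation, of rank $r$, so that the associated morphism $\Gamma\arr\cB_\kappa\GL_r$ is representable and every tensor power $V^{\otimes n}$ is semisimple in $\rep\Gamma$. The faithfulness of $V$ says precisely that $V$ \emph{tensor-generates} $\rep\Gamma$: every object is a subquotient of a finite direct sum of objects $V^{\otimes a}\otimes(V^\vee)^{\otimes b}$. One way to justify this is to let $\langle V\rangle$ be the tannakian subcategory generated by $V$; it corresponds to a finite gerbe $\Psi$ together with a faithfully flat morphism $\Gamma\arr\Psi$ through which $\Gamma\arr\cB_\kappa\GL_r$ factors. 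Since the latter is representable, so is $\Gamma\arr\Psi$, and a representable faithfully flat morphism of finite gerbes is an isomorphism (compare degrees via Proposition~\ref{prop:properties-degree}), so $\langle V\rangle=\rep\Gamma$. Granting this, it suffices to show that each $V^{\otimes a}\otimes(V^\vee)^{\otimes b}$ is semisimple, because subquotients and finite direct sums of semisimple objects are semisimple.

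The crux is to bring the dual $V^\vee$ under control, so that the semisimplicity of the plain powers $V^{\otimes n}$ propagates to the mixed powers. First I would note that $\det V=\Lambda^r V$ is a quotient of $V^{\otimes r}$; as $V^{\otimes r}$ is semisimple, this quotient splits off, so $\det V$ is a direct summand of the finite object $V^{\otimes r}$ and is therefore itself finite by Proposition~\ref{prop:properties-finite}. An invertible finite object is necessarily torsion: each $(\det V)^{\otimes n}$ is invertible, hence indecomposable, and by finiteness only finitely many isomorphism classes occur among them, so $(\det V)^{\otimes a}\simeq(\det V)^{\otimes b}$ for some $a<b$, whence $(\det V)^{\otimes d}\simeq\cO_\Gamma$ with $d=b-a\ge 1$. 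Now, using the natural isomorphism $V^\vee\simeq\Lambda^{r-1}V\otimes(\det V)^\vee$, the identity $(\det V)^\vee\simeq(\det V)^{\otimes(d-1)}$, and the canonical surjections $V^{\otimes(r-1)}\twoheadrightarrow\Lambda^{r-1}V$ and $V^{\otimes r}\twoheadrightarrow\det V$, I can exhibit $V^\vee$ as a quotient of the single tensor power $V^{\otimes N}$ with $N=(r-1)+r(d-1)$. Since $V^{\otimes N}$ is semisimple, $V^\vee$ splits off, so $V^\vee$ is a direct summand of $V^{\otimes N}$.

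It then follows formally that $(V^\vee)^{\otimes b}$ is a direct summand of $V^{\otimes Nb}$, and hence that $V^{\otimes a}\otimes(V^\vee)^{\otimes b}$ is a direct summand of $V^{\otimes(a+Nb)}$; being a summand of a semisimple object, it is semisimple. Together with the generation statement of the first paragraph, this shows that every object of $\rep\Gamma$ is semisimple, so $\rep\Gamma$ is a semisimple category and $\Gamma$ is tame by Proposition~\refall{prop:properties-tame}{e}.

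I expect the main obstacle to be the handling of duals in the second step. The hypothesis of tame finiteness only supplies semisimplicity of the \emph{un-dualized} powers $V^{\otimes n}$, whereas generation of $\rep\Gamma$ produces arbitrary mixed powers $V^{\otimes a}\otimes(V^\vee)^{\otimes b}$; in a non-tame category the tensor product of two semisimple objects need not be semisimple, so one cannot simply argue that $V^\vee$ is semisimple and tensor. The point is precisely that the torsion of $\det V$ lets one realize $V^\vee$, and then every mixed power, as a \emph{direct summand of a single power} $V^{\otimes M}$, which is semisimple by hypothesis. The generation statement itself is standard tannakian input and is the only ingredient requiring the structure theory of gerbes rather than a formal manipulation inside the rigid tensor category.
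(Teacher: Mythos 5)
Your proof is correct, but it takes a genuinely different route from the paper's. The paper argues geometrically: it chooses a finite extension $\kappa'$ with $\Gamma(\kappa')\neq\emptyset$ and a chart $\rho\colon \spec\kappa'\arr\Gamma$, observes via Lemma \refall{lem:reduced-finite-stacks}{c} that every representation of $\Gamma$ embeds into a sum of copies of $E\eqdef\rho_{*}\cO_{\spec\kappa'}$, so that it suffices to prove $E$ semisimple; it then uses affineness of $f\colon\Gamma\arr\cB_{\kappa}\GL_{r}$ and the surjection $f^{*}f_{*}E\arr E$ to reduce to the pullback of the regular representation $\kappa[\GL_{r}]$, which it presents as a quotient of $\bigoplus_{m,n}\det(W^{\vee})^{\otimes m}\otimes\sym^{n}(W\otimes W^{\vee})$, and concludes from semisimplicity of tensor powers of $V$ and $V^{\vee}$. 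You instead work entirely inside the rigid tensor category once tannakian generation by the faithful $V$ is granted: your observation that $\det V$ is a torsion invertible object lets you realize $V^{\vee}$, and hence every mixed power $V^{\otimes a}\otimes(V^{\vee})^{\otimes b}$, as a direct summand of a single pure power $V^{\otimes M}$. This is a real merit of your write-up: the paper's closing step (``all these tensor powers are semi-simple, because $V$, and hence $V^{\vee}$, is tamely finite'') taken literally only yields semisimplicity of the pure powers of $V$ and of $V^{\vee}$ separately, and in positive characteristic a tensor product of semisimple representations need not be semisimple; your determinant trick (or, alternatively, the remark that any invertible representation of a finite gerbe is automatically torsion, being a character of a finite group scheme) is precisely the mechanism that makes this step airtight in both proofs.

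One phrase in your first paragraph needs repair. The claim ``a representable faithfully flat morphism of finite gerbes is an isomorphism'' is false if ``faithfully flat'' refers to the morphism of stacks: by Proposition \refall{prop:properties-fpqc-gerbe}{b}, \emph{every} morphism from a nonempty algebraic stack to an affine fpqc gerbe is faithfully flat, and $\cB_{\kappa}G\arr\cB_{\kappa}H$ for a strict subgroup scheme $G\subset H$ is representable and faithfully flat without being an isomorphism. What the tannakian correspondence actually provides for the inclusion $\langle V\rangle\subseteq\rep\Gamma$ (full and closed under subquotients) is that the induced morphism $\Gamma\arr\Psi$ is surjective on automorphism group schemes, i.e., an epimorphism of bands; hence $\deg\Psi$ divides $\deg\Gamma$, while representability gives $\deg\Gamma$ divides $\deg\Psi$ by Proposition \ref{prop:properties-degree}, so the degrees agree and that Proposition yields the isomorphism --- which is evidently what your parenthetical ``compare degrees'' intends. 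With that rewording, your argument is complete.
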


\begin{proof}
Let $V$ be a faithful representation of degree~$r$ of $\Gamma$; this corresponds to a morphism $f\colon \Gamma \arr \cB_{\kappa}\GL_{r}$. Notice that $f$ is affine; for this we can extend the base field and assume that $\Gamma = \cB_{\kappa}G$ for a certain finite group scheme $G$, so that $V$ corresponds to a faithful representation $G \arr \GL_{r}$. In this case the fiber of $f$ over $\spec \kappa$ is the quotient $\GL_{r}/G$; and since the quotient of an affine variety by a finite group scheme is affine, we have the result.

Let $\kappa'$ be a finite extension of $\kappa$ such that $\Gamma(\kappa') \neq \emptyset$, and choose a morphism $\rho\colon \spec \kappa' \arr \Gamma$. By Lemma \refall{lem:reduced-finite-stacks}{c} every representation of $\Gamma$ is contained in a sum of copies of $E \eqdef \rho_{*}\cO_{\spec \kappa'}$; hence it is enough to prove that $E$ is semisimple. Set $\sigma \eqdef f\rho \colon \spec \kappa' \arr \cB_{\kappa}\GL_{r}$. Consider the adjunction homomorphism $f^{*}f_{*}E \arr E$; this is surjective, because $f$ is affine, so it is enough to show that the infinite dimensional representation $f^{*}f_{*}E = f^{*}\sigma_{*}\cO_{\spec \kappa'}$ is semisimple.

Notice that any two morphisms $\spec \kappa' \arr \cB_{\kappa}\GL_{r}$ are isomorphic; hence $\sigma \simeq \pi\alpha$, where $\alpha\colon \spec \kappa' \arr \spec \kappa$ is the canonical morphism and $\pi\colon \spec \kappa \arr \cB_{\kappa}\GL_{r}$ corresponds the trivial $\GL_{r}$-torsor on $\cB_{\kappa}\GL_{r}$. If $d$ denotes the degree of the extension $\kappa'/\kappa$, then $\alpha_{*}\cO_{\spec \kappa'} \simeq \cO_{\spec \kappa}^{\oplus d}$, so $\sigma_{*}\cO_{\spec \kappa'} \simeq (\pi_{*}\cO_{\spec k})^{\oplus d}$; so it is enough to prove that $V \eqdef f^{*}\pi_{*}\cO_{\spec k}$ is semisimple. The representation $\pi_{*}\cO_{\spec k}$ corresponds to the standard regular representation of $\GL_{r}$, that is, to the action of $\GL_{r}$ on the vector space $k[\GL_{r}]$ induced by right translation. Let $W$ be the tautological representation of $\GL_{r}$ of degree~$r$; the pullback of $W$ to $\Gamma$ is exactly $V$. On the other hand we have that $k[\GL_{r}]$ is a quotient of the representation
   \[
   \bigoplus_{m, n \geq 0} \det ( W^{\vee})^{\otimes m}
   \otimes \sym^{n}(W \otimes W^{\vee})\,,
   \]
from which we obtain that $f^{*}\pi_{*}\cO_{\spec k}$ is a quotient of 
\[
    \bigoplus_{m, n \geq 0} \det ( V^{\vee})^{\otimes m}
   \otimes \sym^{n}(V \otimes V^{\vee})\,.
   \]
Since each of the summands $\det (V^{\vee})^{\otimes m} \otimes \sym^{n}(V \otimes V^{\vee})$ is a quotient of a tensor product of tensor powers of $V$ and $V^{\vee}$, and all these tensor powers are semi-simple, because $V$, and hence $V^{\vee}$, is tamely finite. This concludes the proof.
\end{proof}

Let us proceed with the proof of the theorem. According to the discussion above, it is enough to show that every tamely finite sheaf $E$ on $X$ is in the essential image of $\rep \tuniv X$. Choose a morphism $f\colon X \arr \Gamma$ to a finite gerbe and a representation $V$ of $\Gamma$ with $f^{*}V \simeq E$. The representation $V$ corresponds to a morphism $\Gamma \arr \cB_{\kappa}\GL_{r}$, where $r$ is the rank of $E$. We claim that there is a factorization $\Gamma \arr \Delta \arr \cB_{\kappa}\GL_{r}$, where $\Delta$ is a finite gerbe and the morphism $\Delta \arr \cB_{\kappa}\GL_{r}$ is representable. Let $I_{\Gamma}$ be the inertia stack of $\Gamma$, which is a group scheme over $\Gamma$, and let $G \subseteq I_{\Gamma}$ be the kernel of the induced homomorphism of relative group schemes $I_{\Gamma} \arr \Gamma \times_{\cB_{\kappa}\GL_{r}} I_{\cB_{\kappa}\GL_{r}} \subseteq \Gamma \times \GL_{r}$; the morphism $\Gamma \arr \cB_{\kappa}\GL_{r}$ is representable if and only if $G$ is trivial. Now we can take $\Delta$ to be the rigidification $\Gamma\thickslash G$, as in \cite[Theorem~A.1]{dan-olsson-vistoli1}. Then the tamely finite sheaf $E$ is a pullback from a representation of $\Delta$, which is a tame finite gerbe, according to Lemma~\ref{lem:faithful->tame}.
\end{proof}

\section{Examples and applications}\label{sec:examples}

We conclude with some examples to illustrate the theory. We will use without comments some standard facts in the theory of Brauer--Severi varieties;  as a general reference, see \cite{artin-brauer-severi} or \cite[Chapter~5]{gille-szamuely}.

\begin{proposition}\label{prop:brauer-severi}
Let $P$ be a Brauer--Severi variety over $\kappa$. Then $\univ P = \spec \kappa$.
\end{proposition}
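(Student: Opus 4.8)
The plan is to reduce everything to the tannakian description of $\univ P$ and to prove that the only essentially finite bundles on $P$ are the free ones. First I would record that a Brauer--Severi variety $P$ is smooth, proper, and \geom, so by Proposition~\refall{prop:geometric-properties-infl}{b} it is \infl, and it is clearly \psp (it is a quasi-compact quasi-separated scheme, and $\H^0(P,E)$ is finite-dimensional for every bundle $E$ by properness). Hence $\univ P$ exists and, by Theorem~\ref{thm:tannakian-description}, the pullback functor identifies $\rep\univ P$ with $\efin P$. Since the trivial gerbe $\spec\kappa$ is characterized among affine gerbes by $\rep(\spec\kappa)\simeq\vect_\kappa$, it suffices to prove that every essentially finite locally free sheaf on $P$ is free; the resulting equivalence $\efin P\simeq\vect_\kappa$ then forces $\univ P=\spec\kappa$.

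The heart of the argument is to show that every \emph{finite} bundle $E$ on $P$ is trivial. I would first pass to the algebraic closure: finiteness is witnessed by a relation $f(E)\simeq g(E)$ with $f\neq g$ in $\NN[x]$, and this relation is preserved by the exact tensor pullback to $P_{\overline\kappa}\simeq\PP^n_{\overline\kappa}$, so $E_{\overline\kappa}$ is finite. Restricting further along an arbitrary line $\ell\simeq\PP^1_{\overline\kappa}$, the same relation shows $E_{\overline\kappa}\mid_\ell$ is finite; by Grothendieck's splitting theorem $E_{\overline\kappa}\mid_\ell\simeq\bigoplus_i\cO(d_i)$, and since the tensor powers of such a bundle acquire indecomposable summands of unbounded degree unless all $d_i=0$, the finiteness (finitely many indecomposable components among all powers) forces $E_{\overline\kappa}\mid_\ell\simeq\cO_\ell^{\oplus r}$. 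Thus $E_{\overline\kappa}$ is trivial on every line, and by the standard fact that a vector bundle on $\PP^n$ trivial on all lines is itself trivial, $E_{\overline\kappa}\simeq\cO^{\oplus r}$. To descend this to $P$, set $V\eqdef\H^0(P,E)$; by Remark~\ref{rmk:base-change} (applied with $\kappa'=\overline\kappa$) we get $\dim_\kappa V=r$, and the evaluation map $V\otimes_\kappa\cO_P\to E$ becomes an isomorphism after the faithfully flat base change to $\overline\kappa$, hence is itself an isomorphism. So $E\simeq\cO_P^{\oplus r}$.

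Finally I would treat the essentially finite case. An essentially finite bundle is the kernel of a homomorphism $\phi\colon E_1\to E_2$ between finite bundles; by the previous step $E_i\simeq\cO_P^{\oplus a_i}$, and since $P$ is \geom and proper we have $\H^0(P,\cO)=\kappa$, so $\phi$ is a constant matrix in $\operatorname{Mat}_{a_2\times a_1}(\kappa)$ and its kernel is free. Therefore $\efin P$ consists exactly of the free bundles, i.e.\ $\efin P\simeq\vect_\kappa$, whence $\univ P=\spec\kappa$. The main obstacle is the triviality of finite bundles on $\PP^n$ in the second paragraph; everything else is formal descent. An alternative to the geometric line argument would be to invoke Proposition~\ref{prop:base-change} to reduce, over a finite separable splitting field $\kappa'$ with $P_{\kappa'}\simeq\PP^n_{\kappa'}$, to the triviality of the fundamental gerbe of projective space; but the direct route above has the advantage of never having to descend the gerbe itself.
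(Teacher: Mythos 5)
Your proof is correct, and its geometric heart is the same as the paper's: both arguments come down to showing that a finite bundle on projective space is trivial, by restricting to lines, invoking Grothendieck's splitting theorem on $\PP^1$ together with the unboundedness of degrees of indecomposable summands of tensor powers, and then citing the standard fact that a bundle on $\PP^n$ trivial on every line is trivial. Where you genuinely diverge is in the reduction to the split case. The paper descends at the level of the \emph{gerbe}: it uses the base-change theorem (Proposition~\ref{prop:base-change}) along a finite separable splitting extension $\kappa'/\kappa$ to assume outright that $P = \PP^n_\kappa$, and then has to add a separate step (``pass to a separable closure'') to ensure the field is infinite, as required by the line-restriction criterion of Okonek--Schneider--Spindler. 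You instead descend at the level of the \emph{bundle}: you prove triviality of $E_{\overline\kappa}$ on $P_{\overline\kappa} \simeq \PP^n_{\overline\kappa}$ and pull the conclusion back to $P$ via the evaluation map $\H^0(P,E)\otimes_\kappa\cO_P \to E$, using Remark~\ref{rmk:base-change} and faithfully flat descent of isomorphisms, never base-changing $\univ P$ at all. This buys you two small simplifications: you avoid Proposition~\ref{prop:base-change} entirely (so no separability bookkeeping, since you only ever descend a coherent-sheaf isomorphism along the faithfully flat $P_{\overline\kappa}\to P$), and the infinite-field hypothesis is automatic over $\overline\kappa$; the price is the extra evaluation-map descent step, which the paper's gerbe-level reduction dispenses with in one stroke. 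You also spell out what the paper compresses into ``this implies immediately \dots by Tannaka duality'': the explicit verification that $P$ is \infl and \psp (so that Theorem~\ref{thm:tannakian-description} applies), and the computation that kernels of constant matrices between free bundles are free, using $\H^0(P,\cO)=\kappa$, to get $\efin P \simeq \vect_\kappa$. All of these steps are sound, so your argument stands as a complete, slightly more self-contained variant of the paper's proof.
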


\begin{proof}
There exists some finite separable extension $\kappa'$ of $\kappa$ such that $P(\kappa') \neq \emptyset$, so that $P_\kappa' = \PP^{n}_{\kappa'}$. From Proposition~\ref{prop:base-change} we see that we may assume $\kappa = \kappa'$, that is, $P= \PP^{n}_{\kappa}$.  Also, we can assume that $\kappa$ is infinite (if not, pass to a separable closure).

It is enough to show that every finite bundle $E$ on $\PP^{n}_{\kappa}$ is trivial, as this implies immediately that $\efin \PP^{n}_{\kappa} = \vect_{\kappa}$, which implies the thesis, by Tannaka duality. By \cite[Theorem~3.2.1]{okonek-schneider-spindler}, it is enough to show that the restriction of $E$ to each line is trivial (in the reference given the the result is only stated over $\CC$, but the proof works over any infinite field). Since the restriction of $E$ to a line is again finite, we are reduced to the case $n =1$, which follows from Proposition~\ref{prop:properties-finite}, and Grothendieck's theorem on the structure of vector bundles on $\PP^{1}_{\kappa}$.
\end{proof}

Let us give examples of schemes $X$ over $\kappa$ in which $\univ X(\kappa) = \emptyset$. Here is a general method for producing examples. 

\begin{proposition}\label{prop:criterion-no-sections}
Let $P$ be a Brauer--Severi variety $\kappa$. Call $r$ its exponent; then $\pic P$ is generated by a sheaf of degree $r$, which we call $\cO_{P}(r)$. Let $f\colon X \arr P$ be a morphism, where $X$ is an inflexible algebraic stack. Assume that there exists a prime $p$ dividing $r$ and an invertible sheaf $\Lambda$ on $X$, such that $\Lambda^{\otimes p} \simeq f^{*}\cO_{P}(r)$. Then $\univ X(\kappa) = \emptyset$.

\end{proposition}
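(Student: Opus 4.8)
The plan is to argue by contradiction: assuming $\univ X(\kappa)\neq\emptyset$ together with the hypothesis, I will produce a morphism from $X$ to a \emph{nonneutral} finite gerbe over $\kappa$, which is impossible once $\univ X$ has a rational point, by the defining property of the fundamental gerbe.

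First I would isolate the arithmetic obstruction carried by $P$. By Brauer--Severi theory $\pic P_{\kappa\sep}\simeq\ZZ$, generated by $\cO(1)$, while $\pic P\simeq\ZZ$ is generated by $\cO_{P}(r)$, and the boundary map $\pic P_{\kappa\sep}\arr\operatorname{Br}\kappa$ sends $\cO(1)$ to the class $[A]$ of $P$, which has exact order $r$. Set $\beta\eqdef (r/p)[A]\in\operatorname{Br}\kappa$. Then $p\beta=r[A]=0$, so $\beta\in\operatorname{Br}\kappa[p]=\H^{2}(\kappa,\mmu_{p})$, and $\beta$ has order exactly $p$ (because $[A]$ has order $r$ and $p\mid r$); in particular $\beta\neq 0$. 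Standard descent then identifies the Kummer boundary $\delta([\cO_{P}(r)])\in\H^{2}(P,\mmu_{p})$ of $\cO_{P}(r)$ with the pullback $s_{P}^{*}\beta$ along the structure morphism $s_{P}\colon P\arr\spec\kappa$: since $\cO_{P}(r)$ becomes $p$-divisible over $\kappa\sep$ (it is $\cO(r/p)^{\otimes p}$ there) the class $\delta([\cO_{P}(r)])$ dies on the geometric fibre, hence comes from $\H^{2}(\kappa,\mmu_{p})$, where it computes to $(r/p)[A]$.

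Next I would encode $\beta$ as a finite gerbe. Let $\cG_{\beta}\arr\spec\kappa$ be the $\mmu_{p}$-gerbe of class $\beta$, concretely the stack of pairs $(L,\theta)$ with $L$ an invertible sheaf on $P$ (over a variable base) and $\theta\colon L^{\otimes p}\simeq\cO_{P}(r)$, which over $\kappa\sep$ acquires the object $\cO(r/p)$ and is therefore a gerbe banded by $\mmu_{p}$. It is \geom (over $\kappa\sep$ it is $\cB_{\kappa\sep}\mmu_{p}$), of finite type with finite diagonal and finitely many geometric isomorphism classes, hence a finite gerbe by Propositions~\ref{prop:char-finite} and~\ref{prop:gerbe<->geom}; moreover $\cG_{\beta}(\kappa)\neq\emptyset$ if and only if $\beta=0$. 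The hypothesis then produces a $\kappa$-morphism $g\colon X\arr\cG_{\beta}$. Indeed, writing $s_{X}=s_{P}\circ f$ for the structure morphism of $X$, the sheaf $\Lambda$ with its chosen isomorphism $\Lambda^{\otimes p}\simeq f^{*}\cO_{P}(r)$ trivializes the pulled-back class
\[
s_{X}^{*}\beta=f^{*}s_{P}^{*}\beta=f^{*}\delta([\cO_{P}(r)])=\delta(f^{*}[\cO_{P}(r)])=\delta(p[\Lambda])=0,
\]
equivalently it lifts $f$ to the gerbe of $p$-th roots of $\cO_{P}(r)$ over $P$, which is $\cG_{\beta}\times P$; projecting to $\cG_{\beta}$ gives $g$.

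Finally I would invoke the universal property. Since $X$ is \infl it has a fundamental gerbe (Theorem~\ref{thm:universal-exists}), and because $\cG_{\beta}$ is a finite stack the functor $\hom_{\kappa}(\univ X,\cG_{\beta})\arr\hom_{\kappa}(X,\cG_{\beta})$ is an equivalence (Definition~\ref{def:fund-gerbe}); thus $g$ factors as $X\arr\univ X\xarr{\bar g}\cG_{\beta}$. If $\univ X(\kappa)\neq\emptyset$, choosing $\xi_{0}\in\univ X(\kappa)$ yields $\bar g(\xi_{0})\in\cG_{\beta}(\kappa)$, forcing $\beta=0$ and contradicting $\beta\neq 0$; hence $\univ X(\kappa)=\emptyset$. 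I expect the main obstacle to be the cohomological bookkeeping of the second step: correctly identifying $\delta([\cO_{P}(r)])$ with $s_{P}^{*}\beta$ and verifying that the descent/Brauer obstruction genuinely lands in $\operatorname{Br}\kappa$ rather than in a larger group. This rests on the properness and geometric connectedness of $P$ (so that global units on $P_{\kappa\sep}$ are exactly $(\kappa\sep)^{*}$) and on the torsion-freeness of $\pic P_{\kappa\sep}$. Once $\beta$ is placed in $\H^{2}(\kappa,\mmu_{p})$ and shown to be nonzero, the gerbe-theoretic conclusion is a direct application of the universal property of $\univ X$.
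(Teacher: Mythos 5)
Your proposal is correct in substance, but it takes a genuinely different route from the paper's. The paper never mentions Brauer classes or root gerbes: it passes to the \emph{dual} variety $P^{\vee}$, defines its gerbe $\Gamma$ as the stack of $p$-th roots of $\cO_{P^{\vee}}(r)$ (non-neutral simply because $\pic P^{\vee} = \ZZ\cdot\cO_{P^{\vee}}(r)$ has no such root), and constructs the required object over $X$ completely explicitly, as $E = (f\times\id)^{*}\cO(H)^{\otimes r/p}\otimes\pr_{1}^{*}\Lambda^{\vee}$ on $X\times P^{\vee}$, where $H\subseteq P\times P^{\vee}$ is the incidence divisor of bidegree $(1,1)$; the identity $E^{\otimes p}\simeq\pr_{2}^{*}\cO_{P^{\vee}}(r)$ is then verified by a seesaw/base-change computation using $\H^{1}(F,\cO)=0$ on the fibers. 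You instead stay on $P$ itself: your key step is the identification of the gerbe of $p$-th roots of $\cO_{P}(r)$ over $P$ with the constant gerbe $\cG_{\beta}\times P$, after which the lift of $f$ is tautological from $(\Lambda, \Lambda^{\otimes p}\simeq f^{*}\cO_{P}(r))$. This buys conceptual clarity --- the obstruction is visibly the Brauer class $(r/p)[A]$ --- and avoids both the dual variety and the seesaw computation; the paper's version buys an explicit, cohomology-free, characteristic-blind argument. Two caveats on your cohomological bookkeeping, which you rightly flag as the delicate point: in characteristic $p$ you must work with fppf cohomology over $\overline{\kappa}$ rather than $\kappa\sep$ (the group $\mmu_{p}$ is infinitesimal and $\kappa\sep$ may be imperfect, so for instance $\H^{1}_{\mathrm{fppf}}(\kappa\sep,\mmu_{p})$ need not vanish); and, more seriously, $s_{P}^{*}$ is \emph{not} injective on $\H^{2}$ --- by Amitsur's theorem the kernel of $\operatorname{Br}\kappa\arr\operatorname{Br}P$ is generated by $[A]$ --- so the step ``dies on the geometric fibre, hence comes from the base, where it computes to $(r/p)[A]$'' does not determine the base class by restriction alone. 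Fortunately none of this is actually needed: with your concrete definition of $\cG_{\beta}$ (pairs $(L,\theta)$ with $L$ on all of $P_{S}$), restriction along the graph of $T\arr P$ defines a morphism $\cG_{\beta}\times P\arr (\text{root gerbe of } \cO_{P}(r) \text{ over } P)$ of $\mmu_{p}$-gerbes over $P$ inducing the identity on bands, hence an equivalence by Giraud's theory, with no spectral sequence; and $\cG_{\beta}(\kappa)=\emptyset$ follows directly from $\pic P=\ZZ\cdot\cO_{P}(r)$, exactly as in the paper, making your detour through $\beta\neq 0$ correct but dispensable. With that repair, your construction of $g\colon X\arr\cG_{\beta}$ and the final appeal to the universal property of $\univ X$ (a rational point of $\univ X$ would neutralize the finite gerbe $\cG_{\beta}$) match the paper's concluding logic precisely.
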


\begin{proof}
Let $P^{\vee}$ be the dual Brauer--Severi variety, that is, the Hilbert scheme of hyperplanes in $P$. Then $P^{\vee}$ has also exponent~$r$. Let $\Gamma \arr \spec \kappa$ be the stack, whose sections over a $\kappa$-scheme $S$ consist of invertible sheaves $E$ on $S \times P^{\vee}$, with an isomorphism $E^{\otimes p} \simeq \pr_{2}^{*}\cO_{P^{\vee}}(r)$. It is easy to check that $\Gamma$ is a gerbe banded by $\mu_{p}$. Clearly $\Gamma(\kappa) = \emptyset$, because $\cO_{P^{\vee}}(r)$ has no $p\th$ root on $P^{\vee}$.

Denote by $H \subseteq P \times P^{\vee}$ the tautological divisor; the invertible sheaf $\cO(H)$ has bidegree $(1,1)$. Notice that there is an isomorphism
   \[
   \cO(rH) \simeq \pr_{1}^{*}\cO_{P}(r) \otimes \pr_{2}^{*}\cO_{P^{\vee}}(r)\,.
   \]
Consider the morphism $f\times\id\colon X \times P^{\vee} \arr P \times P^{\vee}$, and the invertible sheaf
   \[
   E \eqdef (f \times \id)^{*}\cO(H)^{\otimes r/p} \otimes \pr_{1}^{*}\Lambda^{\vee}
   \]
on $X \times P^{\vee}$. We claim that $E^{\otimes p}$ is isomorphic to $\pr_{2}^{*}\cO_{P^{\vee}}(r)$; this gives a morphism $X \arr \Gamma$, and shows that $\univ X(\kappa) = \emptyset$.

If $F$ is a geometric fiber of the projection $\pr_{1}\colon X \times P^{\vee} \arr X$, the invertible sheaf $E^{\otimes p} \otimes \pr_{2}^{*}\cO_{P^{\vee}}(-r)$ is trivial along $F$; since $\H^{1}(F, \cO) = 0$, we have, by the standard base change theorems, that $\pr_{1*}\bigl(E^{\otimes p} \otimes \pr_{2}^{*}\cO_{P^{\vee}}(-r)\bigr)$ is an invertible sheaf on $X$, and that
   \begin{align*}
   E^{\otimes p} \otimes \pr_{2}^{*}\cO_{P^{\vee}}(-r) &=
   \pr_{1}^{*}\pr_{1*}\bigl(E^{\otimes p} \otimes \pr_{2}^{*}\cO_{P^{\vee}}(-r)\bigr)\\
   &= \pr_{1}^{*}\pr_{1*}\bigl((f \times \id)^{*}\cO(rH)
      \otimes \pr_{1}^{*}\Lambda^{\otimes - p} \otimes \pr_{2}^{*}\cO_{P^{\vee}}(-r)\bigr)\\
   &= \pr_{1}^{*}\pr_{1*}\bigl(\pr_{1}^{*}(f^*\cO_{P}(r)
      \otimes \Lambda^{\otimes - p}) \otimes \pr_{2}^{*}(\cO_{P^{\vee}}(r)
         \otimes \cO_{P^{\vee}}(-r))\bigr)\\
   &= \pr_{1}^{*}\pr_{1*} \cO_{X \times P^{\vee}}\\
   &= \cO_{X}\,.
   \end{align*}
This concludes the proof.
\end{proof}

\begin{remark}\label{rmk:weaker-condition}\hfil
	\begin{enumerate}
		\item In applying Proposition~\ref{prop:criterion-no-sections}, it is useful to notice that if $f^{*}\cO_{P}(mr)$ is a $p\th$ power in $\pic X$, where $m$ is not divisible by $p$, then $f^{*}\cO_{P}(r)$ is also a $p\th$ power. 		
		\item Using the relative Brauer group, Jakob Stix has shown the following similar result (\cite{stix_habil_2012}, \S 10.3). Assume that $P$ is a non trivial Severi-Brauer variety of dimension $n-1$. Let $\mathcal L$ be an invertible sheaf on $P$ whose order in $\pic P/n$ is a multiple of the period $r$. If $\mathcal L$ admits a $n$-th root on $X\to P$, then $\univ X(\kappa) = \emptyset$. In our result however, the dimension of $P$ does not play any role.
		\item Sylvain Brochard has given an independent proof of Proposition~\ref{prop:criterion-no-sections} using the torsion of the Picard scheme and an interesting duality theory for commutative group stacks (see \cite{brochard_duality_2012}). 
	\end{enumerate}
\end{remark}

From this it is easy to give examples of smooth projective geometrically connected curves of genus at least $2$ over a finitely generated field $\kappa$ such that $\univ X(\kappa) = \emptyset$, over any field with non-trivial Brauer group.

\begin{proposition}
Let $P$ be a Brauer-Severi variety over $\kappa$ with $P(\kappa) = \emptyset$, let $r$ be its exponent, and $p$ a prime factor of $r$. Then there exists a smooth geometrically connected projective curve $X$ with a morphism $f\colon X \arr P$, and an invertible sheaf $\Lambda$ on $X$ such that $\Lambda^{\otimes p} \simeq f^{*}\cO_{P}(r)$.
\end{proposition}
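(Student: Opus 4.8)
The plan is to build $X$ as a cover of a suitable curve in $P$ on which the hyperplane class becomes divisible by $p$, the divisibility being produced by a degree\dash$p$ self\dash map of $\PP^{1}$. First I would pass to a very ample multiple of $\cO_{P}(r)$. Since $\cO_{P}(r)$ generates $\pic P$ and $p\mid r$, choose an integer $m$ with $\gcd(m,p)=1$ that is large enough that $N\eqdef\cO_{P}(mr)$ is very ample (arbitrarily large integers are prime to $p$, and a high power of an ample sheaf is very ample). By Remark~\ref{rmk:weaker-condition} it suffices to produce $f\colon X\to P$ together with $\Lambda_{0}$ such that $\Lambda_{0}^{\otimes p}\simeq f^{*}N$: writing $am+bp=1$ and $L\eqdef f^{*}\cO_{P}(r)$, so that $f^{*}N\simeq L^{\otimes m}\simeq\Lambda_{0}^{\otimes p}$, the sheaf $\Lambda\eqdef\Lambda_{0}^{\otimes a}\otimes L^{\otimes b}$ satisfies $\Lambda^{\otimes p}\simeq L^{\otimes(ma+bp)}\simeq f^{*}\cO_{P}(r)$.

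Next I would cut out a curve and manufacture a pencil. Intersecting $P$ with $\dim P-1$ general members of $\lvert N\rvert$ produces, by a Bertini theorem in the form available over $\kappa$ (Poonen's version when $\kappa$ is finite), a smooth geometrically connected projective curve $C\subseteq P$; set $M\eqdef N\rest{C}$, a very ample sheaf on $C$. A general two\dash dimensional subspace of $\H^{0}(C,M)$ is base\dash point free and, after composing with a suitable automorphism of $\PP^{1}$, defines a finite separable morphism $\phi\colon C\to\PP^{1}$ with $\phi^{*}\cO_{\PP^{1}}(1)\simeq M$ that is unramified over the rational point $\infty$.

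The key step is to force $M$ to become a $p$-th power. Choose a finite separable morphism $\psi\colon\PP^{1}\to\PP^{1}$ of degree $p$ that is totally ramified over $\infty$: the $p$-power map $[u:v]\mapsto[u^{p}:v^{p}]$ when $\cha\kappa\neq p$, and the Artin--Schreier map $[u:v]\mapsto[u^{p}-uv^{p-1}:v^{p}]$ when $\cha\kappa=p$; in either case $\psi^{*}\cO_{\PP^{1}}(1)\simeq\cO_{\PP^{1}}(p)$. Let $X$ be the normalization of $C\times_{\phi,\PP^{1},\psi}\PP^{1}$, with induced morphisms $a\colon X\to C$ and $b\colon X\to\PP^{1}$, and let $f$ be $a$ followed by the inclusion $C\hookrightarrow P$. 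Since $\psi$ is totally ramified of degree $p$ over $\infty$ while $\phi$ is unramified there, the cover $a\colon X\to C$ is totally ramified over each geometric point of $\phi^{-1}(\infty)$; a separable cover of a geometrically connected curve that is totally ramified at a geometric point is itself geometrically connected, so $X$ is a smooth geometrically connected projective curve over $\kappa$ (separability of $\phi$ and $\psi$ guarantees geometric reducedness, hence smoothness of the normalization). From $\phi\circ a=\psi\circ b$ we obtain
\[
f^{*}N\simeq a^{*}M\simeq a^{*}\phi^{*}\cO_{\PP^{1}}(1)\simeq b^{*}\psi^{*}\cO_{\PP^{1}}(1)\simeq b^{*}\cO_{\PP^{1}}(p)\simeq\Lambda_{0}^{\otimes p},
\]
where $\Lambda_{0}\eqdef b^{*}\cO_{\PP^{1}}(1)$. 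Combined with the first paragraph this gives $\Lambda^{\otimes p}\simeq f^{*}\cO_{P}(r)$, and feeding $f$ into Proposition~\ref{prop:criterion-no-sections} yields $\univ X(\kappa)=\emptyset$; taking $m$ (equivalently $\deg C$) large makes the genus of $X$ at least $2$ by Riemann--Hurwitz.

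I expect the main obstacle to be the simultaneous genericity bookkeeping: delivering a smooth geometrically connected complete\dash intersection curve $C$ and a base\dash point free, \emph{separable} pencil $\phi$ that is unramified over the chosen branch point of $\psi$. The line\dash bundle identity on the fibre product is then purely formal, and the remaining inputs (Bertini, Riemann--Hurwitz) are standard. The genuinely delicate point is separability in characteristic $p$, where the naive $p$-power map is inseparable and would merely reproduce $C$; this is precisely why the Artin--Schreier cover is used in that case.
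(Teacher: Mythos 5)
Your construction is genuinely different from the paper's. The paper makes $f^{*}L$ a $p$-th power by taking the cover of $p$-th roots of a smooth divisor $D+D'\in\lvert L^{\otimes p}\rvert$ with $D\in\lvert L\rvert$, so that the branch index along $D$ is $p$ and hence $f^{*}D=pE$ as divisors; you instead import the divisibility from $\PP^{1}$, base-changing a pencil $\phi\colon C\arr\PP^{1}$ along a degree-$p$ self-map $\psi$ of $\PP^{1}$ totally ramified over $\infty$. Your opening reduction (coprime twist plus B\'ezout, in effect reproving Remark~\ref{rmk:weaker-condition}), the line-bundle identity on the fibre product, and the geometric-connectedness argument via total ramification over a fibre where the fibre product is smooth are all correct. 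In fact your Artin--Schreier branch treats $\cha\kappa=p$ more transparently than the paper's ``usual fashion'' root cover, which is delicate precisely when the characteristic divides the degree of the cover.

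There is, however, one genuine gap: the parenthetical claim that separability of $\phi$ and $\psi$ ``guarantees geometric reducedness, hence smoothness of the normalization.'' Two things go wrong. First, when $\cha\kappa=\ell\notin\{0,p\}$ the Kummer map $t\mapsto t^{p}$ is branched over $0$ as well as over $\infty$, and you only arranged $\phi$ to be unramified over $\infty$; so $C\times_{\PP^{1}}\PP^{1}$ may be singular at points above $t=0$ where $\phi$ also ramifies. Second, at such points the inference ``geometrically reduced, hence the normalization is smooth'' is false over imperfect fields, which do occur here (e.g.\ $\operatorname{Br}\bigl(\FF_{\ell}(t)\bigr)$ has elements of order $p$): normalization produces a \emph{regular} curve, but regular does not imply smooth. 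For instance $y^{2}=x^{p}-t$ over $\FF_{p}(t)$, $p$ odd, is geometrically integral, regular, equal to its own normalization, and not smooth at the point $y=0$, $x^{p}=t$ --- and it is even a separable double cover of a smooth curve, so separability alone cannot save the claim. The repair is easy and uses only a trick you already employ: since $\operatorname{Br}(\kappa)\neq 0$ forces $\kappa$ to be infinite while the branch locus of $\phi$ is finite, compose with an automorphism of $\PP^{1}$ moving \emph{two} unbranched rational values to $0$ and $\infty$. Then at every point of $C\times_{\PP^{1}}\PP^{1}$ at least one projection is \'etale (the second one over $\psi^{-1}(\{0,\infty\})$, because $\phi$ is \'etale over $0$ and $\infty$; the first one elsewhere, because $\psi$ is \'etale away from $0$ and $\infty$, respectively away from $\infty$ in the Artin--Schreier case), so the fibre product is already smooth and no normalization is needed. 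With this modification your proof is complete; note also that the appeal to Poonen's finite-field Bertini is vacuous, since $P(\kappa)=\emptyset$ with $P$ of positive exponent already forces $\kappa$ infinite, as the paper's own proof of its Lemma observes.
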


\begin{proof}
The result follows from the next lemma, applied to a smooth geometrically connected projective curve $Y \subseteq P$ (for example, a complete intersection in $P$), and to the restriction of $\cO_{P}(r)$ to $Y$.
\end{proof}

\begin{lemma}
Let $Y$ be a smooth geometrically connected projective curve over $\kappa$, let $L$ be an invertible sheaf on $Y$, and $n$ a positive integer. Then there exists a morphism of smooth geometrically connected projective curves $f\colon X \arr Y$ and an invertible sheaf $\Lambda$ on $X$ with $\Lambda^{\otimes n} \simeq f^{*}L$.
\end{lemma}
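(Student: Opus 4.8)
The plan is to realize $f^{*}L$ as an $n$-th power by choosing $f$ to be totally ramified of index $n$ over the support of a divisor representing $L$. Write $L \simeq \cO_{Y}(D)$ with $D = \sum_{i} a_{i}P_{i}$ for distinct closed points $P_{i}$ of $Y$. If I can produce a finite morphism $f\colon X \arr Y$, with $X$ a smooth geometrically connected projective curve, such that over each $P_{i}$ there is a single point $Q_{i}$ with $f^{*}P_{i} = n Q_{i}$ as divisors, then $f^{*}D = n\sum_{i}a_{i}Q_{i}$, and setting $\Lambda \eqdef \cO_{X}\bigl(\sum_{i}a_{i}Q_{i}\bigr)$ gives $f^{*}L \simeq \cO_{X}(f^{*}D) \simeq \Lambda^{\otimes n}$ (using the standard compatibility $f^{*}\cO_{Y}(D)\simeq \cO_{X}(f^{*}D)$ for finite morphisms of smooth curves). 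I emphasize that I do \emph{not} require $f$ to be separable; this is exactly what lets the construction go through uniformly, including when $\cha\kappa$ divides $n$.

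To build such an $f$, I would first construct a rational function $u \in \kappa(Y)^{\times}$ whose order at each $P_{i}$ is coprime to $n$ --- concretely, with a simple pole at every $P_{i}$. Choosing an effective divisor $B$ supported away from the $P_{i}$ with $\deg\bigl(\sum_{i}P_{i}+B\bigr)$ large (say $\ge 2g+1$, $g$ the genus), Riemann--Roch makes $\cO_{Y}\bigl(\sum_{i}P_{i}+B\bigr)$ very ample, and for $B$ of large enough degree its complete linear system has a $\kappa$-rational member $\Delta'$ (effective, with support disjoint from the $P_{i}$), since finitely many hyperplanes cannot cover $\mathbb{P}^{N}(\kappa)$ once $N$ is large. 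Then $\Delta' \sim \sum_{i}P_{i}+B$, so there is $u$ with $\operatorname{div}(u) = \Delta' - \sum_{i}P_{i} - B$, whence $\operatorname{ord}_{P_{i}}(u) = -1$ for every $i$.

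I would then take $X$ to be the normalization of $Y$ in the finite field extension $\kappa(Y)[z]/(z^{n}-u)$, with $f\colon X \arr Y$ the induced morphism; $X$ is automatically projective and regular. The key local point is that, since $\operatorname{ord}_{P_{i}}(u) = -1$ is coprime to $n$, the Newton polygon of $z^{n}-u$ over the completion of $\kappa(Y)$ at $P_{i}$ is a single segment of slope $1/n$; hence $z^{n}-u$ is irreducible there, the extension is totally ramified of index $n$, and there is a unique point $Q_{i}$ over $P_{i}$ with $f^{*}P_{i} = n Q_{i}$. The same slope computation at a point of $Y_{\overline{\kappa}}$ lying over some $P_{i}$ shows that $z^{n}-u$ remains irreducible over $\overline{\kappa}(Y)$; thus $\kappa(X)\otimes_{\kappa}\overline{\kappa} = \overline{\kappa}(Y)[z]/(z^{n}-u)$ is a field, $X_{\overline{\kappa}}$ is integral, and $X$ is smooth and geometrically connected. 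Together with the first paragraph this proves the lemma.

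The main obstacle is the wild case, where $\cha\kappa = p$ divides $n$: there the radical cover $z^{n}=u$ is inseparable, so the naive expectation that $f$ be a tame cyclic (Kummer) cover fails outright. The point to stress is that separability is never used --- the only facts needed about $f$ are the divisor equality $f^{*}P_{i} = n Q_{i}$ and the geometric integrality of $X$, and both follow from the single condition that $\operatorname{ord}_{P_{i}}(u)$ be coprime to $n$, via the uniform Newton-polygon argument above (adjoining an $n$-th root of a local parameter is totally ramified of index $n$ regardless of the characteristic). This dispenses with any characteristic-by-characteristic case analysis.
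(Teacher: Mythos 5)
You rely on the chain ``normalization $\Rightarrow$ regular'' plus ``$\kappa(X)\otimes_{\kappa}\overline{\kappa}$ a field $\Rightarrow$ $X_{\overline{\kappa}}$ integral'' to conclude that $X$ is \emph{smooth}, and over an imperfect field this inference is false: a regular, geometrically integral curve need not be smooth, and in the wild case $p=\cha\kappa\mid n$ --- exactly the case you advertise as being handled uniformly --- your normalized radical cover genuinely acquires regular-but-not-smooth points. The trouble occurs away from the $P_{i}$: at a closed point $Q$ where $u$ takes a value $c\notin\kappa(Q)^{p}$ with contact order $\operatorname{ord}_{Q}(u-c)\geq 2$, the ring $\widehat{\cO}_{Y,Q}[z]/(z^{p}-u)$ is already a DVR (its residue field is the inseparable extension $\kappa(Q)(c^{1/p})$), so normalization does nothing there, while over $\overline{\kappa}$ one may set $\eta=z-c^{1/p}$ and gets $\eta^{p}=u-c=\tau^{k}\cdot(\mathrm{unit})$ with $k\geq2$, a cusp. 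Concretely: take $\kappa=\FF_{p}(a)$ with $p\geq5$, $Y=\PP^{1}_{t}$, $n=p$, $u=1/t+at^{2}$. Then $u$ has a simple pole at $t=0$, your Newton polygon argument applies verbatim, $X$ is geometrically integral and $f^{*}(t{=}0)=pQ_{0}$; but $du$ vanishes simply at the point $t_{0}$ with $t_{0}^{3}=1/(2a)$, where $u$ takes the value $c=3/(2t_{0})\notin\kappa(t_{0})^{p}$ with contact order $2$, so $X$ is regular but \emph{not} smooth there. Since you impose no condition on $u$ away from its polar divisor, and zeros of $du$ off that locus are forced for degree reasons, this is a structural gap, not a missing routine check; and the imperfect wild case is squarely within the lemma's intended scope (the paper applies it over finitely generated fields of positive characteristic such as $\FF_{q}(t)$).

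A second, related pitfall: your geometric Newton-polygon step tacitly assumes $\operatorname{ord}(u)=-1$ persists at the geometric points above $P_{i}$, which fails when $P_{i}$ has inseparable residue field --- e.g.\ over $\FF_{p}(s)$ with $P$ cut out by $t^{p}-s$ and $u=1/(t^{p}-s)$, one has $z^{p}-u=\bigl(z-(t-s^{1/p})^{-1}\bigr)^{p}$ over $\overline{\kappa}(Y)$, so $X$ is not even geometrically reduced. This one is fixable (choose the divisor of $L$ and the auxiliary divisor $B$ supported at closed points with separable residue fields, which are dense on a smooth curve), but it must be addressed. Over a \emph{perfect} field, where regular equals smooth and all residue extensions are separable, your argument is correct and is essentially the function-field counterpart of the paper's construction, which instead forms the cyclic $n$-th root cover along a reduced divisor $D+D'\in|L^{\otimes n}|$ and only needs all ramification indices over $D$ to be divisible by $n$. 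One last quibble: ``finitely many hyperplanes cannot cover $\PP^{N}(\kappa)$ once $N$ is large'' is false over a finite field ($q+1$ hyperplanes always cover $\PP^{N}(\FF_{q})$); the paper sidesteps this by working over an infinite field (legitimate in its application, since the Brauer group is assumed non-trivial), and in your setup the correct justification over any field is the surjectivity, for $\deg B$ large, of the evaluation map $\H^{0}\bigl(Y,\cO(\textstyle\sum_{i}P_{i}+B)\bigr)\arr\bigoplus_{i}\cO(\textstyle\sum_{i}P_{i}+B)\big|_{P_{i}}$.
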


\begin{proof}
Suppose that $f\colon X \arr Y$ is a morphism of smooth geometrically connected projective curves over $\kappa$. If $y\in Y$ is a closed point, and we set $f^{*}y = \sum_{x \in f^{-1}(y)} e_{x} x$ (where the equality is an equality of divisors), with $e_{x} \in \NN$. We call the \emph{branch index} of $f$ at $y$ the greatest common divisor $\rmb_{y}(f)$ of the $e_{x}$. 

Now, by multiplying $L$ by the $n\th$ power of a sufficiently ample invertible sheaf on $Y$, we may assume that $L$ is very ample. Let $D$ be a smooth divisor in its linear system (which exists, since the field $\kappa$ is infinite, because it has non-trivial Brauer group). It is sufficient to show that there exists a morphism of smooth geometrically connected projective curves $f\colon X \arr Y$ with $\rmb_{y}(f) = n$ for each $y \in D$. Let $D'$ be a smooth divisor in the linear system of $L^{\otimes(n-1)}$ which is disjoint from $D$; then $D + D'$ is a smooth divisor in the linear system of $L^{\otimes n}$. We obtain $X$ as the ramified cover of $n\th$ roots of $D + D'$ in the usual fashion.
\end{proof}

\bibliography{borne_vistoli_fgs}
\end{document}